\theoremstyle{plain}
\newtheorem{theorem}{Theorem}[section]
\newaliascnt{lemma}{theorem}
\newtheorem{lemma}[lemma]{Lemma}
\newaliascnt{corollary}{theorem}
\newtheorem{corollary}[corollary]{Corollary}
\newaliascnt{proposition}{theorem}
\theoremstyle{definition}
\newaliascnt{definition}{theorem}
\newtheorem{definition}[definition]{Definition}
\newaliascnt{assumption}{theorem}
\newtheorem{assumption}[assumption]{Assumption}
\theoremstyle{remark}
\newaliascnt{remark}{theorem}
\newtheorem{remark}[remark]{Remark}
\crefname{equation}{Eq.}{Eqs.}
\Crefname{equation}{Eq.}{Eqs.}
\crefname{theorem}{Theorem}{Theorems}
\Crefname{theorem}{Theorem}{Theorems}
\crefname{lemma}{Lemma}{Lemmas}
\Crefname{lemma}{Lemma}{Lemmas}
\crefname{corollary}{Corollary}{Corollaries}
\Crefname{corollary}{Corollary}{Corollaries}
\crefname{proposition}{Proposition}{Propositions}
\Crefname{proposition}{Proposition}{Propositions}
\crefname{definition}{Definition}{Definitions}
\Crefname{definition}{Definition}{Definitions}
\crefname{assumption}{Assumption}{Assumptions}
\Crefname{assumption}{Assumption}{Assumptions}
\crefname{remark}{Remark}{Remarks}
\Crefname{remark}{Remark}{Remarks}
\def\ca#1{\mathcal{#1}}
\def\w#1{\widetilde{#1}}
\title{Problem-Parameter-Free \\ Decentralized Bilevel Optimization}
\author{
  Zhiwei Zhai\quad Wenjing Yan\quad Ying-Jun Angela Zhang \\
  Department of Information Engineering\\
  The Chinese University of Hong Kong\\
  \texttt{\{zz024, wjyan, yjzhang\}@ie.cuhk.edu.hk} \\
}
\begin{document}

\maketitle

\begin{abstract}
Decentralized bilevel optimization has garnered significant attention due to its critical role in solving large-scale machine learning problems. However, existing methods often rely on prior knowledge of problem parameters—such as smoothness, convexity, or communication network topologies—to determine appropriate stepsizes. In practice, these problem parameters are typically unavailable, leading to substantial manual effort for hyperparameter tuning. In this paper, we propose \textbf{AdaSDBO}, a fully problem-parameter-free algorithm for decentralized bilevel optimization with a single-loop structure. AdaSDBO leverages adaptive stepsizes based on cumulative gradient norms to update all variables simultaneously, dynamically adjusting its progress and eliminating the need for problem-specific hyperparameter tuning. Through rigorous theoretical analysis, we establish that AdaSDBO achieves a convergence rate of $\w{\mathcal{O}}\left(\frac{1}{T}\right)$, matching the performance of well-tuned state-of-the-art methods up to polylogarithmic factors. Extensive numerical experiments demonstrate that AdaSDBO delivers competitive performance compared to existing decentralized bilevel optimization methods while exhibiting remarkable robustness across diverse stepsize configurations.
\end{abstract}

\section{Introduction}
\vspace{-0.15cm}

Bilevel optimization is a powerful framework widely applied in machine learning, artificial intelligence, and operations research \citep{camacho2024metaheuristics, caselli2024bilevel}. In bilevel optimization, the objective is to optimize a function that is itself dependent on an optimization problem, creating a hierarchical structure of decision-making. This framework models numerous real-world problems where decisions at one level influence outcomes at another, including reinforcement learning \citep{hong2023two, thoma2024contextual, shen2025principled}, meta-learning \citep{bertinetto2018meta, rajeswaran2019meta, ji2020convergence}, adversarial learning \citep{mkadry2017towards}, hyperparameter optimization \citep{pedregosa2016hyperparameter, franceschi2018bilevel}, and imitation learning \citep{arora2020provable}. The flexibility of bilevel optimization makes it an essential tool for modeling complex systems and tackling a wide range of challenges in modern machine learning and optimization.

As datasets continue to grow and machine learning models become more complex, bilevel optimization increasingly necessitates decentralized computation paradigms \citep{kong2024decentralized}. Decentralized approaches distribute computation across multiple agents that communicate only with their neighbors, thereby significantly reducing communication overhead and enhancing scalability for large-scale problems. These frameworks are particularly valuable in scenarios where centralizing data is infeasible due to privacy concerns or infrastructure limitations \citep{zhang2019metapred, kayaalp2022dif}. Applications of decentralized bilevel optimization are prevalent in various domains, including resource allocation \citep{ji2023network}, collaborative decision-making \citep{hashemi2024cobo}, and distributed machine learning \citep{jiao2022asynchronous}, where agents collaboratively solve a global bilevel problem while addressing local constraints.

Given its importance, numerous studies have explored the challenges of decentralized bilevel optimization, focusing on algorithm design \citep{lu2022stochastic}, convergence analysis \citep{wang2024fully}, and practical applications \citep{lu2022decentralized,liu2022interact}. Among existing methods, double-loop frameworks have been extensively studied for their effectiveness in achieving convergence across various settings \citep{chen2024decentralized, chen2023decentralized}. However, these approaches are computationally expensive due to their nested structure, which requires repeatedly solving lower-level problems during each upper-level iteration. This results in significant computational and communication overhead in decentralized settings. To address these limitations, single-loop methods have emerged as a computationally efficient alternative \citep{zhu2024sparkle, dong2023single}. By integrating updates for both levels into a unified process, single-loop frameworks reduce overall complexity and are better suited for real-world decentralized bilevel optimization tasks.

Despite these advancements, existing decentralized bilevel optimization methods face a critical challenge: problem-specific hyperparameter tuning (e.g., stepsizes). In particular, the selection of hyperparameters in these algorithms often relies on problem-specific information, such as smoothness and strong convexity constants, the spectral gap of the graph adjacency matrix, or other topological characteristics. However, obtaining such information is typically infeasible due to physical or privacy constraints and computational limitations, especially in large-scale machine learning applications involving massive datasets. The nested structure of upper- and lower-level objectives in decentralized bilevel problems further exacerbates this challenge. As a result, extensive hyperparameter tuning remains necessary in existing methods, significantly limiting their practicality in real-world scenarios. This raises a fundamental question: 
\vspace{-0.2cm}
\begin{tcolorbox}[colback=gray!5,
    colframe=black!70, 
    frame style={line width=10pt}, arc=2mm  ]
\textbf{Can we design a single-loop decentralized bilevel optimization algorithm that eliminates reliance on problem-specific parameters while achieving comparable performance to well-tuned counterparts?
}
\end{tcolorbox}
\vspace{-0.3cm}

\subsection{Main Contributions}
\vspace{-0.2cm}
In this paper, we provide an affirmative answer to the above question by proposing an {\bf Ada}ptive {\bf S}ingle-loop {\bf D}ecentralized {\bf B}ilevel {\bf O}ptimization Algorithm (AdaSDBO). AdaSDBO leverages accumulated gradient norms to dynamically adjust stepsizes per iteration, thereby eliminating the need for hyperparameter tuning. 
We conduct a comprehensive convergence analysis with nonconvex-strongly-convex problem settings, showing that AdaSDBO achieves performance comparable to existing well-tuned approaches. 
Our main contributions are summarized as follows:
\begin{itemize}[topsep=0pt, itemsep=2pt, parsep=0pt, leftmargin=0.25cm]
    \item We propose AdaSDBO, the first parameter-free method for decentralized bilevel optimization with a single-loop structure. AdaSDBO employs adaptive stepsizes based on accumulated (hyper)gradient norms to update all variables simultaneously. However, due to the coupling of bilevel objectives, adaptive stepsizes in a single-loop framework must carefully orchestrate the progress of primal, dual, and auxiliary variables. Additionally, network heterogeneity in decentralized settings introduces inconsistencies in local-gradient-based adaptive stepsizes. To address these challenges, our method incorporates two key mechanisms: 1) \textbf{hierarchical stepsize design}, which respects the interdependence of different variables while preserving the autonomy of adaptive stepsizes; 2) \textbf{stepsize tracking scheme}, which synchronizes gradient-norm accumulators, effectively managing stepsize discrepancies among agents.
    \item We provide a comprehensive theoretical analysis, demonstrating that our algorithm eliminates the need for problem-specific hyperparameter tuning while achieving a convergence rate of \(\w{\mathcal{O}}\left(\frac{1}{T}\right)\), matching well-tuned counterparts \citep{ji2022will, dong2023single} up to polylogarithmic factors. Our analysis is inspired by the two-stage framework \citep{xie2020linear, ward2020adagrad}, but uniquely addresses the intricate coupling between optimization variables and adaptive stepsizes in single-loop bilevel optimization. Furthermore, we conduct a more rigorous analysis to control the interaction between hierarchical optimization errors and network-induced discrepancies, while preserving the problem-parameter-free property.
    \item We conduct experiments on several machine learning problems, showing that our method performs comparably with existing well-tuned approaches on both synthetic and real-world datasets. Moreover, our method exhibits remarkable robustness across a wide range of initial stepsizes, validating the effectiveness of our adaptive stepsizes design.
\end{itemize}  

\begin{table*}[t]\small
\centering
\renewcommand{\arraystretch}{1.4}
\setlength{\tabcolsep}{4pt}
\setlength{\heavyrulewidth}{1.5pt}
\setlength{\lightrulewidth}{1pt}
\setlength{\cmidrulewidth}{0.6pt}
\caption{Comparison between different bilevel optimization algorithms.\\ \(T\) denotes the number of (upper-level) iterations;  \(\epsilon\) is the target stationarity such that \(\sum_{t=0}^{T-1}\|\nabla\Phi(\bar{x}_t)\|^2/T \leq \epsilon\); \(\rho_W\) measures the connectivity of the underlying graph; \(\mu\) and \(L\) are the strongly convex and Lipschitz constants, respectively;
\(\beta\) represents the momentum parameter.}
\begin{threeparttable}
\begin{tabular}{@{}lccccc@{}}
\toprule
\textbf{Algorithm} & \textbf{Loopless} & \textbf{Convergence Rate$^\diamond$} & \textbf{Gradient Complexity$^\dagger$} &  \textbf{Parameters$^\uparrow$} \\ \midrule
DBO \citep{chen2024decentralized}    & \ding{55} & $\mathcal{O}\left(\frac{1}{{T}}\right)$ & $\mathcal{O}\left(\frac{1}{\epsilon^2} \log \left( \frac{1}{\epsilon} \right)\right)$  & $\mu,L,\rho_W$ \\
MDBO \citep{gao2023convergence} & \ding{55} & $\mathcal{O}\big(\frac{1}{\sqrt{T}}\big)$ & $\mathcal{O}\left(\frac{1}{\epsilon^2} \log \left( \frac{1}{\epsilon} \right)\right)$ & $\mu,L,\rho_W,\beta$ \\
FSLA \citep{li2022fully}  & \ding{51} & $\mathcal{O}\big(\frac{1}{\sqrt{T}}\big)$ & $\mathcal{O}\left(\frac{1}{\epsilon^2} \right)$ &  $\mu,L,\beta$ \\
AID \citep{ji2022will} & \ding{51}& $\mathcal{O}\big(\frac{1}{{T}}\big)$ & $\mathcal{O}\left(\frac{1}{\epsilon} \right)$ & $\mu,L,\epsilon$ \\
SLDBO \citep{dong2023single}   & \ding{51} & $\mathcal{O}\big(\frac{1}{{T}}\big)$ & $\mathcal{O}\left(\frac{1}{\epsilon} \right)$ & $\mu,L,\rho_W$ \\ \rowcolor[gray]{0.9}
\textbf{AdaSDBO (This paper)} & \ding{51} & $\mathcal{O}\big(\frac{\log^4(T)}{{T}}\big)$ & $\mathcal{O}\left(\frac{1}{\epsilon} \log^4\left(\frac{1}{\epsilon}\right)\right)$ & None \\ \bottomrule
\end{tabular}
\vspace{0cm}
\label{tab:comparison}
 \begin{tablenotes}  
   \footnotesize
   \item[$\diamond$] The convergence rate when $T\rightarrow\infty$. 
   \item[$\dagger$] The number of gradient/Jacobian/Hessian evaluations per agent to achieve $\epsilon$-accuracy when $\epsilon\rightarrow 0$.
   \item[$\uparrow$] Stepsize-related problem-specific parameters.
  \end{tablenotes}
\end{threeparttable}
\vspace{-0.5cm}
\label{table1}
\end{table*}

\vspace{-0.3cm}
\subsection{Related Works}
\vspace{-0.2cm}
\textbf{Decentralized Bilevel Optimization.}
Recent advancements in decentralized bilevel optimization have focused on addressing the challenges of large-scale data and leveraging the computational benefits of parallel environments. 
\citet{chen2024decentralized} proposed DBO, a general framework that incorporates convergence analysis while accounting for data heterogeneity across agents. Similarly, MA-DSBO \citep{chen2023decentralized} and MDBO \citep{gao2023convergence} employed a double-loop framework with momentum techniques \citep{liu2020improved}.  
More recently, single-loop frameworks have emerged as efficient alternatives to double-loop methods. These approaches \citep{chen2024optimal, dagreou2022framework, kong2024decentralized, zhang2023communication} enable approximate solutions to decentralized bilevel problems within a single iteration, significantly improving computational efficiency by reducing redundant computations. Such methods have made decentralized bilevel optimization more practical and scalable for large-scale applications. \citet{dong2023single} further introduced SLDBO, a low-complexity single-loop decentralized bilevel algorithm that leverages gradient tracking technology.  
Despite these advancements, existing methods rely on fixed or uniformly decaying stepsizes. Further, they require prior knowledge of problem parameters for stepsize selection. This dependency imposes additional challenges, particularly in decentralized settings where such information is often unavailable or difficult to estimate. 
Further details on bilevel optimization, adaptive methods, and their applications are provided in \cref{secrelated work}.

\vspace{-0.35cm}
\subsection{Comparisons with Prior Approaches}
\vspace{-0.15cm}
We compare AdaSDBO with representative bilevel optimization methods, as summarized in \cref{table1}. Notably, AdaSDBO adopts a loopless framework while achieving a convergence rate that matches the state-of-the-art results, up to a polylogarithmic factor of $\log^4(T)$. Since logarithmic factors grow significantly slower than polynomial terms, this factor is negligible relative to $T$, a common consideration in optimization research \citep{yang2022nest,li2024problem}. By carefully controlling network-induced errors, AdaSDBO matches both the convergence rate and gradient complexity of its centralized counterpart AID~\citep{ji2022will}, while outperforming the centralized method FSLA~\citep{li2022fully} in both metrics.
Compared to decentralized approaches such as DBO \citep{chen2024optimal}, MDBO \citep{gao2023convergence}, and SLDBO \citep{dong2023single}, AdaSDBO achieves the best-known convergence rate of $\mathcal{O}\big(\frac{1}{T}\big)$ and gradient complexity of $\mathcal{O}\big(\frac{1}{\epsilon}\big)$, while surpassing double-loop methods in gradient complexity—underscoring the efficiency of its single-loop framework. Most importantly, AdaSDBO is a completely tuning-free algorithm independent of problem parameters, which is in sharp contrast to other methods that require extensive hyperparameter tuning. This advantage significantly simplifies algorithm deployment,
facilitating the implementation of bilevel optimization in diverse environments.

\vspace{-0.3cm}
\section{Algorithm Development}
\vspace{-0.2cm}
\subsection{Problem Model}
\vspace{-0.2cm}
In this paper, we consider a networked system consisting of $n$ nodes (agents) that collectively solve the following nonconvex-strongly-convex bilevel optimization problem:
\vspace{-0.23cm}
\begin{equation}\label{eqcontent1}
\begin{aligned}
    \min_{x \in \mathbb{R}^p} &\Phi(x) = f(x, y^*(x)) := \frac{1}{n} \sum_{i=1}^n f_i(x, y^*(x)), \\
    \text{s.t.} ~~& y^*(x) = \arg\min_{y \in \mathbb{R}^q} l(x, y) :=  \frac{1}{n} \sum_{i=1}^n l_i(x, y),
\end{aligned}
\end{equation}
where $f(\cdot)$ represents the upper-level objective function, which is minimized with respect to $x$, subject to the constraint that $y^*(x)$ is a minimizer of the lower-level function $l(\cdot)$. Each agent \(i\) has a possibly nonconvex objective function \(f_i(\cdot) : \mathbb{R}^p \times \mathbb{R}^q \to \mathbb{R}\) and a strongly convex objective function \(l_i(\cdot): \mathbb{R}^p \times \mathbb{R}^q \to \mathbb{R}\) with respect to $y$. The agents are interconnected through a communication network modeled as a graph \(\mathcal{G} = (\mathcal{N}, \mathcal{E})\), where \(\mathcal{N} = \{1, 2, \dots, n\}\) is the set of nodes (agents), and \(\mathcal{E} \subseteq \mathcal{N} \times \mathcal{N}\) is the set of edges representing communication links. An edge \((i, j) \in \mathcal{E}\) indicates a communication link between nodes \(j\) and node \(i\). We represent the weight matrix of the communication network $\mathcal{G}$ as $W = (w_{ij}) \in \mathbb{R}^{n \times n}$, where $w_{ij} = 0$ if \((i, j) \notin \mathcal{E}\). The following assumption is made on $W$. 
\begin{assumption}
The matrix $W$ is doubly stochastic, i.e., $W \mathbf{1}\! =\! \mathbf{1}$,  $\mathbf{1}^\top W \!=\! \mathbf{1}^\top$, and $\rho_W\!:= \|W\!-\!\mathbf{J}\|_2^2 \!< \!1$, where \( \mathbf{J} = \mathbf{1} \mathbf{1}^\top/n \) is the averaging matrix with $n$ dimension.\label{Assumption3}
\end{assumption}
\vspace{-0.2cm}
A key challenge in solving the Problem \eqref{eqcontent1} is the computation of the hypergradient $\nabla \Phi(x)$, which is expressed as:
{\setlength{\abovedisplayskip}{3pt}
\setlength{\belowdisplayskip}{5pt}
\begin{equation} \label{eqcontent2}
    \begin{aligned}
    \nabla \Phi(x) :=\nabla_x f(x, y^*(x)) - \nabla_{x}\nabla_{y} l(x, y^*(x)) [\nabla_{y}\nabla_{y} l(x, y^*(x))]^{-1} \nabla_y f(x, y^*(x)),
\end{aligned}
\end{equation}}derived using the implicit function theorem \citep{ghadimi2018approximation}. First, the lower-level minimizer $y^*(x)$ is typically not directly accessible, requiring iterative algorithms to approximate it with an estimate $\hat{y}$. Second, the expression in \cref{eqcontent2} involves the inversion of the Hessian matrix $\nabla_{y}\nabla_{y} l(x, y^*(x))$, which is computationally expensive, with the complexity of $\ca{O}(q^3)$. Additionally, in decentralized settings, the difficulty is further exacerbated because each agent has access only to its local problem and lacks information about the global lower-level objective $l(\cdot)$.  
To overcome this challenge, we introduce the following linear system:
{\setlength{\abovedisplayskip}{3pt}
\setlength{\belowdisplayskip}{3pt}
\begin{equation}\label{eqcontent3}
\min_{v} r(x, \hat{y}, v) := \frac{1}{2} v^\top \nabla_y\nabla_y l(x, \hat{y}) v \!-\! v^\top \nabla_y f(x, \hat{y}),
\end{equation}}which seeks to approximate the inversion of the Hessian matrix by $ v^*(x)= [\nabla_{y}\nabla_{y} l(x, y^*(x))]^{-1} \nabla_y f(x, y^*(x))$ when $\hat{y}$ approaching $y^*(x)$, where $v\in\mathcal{V}$. 
An iterative algorithm is then employed to compute an approximate solution $\hat{v}$ to the Problem \eqref{eqcontent3}. Using the approximations $\hat{y}$ and $\hat{v}$, the $x$ is subsequently updated based on a hypergradient estimate as:
{\setlength{\abovedisplayskip}{7pt}
\setlength{\belowdisplayskip}{2.5pt}
\begin{equation} \label{eqcontent_grad_approx}
\bar{\nabla} f(x, \hat{y}, \hat{v}) := \nabla_x f(x, \hat{y}) - \nabla_x \nabla_y l(x, \hat{y}) \hat{v}.
\end{equation}}

Building on this framework, solving the decentralized bilevel optimization problem \eqref{eqcontent1} can be reformulated into three interconnected subproblems:
\begin{subequations} \label{eqcontent4}
\vspace{-0.5cm}
\begin{center}
\begin{tabular}{@{}c@{\quad}c@{}}
  \begin{minipage}{0.45\textwidth}
    \begin{equation}
    x^* = \arg\min_{x \in \mathbb{R}^p} \frac{1}{n} \sum_{i=1}^n f_i(x, y^*(x),\tag{5a}\label{4a}
    \end{equation}
  \end{minipage}
  &
  \begin{minipage}{0.45\textwidth}
    \begin{equation}
    y^*(x) = \arg\min_{y \in \mathbb{R}^q} \frac{1}{n} \sum_{i=1}^n l_i(x, y), \tag{5b}\label{4b}
    \end{equation}
  \end{minipage}
\end{tabular}
\end{center}
\vspace{-0.4cm}
\begin{equation}
v^*(x) = \arg\min_{v \in \mathbb{R}^q} \frac{1}{n} \sum_{i=1}^n r_i(x, y^*(x), v). \quad\quad\!\!\!\text{(5c)}\label{4c}\notag
\end{equation}
\vspace{-0.4cm}
\end{subequations}

In \eqref{eqcontent4}, the subproblems (5a) and (5c) depend on the optimal solution of (5b). This naturally motivates many existing works \citep{chen2023decentralized,chen2024decentralized} to adopt a nested structure, where (5b) is solved up to a certain accuracy before sequentially computing (5c) and (5a). However, these nested loops result in substantial computational overhead and implementation difficulties. To address this, a single-loop framework \citep{dong2023single,zhu2024sparkle} has been proposed to solve the three subproblems in parallel, thereby improving computational efficiency in bilevel optimization. Nevertheless, the single-loop structure introduces significant challenges in algorithm design and convergence analysis.

Furthermore, existing decentralized bilevel optimization solutions predominantly rely on problem-specific parameters for algorithm tuning, such as smoothness and strong convexity constants, the spectral gap of the graph adjacency matrix, or other topological characteristics. Obtaining such information is often impractical due to physical or privacy constraints, as well as computational limitations—particularly in large-scale machine learning applications involving massive datasets.

In light of these challenges, this paper aims to develop a single-loop algorithm for solving Problem \eqref{eqcontent4} while achieving comparable convergence guarantees without the need for hyperparameter tuning.

\subsection{Algorithm Development}

In this subsection, we propose an { Ada}ptive { S}ingle-loop { D}ecentralized {B}ilevel { O}ptimization Algorithm (AdaSDBO) based on iterative gradient updates, as presented in \cref{alg1}. Let \(x_{i,t} \in \mathbb{R}^p\), \(y_{i,t} \in \mathbb{R}^q\), and \(v_{i,t} \in \mathbb{R}^q\) denote the iterates at agent $i$ for variables $x$, $y$, and $v$, respectively. According to Problem \eqref{eqcontent4}, the local gradients at each agent $i$ can be expressed as:
\begin{equation*}
\bar{\nabla}f_i(x_{i,t}, y_{i,t}, v_{i,t}) = \nabla_{x} f_i(x_{i,t}, y_{i,t}) - \nabla_{x}\nabla_{y} l_i(x_{i,t}, y_{i,t}) v_{i,t},
\end{equation*}
\begin{equation*}
    \nabla_{v} r_i(x_{i,t}, y_{i,t}, v_{i,t}) = \nabla_{y}\nabla_{y} l_i(x_{i,t}, y_{i,t}) v_{i,t} -\nabla_{y} f_i(x_{i,t}, y_{i,t}).
\end{equation*}
For brevity, let $g^x_{i,t}:=\bar{\nabla}f_i(x_{i,t}, y_{i,t}, v_{i,t})$, $g^y_{i,t} := \nabla_yl_i(x_{i,t}, y_{i,t})$, and $g^v_{i,t}:=\nabla_v r_i(x_{i,t}, y_{i,t}, v_{i,t})$.

\textbf{Adaptive Stepsizes Design.}
To eliminate the dependency on problem-specific parameters, we design the adaptive stepsizes strategy based on accumulated gradient norms. Specifically, we introduce an accumulator $m^y_{i,t+1}$ as $[m^y_{i,t+1}]^2 = [m^y_{i,t}]^2 + \|g^y_{i,t}\|^2.$ Using this accumulator, the dual variable $y_{i,t}$ is updated by $y_{i,t+1} = y_{i,t} - \frac{\gamma_y}{m^y_{i,t+1}} g^y_{i,t}$,
where $\gamma_y >0$ is a control coefficient that is independent of the problem parameters. Similarly, the accumulators $m^x_{i,t}$ and $m^v_{i,t}$ are defined by the updates $[m^x_{i,t+1}]^2 = [m^x_{i,t}]^2 + \|g^x_{i,t}\|^2$ and $[m^v_{i,t+1}]^2 = [m^v_{i,t}]^2 + \|g^v_{i,t}\|^2$, respectively.
However, the adaptive update rule for dual variable \(y_{i,t}\) cannot be directly extended to primal variable \(x_{i,t}\) and auxiliary variable \(v_{i,t}\) due to their intricate interdependencies with other variables. The primary challenges are:  
\begin{itemize}[topsep=0pt, itemsep=2pt, parsep=0pt, leftmargin=0.25cm]
    \item \textbf{Auxiliary-Level Update}: The update of the auxiliary variable \(v_{i,t}\) requires the optimal solution \(y^*\) of the lower-level subproblem. Since single-loop algorithms perform only one-step updates of the variable \(y_{i,t}\), the suboptimality gap \(\|y_{i,t+1} - y^*\|^2\) must be carefully managed to prevent error accumulation. Thus, \(v_{i,t}\) must progress no faster than \(y_{i,t}\) to maintain approximation accuracy.  

    \item \textbf{Upper-Level Update}: The primal variable \(x_{i,t}\) depends on both the optimal variables \(y^*\) and \(v^*\), introducing additional complexity. Errors from both the lower-level and auxiliary-level updates must be considered, necessitating more conservative updates of the variable \(x_{i,t}\) to align with the slower dynamics of these levels.  
\end{itemize}

To overcome these challenges, we propose the following hierarchical stepsizes:
\begin{itemize}[topsep=0pt, itemsep=2pt, parsep=0pt, leftmargin=0.25cm]
 \item \textbf{Auxiliary variable update} (\(v_{i,t}\)):  
   We employ a stepsize inversely proportional to \({\max(m^v_{i,t+1}, m^y_{i,t+1})}\), resulting in the update rule 
$v_{i,t+1} = v_{i,t} - \frac{\gamma_v}{\max(m^v_{i,t+1}, m^y_{i,t+1})} g^v_{i,t}$.
    \item \textbf{Primal variable update} (\(x_{i,t}\)):  
    We use a more conservative stepsize, inversely proportional to \(m^x_{i,t+1} \max(m^v_{i,t+1}, m^y_{i,t+1})\), yields the update \(x_{i,t+1} = x_{i,t} - \frac{\gamma_x}{m^x_{i,t+1} \max(m^v_{i,t+1}, m^y_{i,t+1})} g^x_{i,t}\).
\end{itemize}

\vspace{-0.1cm}

Here, \(\gamma_v > 0\) and \(\gamma_x > 0\) are control coefficients.  

This hierarchical stepsize design plays a critical role in balancing the progress speeds at different levels, ensuring stability and convergence in single-loop optimization. Additionally, the adaptive stepsizes based on accumulated gradient norms dynamically adjust to the local optimization geometry, enhancing the efficiency and accuracy of the algorithm without requiring problem-specific parameters.

\begin{algorithm}[tb]
   \caption{Adaptive Single-Loop Decentralized Bilevel Optimization: Procedures at Each Agent $i\in[n]$}
   \label{alg1}
\begin{algorithmic}[1]
    \STATE {\bfseries Initialization:} $x_{i,0}$, $y_{i,0}$, $v_{i,0}$, $m^x_{i,0} = m^y_{i,0} = m^v_{i,0} > 0$, $\gamma_x=\gamma_y=\gamma_v > 0$.
    \FOR{$t = 0, 1, \cdots, T-1$}
        \STATE Compute the gradients: \\
        $g^y_{i,t} = \nabla_y l_i(x_{i,t}, y_{i,t})$,\\
        $g^v_{i,t} = \nabla_y\nabla_y l_i(x_{i,t}, y_{i,t}) v_{i,t}-\nabla_y f_i(x_{i,t}, y_{i,t})$,\\
        $g^x_{i,t} = \nabla_x f_i(x_{i,t}, y_{i,t}) - \nabla_x\nabla_y l_i(x_{i,t}, y_{i,t}) v_{i,t}$.
        \STATE Accumulate the gradient norms:\\
        $[m^x_{i,t+1}]^2 = [m^x_{i,t}]^2 + \|g^x_{i,t}\|^2$,
        $[m^y_{i,t+1}]^2 = [m^y_{i,t}]^2 + \|g^y_{i,t}\|^2$,
        $[m^v_{i,t+1}]^2 = [m^v_{i,t}]^2 + \|g^v_{i,t}\|^2$.
        \STATE Update the primal, dual, and auxiliary variables by: \\
        $y_{i,t+1} = y_{i,t} - \frac{\gamma_y}{m^y_{i,t+1}} g^y_{i,t}$,\\
        $v_{i,t+1} = v_{i,t} - \frac{\gamma_v }{\max(m^v_{i,t+1}, m^y_{i,t+1})} g^v_{i,t}$,\\
        $x_{i,t+1} = x_{i,t} - \frac{\gamma_x }{m^x_{i,t+1}\max(m^v_{i,t+1}, m^y_{i,t+1})} g^x_{i,t}$.
        \STATE Information exchange with neighbors:\\
        $\{x, y, v\}_{i,t+1} \gets \sum_{j} w_{i,j} \{x, y, v\}_{j,t+1}$,\\
        $\{m^x, m^y, m^v\!\}_{i,t+1} \gets \sum_{j} w_{i,j} \{m^x, m^y, m^v\!\}_{j,t+1}$.
        \STATE Projection of auxiliary variable on the set $\mathcal{V}$: $v_{i,t+1} \leftarrow \mathcal{P}_{\mathcal{V}}(v_{i,t+1})$.
   \ENDFOR
\end{algorithmic}
\end{algorithm}

For notational convenience, we define the stepsize variables $q_{i,t+1} := m^x_{i,t+1}\max \big\{m^v_{i,t+1}, m^y_{i,t+1}\big\}$, $u_{i,t+1} := m^y_{i,t+1}$, and $z_{i,t+1} := \max \big\{m^v_{i,t+1}, m^y_{i,t+1}\big\}$. Correspondingly, we define the following diagonal stepsize matrices as: 
{\setlength{\abovedisplayskip}{2pt}
\setlength{\belowdisplayskip}{1pt}
\begin{equation*}\label{eq8}
    Q_{t+1} = \text{diag} \big\{q_{i,t+1}\big\}_{i=1}^n, \quad 
U_{t+1} = \text{diag} \big\{u_{i,t+1}\big\}_{i=1}^n,\quad Z_{t+1}= \text{diag} \big\{z_{i,t+1}\big\}_{i=1}^n.\nonumber
\end{equation*}}

Additionally, define the concatenated variable matrices as: $\mathbf{x}_t \!\!:=\!\! [\dots, x_{i,t},\dots]^\top \!\!\!\in\!\! \mathbb{R}^{n \times p}$, $\mathbf{y}_t \!\!:=\!\! [\dots, y_{i,t},\dots]^\top \!\!\!\in\!\! \mathbb{R}^{n \!\times q}$, and $\mathbf{v}_t\! \!:=\!\! [\dots, v_{i,t},\dots]^\top \!\!\!\in\!\! \mathbb{R}^{n \!\times q}$. We also concatenate the gradient vectors as:
{\setlength{\abovedisplayskip}{3pt}
\setlength{\belowdisplayskip}{2pt}
\begin{equation*}
    \bar{\nabla} {F}(\mathbf{x}_t, \mathbf{y}_t, \mathbf{v}_t) :=
    \begin{bmatrix}
         \cdots, \bar{\nabla}f_i(x_{i,t}, y_{i,t}, v_{i,t}),\cdots
    \end{bmatrix}^\top,\quad
    \nabla_y {L}(\mathbf{x}_t, \mathbf{y}_t) :=
    \begin{bmatrix}
        \cdots,  \nabla_yl_i(x_{i,t}, y_{i,t}),\cdots
    \end{bmatrix}^\top,
\end{equation*}
\begin{equation*}
    \!\nabla_v {R}(\mathbf{x}_t, \mathbf{y}_t, \mathbf{v}_t) =
    \begin{bmatrix}
        \cdots,\nabla_v r_i(x_{i,t}, y_{i,t}, v_{i,t}),\cdots
    \end{bmatrix}^\top.
\end{equation*}}

Based on these definitions, the update rules for the primal, dual, and auxiliary variables are given by:
\begin{equation*}
\mathbf{x}_{t+1} = W \Big(\mathbf{x}_t - \gamma_x Q_{t+1}^{-1} \bar{\nabla} F(\mathbf{x}_t, \mathbf{y}_t,\mathbf{v}_t)\Big), \quad
\mathbf{y}_{t+1} = W \big(\mathbf{y}_t - \gamma_y U_{t+1}^{-1} \nabla_y L(\mathbf{x}_t, \mathbf{y}_t)\big),
\end{equation*}
\begin{equation*}
\mathbf{v}_{t+1} = \mathcal{P}_{\mathcal{V}}\Big(W \big(\mathbf{v}_t - \gamma_v Z_{t+1}^{-1} \nabla_v R(\mathbf{x}_t, \mathbf{y}_t,\mathbf{v}_t)\big)\Big),
\end{equation*}
where $\mathcal{P}_{\mathcal{V}}(\cdot)$ denotes the projection operation onto the set $\mathcal{V}$.

\textbf{Addressing Stepsize Inconsistencies.} In decentralized bilevel settings, agents compute their adaptive stepsizes independently based on local private objective functions, and the coupling of multiple optimization variables within adaptive stepsizes amplifies their network-wide inconsistencies. These discrepancies can hinder convergence if not properly controlled. To formalize this, let \(\bar{x}_t := \frac{1}{n} \sum_{i=1}^n x_{i,t}\) denote the average of all primal variables at the \(t\)-th iteration, and \(\bar{q}_t := \frac{1}{n} \sum_{i=1}^n q_{i,t}\) represent the average of their respective stepsizes. We then define the stepsize discrepancy vector as \(\tilde{\mathbf{q}}_t^{-1} := \big[ \cdots, q_{i,t}^{-1} - \bar{q}_t^{-1}, \cdots \big]^\top\). Using this definition, the update rule for \(\bar{x}_t\) can be expressed as:  
\begin{equation}\label{eqcontentnew6}
\bar{x}_{t+1} = \bar{x}_t - \gamma_x \bigg(\underbrace{\frac{\bar{q}_{t+1}^{-1} \mathbf{1}^\top}{n}}_{\text{(a)}} 
+ \underbrace{\frac{\big(\tilde{\mathbf{q}}_{t+1}^{-1}\big)^\top}{n}}_{\text{(b)}} \bigg) \bar{\nabla} F(\mathbf{x}_t, \mathbf{y}_t, \mathbf{v}_t).
\end{equation}  
In \cref{eqcontentnew6}, term (a) resembles centralized gradient descent, while term (b) introduces an undesired perturbation caused by stepsize inconsistencies among agents. This perturbation can disrupt network consensus due to varying update rates and may lead to uncontrollable error growth, as term (b) represents accumulated gradient-norm discrepancies over iterations. Such stepsize inconsistencies similarly affect the updates of the dual and auxiliary variables.  

To address this challenge, we incorporate a stepsize tracking mechanism in \cref{alg1}. At each iteration, the gradient-norm accumulators $m^x_{i,t}$, $m^y_{i,t}$, and $m^v_{i,t}$ are tracked by:  
\begin{align*}
[m^b_{i,t+1}]^2 = \sum_{j=1}^n w_{ij}[m^b_{j,t+1}]^2 = \sum_{j=1}^n w_{ij} \left([m^b_{j,t}]^2 + \|g^b_{j,t}\|^2 \right)\!,
\end{align*}
where $b \in \{x, y, v\}$ and $i \in [n]$. Let $\mathbf{k}^b_t := \left[\cdots, [m^b_{i,t}]^2, \cdots\right]^{\top}$ and $\mathbf{h}^b_t := \left[\cdots, \|g^b_{i,t}\|^2, \cdots\right]^{\top}$ denote the concatenated gradient accumulators and corresponding norms, respectively. The above equation can then be compactly expressed as
$\mathbf{k}^b_{t+1} = W \big(\mathbf{k}^b_t + \mathbf{h}^b_t\big).$
This tracking mechanism enforces consensus on the gradient-norm accumulators before computing the adaptive stepsizes $(q_{i,t}, u_{i,t}, z_{i,t})$. By synchronizing these accumulators, it effectively bounds stepsize discrepancies among agents, preventing error accumulation while preserving the adaptive nature of the updates.

It is worth noting that the additional communication overhead of our method is modest—only scalar values (the stepsize accumulators) are exchanged—especially compared with transmitting the primal, dual, and auxiliary variables commonly communicated in decentralized bilevel methods. Some approaches \citep{gao2023convergence, chen2023decentralized, zhu2024sparkle} also employ a gradient tracking mechanism, which involves exchanging extra tracker states with the same dimensionality as the optimization variables. In contrast, our method adds only lightweight scalars for transmission, while offering a robust, problem-parameter-free solution for decentralized bilevel optimization.

\vspace{-0.1cm}
\section{Theoretical Analysis}
\vspace{-0.1cm}
\subsection{Technical Challenges}
\vspace{-0.1cm}

The analysis of problem-parameter-free decentralized bilevel optimization with a single-loop structure involves several fundamental challenges:
\begin{itemize}[topsep=0pt, itemsep=2pt, parsep=0pt, leftmargin=0.25cm]
    \item \textbf{Interdependent Variable Updates:} The coupling of bilevel objectives creates intricate interdependencies among the variables \((x, y, v)\), making the convergence analysis significantly more challenging compared to single-level optimization.
    \item \textbf{Coupled Stepsize Dynamics:} The adaptive stepsizes exhibit highly intertwined dynamics, forming a multi-stage system in which the progress of each variable directly affects the others, requiring meticulous coordination to manage these interactions effectively.
    \item \textbf{Accumulated Stepsize Inconsistencies:} Inconsistencies in adaptive stepsizes across agents disrupt network consensus, while their cumulative effect over iterations further exacerbates the challenge.
    \item \textbf{Interplay Between Optimization and Consensus Errors:} The interaction between hierarchical optimization errors and network-induced discrepancies necessitates rigorous theoretical bounds to guarantee convergence while maintaining the problem-parameter-free property.
\end{itemize}

\subsection{Assumptions and Definitions}\label{sec3.2}

In this subsection, we present the standard assumptions and definitions used in our analysis.

\begin{assumption}
For any $i\in[n]$, the objective functions $f_i(x, y)$ and $l_i(x, y)$ are twice continuously differentiable, and $l_i(x, y)$ is $\mu$-strongly 
convex with respect to $y$.\label{Assumption1}
\end{assumption}

\begin{assumption}
For any $i\in[n]$, the function $f_i(x, y)$ is $L_{f,0}$-Lipschitz continuous; the gradients $\nabla f_i(x, y)$ and 
$\nabla l_i(x, y)$ are $L_{f,1}$- and $L_{l,1}$-Lipschitz continuous, respectively; the second-order gradients 
$\nabla_x \nabla_y l_i(x, y)$ and $\nabla_y\nabla_y l_i(x, y)$ are $L_{l,2}$-Lipschitz continuous.\label{Assumption2}
\end{assumption}
\vspace{-0.19cm}

The above assumptions are commonly adopted in prior works, including \citep{zhu2024sparkle, chen2024decentralized, chen2023decentralized,dong2023single, ji2022will}.

\begin{remark}
    \cref{Assumption2} indicates that there exist constants $C_{f_x}$, $C_{f_y}$,  $C_{l_{xy}}$, and $C_{l_{yy}}$ such that $\|\nabla_x f_i(x, y)\| \leq C_{f_x}$, $\|\nabla_y f_i(x, y)\| \leq C_{f_y}$, $\|\nabla_x \nabla_y l_i(x, y)\| \leq C_{l_{xy}}$, and $\|\nabla_y \nabla_y l_i(x, y)\| \leq C_{l_{yy}}$.\label{remark1}
\end{remark}

Define \( \bar{u}_t \!\!:=\!\! \frac{1}{n} \sum_{i=1}^n u_{i,t} \), \( \bar{z}_t \!\!:=\!\! \frac{1}{n} \sum_{i=1}^n z_{i,t} \), and recall that \(\bar{q}_t \!\!:=\!\! \frac{1}{n} \sum_{i=1}^n q_{i,t}\). We then introduce the following metrics to quantify the level of stepsize inconsistency among agents: 
\begin{align}
\zeta_q^2 := \sup_{i \in [n], t > 0} 
\frac{\big(q_{i,t}^{-1} - \bar{q}_t^{-1}\big)^2}{\big(\bar{q}_t^{-1}\big)^2},\quad
\zeta_u^2 := \sup_{i \in [n], t > 0} 
\frac{\big(u_{i,t}^{-1} - \bar{u}_t^{-1}\big)^2}{\big(\bar{u}_t^{-1}\big)^2},\quad
\zeta_z^2 := \sup_{i \in [n], t > 0} 
\frac{\big(z_{i,t}^{-1} - \bar{z}_t^{-1}\big)^2}{\big(\bar{z}_t^{-1}\big)^2},\nonumber
\end{align}
\vspace{-0.3cm}
\begin{align*}
\sigma_q^2 := \inf_{i \in [n], t > 0} 
\frac{\big(q_{i,t}^{-1} - \bar{q}_t^{-1}\big)^2}{\big(\bar{q}_t^{-1}\big)^2},\quad
\sigma_u^2 := \inf_{i \in [n], t > 0} 
\frac{\big(u_{i,t}^{-1} - \bar{u}_t^{-1}\big)^2}{\big(\bar{u}_t^{-1}\big)^2},\quad
\sigma_z^2 := \inf_{i \in [n], t > 0} 
\frac{\big(z_{i,t}^{-1} - \bar{z}_t^{-1}\big)^2}{\big(\bar{z}_t^{-1}\big)^2}.
\end{align*}
These metrics are guaranteed to remain bounded under \cref{Assumption2} and \cref{remark1}. 
\begin{definition}
An output \(\bar{x}\) of an algorithm is the \(\epsilon\)-accurate stationary point of the objective function \(\Phi(x)\) if it satisfies \(\|\nabla \Phi(\bar{x})\|^2 \leq \epsilon\), where \(\epsilon \in (0, 1)\).
\end{definition}

\subsection{Theoretical Results}\label{sec3.3}

In this subsection, we present the main theoretical results for the proposed \cref{alg1}, with the proof sketch provided in \cref{proof}. As outlined in \cref{secc2}, the descent behavior of the objective function \(\Phi(\cdot)\) is governed by three key factors: approximation errors, consensus errors, and stepsize inconsistencies. These components are bounded in the following lemmas.

To facilitate the analysis, we first define \(\bar{x}_t := \frac{1}{n} \sum_{i=1}^n x_{i,t}\), \(\bar{y}_t := \frac{1}{n} \sum_{i=1}^n y_{i,t}\), and \(\bar{v}_t := \frac{1}{n} \sum_{i=1}^n v_{i,t}\). Then, let \(\bar{m}^x_t := \frac{1}{n} \sum_{i=1}^n m^x_{i,t}\) and \(\bar{m}^y_t := \frac{1}{n} \sum_{i=1}^n m^y_{i,t}\) represent the average of the gradient accumulators. Additionally, denote $f(\bar{x}_t,\bar{y}_t,\bar{v}_t)\!:=\!\frac{1}{n}\sum_{i=1}^n f_i(\bar{x}_t,\bar{y}_t,\bar{v}_t)$,
$l(\bar{x}_t, \bar{y}_t)\!:=\! \frac{1}{n}\sum_{i=1}^nl_i(\bar{x}_t, \bar{y}_t)$, 
$r(\bar{x}_t, \bar{y}_t, \bar{v}_t)\!:=\! \frac{1}{n}\sum_{i=1}^nr_i(\bar{x}_t, \bar{y}_t, \bar{v}_t)$ as the corresponding aggregated functions when the variables $(\mathbf{x},\mathbf{y},\mathbf{v})$ achieve consensus to $(\bar{x},\bar{y},\bar{v})$.

From the descent lemma in \cref{secc2}, we obtain that the approximation error $\|\nabla \Phi(\bar{x}_t) -\bar{\nabla} f(\bar{x}_t, \bar{y}_t, \bar{v}_t)\|^2$ is attributed in terms of $\mathcal{O}(\|\nabla_y l(\bar{x}_t, \bar{y}_t)\|^2)$ and $\mathcal{O}(\|\nabla_v r(\bar{x}_t, \bar{y}_t, \bar{v}_t)\|^2)$. Hence, we establish the following lemma to provide bounds for these terms associated with approximation errors during the optimization process.

\begin{lemma}[\textbf{Approximation Errors}] Under Assumptions \ref{Assumption1} and \ref{Assumption2}, for any integer $k_0 \in [0, t)$, we have $
\sum_{k=k_0}^t \frac{\|\nabla_y l(\bar{x}_k, \bar{y}_k)\|^2}{\bar{m}^y_{k+1}} \leq a_5 \log(t + 1) + b_5$ and
$\sum_{k=k_0}^t \frac{\|\nabla_v r(\bar{x}_k, \bar{y}_k, \bar{v}_k)\|^2}{\bar{z}_{k+1}} \leq a_6 \log(t + 1) + b_6$,
where the constants $a_5$, $b_5$, $a_6$, and $b_6$ are defined in \cref{eqnewlemma13} of \cref{secc80}.
\end{lemma}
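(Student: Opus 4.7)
The plan is to prove both bounds via a descent-lemma-and-telescoping argument that exploits the $\mu$-strong convexity of $l$ in $y$ and of $r$ in $v$ (the latter inherited from $l$'s strong convexity via the definition of $r$), with the adaptive accumulators $\bar{m}^y_{k+1}$ and $\bar{z}_{k+1}$ playing the role of effective stepsizes inside the descent inequality. The logarithmic factor will not come from the classical AdaGrad identity $\sum_k\|g_k\|^2/[m_{k+1}]^2\lesssim\log$; rather, it will arise from the cumulative drift of the outer variables between iterations, whose per-step bound decays at a harmonic rate because the stepsize-products $\bar{q}_{k+1}$ and $\bar{z}_{k+1}$ are bounded below by their strictly positive initializations and grow via the tracked gradient-norm accumulators.

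For the first inequality I would apply $L_{l,1}$-smoothness of $l(\bar{x}_k,\cdot)$ between $\bar{y}_k$ and $\bar{y}_{k+1}$, substitute the averaged update $\bar{y}_{k+1}-\bar{y}_k=-\gamma_y\tfrac{1}{n}\sum_i u_{i,k+1}^{-1}g^y_{i,k}$, and decompose the averaged step into a mean-stepsize piece $\bar{u}_{k+1}^{-1}\tfrac{1}{n}\sum_i g^y_{i,k}$ plus a dispersion piece controlled by $\zeta_u$. Using Lipschitzness of $\nabla_y l_i$ to replace the averaged local gradient by $\nabla_y l(\bar{x}_k,\bar{y}_k)$ up to consensus errors in $(\mathbf{x}_k,\mathbf{y}_k)$, and absorbing the quadratic smoothness residual via Young's inequality (valid because $\bar{m}^y_{k+1}\ge\bar{m}^y_0>0$), one arrives at an inequality of the form
\[
\frac{c\,\gamma_y}{\bar{m}^y_{k+1}}\,\|\nabla_y l(\bar{x}_k,\bar{y}_k)\|^2 \;\le\; l(\bar{x}_k,\bar{y}_k)-l(\bar{x}_k,\bar{y}_{k+1}) + \Xi_k,
\]
where $\Xi_k$ aggregates consensus and dispersion residuals that are summable to constants by earlier lemmas in \cref{sec3.3}. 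To telescope despite the $\bar{x}_k$-dependence, I split $l(\bar{x}_k,\bar{y}_k)-l(\bar{x}_k,\bar{y}_{k+1})=[l(\bar{x}_k,\bar{y}_k)-l(\bar{x}_{k+1},\bar{y}_{k+1})]+[l(\bar{x}_{k+1},\bar{y}_{k+1})-l(\bar{x}_k,\bar{y}_{k+1})]$; the first bracket telescopes to an $O(1)$ quantity (as $l$ is lower-bounded by strong convexity together with the bounds of \cref{remark1}), while the second bracket is controlled using the smoothness of $l$ in $x$ together with $\|\bar{x}_{k+1}-\bar{x}_k\|\le \gamma_x C/\bar{q}_{k+1}$. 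The log-factor enters through $\sum_{k=k_0}^t 1/\bar{q}_{k+1}=O(\log(t+1))$, which follows from applying the classical identity $\sum_k(A_{k+1}-A_k)/A_{k+1}\le\log(A_{t+1}/A_0)$ to the product structure $q_{i,k+1}=m^x_{i,k+1}\max(m^v_{i,k+1},m^y_{i,k+1})$ of two monotonically accumulated positive factors.

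The second inequality follows the same blueprint applied to $r(\bar{x}_k,\bar{y}_k,\cdot)$, which is $\mu$-strongly convex and $C_{l_{yy}}$-smooth in $v$ with bounded domain $\mathcal{V}$. The telescoping now incurs drift in both $\bar{x}_k$ and $\bar{y}_k$, producing an extra term proportional to $\|\bar{y}_{k+1}-\bar{y}_k\|$. The main obstacle, and the place demanding the most care, is to prevent this $\bar{y}$-drift from blowing up to $\sqrt{t}$: I would dualize it through a weighted Young's inequality that trades a small fraction of the \emph{already-established} first inequality's left-hand side (which is $O(\log(t+1))$) against a $\sum_{k=k_0}^t 1/\bar{z}_{k+1}$ term, itself $O(\log(t+1))$ by the same product-accumulator argument used for $\bar{q}_{k+1}$. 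Chaining the two estimates closes the recursion and yields $a_6\log(t+1)+b_6$, with the constants $a_5,b_5,a_6,b_6$ emerging from $L_{l,1}, C_{l_{xy}}, C_{l_{yy}}, \zeta_u, \zeta_z, \gamma_y, \gamma_v$, and the initial accumulator values $m^b_{i,0}$ specified in \cref{eqnewlemma13} of \cref{secc80}.
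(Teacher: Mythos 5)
There is a genuine gap, and it sits at the exact point where your argument is supposed to produce the logarithmic factor. You claim $\sum_{k=k_0}^t 1/\bar{q}_{k+1}=O(\log(t+1))$ by invoking the identity $\sum_k (A_{k+1}-A_k)/A_{k+1}\le\log(A_{t+1}/A_0)$. That identity (\cref{Lemma1}) bounds a sum whose numerators are the \emph{increments} of the accumulator; it says nothing about $\sum_k 1/A_{k+1}$, and the two coincide only when $A_k$ grows linearly. Here the opposite happens: the whole point of the analysis (\cref{proposition2}, \cref{Lemma10}, \cref{Lemma13}) is that $\bar{m}^x_t$ and $\bar{z}_t$ grow only like $O(\log t)$, so $\bar{q}_{k+1}=O(\log^2 k)$ and $\sum_{k\le t}1/\bar{q}_{k+1}=\Omega(t/\log^2 t)$ — essentially linear in $t$, not logarithmic. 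The same objection applies to $\sum_k 1/\bar{z}_{k+1}$ in your second paragraph. Since your telescoping of the function values $l(\bar{x}_k,\bar{y}_k)$ produces a drift term that is \emph{first order} in the $x$-step, $|l(\bar{x}_{k+1},\bar{y}_{k+1})-l(\bar{x}_k,\bar{y}_{k+1})|\lesssim\|\bar{x}_{k+1}-\bar{x}_k\|\lesssim 1/\bar{q}_{k+1}$, the residual you must sum is exactly this divergent harmonic-type sum, and the claimed $a_5\log(t+1)+b_5$ bound does not follow. (A secondary issue: the telescoped sum of $l(\bar{x}_k,\bar{y}_k)-l(\bar{x}_{k+1},\bar{y}_{k+1})$ needs $\inf_{x,y}l(x,y)>-\infty$ and a bound on $\|\nabla_x l\|$, neither of which is among \cref{Assumption1}–\ref{Assumption2}.)

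The paper's route is structured precisely to avoid this. The stated lemma is \cref{lemmanew12}, whose proof is a short reduction: relate $\|\nabla_y l(\bar{x}_k,\bar{y}_k)\|^2$ to the stacked gradient $\|\nabla_y L(\mathbf{x}_k,\mathbf{y}_k)\|^2$ plus consensus errors via Lipschitzness, then invoke \cref{Lemma11} and \cref{Lemma5}. The heavy lifting in \cref{Lemma7}–\cref{Lemma11} telescopes the squared distance $\|\mathbf{y}_k-\mathbf{1}y^*(\bar{x}_k)\|^2$ rather than function values, using the strongly-convex coercivity inequality (Lemma 3.11 of Bubeck) to extract a negative $\|\nabla_y L\|^2$ term, and absorbs the shift $y^*(\bar{x}_k)\to y^*(\bar{x}_{k+1})$ through a Young's inequality weighted by the contraction factor $\bar{\lambda}_{k+1}\sim\bar{u}_{k+1}^{-1}$. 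This makes the $x$-drift contribution \emph{quadratic} in the step, $\bar{u}_{k+1}\|\bar{x}_{k+1}-\bar{x}_k\|^2\lesssim\|\bar{\nabla}F(\mathbf{x}_k,\mathbf{y}_k,\mathbf{v}_k)\|^2/[\bar{m}^x_{k+1}]^2$, which is then summed to $O(\log(t+1))$ by exactly the AdaGrad identity you declined to use (\cref{eq95}, combined with the polynomial a priori bound $\|\mathbf{v}_k\|\lesssim\bar{z}_{k+1}+\sqrt{k}$ and a self-bounding step to resolve $\bar{z}_{t+1}\le a_0\log\bar{z}_{t+1}+\cdots$). To repair your proof you would need to replace the function-value descent by this distance-to-optimum contraction (or otherwise convert the first-order drift into a second-order one), at which point the logarithm necessarily re-enters through \cref{Lemma1}.
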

\begin{lemma}[\textbf{Accumulated Gradients}]\label{proposition2}  
Under Assumptions \ref{Assumption1} and \ref{Assumption2}, the gradient accumulators satisfy the bounds
$\bar{m}^x_t \leq \mathcal{O}\left(\log(t)\right)$ and $\bar{z}_t \leq \mathcal{O}\left(\log(t)\right).$
\end{lemma}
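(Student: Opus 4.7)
The plan is to bound each averaged accumulator $\bar{m}^b_t$ (for $b\in\{x,y,v\}$) by exploiting the doubly stochastic structure of $W$, combining Jensen's inequality with an AdaGrad-style ``cancel-and-recover'' trick, and then solving a self-referential quadratic inequality in $\bar{m}^b_{t+1}$. I would handle the three variables in the hierarchical order $y\to v\to x$ that mirrors the stepsize design of Algorithm~\ref{alg1}.

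First, summing the tracking rule $[m^b_{i,t+1}]^2=\sum_j w_{ij}\bigl([m^b_{j,t}]^2+\|g^b_{j,t}\|^2\bigr)$ over $i$ and invoking $\mathbf{1}^\top W=\mathbf{1}^\top$ yields the exact identity
\begin{equation*}
\frac{1}{n}\sum_{i=1}^n [m^b_{i,t+1}]^2 \;=\; [m^b_0]^2 \;+\; \frac{1}{n}\sum_{k=0}^{t}\sum_{i=1}^n \|g^b_{i,k}\|^2 .
\end{equation*}
Jensen's inequality then gives $(\bar{m}^b_{t+1})^2\le \frac{1}{n}\sum_i [m^b_{i,t+1}]^2$, reducing the task to controlling the accumulated local squared gradients. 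Using the Lipschitz bounds from Assumption~\ref{Assumption2} and the boundedness of $v_{i,k}$ ensured by the projection onto $\mathcal{V}$, I would split each $\|g^b_{i,k}\|^2$ into a ``centralized'' contribution evaluated at $(\bar{x}_k,\bar{y}_k,\bar{v}_k)$ plus consensus discrepancies $\sum_i\|x_{i,k}-\bar{x}_k\|^2+\|y_{i,k}-\bar{y}_k\|^2+\|v_{i,k}-\bar{v}_k\|^2$; the latter will be summable by the appendix's consensus lemmas since the effective stepsizes $\gamma_b/m^b$ shrink.

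Second, I would invoke the approximation-error lemma stated immediately above and use the elementary identity $\|\nabla_y l(\bar{x}_k,\bar{y}_k)\|^2 = \bar{m}^y_{k+1}\cdot\|\nabla_y l(\bar{x}_k,\bar{y}_k)\|^2/\bar{m}^y_{k+1}$, then bound the monotone factor by $\bar{m}^y_{t+1}$ and sum to obtain $\sum_{k=0}^{t}\|\nabla_y l(\bar{x}_k,\bar{y}_k)\|^2 \le \bar{m}^y_{t+1}\bigl(a_5\log(t+1)+b_5\bigr)$. Substituting this back into the bound from the first step produces the self-referential inequality
\begin{equation*}
(\bar{m}^y_{t+1})^2 \;\le\; C_1\,\bar{m}^y_{t+1}\log(t+1) + C_2\log(t+1) + C_3 ,
\end{equation*}
whose positive root is $\bar{m}^y_{t+1}=\mathcal{O}(\log t)$. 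The identical argument, now using the companion bound on $\sum_k\|\nabla_v r\|^2/\bar{z}_{k+1}$, gives $\bar{m}^v_{t+1}=\mathcal{O}(\log t)$, and since $z_{i,t}\le m^v_{i,t}+m^y_{i,t}$ one also gets $\bar{z}_{t+1}=\mathcal{O}(\log t)$.

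Finally, for $\bar{m}^x$ I would decompose $\|g^x_{i,k}\|^2$ via $\bar{\nabla} f(\bar{x}_k,\bar{y}_k,\bar{v}_k)$, use the standard bilevel estimate $\|\bar{\nabla} f-\nabla\Phi\|^2 \lesssim \|\nabla_y l\|^2+\|\nabla_v r\|^2$, and combine this with a descent-lemma bound on $\sum_k\|\nabla\Phi(\bar{x}_k)\|^2/q_{k+1}$. Applying the same cancel-and-recover trick with the factor $\bar{m}^x_{t+1}\bar{z}_{t+1}$, noting that $\bar{z}_{t+1}$ is already known to be logarithmic from the previous step, then yields a self-referential inequality for $\bar{m}^x_{t+1}$ that solves to $\mathcal{O}(\log t)$. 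The main obstacle will be this last step: the stepsize denominator $q_{i,t+1}=m^x_{i,t+1}z_{i,t+1}$ couples all three accumulators, so the self-referential inequality must be set up only after the $y$- and $v$-accumulators have been controlled, and the consensus and stepsize-discrepancy contributions have to be shown to remain lower-order logarithmic without ever invoking the unknown constants $\mu$, $L_{l,1}$, $\rho_W$, which is what preserves the problem-parameter-free property that the paper advertises.
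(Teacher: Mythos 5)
Your high-level shape---reduce each accumulator to a weighted sum of gradient norms and then close a self-referential inequality---is in the spirit of the paper, and the exact-identity-plus-Jensen reduction and the quadratic ``cancel-and-recover'' step are fine in isolation. The argument as written is circular, however: the two black boxes you invoke, namely the approximation-error bounds $\sum_k\|\nabla_y l(\bar x_k,\bar y_k)\|^2/\bar m^y_{k+1}\le a_5\log(t+1)+b_5$ (with its $v$-analogue) and the summability of the consensus errors, are in the paper's logical order \emph{consequences} of the very lemma you are proving. The constants $a_5,b_5,a_6,b_6$ are built from $a_2,b_2,a_3,b_3,c_2$ of \cref{Lemma11}, whose proof explicitly substitutes the bound $\bar z_{t+1}\le a_1\log(t+1)+b_1$ of \cref{Lemma10}; the consensus bound of \cref{Lemma5} likewise feeds off \cref{Lemma11}. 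What your proposal is missing is the independent machinery that actually produces logarithmic bounds on those weighted sums: (i) the two-stage threshold argument of \cref{Lemma6}, without which the one-step contraction for the $y$- and $v$-subproblems is unavailable because the effective stepsize $\gamma_y/\bar m^y_{t+1}$ need not be small in early iterations; (ii) the post-threshold strong-convexity contractions of \cref{Lemma7,Lemma8}, which reduce everything to $\sum_k\|\bar\nabla F(\mathbf x_k,\mathbf y_k,\mathbf v_k)\|^2/([\bar m^x_{k+1}]^2\bar z_{k+1})$; and (iii) the bound $\|\mathbf v_t\|\le\bar a\,\bar z_{t+1}+\bar b\sqrt t$, derived from strong convexity and the accumulated $\|\nabla_v R\|^2$ rather than from the projection (the set $\mathcal V$ is never assumed bounded), which is what turns the AdaGrad logarithm $\log\bigl(\sum_k\|\bar\nabla F\|^2\bigr)$ into $\mathcal O(\log t+\log\bar z_{t+1})$ and makes the inequality $\bar z_{t+1}\le a_0\log\bar z_{t+1}+\dots$ solvable. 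Note also that $\|\nabla_y l_i\|$ is not uniformly bounded under \cref{Assumption2} (only its Lipschitz constant is, and a $\mu$-strongly convex $l_i$ cannot be globally Lipschitz in $y$), so you cannot fall back on a crude $\mathcal O(t)$ bound for $\sum_k\|g^y_{i,k}\|^2$ either.

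A second, smaller issue: your closing step for $\bar m^x$ does not deliver $\mathcal O(\log t)$. The descent lemma controls $\sum_k\|\nabla_x f(\bar x_k,\bar y_k,\bar v_k)\|^2/\bar q_{k+1}$ with $\bar q_{k+1}=\bar m^x_{k+1}\bar z_{k+1}$, so unwinding the weights costs a factor $\bar z_{t+1}=\mathcal O(\log t)$, and your quadratic inequality yields only $\bar m^x_{t+1}=\mathcal O(\bar z_{t+1}\log t)=\mathcal O(\log^2 t)$---which is exactly what the paper's \cref{Lemma13} proves, $\bar m^x_t\le C_{m^x}+(\,\cdots+a_7\log(t+1)+b_7)\bar z_{t+1}$, and what accounts for the $\log^4$ factor in \cref{theorem 1}. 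The route through the raw sum $\sum_k\|g^x_{i,k}\|^2$ is worse still: even with $\|g^x_{i,k}\|$ uniformly bounded it gives only $\bar m^x_t=\mathcal O(\sqrt t)$.
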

\cref{proposition2} shows that the accumulated gradient norms for all variables \((x, y, v)\) grow at most logarithmically with respect to the iteration index \(t\).

\begin{lemma}[\textbf{Consensus Errors}]\label{Lemma5} Suppose that Assumptions \ref{Assumption3}, \ref{Assumption1}, and \ref{Assumption2} hold. Let \(\Delta_t := \|\mathbf{x}_t - \mathbf{1} \bar{x}_t\|^2 + \|\mathbf{y}_t - \mathbf{1} \bar{y}_t\|^2 + \|\mathbf{v}_t - \mathbf{1} \bar{v}_t\|^2\) represent the consensus errors for all variables at the \(t\)-th iteration. Then, the time-averaged consensus error satisfies $\frac{1}{T}\sum_{t=0}^{T-1} \Delta_t
\leq \mathcal{O}\left({\log(T)}/{T}\right)$.
\end{lemma}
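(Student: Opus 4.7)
The plan is to derive a single one-step geometric contraction on $\Delta_t$, telescope it over $t$, and then bound the residual adaptive-gradient ``noise'' terms by $\mathcal{O}(\log T)$ via AdaGrad-style accumulation. For the $\mathbf{y}$ block I would rewrite the update in deviation form as
\[
\mathbf{y}_{t+1} - \mathbf{1}\bar{y}_{t+1} = (W - \mathbf{J})\bigl(\mathbf{y}_t - \gamma_y U_{t+1}^{-1}\nabla_y L(\mathbf{x}_t,\mathbf{y}_t)\bigr),
\]
(using $W\mathbf{1} = \mathbf{1}$ and $\mathbf{J}\mathbf{1} = \mathbf{1}$), apply $\|W - \mathbf{J}\|_2^2 = \rho_W < 1$ together with Young's inequality, and obtain
\[
\|\mathbf{y}_{t+1} - \mathbf{1}\bar{y}_{t+1}\|^2 \leq \tfrac{1+\rho_W}{2}\|\mathbf{y}_t - \mathbf{1}\bar{y}_t\|^2 + C_\rho \gamma_y^2 \|U_{t+1}^{-1}\nabla_y L(\mathbf{x}_t,\mathbf{y}_t)\|^2,
\]
for a constant $C_\rho$ depending only on $\rho_W$. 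The $\mathbf{x}$ block is handled identically with $Q_{t+1}$ and $\bar{\nabla}F$ in place of $U_{t+1}$ and $\nabla_y L$. For the $\mathbf{v}$ block the row-wise projection $\mathcal{P}_{\mathcal{V}}$ is nonexpansive, so centering at $\mathcal{P}_{\mathcal{V}}(\bar{\tilde v}_{t+1})$ (with $\tilde{\mathbf{v}}_{t+1}$ the pre-projection iterate) shows its post-projection consensus error is bounded by the pre-projection one, and the same contraction then goes through with $Z_{t+1}$ and $\nabla_v R$. Summing the three pieces yields $\Delta_{t+1} \leq \tfrac{1+\rho_W}{2}\Delta_t + C_\rho\mathcal{N}_t$, where $\mathcal{N}_t$ collects the three adaptive-gradient noise terms.

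Telescoping this recursion from $t = 0$ gives $\sum_{t=0}^{T-1}\Delta_t \leq \tfrac{2}{1-\rho_W}\Delta_0 + \tfrac{2C_\rho}{1-\rho_W}\sum_{t=0}^{T-1}\mathcal{N}_t$, so the lemma reduces to showing $\sum_{t=0}^{T-1}\mathcal{N}_t = \mathcal{O}(\log T)$. Writing $S^b_t := \sum_i [m^b_{i,t}]^2$ for $b \in \{x,y,v\}$ and using that $W$ is doubly stochastic (so the tracking step preserves the network-wide sum), one checks that $S^b_{t+1} = S^b_t + \sum_i \|g^b_{i,t}\|^2$. The AdaGrad identity $x/(x+y) \leq \log(1 + x/y)$ then yields $\sum_i \|g^b_{i,t}\|^2/S^b_{t+1} \leq \log(S^b_{t+1}/S^b_t)$, which telescopes to $\log(S^b_T/S^b_0) = \mathcal{O}(\log T)$ by the bounded-gradient consequence of \cref{remark1}. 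For the $\mathbf{x}$ term, the extra $\max(m^v_{i,t+1},m^y_{i,t+1})^{-1}$ appearing in $q_{i,t+1}^{-1}$ is uniformly lower-bounded by $m^b_{i,0} > 0$, so the same telescoping absorbs it. Dividing by $T$ gives $(1/T)\sum_{t=0}^{T-1}\Delta_t = \mathcal{O}(\log T/T)$.

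The main technical obstacle is that the adaptive stepsize matrices $U_{t+1}, Z_{t+1}, Q_{t+1}$ are not scalar multiples of the identity, so they do not commute with $W$ and the clean $(W-\mathbf{J})$ contraction above is slightly lossy. I plan to resolve this using the discrepancy constants $\zeta_u,\zeta_z,\zeta_q$ defined in \cref{sec3.2}, which the stepsize-tracking scheme keeps bounded under \cref{Assumption2} and \cref{remark1}. Concretely, I would split each local inverse stepsize into its network mean (which is a scalar, commutes with $W$, and feeds the contraction cleanly) plus a deviation whose squared norm is controlled by the corresponding $\zeta^2$; the extra noise produced by the deviation is of the same AdaGrad form and sums to $\mathcal{O}(\log T)$ via \cref{proposition2}. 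This split is precisely what converts the per-agent AdaGrad bookkeeping into a network-level bound without reintroducing any problem-specific constants into the final $\mathcal{O}(\log T/T)$ estimate.
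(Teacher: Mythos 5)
Your contraction half coincides with the paper's: the $(W-\mathbf{J})$ step with Young's inequality, the split of each inverse stepsize into its network mean plus a $\zeta^2$-controlled deviation, and the non-expansiveness of $\mathcal{P}_{\mathcal{V}}$ for the $\mathbf{v}$ block are exactly \cref{Lemmanewx} and Eqs.~\eqref{eq44}--\eqref{eq49}. Where you genuinely diverge is in bounding the residual sums $\sum_t \bar q_{t+1}^{-2}\|\bar\nabla F\|^2$, $\sum_t\bar u_{t+1}^{-2}\|\nabla_y L\|^2$, $\sum_t\bar z_{t+1}^{-2}\|\nabla_v R\|^2$ by $\mathcal{O}(\log T)$: the paper routes these through \cref{Lemma11}, which rests on the entire two-stage machinery of \cref{Lemma6}--\cref{Lemma10} (in particular the logarithmic bound on $\bar z_{t+1}$, needed because the paper controls $\|\mathbf{v}_t\|$ via Eq.~\eqref{eq93} rather than by a bounded projection set), whereas you telescope the AdaGrad identity directly on the network-wide accumulator sums $S^b_t$, exploiting that double stochasticity of $W$ preserves $\sum_i[m^b_{i,t}]^2$. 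Your route is shorter and, combined with $(\sum_i m^b_{i,t+1})^2\ge\sum_i[m^b_{i,t+1}]^2$ and the $\zeta$-bounds to pass from per-agent to averaged stepsizes, it is structurally sound; it buys a self-contained consensus lemma that does not depend on the descent analysis, at the price of needing an independent argument that $\log(S^b_T/S^b_0)=\mathcal{O}(\log T)$.

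That last step is where your justification breaks. \cref{remark1} bounds $\nabla_x f_i$, $\nabla_y f_i$, $\nabla_x\nabla_y l_i$, and $\nabla_y\nabla_y l_i$, but \emph{not} $\nabla_y l_i$ --- and it cannot, since $l_i$ is $\mu$-strongly convex in $y$ and its gradient is unbounded. So ``$\log(S^y_T/S^y_0)=\mathcal{O}(\log T)$ by the bounded-gradient consequence of \cref{remark1}'' is false as stated, and this is precisely the quantity the paper spends \cref{Lemma7} and \cref{Lemma9} controlling. The gap is repairable: because $[m^y_{i,t+1}]^2\ge\|g^y_{i,t}\|^2$, each normalized update displaces $y_{i,t}$ (and likewise $x_{i,t}$) by at most $\gamma_y$ (resp.\ $\gamma_x$), so by $L_{l,1}$-Lipschitzness of $\nabla l_i$ the gradient $\|g^y_{i,t}\|$ grows at most linearly in $t$, $S^y_T=\mathrm{poly}(T)$, and its logarithm is still $\mathcal{O}(\log T)$. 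You should also make explicit that the boundedness of $\|g^x_{i,t}\|$ and $\|g^v_{i,t}\|$ rests on $\|v_{i,t}\|$ being bounded by the projection onto $\mathcal{V}$ (the paper is only implicit about $\mathcal{V}$ being a ball of radius $C_{f_y}/\mu$); if $\mathcal{V}$ is not assumed bounded, you would be forced back onto the paper's Eq.~\eqref{eq93}/\cref{Lemma10} bound for $\|\mathbf{v}_t\|$ and would lose the independence of your argument from the two-stage analysis.
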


Next, we provide the bounds for the terms associated with the stepsize inconsistencies in our analysis.
\begin{lemma}[\textbf{Stepsize Inconsistencies}] Suppose Assumptions \ref{Assumption3}, \ref{Assumption1}, and \ref{Assumption2} hold. Define discrepancy vectors as \(\tilde{\mathbf{q}}_t^{-1} \!\!:=\!\! \big[ \!\cdots, q_{i,t}^{-1}\!\! -\!\! \bar{q}_t^{-1}, \cdots \!\big]^\top\), \(\tilde{\mathbf{u}}_t^{-1} \!\!:=\!\! \big[ \!\cdots, u_{i,t}^{-1}\!\! -\!\! \bar{u}_t^{-1}, \cdots \!\big]^\top\), and \(\tilde{\mathbf{z}}_t^{-1} \!\!:=\!\! \big[ \!\cdots, z_{i,t}^{-1}\!\! -\!\! \bar{z}_t^{-1}, \cdots \!\big]^\top\). Then, under Algorithm~\ref{alg1}, we have that
$\frac{1}{T}\sum_{t=0}^{T-1} \left\| {\big(\tilde{\mathbf{q}}_{t+1}^{-1}\big)^{\top}} \bar{\nabla} F(\mathbf{x}_t, \mathbf{y}_t, \mathbf{v}_t)/{n\bar{q}_{t+1}^{-1}} \right\|^2$, $\frac{1}{T}\sum_{t=0}^{T-1} \left\| {\big(\tilde{\mathbf{u}}_{t+1}^{-1}\big)^\top} {\nabla}_y L(\mathbf{x}_t, \mathbf{y}_t)/{n\bar{u}_{t+1}^{-1}} \right\|^2$, and $\frac{1}{T}\sum_{t=0}^{T-1} \left\| {\big(\tilde{\mathbf{z}}_{t+1}^{-1}\big)^\top} \nabla_v R(\mathbf{x}_t, \mathbf{y}_t, \mathbf{v}_t) /n\bar{z}_{t+1}^{-1}\right\|^2$ are each upper-bounded by $\mathcal{O}\left({\log(T)}/{T}\right)$. 
\end{lemma}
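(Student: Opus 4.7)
I would prove all three bounds by the same strategy, so I sketch it for the primal term; the dual and auxiliary cases follow analogously. Applying Cauchy--Schwarz in the agent index gives
\begin{equation*}
\Big\| \frac{(\tilde{\mathbf{q}}_{t+1}^{-1})^\top \bar{\nabla} F(\mathbf{x}_t, \mathbf{y}_t, \mathbf{v}_t)}{n\bar{q}_{t+1}^{-1}} \Big\|^2
\leq \frac{\|\tilde{\mathbf{q}}_{t+1}^{-1}\|^2}{n^2 (\bar{q}_{t+1}^{-1})^2} \cdot \|\bar{\nabla} F(\mathbf{x}_t, \mathbf{y}_t, \mathbf{v}_t)\|^2,
\end{equation*}
reducing the task to bounding the time-summed product of the stepsize-discrepancy factor and the squared gradient norm.

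Next, I would exploit the stepsize tracking recursion $\mathbf{k}^b_{t+1} = W(\mathbf{k}^b_t + \mathbf{h}^b_t)$ to control the accumulator consensus error. Double-stochasticity of $W$ yields $\mathbf{k}^b_{t+1} - \mathbf{1}\bar{k}^b_{t+1} = (W-\mathbf{J})[(\mathbf{k}^b_t - \mathbf{1}\bar{k}^b_t) + (\mathbf{h}^b_t - \mathbf{1}\bar{h}^b_t)]$; applying Young's inequality with $\rho_W < 1$ from \cref{Assumption3} and the uniform boundedness of $\|\mathbf{h}^b_t\|$ (which follows from \cref{remark1}) gives $\|\mathbf{k}^b_t - \mathbf{1}\bar{k}^b_t\| = \mathcal{O}(1)$ for $b \in \{x,y,v\}$. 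The factorization $m^b_{i,t+1} - \bar{m}^b_{t+1} = (k^b_{i,t+1} - (\bar{m}^b_{t+1})^2)/(m^b_{i,t+1} + \bar{m}^b_{t+1})$, combined with the strict positivity of the initial accumulators, converts this into $|m^b_{i,t+1} - \bar{m}^b_{t+1}| = \mathcal{O}(1)$. Applying the product identity $A_i B_i - \bar{A}\bar{B} = A_i(B_i-\bar{B}) + (A_i-\bar{A})\bar{B}$ with $A_i = m^x_{i,t+1}$ and $B_i = z_{i,t+1}$ to $q_{i,t+1} = A_i B_i$ lifts these accumulator-level bounds to $|q_{i,t+1} - \bar{q}_{t+1}| = \mathcal{O}(\bar{m}^x_{t+1} + \bar{z}_{t+1})$. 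Squaring and dividing by $\bar{q}_{t+1}^2 \asymp (\bar{m}^x_{t+1} \bar{z}_{t+1})^2$ produces the per-iteration bound $\|\tilde{\mathbf{q}}_{t+1}^{-1}\|^2/(n (\bar{q}_{t+1}^{-1})^2) = \mathcal{O}(1/(\bar{m}^x_{t+1})^2 + 1/\bar{z}_{t+1}^2)$.

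For the final step, I would combine the per-iteration bound with an AdaGrad-style telescoping argument. Using $\|\bar{\nabla} F(\mathbf{x}_t, \mathbf{y}_t, \mathbf{v}_t)\|^2 = n \bar{h}^x_t$ and the identity $\bar{k}^x_{t+1} = \bar{k}^x_t + \bar{h}^x_t$ (from double-stochasticity applied to the tracking recursion), the standard log-integral inequality yields $\sum_{t=0}^{T-1} \|\bar{\nabla} F\|^2 / \bar{k}^x_{t+1} \leq n \log(\bar{k}^x_T/\bar{k}^x_0)$. Since $(\bar{m}^x_{t+1})^2 \asymp \bar{k}^x_{t+1}$ and \cref{proposition2} gives $\bar{m}^x_T \leq \mathcal{O}(\log T)$, hence $\bar{k}^x_T \leq \mathcal{O}(\log^2 T)$, this sum is $\mathcal{O}(\log\log T)$. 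The analogous telescoping argument (applied to $\bar{k}^v$ and $\bar{k}^y$, using $\bar{z}_t \leq \mathcal{O}(\log t)$) controls the $1/\bar{z}_{t+1}^2$ contribution. Combining these with the uniform boundedness of $\|\bar{\nabla} F\|^2 / n$ on the residual cross-term yields $\sum_{t=0}^{T-1} \|(\tilde{\mathbf{q}}_{t+1}^{-1})^\top \bar{\nabla} F/(n\bar{q}_{t+1}^{-1})\|^2 = \mathcal{O}(\log T)$, delivering the advertised $\mathcal{O}(\log(T)/T)$ time-average bound.

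The main obstacle I expect is the non-smooth max in $z_{i,t+1} = \max(m^v_{i,t+1}, m^y_{i,t+1})$, which couples the $y$- and $v$-level accumulators and complicates translating accumulator-consensus errors into a clean bound on the $z$-discrepancy; this step requires splitting by which argument achieves the max (e.g., via $\max(a,b) = (a+b+|a-b|)/2$) and carefully handling the resulting cross-terms without inflating constants. A secondary challenge is keeping every intermediate constant independent of problem parameters---in particular, $\rho_W$ should enter only through a uniform $\mathcal{O}(1)$ prefactor from the accumulator-consensus bound rather than through the stepsizes themselves---so that the proof preserves the tuning-free character advertised for AdaSDBO.
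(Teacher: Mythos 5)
Your opening Cauchy--Schwarz step and your use of the tracking recursion $\mathbf{k}^b_{t+1}=W(\mathbf{k}^b_t+\mathbf{h}^b_t)$ with the $(W-\mathbf{J})$ contraction to get an $\mathcal{O}(1)$ accumulator-consensus error are the same ingredients the paper uses. The gap is in your final assembly. Once you split the relative discrepancy as $\|\tilde{\mathbf{q}}_{t+1}^{-1}\|^2/(\bar{q}_{t+1}^{-1})^2=\mathcal{O}\big(1/(\bar{m}^x_{t+1})^2+1/\bar{z}_{t+1}^2\big)$ and multiply by $\|\bar{\nabla}F(\mathbf{x}_t,\mathbf{y}_t,\mathbf{v}_t)\|^2$, the cross term $\sum_{t}\|\bar{\nabla}F(\mathbf{x}_t,\mathbf{y}_t,\mathbf{v}_t)\|^2/\bar{z}_{t+1}^2$ has no AdaGrad telescoping structure: $\bar{z}_{t+1}$ accumulates $\|\nabla_yL\|^2$ and $\|\nabla_vR\|^2$, not $\|\bar{\nabla}F\|^2$, so the ``analogous telescoping applied to $\bar{k}^y$ and $\bar{k}^v$'' does not touch this sum. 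Your stated fallback --- uniform boundedness of $\|\bar{\nabla}F\|^2/n$ --- gives only $\sum_{t=0}^{T-1}\|\bar{\nabla}F\|^2/\bar{z}_{t+1}^2\leq nG^2\sum_{t=0}^{T-1}\bar{z}_{t+1}^{-2}$, and since $\bar{z}_{t+1}$ grows at most like $\log t$ (\cref{proposition2}), that sum is of order $T/\log^2(T)$, not $\log(T)$; after dividing by $T$ you get $\mathcal{O}(1/\log^2 T)$ rather than the claimed $\mathcal{O}(\log(T)/T)$. The paper avoids this entirely by never separating the two reciprocal factors: in its Eqs.~\eqref{eq113}--\eqref{eq114} it bounds $\bar{q}_{t+1}^{-2}\leq\bar{z}_0^{-2}[\bar{m}^x_{t+1}]^{-2}$, keeping the full $x$-accumulator paired with $\|\bar{\nabla}F\|^2$ so that \cref{Lemma11} applies directly, and it bounds the \emph{absolute} discrepancy $\|\mathbf{q}_{t+1}-\mathbf{1}\bar{q}_{t+1}\|^2$ by a constant via the tracking recursion.

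A repair within your framework would be to bound the cross term through the accumulated gradient energy rather than a per-iteration bound: $\sum_{t=0}^{T-1}\|\bar{\nabla}F\|^2=n(\bar{k}^x_T-\bar{k}^x_0)$, and $\bar{k}^x_T\asymp(\bar{m}^x_T)^2=\mathcal{O}(\mathrm{polylog}(T))$ by \cref{proposition2} together with the accumulator consensus, so $\sum_t\|\bar{\nabla}F\|^2/\bar{z}_{t+1}^2\leq\bar{z}_0^{-2}\sum_t\|\bar{\nabla}F\|^2$ is polylogarithmic --- but this yields a higher power of $\log T$ than the lemma states and would propagate a worse polylog factor into \cref{Lemma13} and \cref{theorem 1}. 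Two smaller points: \cref{remark1} does not bound $\|\nabla_yl_i\|$ (only the $f$-gradients and the second-order derivatives of $l_i$), so the uniform bound on $\mathbf{h}^y_t$ must come from the accumulator bound of \cref{Lemma9} as the paper does in its Eq.~\eqref{eq116}; and $\bar{k}^b_{t+1}\neq(\bar{m}^b_{t+1})^2$ (Jensen gives only one inequality), so your identification $(\bar{m}^x_{t+1})^2\asymp\bar{k}^x_{t+1}$ needs the variance term $\bar{k}^x_{t+1}-(\bar{m}^x_{t+1})^2$ to be controlled by the same consensus argument before the log-integral inequality can be invoked.
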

\vspace{-0.2cm}

Combining the above results, we establish the convergence of \cref{alg1} as follows.
\begin{theorem}\label{theorem 1}
Under Assumptions \ref{Assumption3}, \ref{Assumption1}, and \ref{Assumption2}, for any positive constants $\gamma_x$, $\gamma_y$, $\gamma_v$, $m^x_{i,0}$,  $m^y_{i,0}$, and $m^v_{i,0}$, the iterates generated by \cref{alg1} satisfy:
\vspace{-0.19cm}
\begin{equation*}
\begin{aligned}
\frac{1}{T} \sum_{t=0}^{T-1} \|\nabla \Phi(\bar{x}_t)\|^2
&{\leq} \frac{2}{T}\!\Bigg[\!\!\left(\!{4}\!\left(\!\frac{\Phi(\bar{x}_0)\!-\!\Phi^*}{\gamma_x}\!\right)\!\!+\!a_7\log(T)\!\!+\!b_7\!\right)^{\!2}\!\!\!{(a_1\!\log(T) \!+\! b_1)^2}  \\
& \quad+ \!C_{m^x} \!\!\left(\!{4}\!\left(\!\frac{\Phi(\bar{x}_0)\!-\!\Phi^*}{\gamma_x}\!\right)\!\!+\!a_7\!\log(T)\!\!+\!b_7\!\right)\!(a_1\!\log(T)\!\!+\! b_1)\Bigg] \!=\! \mathcal{O}\left(\frac{\log^4(T)}{T}\right),
\end{aligned}
\end{equation*}
where $\Phi^* := \inf_{x \in \mathbb{R}^p} \Phi(x) > -\infty$, and the constants $C_{m^x}$, $a_1$, $b_1$, $a_7$, and $b_7$ are defined in Eqs.~\eqref{eq52}, \eqref{eq84}, and \eqref{eq128} in the Appendix.
\end{theorem}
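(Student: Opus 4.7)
The plan is to combine the descent inequality for $\Phi(\bar{x}_t)$ alluded to in Appendix~\ref{secc2} with the four preceding lemmas of \cref{sec3.3} through a single telescoping argument, and then convert the resulting weighted sum $\sum_t \|\nabla\Phi(\bar{x}_t)\|^2/\bar{q}_{t+1}$ into an unweighted sum via the polylogarithmic control on $\bar{q}_T$ provided by \cref{proposition2}. Concretely, the one-step descent inequality for the single-loop bilevel setting should take the schematic form $\Phi(\bar{x}_{t+1})-\Phi(\bar{x}_t) \leq -c\,\gamma_x\,\|\nabla\Phi(\bar{x}_t)\|^2/\bar{q}_{t+1} + R_t/\bar{q}_{t+1}$, where the residual $R_t$ aggregates (i) the hypergradient approximation error, expressible through $\|\nabla_y l(\bar{x}_t,\bar{y}_t)\|^2$ and $\|\nabla_v r(\bar{x}_t,\bar{y}_t,\bar{v}_t)\|^2$; (ii) the consensus error $\Delta_t$; and (iii) the stepsize-inconsistency perturbations driven by $\tilde{\mathbf{q}}_{t+1}^{-1}$, $\tilde{\mathbf{u}}_{t+1}^{-1}$, and $\tilde{\mathbf{z}}_{t+1}^{-1}$ appearing in \eqref{eqcontentnew6}.

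Summing this inequality from $t=0$ to $T-1$ makes the left side telescope to at most $\Phi(\bar{x}_0)-\Phi^*$. Each family of residual terms on the right is then absorbed via the corresponding lemma: the approximation-error lemma contributes $\mathcal{O}(\log T)$ through its weighted sums of $\|\nabla_y l\|^2/\bar{m}^y_{k+1}$ and $\|\nabla_v r\|^2/\bar{z}_{k+1}$; \cref{Lemma5} gives $\mathcal{O}(\log T)$ for the aggregated consensus error; and the stepsize-inconsistency lemma similarly yields $\mathcal{O}(\log T)$ for each discrepancy term. Folding these bounds into the constants $a_7,b_7$ reduces the telescoped inequality to $\sum_{t=0}^{T-1}\|\nabla\Phi(\bar{x}_t)\|^2/\bar{q}_{t+1} \leq 4(\Phi(\bar{x}_0)-\Phi^*)/\gamma_x + a_7\log(T) + b_7$.

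To pass from this weighted sum to $\sum_t\|\nabla\Phi(\bar{x}_t)\|^2$, I would exploit the monotonicity of $\{\bar{q}_t\}$ together with \cref{proposition2}, which implies $\bar{q}_T = \bar{m}^x_T\,\max(\bar{m}^v_T,\bar{m}^y_T) \leq (a_1\log T + b_1)^2$. A Cauchy--Schwarz-type argument then produces a self-bounding quadratic inequality in $\sum_t\|\nabla\Phi(\bar{x}_t)\|^2$, whose solution has exactly the product structure of the theorem: a dominant quadratic term $A^2 B^2$ with $A = 4(\Phi(\bar{x}_0)-\Phi^*)/\gamma_x + a_7\log T + b_7$ and $B = a_1\log T + b_1$, plus a lower-order cross term $C_{m^x}\,A\,B$. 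Dividing by $T$ yields the stated $\mathcal{O}(\log^4(T)/T)$ rate.

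The main obstacle is the bootstrap coupling between the adaptive stepsizes and the descent: the descent inequality is useful only when $\bar{q}_{t+1}$ stays polylogarithmic, yet the control on $\bar{q}_T$ in \cref{proposition2} itself relies on descent-type estimates for each of $x$, $y$, and $v$. Resolving this requires the \emph{hierarchical stepsize design} of \cref{alg1}---in particular, the choice $\gamma_v/\max(m^v_{i,t+1},m^y_{i,t+1})$ for the auxiliary update and $\gamma_x/(m^x_{i,t+1}\max(m^v_{i,t+1},m^y_{i,t+1}))$ for the primal update---so that the lower-level subproblem can be analyzed independently and its progress propagated upward. Simultaneously, one must verify that the stepsize-tracking step keeps $\zeta_q,\zeta_u,\zeta_z$ bounded, ensuring the perturbation term (b) in \eqref{eqcontentnew6} does not inject polynomially-growing errors into the telescope. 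Interleaving these ingredients while preserving the problem-parameter-free property is the delicate core of the argument.
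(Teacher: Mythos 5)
Your overall architecture---telescoping the descent inequality, absorbing the approximation, consensus, and stepsize-inconsistency residuals through the stated lemmas to reach $\frac{1}{T}\sum_{t}\bar{q}_{t+1}^{-1}\|\nabla\Phi(\bar{x}_t)\|^2 \leq \frac{2}{T}\bigl(4(\Phi(\bar{x}_0)-\Phi^*)/\gamma_x + a_7\log T + b_7\bigr)$, and then removing the weight via the monotonicity $\bar{q}_{t+1}\leq\bar{q}_T$---is exactly the paper's route. However, your conversion step does not produce the theorem's bound as written. You bound $\bar{q}_T=\bar{m}^x_T\max(\bar{m}^v_T,\bar{m}^y_T)\leq(a_1\log T+b_1)^2$ by applying the accumulated-gradients lemma to both factors; multiplied against the weighted-sum bound this gives order $AB^2/T=\mathcal{O}(\log^3 T/T)$, which is inconsistent with the $A^2B^2$ term you correctly identify in the statement. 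In the paper only $\bar{z}_T\leq a_1\log T+b_1=:B$ comes from the accumulator analysis (\cref{Lemma10}); the bound on $\bar{m}^x_T$ is the separate and weaker statement $\bar{m}^x_T\leq C_{m^x}+A\,\bar{z}_T$ of \cref{Lemma13}, obtained by telescoping the descent lemma a \emph{second} time against $\|\nabla_x f(\bar{x}_k,\bar{y}_k,\bar{v}_k)\|^2/\bar{m}^x_{k+1}$, whose partial sums dominate the increments of $\bar{m}^x$. That is where the self-referencing actually lives---in the accumulator $\bar{m}^x_T$, not in a Cauchy--Schwarz quadratic for $\sum_t\|\nabla\Phi(\bar{x}_t)\|^2$---and it is what yields $\bar{q}_T\leq(C_{m^x}+AB)B$ and hence the $A^2B^2+C_{m^x}AB$ structure.

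The second gap is that your schematic descent inequality $-c\gamma_x\|\nabla\Phi(\bar{x}_t)\|^2/\bar{q}_{t+1}+R_t/\bar{q}_{t+1}$ hides a sign problem in the early iterations: the actual one-step bound (\cref{Lemma4}) carries the term $-\bigl(\gamma_x/2-2n\gamma_x^2L_\Phi\bar{q}_{t+1}^{-1}(1+\zeta_q^2)\bigr)\|\nabla_x f(\bar{x}_t,\bar{y}_t,\bar{v}_t)\|^2/\bar{q}_{t+1}$, which is an \emph{ascent} term whenever the accumulator is still small. The paper resolves this with the two-stage framework of \cref{Lemma6}: it sets the threshold $C_{m^x}\geq 8n\gamma_xL_\Phi(1+\zeta_q^2)/\bar{z}_0$, splits at the crossing time $k_1$, and bounds the pre-$k_1$ contribution by a constant of order $C_{m^x}^2$ because the accumulated gradient norms before $k_1$ are capped by the threshold by definition. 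Without this case split the telescoping does not close, and the appearance of $C_{m^x}$ in both $b_7$ and the final cross term is unexplained. You correctly flag the bootstrap coupling as the delicate core, but the threshold device is the specific mechanism that breaks the circularity, and it is absent from your plan.
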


\begin{remark}
\Cref{theorem 1} implies that for any positive coefficients \((\gamma_x, \gamma_y, \gamma_v)\) and positive initial stepsizes \((m^x_{i,0}, m^y_{i,0}, m^v_{i,0})\), \cref{alg1} guarantees convergence with a rate of \(\mathcal{O}\left(\frac{\log^4(T)}{T}\right)\). This convergence rate matches the state-of-the-art results, up to a polylogarithmic factor of \(\log^4(T)\), which is regarded negligible relative to \(T\) in the optimization literature \citep{yang2022nest,li2024problem}.  
\end{remark}

\begin{corollary}\label{collary1}
From \cref{theorem 1}, to achieve an \(\epsilon\)-accurate stationary point, \cref{alg1} requires \(T = \mathcal{O}\left(\frac{1}{\epsilon} \log^4\left(\frac{1}{\epsilon}\right)\right)\) iterations, resulting in a gradient complexity of \(\mathrm{Gc}(\epsilon) = \mathcal{O}\left(\frac{1}{\epsilon} \log^4\left(\frac{1}{\epsilon}\right)\right)\).
\end{corollary}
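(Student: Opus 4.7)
The plan is to invert the convergence bound from \cref{theorem 1} and then observe that each iteration requires only a constant number of first- and second-order oracle calls per agent. Since \cref{theorem 1} provides
\begin{equation*}
\frac{1}{T}\sum_{t=0}^{T-1}\|\nabla\Phi(\bar{x}_t)\|^2 \leq \mathcal{O}\!\left(\frac{\log^4(T)}{T}\right),
\end{equation*}
there must exist at least one index $t^\star\in\{0,\dots,T-1\}$ (equivalently, a uniformly random draw from the iterate history) for which $\|\nabla\Phi(\bar{x}_{t^\star})\|^2 \leq \mathcal{O}(\log^4(T)/T)$, matching the definition of an $\epsilon$-accurate stationary point whenever the right-hand side is no larger than $\epsilon$.

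The first step is therefore to determine the smallest $T$ guaranteeing $\log^4(T)/T \leq \epsilon$. I would set $T = \frac{C}{\epsilon}\log^4(\tfrac{1}{\epsilon})$ for a sufficiently large absolute constant $C$ (absorbing the hidden constants from \cref{theorem 1}) and verify the inequality by substitution. Specifically, $\log(T) = \log(C) + \log(1/\epsilon) + 4\log\log(1/\epsilon) = \mathcal{O}(\log(1/\epsilon))$, so $\log^4(T) = \mathcal{O}(\log^4(1/\epsilon))$ and
\begin{equation*}
\frac{\log^4(T)}{T} \;=\; \mathcal{O}\!\left(\frac{\log^4(1/\epsilon)}{(C/\epsilon)\log^4(1/\epsilon)}\right) \;=\; \mathcal{O}\!\left(\frac{\epsilon}{C}\right) \;\leq\; \epsilon,
\end{equation*}
which confirms $T = \mathcal{O}\!\left(\tfrac{1}{\epsilon}\log^4(\tfrac{1}{\epsilon})\right)$ iterations suffice.

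For the gradient complexity claim, I would then inspect \cref{alg1}: at each iteration $t$, every agent evaluates $\nabla_y l_i$, $\nabla_y f_i$, $\nabla_x f_i$, $\nabla_y\nabla_y l_i$, and $\nabla_x\nabla_y l_i$ once, i.e., a constant number $\mathcal{O}(1)$ of gradient/Jacobian/Hessian evaluations per agent per iteration. Multiplying this per-iteration cost by the iteration bound yields $\mathrm{Gc}(\epsilon) = \mathcal{O}\!\left(\tfrac{1}{\epsilon}\log^4(\tfrac{1}{\epsilon})\right)$.

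No part of this argument is substantively hard; the only subtlety is ensuring the $\log^4(T)$ factor is correctly handled when inverting the rate, which is a standard fixed-point calculation using $\log(x/\log^k x) \asymp \log(x)$ for small $\epsilon$. Everything else is bookkeeping, and I would present it as a short derivation immediately following \cref{theorem 1}.
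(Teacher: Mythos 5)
Your proposal is correct and follows essentially the same route as the paper: substitute $T \asymp \epsilon^{-1}\log^4(1/\epsilon)$ into the bound of \cref{theorem 1} and verify $\log^4(T)/T \leq \epsilon$ via $\log(T)=\mathcal{O}(\log(1/\epsilon))$, then note the per-iteration oracle cost is constant. The paper merely instantiates your generic constant $C$ explicitly (as $ML$ with $L=12^4$) to make the two logarithmic inequalities concrete; your additional remarks about selecting a best iterate $t^\star$ and counting the per-agent evaluations are harmless elaborations the paper leaves implicit.
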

\vspace{-0.19cm}

In sharp contrast, the theoretical convergence of existing decentralized bilevel methods heavily relies on the correct selection of hyperparameters based on problem-specific constants, such as \(\mu\), \(L\), and \(\rho_W\). This reliance restricts their applicability, as these parameters are unknown or difficult to determine. In comparison, our approach significantly simplifies the implementation of decentralized bilevel optimization by operating in a completely problem-parameter-free manner.

\vspace{-0.2cm}
\section{Numerical Experiments}
\vspace{-0.2cm}
In this section, we evaluate the performance of \cref{alg1} on the hyperparameter optimization problem, as illustrated in \cref{hypersec}.
Our algorithm is compared with several decentralized bilevel optimization methods, including SLDBO \citep{dong2023single}, MA-DSBO \citep{chen2023decentralized}, MDBO \citep{gao2023convergence}, and DBO \citep{chen2024decentralized}. Experiments are conducted on both synthetic and real-world datasets, with detailed configurations and additional results provided in \cref{experimentsec}.

\begin{figure*}[t]
\centering
\begin{minipage}[b]{0.2465\textwidth}
    \includegraphics[width=\textwidth]{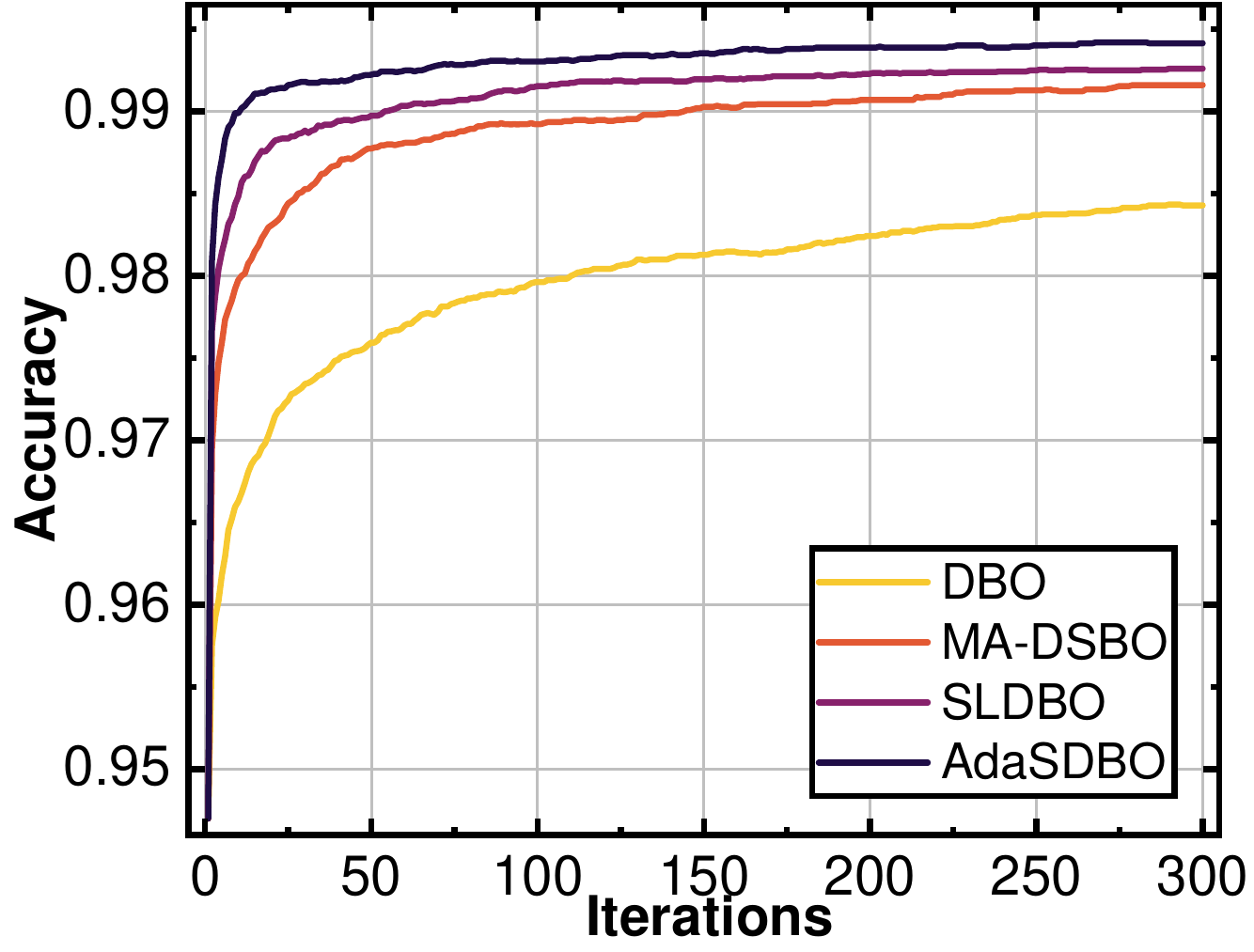}
    \par\vspace{-0.1cm}
    \makebox[\textwidth]{{\hspace{0.46cm}(a)\! Synthetic\! ($p\!=\!50$)}}
\end{minipage}
\begin{minipage}[b]{0.2465\textwidth}
    \includegraphics[width=\textwidth]{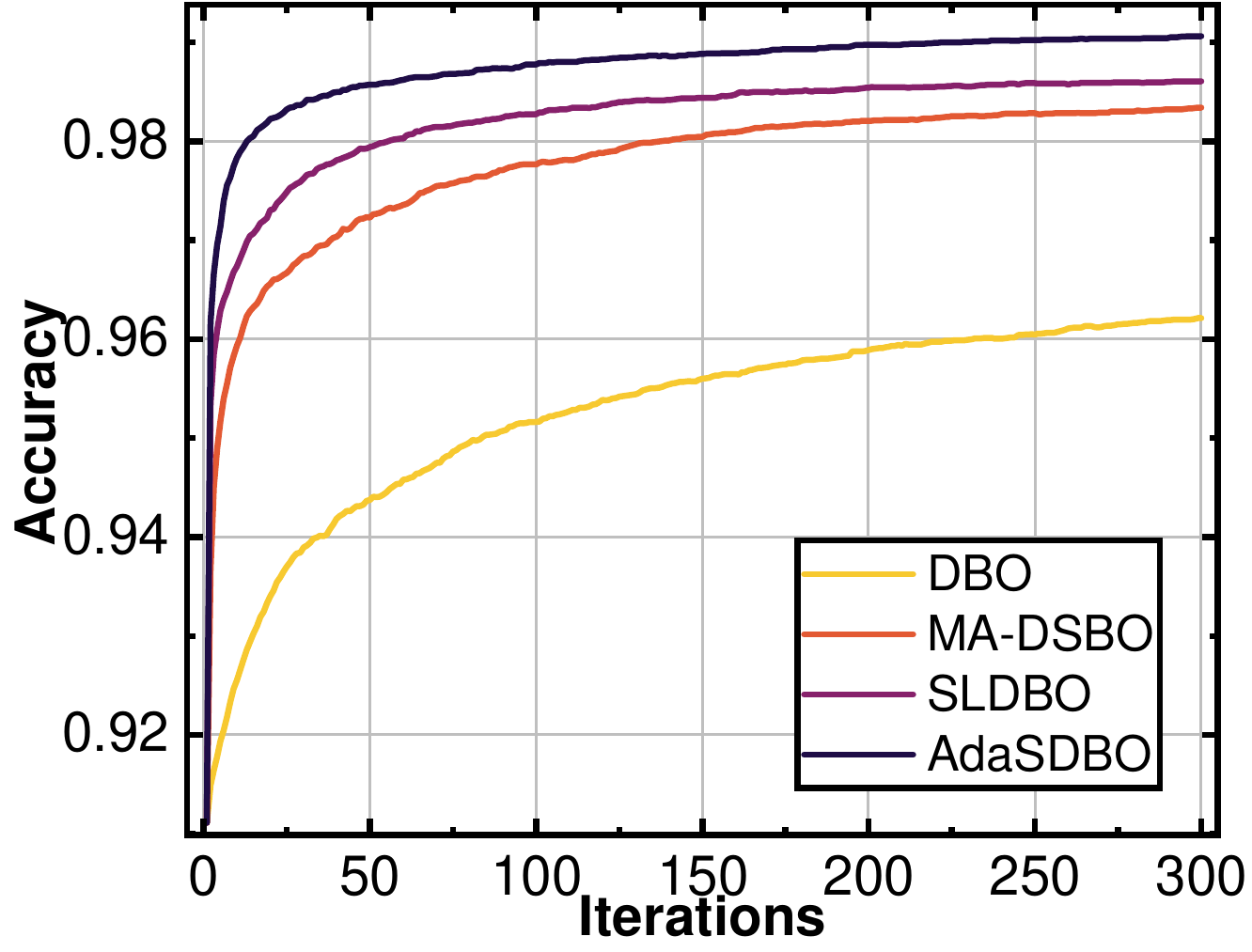}
    \par\vspace{-0.1cm}
    \makebox[\textwidth]{{\hspace{0.439cm}(b)\! Synthetic\! ($p\!=\!200$)}}
\end{minipage}
\begin{minipage}[b]{0.244\textwidth}
    \includegraphics[width=\textwidth]{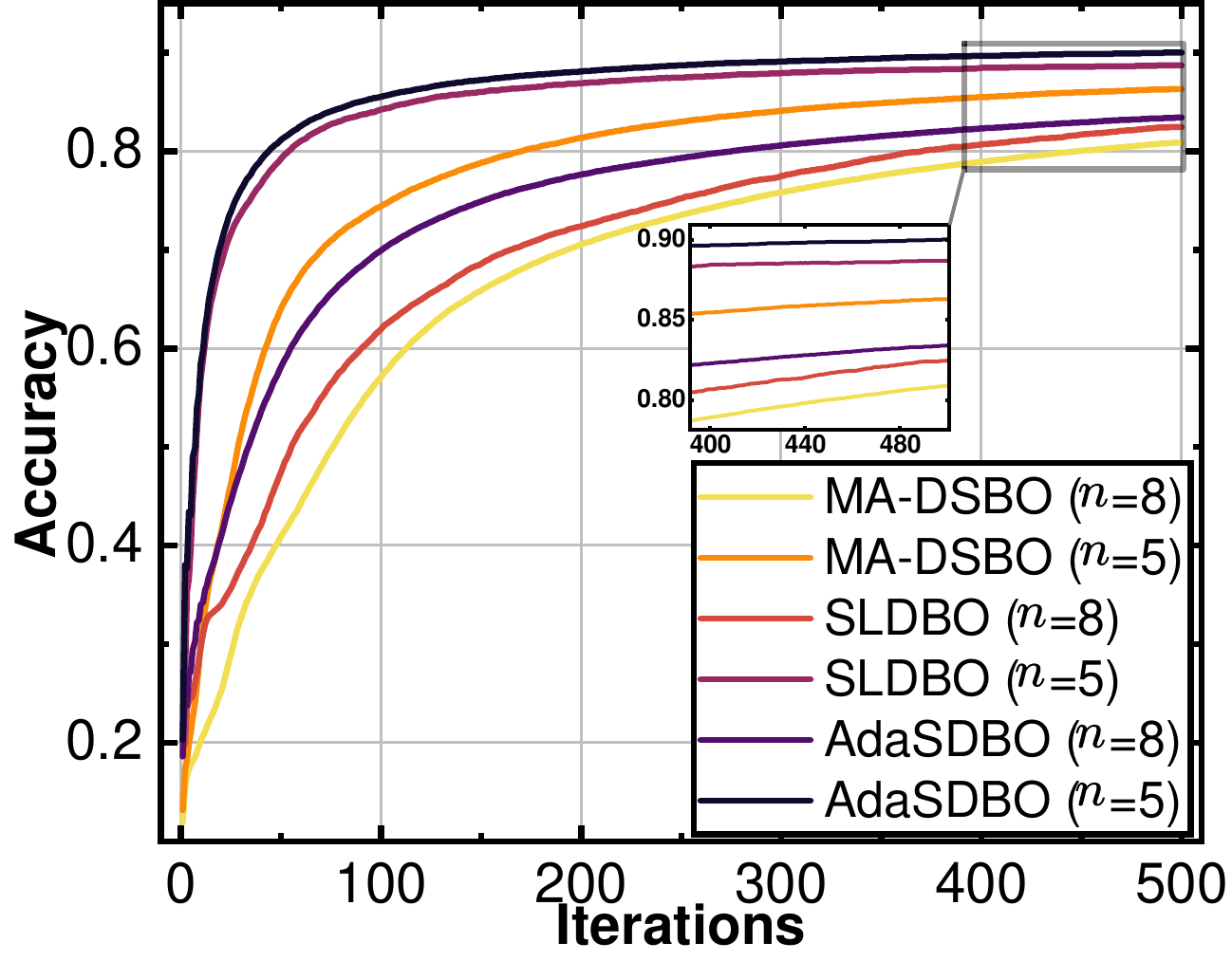}
    \par\vspace{-0.1cm}
    \makebox[\textwidth]{{\hspace{0.39cm}(c)\! MNIST \!(varying $n$)}}
\end{minipage}
\begin{minipage}[b]{0.243\textwidth}
    \includegraphics[width=\textwidth]{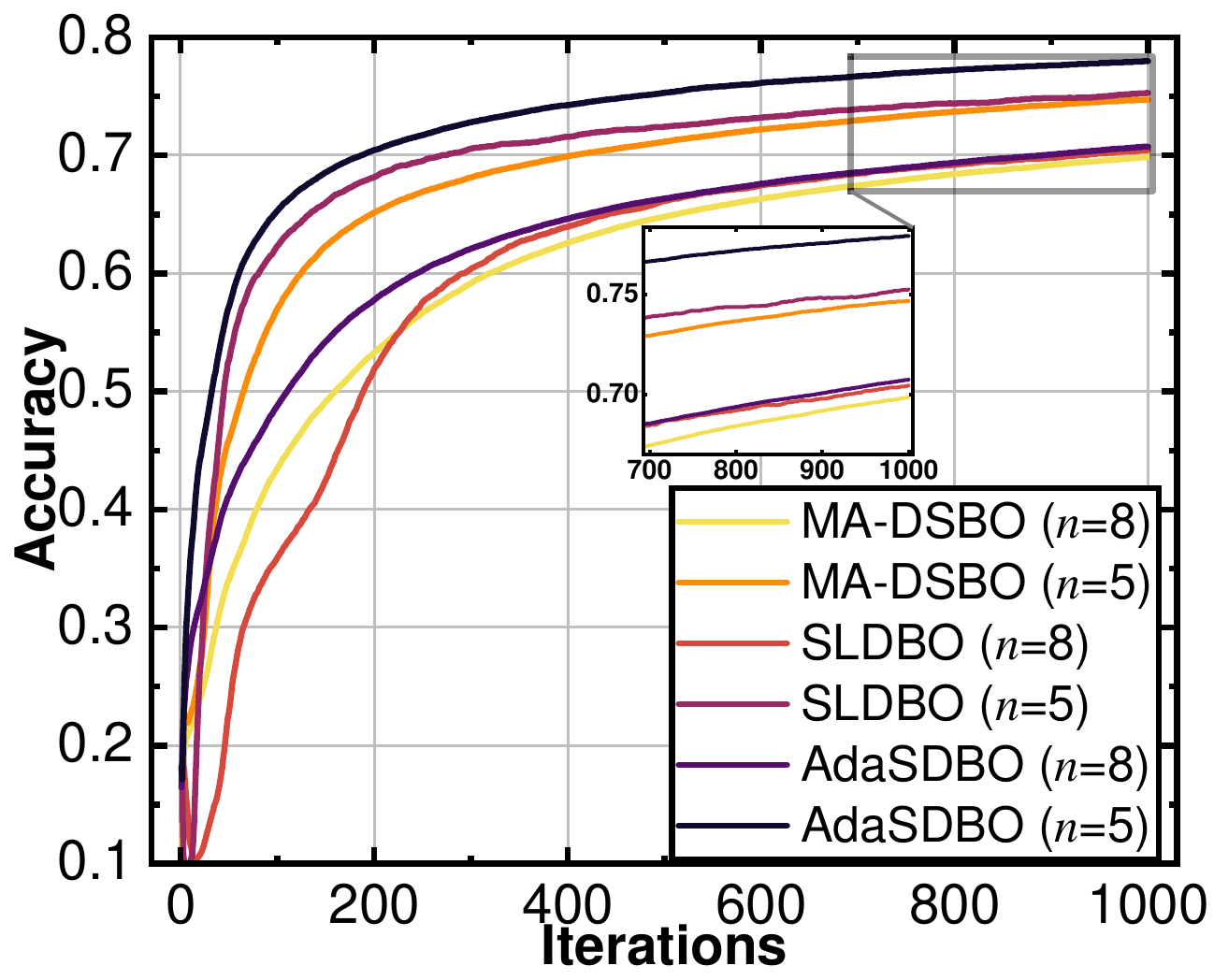}
    \par\vspace{-0.1cm}
    \makebox[\textwidth]{{\hspace{0.397cm}(d)\! FMNIST \!(varying $n$)}}
\end{minipage}
\vspace{-0.5cm}
\caption{Test Accuracy on different datasets.}
\vspace{-0.1cm}
\label{fig:accuracy4algorithm}
\end{figure*}
\begin{figure*}[t]
\centering
\hspace{-0.3cm}
\begin{minipage}[b]{0.255\textwidth}
    \includegraphics[width=1.1\textwidth, height=0.73\textwidth]{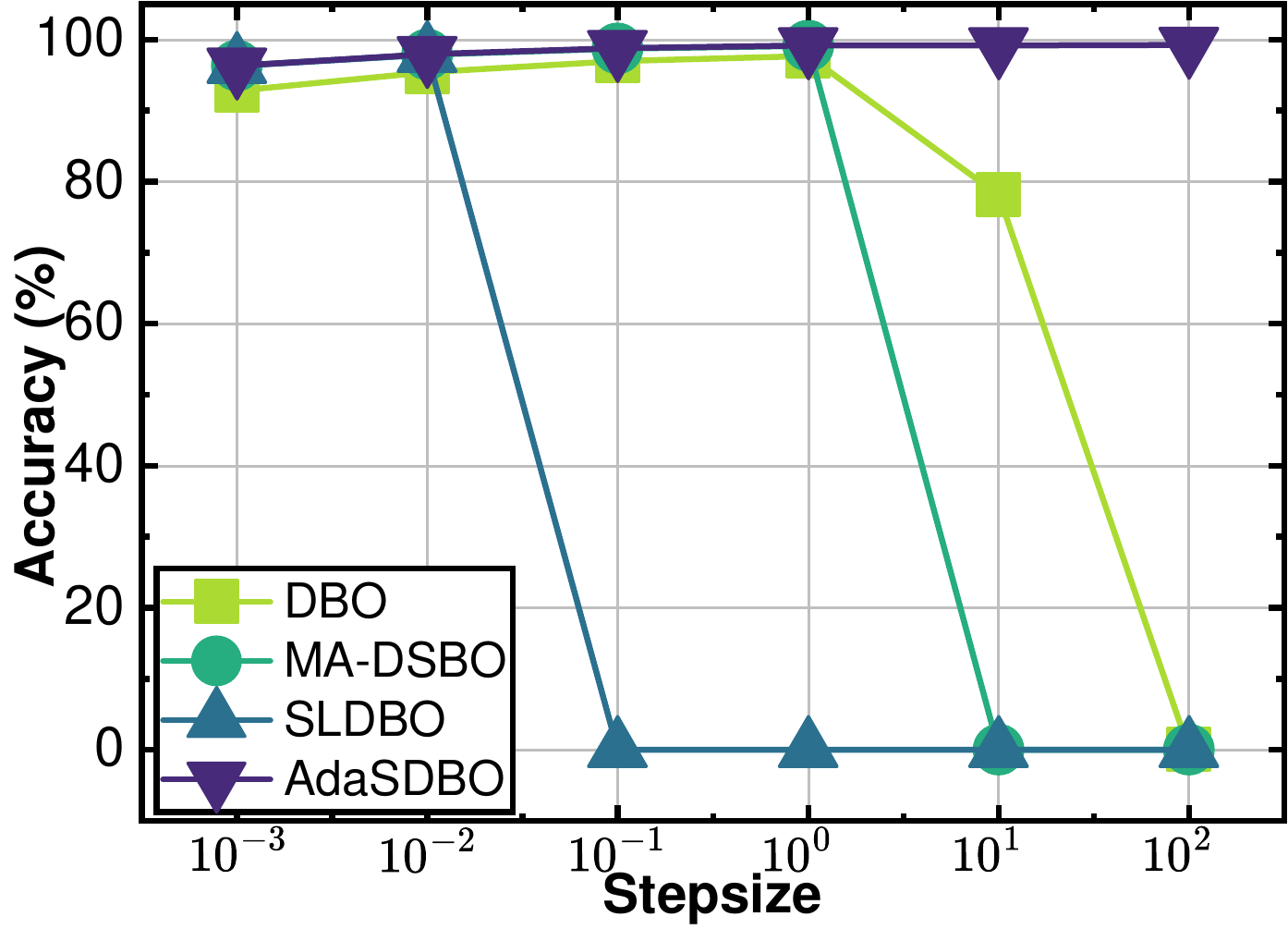} 
    \par\vspace{-0.1cm} 
    \makebox[\textwidth]{{\hspace{0.5cm}(a) Synthetic}}
\end{minipage}
\hspace{0.06\textwidth} 
\begin{minipage}[b]{0.255\textwidth}
    \includegraphics[width=1.1\textwidth, height=0.73\textwidth]{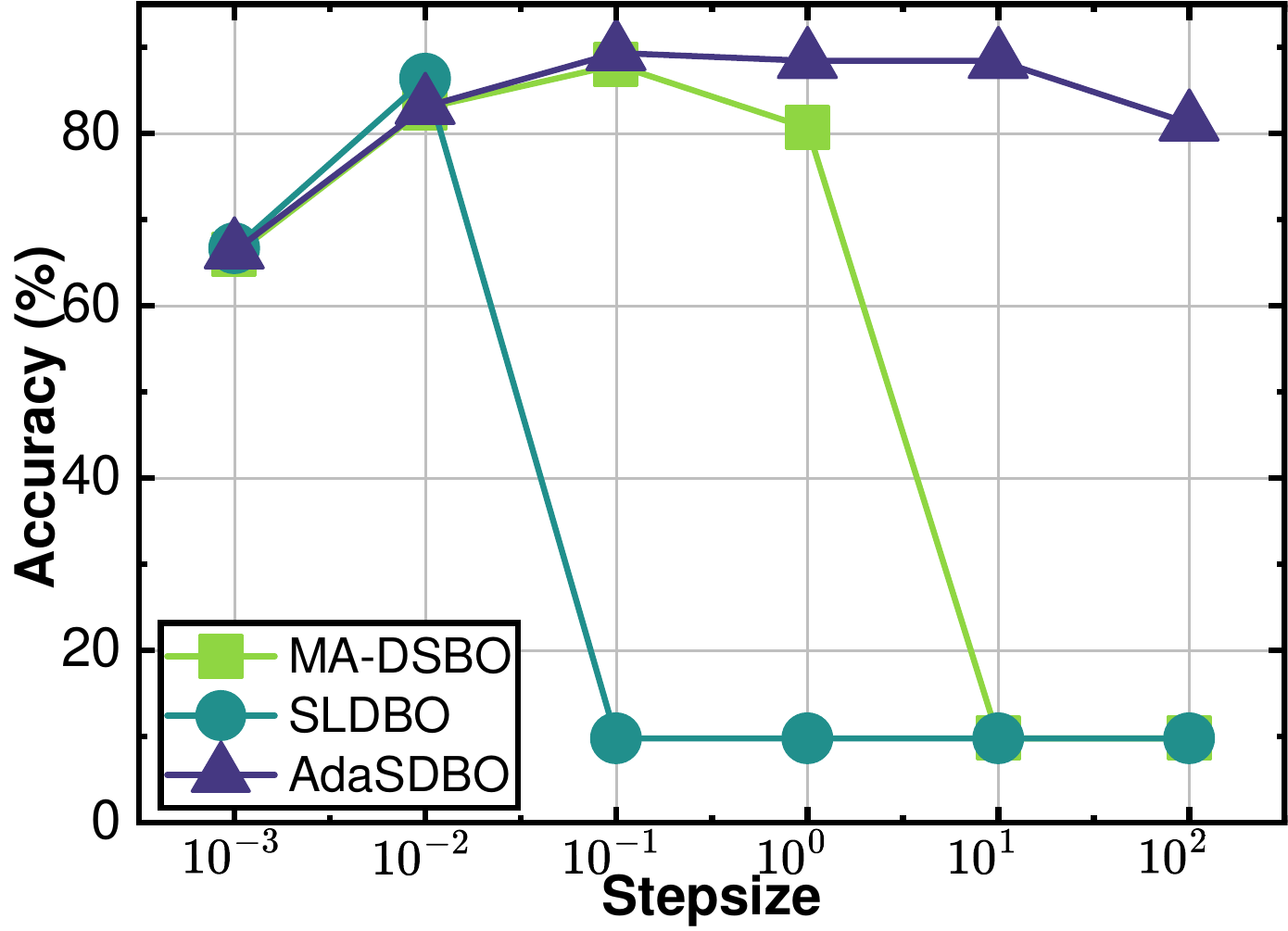}
    \par\vspace{-0.1cm}
    \makebox[\textwidth]{{\hspace{0.5cm}(b) MNIST}}
\end{minipage}
\hspace{0.06\textwidth}
\begin{minipage}[b]{0.255\textwidth}
    \includegraphics[width=1.1\textwidth, height=0.73\textwidth]{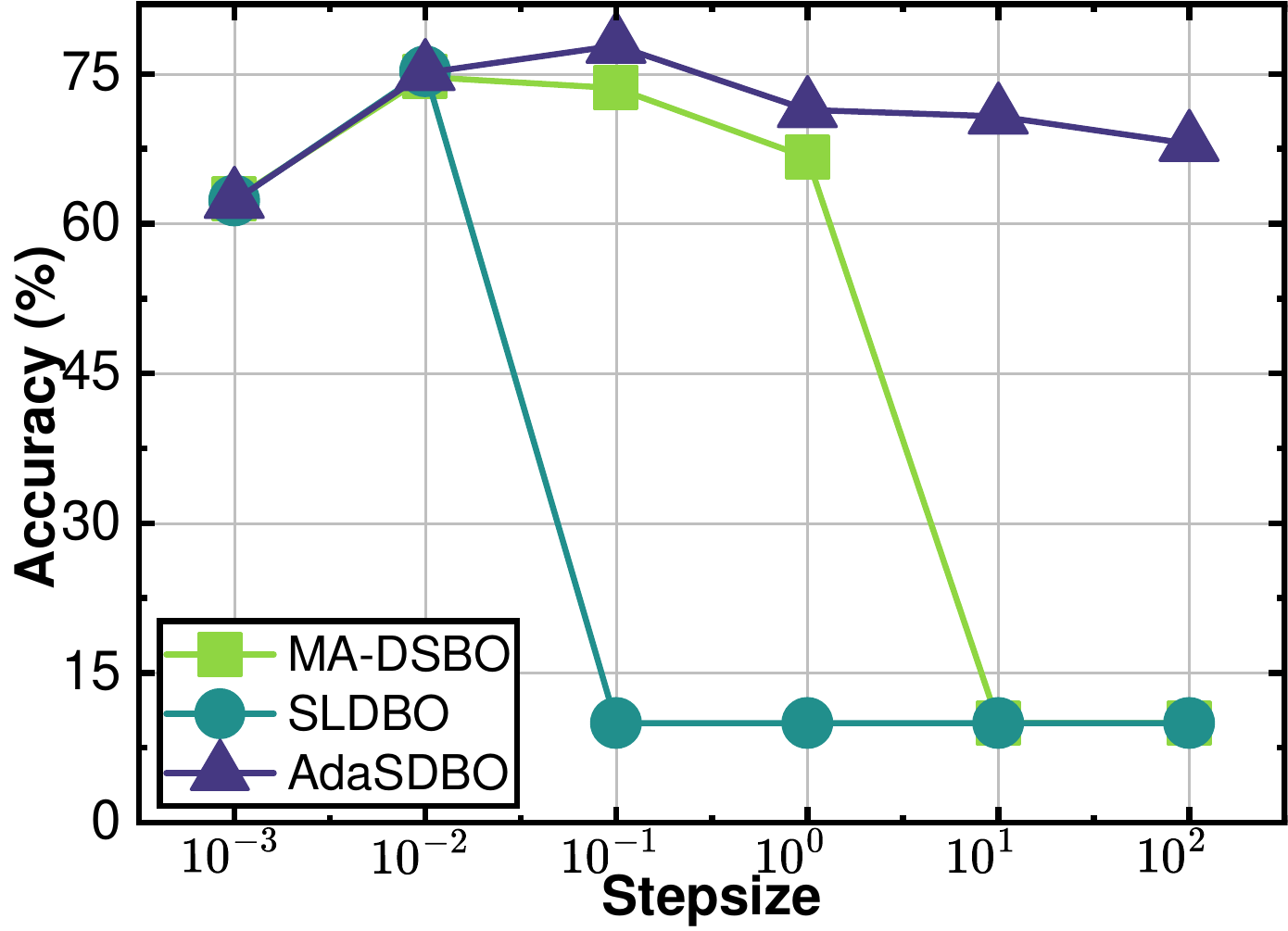} 
    \par\vspace{-0.1cm}
    \makebox[\textwidth]{{\hspace{0.5cm}(c) FMNIST}}
\end{minipage}
\vspace{-0.15cm}
\caption{Test accuracy versus stepsize on different datasets.}
\label{fig:stepsize}
\vspace{-0.5cm}
\end{figure*}

\vspace{-0.3cm}
\subsection{Synthetic Data Experiments}
\vspace{-0.1cm}

For synthetic data, the data distribution at node \(i\) follows \(\mathcal{N}(0, i^2\cdot r^2)\), where the parameter $r$ controls the level of data heterogeneity. To evaluate the advantages of AdaSDBO, we first compare different methods under low data heterogeneity conditions with $r = 1$ (Experiments under higher heterogeneity conditions are presented in \cref{fig:syntheticr5} of \cref{addiresult}). Both algorithms compute full gradients and utilize a training dataset and a testing dataset, each containing 20,000 samples.
As illustrated in \cref{fig:accuracy4algorithm} (a) and \cref{fig:accuracy4algorithm} (b), AdaSDBO achieves faster convergence than baseline methods across different data dimensions ({i.e.,} \(p=50\) and \(p=200\)). Notably, our method consistently outperforms the double-loop frameworks DBO and MA-DSBO. These results validate both the superiority of the adaptive stepsizes design and the single-loop structure of AdaSDBO.

\vspace{-0.2cm}
\subsection{Real-World Data Experiments}
\vspace{-0.1cm}

We evaluate our method on the hyperparameter optimization task using the MNIST~\citep{lecun1998gradient} and FMNIST~\citep{xiao2017fashion} datasets. In \cref{fig:accuracy4algorithm}(c) and \cref{fig:accuracy4algorithm}(d), we vary the number of agents to assess scalability. The results illustrate that AdaSDBO consistently maintains a robust convergence rate across different network sizes, highlighting its scalability and robustness to variations in the number of agents. Furthermore, AdaSDBO achieves a competitive convergence rate compared to state-of-the-art methods, further corroborating its effectiveness. Additional scalability evaluations under broader network configurations are presented in \cref{figsca1} and \cref{figsca2} of \cref{addiresult}.

\cref{fig:stepsize} compares the test accuracy of various algorithms versus stepsizes on the synthetic, MNIST, and FMNIST datasets. All algorithms were evaluated over 1,000  rounds to ensure a fair comparison. To comprehensively assess robustness, the stepsizes were varied over a wide range ({i.e.,} from \(10^{-3}\) to \(10^2\)). It can be observed that the AdaSDBO algorithm demonstrates remarkable resilience to stepsize selection, maintaining stable performance over a substantially broader range of stepsizes compared to baseline methods. In contrast, the baseline algorithms exhibit relatively narrower regions of stable performance, underscoring the enhanced stepsize robustness of our proposed parameter-free method.

\vspace{-0.2cm}
\subsection{Decentralized Meta-Learning}
\vspace{-0.1cm}
We evaluate our method on decentralized meta-learning using the CIFAR-10 dataset~\citep{mcmahan2017communication}, where multiple tasks are constructed following the protocol in~\citep{finn2017model}. This approach minimizes the test loss with respect to shared parameters as the upper-level loss, while the training loss is managed by task-specific parameters at the lower level. The detailed configuration of this experiment can be found in \cref{dmetasec}. To highlight the effectiveness of our approach, we compare against the state-of-the-art SLDBO method~\citep{dong2023single}. As shown in \cref{fig:meta} of \cref{addiresult}, our method achieves notably better training accuracy. This improvement stems from its problem-parameter-free design, which allows the algorithm to automatically adapt stepsizes, consistently reaching optimal convergence rates without manual tuning.

\vspace{-0.2cm}
\section{Conclusions and Limitations}\label{limits}
\vspace{-0.1cm}
In this paper, we proposed AdaSDBO, a parameter-free algorithm for decentralized bilevel optimization with a single-loop framework, supported by a rigorous finite-time convergence analysis. AdaSDBO adaptively adjusted stepsizes without relying on prior knowledge of problem parameters, achieving a convergence rate comparable to well-tuned counterparts. Extensive experiments showed that AdaSDBO delivered strong generalization performance and eliminated the need for tedious hyperparameter tuning, showcasing its potential for large-scale machine learning applications. Nevertheless, our analysis targets deterministic settings with full-gradient information and assumes a strongly convex lower-level problem. Extending the results to stochastic regimes and to generally convex lower-level objectives remains an open direction. We aim to address these limitations in future work to broaden the scope of applicability of AdaSDBO.

\bibliography{ref}
\bibliographystyle{plainnat}

\newpage
\appendix

\section{Additional Discussion on Related Works}\label{secrelated work}
\textbf{Bilevel Optimization.}
Bilevel optimization originated with \citep{bracken1973mathematical} and has seen significant advances in both theory and algorithms. Early methodological approaches \citep{hansen1992new,shi2005extended} predominantly addressed these problems through constrained optimization formulations, treating the inner-level problem as a parametric constraint.
Contemporary research has increasingly focused on efficient gradient-based methods, which fall into three main categories: 1) approximate implicit differentiation (AID) methods \citep{domke2012generic,liao2018reviving,ji2021bilevel,dagreou2022framework}, which use the implicit function theorem to approximate hypergradients; 2) iterative differentiation (ITD) methods \citep{maclaurin2015gradient,franceschi2018bilevel,grazzi2020iteration}, which leverage automatic differentiation; 3) Neumann series-based methods \citep{ji2021bilevel, yang2021provably}, which approximate the inverse Hessian using truncated series. However, implicit differentiation in bilevel optimization requires accurate inner problem solutions for each outer variable update, leading to high computational costs in large-scale problems. To address this, the researchers proposed solving the inner problem with a fixed number of steps and computing gradients with the "backpropagation through time" technique \citep{shaban2019truncated,franceschi2017forward}. Nevertheless, this approach remains computationally expensive for modern machine learning models with hundreds of millions of parameters. Recently, there has been a surge of interest in using implicit differentiation to derive single-loop algorithms. \citet{ghadimi2018approximation} introduced an accelerated AID method with the Neumann series. \citet{yang2021provably} proposed a warm-start strategy to reduce the number of inner steps required at each iteration. Additionally, \citet{li2022fully} introduced FSLA, a fully single-loop algorithm for bilevel optimization that eliminates the need for Hessian inversion.

\textbf{Adaptive Methods.}
The introduction of AdaGrad \citep{mcmahan2010adaptive,duchi2011adaptive} marked a milestone in adaptive gradient-based methods. Originally designed for online convex optimization, AdaGrad quickly evolved into a foundation for deep learning algorithms, spawning numerous variants such as Adadelta \citep{zeiler2012adadelta}, RMSprop \citep{tieleman2017divide}, and Adam \citep{luo2019adaptive,xie2024adan}. In particular, AdaGrad variants with normalized gradients, including AdaNGD \citep{levy2017online}, AcceleGrad \citep{levy2018online}, and AdaGrad-Norm \citep{xie2020linear}, introduced adaptive stepsizes that eliminate the need for problem-specific parameters, establishing themselves as effective parameter-free methods.
More recent refinements, such as the Lipschitzness parameter approximation \citep{malitsky2019adaptive} and the restart mechanisms \citep{marumo2024parameter}, have further enhanced both performance and robustness. Additionally, \citet{yang2023two} provided foundational insights into mainstream adaptive methods, laying the groundwork for their applications in distributed optimization \citep{li2024problem,yan2025problem}.

\textbf{Adaptive Minimax and Bilevel Methods.}
Currently, some research has begun addressing adaptive stepsize design specifically within the minimax optimization context \citep{li2022tiada, huang2024adaptive, huang2024achieving}. However, minimax problems inherently possess simpler structural properties compared to bilevel optimization problems. The nested structure inherent in bilevel optimization introduces additional complexities due to the coupling of variables between the upper and lower levels, significantly complicating the design of adaptive stepsizes.
In centralized bilevel optimization, a few adaptive methods have been proposed. For instance, \citet{antonakopoulos2024adaptive} proposed a double-loop adaptive algorithm utilizing mirror descent, which still relies on the unknown strong convexity parameter of the lower-level function.
Similarly, \citet{yang2024tuning} introduced a centralized adaptive method based on AdaGrad-Norm, achieving convergence rates comparable to well-tuned methods. However, tuning hyperparameters in practice is often prohibitively expensive and becomes considerably more difficult in decentralized scenarios, thus making the ability to automatically adjust the update dynamics particularly crucial \citep{li2024problem}. Nevertheless, achieving such adaptability in decentralized bilevel optimization remains profoundly challenging. Specifically, decentralized bilevel optimization necessitates carefully orchestrated updates across primal, dual, and auxiliary variables to manage the nested structure effectively. Additionally, the hierarchical structure of decentralized bilevel optimization introduces multiple coupled adaptive stepsizes, significantly amplifying heterogeneity across agents. Without meticulous coordination, the resulting variability can degrade convergence performance. Moreover, the decentralized setting requires simultaneously managing network-induced communication errors and hierarchical bilevel approximation errors. To achieve convergence rates matching those of optimally tuned methods, adaptive decentralized bilevel algorithms require an even more precise theoretical analysis, as these error sources interact intricately.
To the best of our knowledge, it remains an open and challenging question on how to leverage adaptive methods to design a completely problem-parameter-free algorithm for decentralized bilevel optimization.

\section{Proof Sketch}\label{proof}

\begin{figure}[h!]
\centering
\begin{tikzpicture}[
  node distance=1cm and 0.7cm,
  every node/.style={
    draw, 
    rectangle, 
    rounded corners=4.1mm,
    minimum width=3cm,
    minimum height=1.2cm,
    align=center,
    line width=0.6mm
  },
  every path/.style={
    line width=0.7mm,
    -{Latex[length=2mm, width=2mm]}
  }
]

\node (A6) at (0,0) {\cref{Lemma7}};
\node[right=of A6] (A7) {\cref{Lemma8}};
\node[right=of A7] (A10) {\cref{Lemma11}};
\node[right=of A10] (A11) {\cref{Lemma5}};

\node[below=of A6] (A8) {\cref{Lemma9}};
\node[right=of A8] (A9) {\cref{Lemma10}};
\node[right=of A9] (A12) {\cref{lemmanew12}};
\node[right=of A12] (A13) {\cref{Lemma12}};

\node[below=of A9, xshift=2.1cm] (Theorem) {\cref{theorem 1}};

\node[below=of Theorem, xshift=-2.1cm] (A5) {\cref{Lemma4}};
\node[below=of Theorem, xshift=1.7cm] (A14) {\cref{Lemma13}};

\draw[->] (A5) -- (Theorem);
\draw[->] (A5) -- (A14);

\draw[->] (A6) -- (A8);
\draw[->] (A6) -- (A9);

\draw[->] (A7) -- (A9);
\draw[->] (A7) -- (A10);

\draw[->] (A8) -- (A9);

\draw[->] (A9) -- (Theorem);
\draw[->] (A9) -- (A10);

\draw[->] (A10) -- (A11);
\draw[->] (A10) -- (A12);
\draw[->] (A10) -- (A13);

\draw[->] (A11) -- (A12);

\draw[->] (A12) -- (A14);
\draw[->] (A12) -- (Theorem);

\draw[->] (A13) -- (A14);

\draw[->] (A14) -- (Theorem);
\end{tikzpicture}
\caption{Structure of the proof}
\label{fig:proof_structure}
\end{figure}
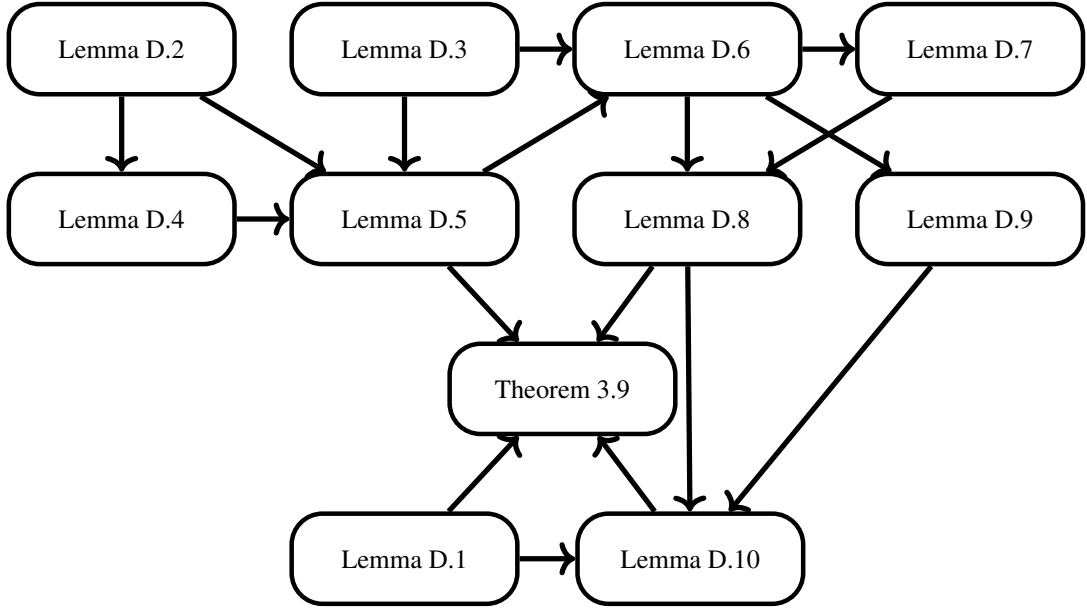

\cref{fig:proof_structure} illustrates the structure of the proof. Next, we present the proof sketch for \cref{theorem 1}.

\textbf{Proof Sketch of \cref{theorem 1}:}

\textbf{Step 1:} We start by introducing the two-stage framework outlined in \cref{Lemma6} to examine the progression of the gradient accumulators \(\bar{m}^x_t\), \(\bar{m}^y_t\), and \(\bar{m}^v_t\). This framework divides the iterations into two cases: when the gradient accumulators are below or exceed a predefined threshold. Using this structure, we derive two descent lemmas in \cref{Lemma4} for the objective function, corresponding to these two stages of \(\bar{m}^x_t\).

\textbf{Step 2:} Next, we derive tail bounds for two key components in the descent \cref{Lemma4}:
$\sum_{k=k_2}^t \frac{\|\nabla_y L(\mathbf{x}_k, \mathbf{y}_k)\|^2}{\bar{u}_{k+1}}$ (in \cref{Lemma7}) and $\sum_{k=k_3}^t \frac{\|\nabla_v R(\mathbf{x}_k, \mathbf{y}_k, \mathbf{v}_k)\|^2}{\bar{z}_{k+1}}$ (in \cref{Lemma8}),
where \(k_2\) and \(k_3\) represent the cutoff points corresponding to the second stage in the two-stage framework of \cref{Lemma6}.

\textbf{Step 3:} Using the bounds derived in Step 2, we establish upper bounds for \(\bar{m}^y_{t+1}\) and \(\bar{z}_{t+1}\) in \cref{Lemma9} and \cref{Lemma10}, respectively. Based on these results, we derive general bounds for $\sum_{k=k_0}^t \frac{\|\bar{\nabla} F(\mathbf{x}_k,\mathbf{y}_k,\mathbf{v}_k)\|^2}{[\bar{m}^x_{k+1}]^2}$, $\sum_{k=k_0}^t \frac{\|\nabla_y L(\mathbf{x}_k, \mathbf{y}_k)\|^2}{\bar{m}^y_{k+1}}$, and $\sum_{k=k_0}^t \frac{\|\nabla_v R(\mathbf{x}_k, \mathbf{y}_k, \mathbf{v}_k)\|^2}{\bar{z}_{k+1}}$ with \(k_0 \in [0, t)\), as shown in \cref{Lemma11}. 

\textbf{Step 4:} We then analyze the consensus errors for the primal, dual, and auxiliary variables in \cref{Lemma5}. By combining these results with \cref{Lemma11}, we derive the upper bounds: $\sum_{k=k_0}^t \frac{\|\bar{\nabla} f(\bar{x}_k, \bar{y}_k, \bar{v}_k)\|^2}{[\bar{m}^x_{k+1}]^2}$, $\sum_{k=k_0}^t \frac{\|\nabla_y l(\bar{x}_k, \bar{y}_k)\|^2}{\bar{m}^y_{k+1}}$ and $\sum_{k=k_0}^t \frac{\|\nabla_v r(\bar{x}_k, \bar{y}_k, \bar{v}_k)\|^2}{\bar{z}_{k+1}}$ with respect to the consensus variables $\bar{x}_k$, $\bar{y}_k$, and $\bar{v}_k$ in \cref{lemmanew12}, where \(k_0 \in [0, t)\).

\textbf{Step 5:} Additionally, in \cref{Lemma12}, we derive an upper bound for the term associated with the stepsize-inconsistency errors in \cref{Lemma4}. By substituting the results from \cref{lemmanew12} and \cref{Lemma12} into the Descent \cref{Lemma4}, we obtain the bound for \(\bar{m}^x_t\), as presented in \cref{Lemma13}.

\textbf{Step 6:} Finally, by combining the results from Steps 3, 4, and 5, we establish the convergence of \cref{alg1} based on the descent analysis in \cref{Lemma4}.

\section{Proofs of Supporting Lemmas}
\begin{lemma}[Lemma 3.2 in \citep{ward2020adagrad}]\label{Lemma1}
For any non-negative $a_1, \dots, a_T,$ and $a_1 \geq 1$, we have:
\begin{align}
\sum_{l=1}^T \frac{a_l}{\sum_{i=1}^l a_i} \leq \log \left(\sum_{l=1}^T a_l\right) + 1.\label{eq21}
\end{align}
\end{lemma}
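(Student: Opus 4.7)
The strategy is to bound each summand by a logarithmic increment and then telescope. First I would introduce the partial sums $S_l := \sum_{i=1}^l a_i$, noting that the hypotheses $a_1 \geq 1$ and $a_i \geq 0$ guarantee $S_l \geq S_1 \geq 1 > 0$ for every $l$, so that every denominator in the sum is strictly positive and, in particular, each ratio $S_l/S_{l-1}$ for $l \geq 2$ is a well-defined real number in $[1, \infty)$. I would peel off the first term directly, observing that $a_1/S_1 = 1$; this contribution is precisely what will account for the additive ``$+1$'' on the right-hand side.

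For $l \geq 2$, the key analytic input is the elementary inequality $1 - 1/t \leq \log t$ for $t \geq 1$, which follows from the concavity of $\log$ (equivalently, from $\int_1^t \frac{1}{s}\,ds \geq \int_1^t \frac{1}{t}\,ds$). Rewriting $a_l/S_l = (S_l - S_{l-1})/S_l = 1 - S_{l-1}/S_l$ and applying the inequality with $t = S_l/S_{l-1}$ would yield the per-term bound $a_l/S_l \leq \log(S_l/S_{l-1})$ for every $l \geq 2$; vanishing terms $a_l = 0$ are handled automatically, since then $S_l = S_{l-1}$ and both sides of the bound equal zero.

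Summing the per-term inequality from $l=2$ to $T$ then collapses into a telescoping sum, $\sum_{l=2}^T \log(S_l/S_{l-1}) = \log S_T - \log S_1 \leq \log S_T$, where the last step uses $S_1 = a_1 \geq 1$ so that $\log S_1 \geq 0$. Combining this with the $l=1$ contribution of $1$ gives the claimed bound $\sum_{l=1}^T a_l/S_l \leq \log S_T + 1 = \log\bigl(\sum_{l=1}^T a_l\bigr) + 1$. The argument is essentially routine, and I do not anticipate any real obstacle; the only delicate point worth flagging is that the hypothesis $a_1 \geq 1$ is used in two distinct places, both to ensure the partial sums are bounded away from zero and to discard $\log S_1$ at the end, and without it one would only obtain the weaker bound $\log(S_T/S_1) + 1$.
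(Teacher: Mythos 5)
Your proof is correct and is essentially the same telescoping argument used in the cited source (Lemma 3.2 of Ward et al.), which the paper invokes without reproducing: peel off the first term $a_1/S_1=1$, bound $a_l/S_l = 1-S_{l-1}/S_l \leq \log(S_l/S_{l-1})$ via $1-1/t\leq\log t$, and telescope, using $a_1\geq 1$ to ensure positivity of the partial sums and to drop $\log S_1$. No gaps.
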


  \begin{lemma}\label{Lemma2} Under \cref{Assumption1} and \cref{Assumption2}, we have the following basic properties:
\begin{enumerate}
    \item[1).] \(\Phi(\bar{x})\) is \(L_{\Phi}\)-smooth with respect to \(\bar{x}\), where 
    $L_{\Phi} := \left(L_{f,1} + \frac{L_{l,2} C_{f_y}}{\mu} \right)\left(1 + \frac{C_{l_{xy}}}{\mu}\right)^2$;
    \item[2).] \({y}^*(\bar{x})\) is \(L_y\)-Lipschitz continuous with respect to \(\bar{x}\), where $L_y := \frac{C_{l_{xy}}}{\mu}$;
    \item[3).] The gradient estimator \(\bar{\nabla} f(\bar{x}, \bar{y}, \bar{v})\) is \((L_{l,2} \|\bar{v}\| + L_{f,1})\)-Lipschitz continuous with respect to \((\bar{x}, \bar{y})\) and \(L_{l,1}\)-Lipschitz continuous with respect to \(\bar{v}\);
    \item[4).] \(\bar{\nabla} f(\bar{x}, \bar{y}, \bar{v})\) can be bounded as $\|\bar{\nabla} f(\bar{x}, \bar{y}, \bar{v})\| \leq C_{l_{xy}} \|\bar{v}\| + C_{f_x}.$
\end{enumerate}
\end{lemma}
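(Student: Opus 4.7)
The plan is to derive the four claims as standard consequences of the implicit function theorem together with the Lipschitz and boundedness conditions supplied by \cref{Assumption1}, \cref{Assumption2}, and \cref{remark1}. Since item (1) depends on items (2) and (3), I would prove the list in the order (4), (2), (3), (1).

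Item (4) follows immediately from the triangle inequality applied to $\bar{\nabla} f(\bar x,\bar y,\bar v)=\nabla_x f(\bar x,\bar y)-\nabla_x\nabla_y l(\bar x,\bar y)\bar v$ combined with the uniform bounds $\|\nabla_x f\|\le C_{f_x}$ and $\|\nabla_x\nabla_y l\|\le C_{l_{xy}}$. For item (2), I would differentiate the optimality condition $\nabla_y l(\bar x,y^*(\bar x))=0$ implicitly in $\bar x$, obtaining
\[
\nabla y^*(\bar x)=-\bigl[\nabla_y\nabla_y l(\bar x,y^*(\bar x))\bigr]^{-1}\nabla_x\nabla_y l(\bar x,y^*(\bar x)),
\]
and then combine $\|[\nabla_y\nabla_y l]^{-1}\|\le 1/\mu$ (from $\mu$-strong convexity) with $\|\nabla_x\nabla_y l\|\le C_{l_{xy}}$ to conclude $\|\nabla y^*(\bar x)\|\le C_{l_{xy}}/\mu=L_y$.

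For item (3), I would split the $(\bar x,\bar y)$-Lipschitz bound as $\|\nabla_x f(\bar x_1,\bar y_1)-\nabla_x f(\bar x_2,\bar y_2)\|+\|(\nabla_x\nabla_y l(\bar x_1,\bar y_1)-\nabla_x\nabla_y l(\bar x_2,\bar y_2))\bar v\|$ and apply the $L_{f,1}$-smoothness of $f$ together with the $L_{l,2}$-Lipschitzness of $\nabla_x\nabla_y l$; the $\bar v$-Lipschitz constant of $L_{l,1}$ follows from $\|\nabla_x\nabla_y l\|\le L_{l,1}$, an immediate consequence of $\nabla l$ being $L_{l,1}$-Lipschitz.

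The main obstacle is item (1). I would start from the implicit-function expression for $\nabla\Phi$ in \cref{eqcontent2} and bound $\|\nabla\Phi(\bar x_1)-\nabla\Phi(\bar x_2)\|$ by inserting telescoping intermediate terms so that each summand differs in exactly one of the four factors $\nabla_x f$, $\nabla_x\nabla_y l$, $[\nabla_y\nabla_y l]^{-1}$, or $\nabla_y f$. For each summand I would combine: (i) the Lipschitzness of $y^*$ from item (2), which turns a step in $(\bar x,y^*(\bar x))$ into at most $(1+L_y)\|\bar x_1-\bar x_2\|$; (ii) the gradient/Hessian Lipschitz constants from \cref{Assumption2}; (iii) the perturbation identity $\|A^{-1}-B^{-1}\|\le\|A^{-1}\|\,\|B^{-1}\|\,\|A-B\|$ paired with $\mu$-strong convexity for the Hessian-inverse factor; and (iv) the uniform bounds from \cref{remark1} on the remaining factors. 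Collecting constants produces $L_\Phi=\bigl(L_{f,1}+L_{l,2}C_{f_y}/\mu\bigr)\bigl(1+C_{l_{xy}}/\mu\bigr)^2$. The principal care is bookkeeping how the factor $(1+L_y)=1+C_{l_{xy}}/\mu$ propagates through the composed expression so that it appears to the correct power in the final constant; this is the standard Ghadimi--Wang-style computation.
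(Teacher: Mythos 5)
Your proposal is correct and follows essentially the same route as the paper: items 3) and 4) are proved by exactly the same triangle-inequality splits and uniform bounds the paper uses, and for items 1) and 2) the paper simply defers to \citep{ghadimi2018approximation}, whose argument is precisely the implicit-differentiation and telescoping-with-inverse-perturbation computation you sketch. The only part the paper does not verify explicitly is the bookkeeping that yields the stated constant $L_\Phi$, which you correctly identify as the one delicate step.
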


\begin{proof}
The proofs of 1) and 2) can refer to \citep{ghadimi2018approximation}. For 3), under \cref{Assumption2}, we have:
\begin{align}
   &\|\bar{\nabla} f(\bar{x}_1, \bar{y}_1, \bar{v}) - \bar{\nabla} f(\bar{x}_2, \bar{y}_2, \bar{v})\| \nonumber\\
&\leq \|\nabla_{{x}}\nabla_{{y}} l(\bar{x}_1, \bar{y}_1) - \nabla_{{x}}\nabla_{{y}} l(\bar{x}_2, \bar{y}_2)\| \cdot \|\bar{v}\|
+ \|\nabla_{{x}} f(\bar{x}_1, \bar{y}_1) - \nabla_{{x}} f(\bar{x}_2, \bar{y}_2)\| \nonumber\\
&\leq (L_{l,2} \|\bar{v}\| + L_{f,1})(\|\bar{x}_1 - \bar{x}_2\| + \|\bar{y}_1 - \bar{y}_2\|),\label{eq22}
\end{align}
and
\begin{align}
\|\bar{\nabla} f(\bar{x}, \bar{y}, \bar{v}_1) - \bar{\nabla} f(\bar{x}, \bar{y}, \bar{v}_2)\| 
\leq \|\nabla_{{x}} \nabla_{{y}} l(\bar{x}, \bar{y})\| \cdot \|\bar{v}_1 - \bar{v}_2\| 
\leq L_{l,1} \|\bar{v}_1 - \bar{v}_2\|.\label{eq23}
\end{align}
By \cref{Assumption2}, we can easily prove 4) as:
\begin{align}
\|\bar{\nabla} f(\bar{x}, \bar{y}, \bar{v})\| \leq \|\nabla_{{x}} \nabla_{{y}} l(\bar{x}, \bar{y})\| \cdot \|\bar{v}\| + \|\nabla_{{x}} f(\bar{x}, \bar{y})\| 
\leq C_{l_{xy}} \|\bar{v}\| + C_{f_x}.\label{eq24}
\end{align}
Then the proof is complete.
\end{proof} 

\begin{lemma}\label{Lemma3} Under \cref{Assumption1} and \cref{Assumption2}, we have basic properties of the linear system function \(r\) in \cref{eqcontent3} as follows:

\begin{enumerate}
    \item[1).] \(r(\bar{x}, \bar{y}, \bar{v})\) is \(\mu\)-strongly convex and \(C_{l{yy}}\)-smooth with respect to \(\bar{v}\);
    \item[2).] \(\nabla_{{v}} r(\bar{x}, \bar{y}, \bar{v})\) is \((L_{l,2} \|\bar{v}\| + L_{f,1})\)-Lipschitz continuous with respect to \((\bar{x}, \bar{y})\);
    \item[3).] \(\nabla_{{v}} r(\bar{x}, \bar{y}, \bar{v})\) can be bounded as $\|\nabla_{{v}} r(\bar{x}, \bar{y}, \bar{v})\| \leq C_{l_{yy}} \|\bar{v}\| + C_{f_y};$
    \item[4).] \({v}^*(\bar{x})\) in \cref{eqcontent3} can be bounded as $\|{v}^*(\bar{x})\| \leq \frac{C_{fy}}{\mu}$, and \({v}^*(\bar{x}, \bar{y}) := \arg\min_{\bar{v}} r(\bar{x}, \bar{y}, \bar{v})\) can also be bounded as $\|{v}^*(\bar{x}, \bar{y})\| \leq \frac{C_{f_y}}{\mu}$;
    \item[5).] \(\bar{\nabla} f(\bar{x}, \bar{y}, \bar{v})\) is $\bar{L}_f$-Lipschitz continuous with respect to $(\bar{x}, \bar{y}, \bar{v})$, where $\bar{L}_f=\max\left\{\frac{C_{fy}L_{l,2}}{\mu} + L_{f,1}, L_{l,1}\right\}$;
    \item[6).] \(\nabla_{{v}} r(\bar{x}, \bar{y}, \bar{v})\) is $\bar{L}_r$-Lipschitz continuous with respect to $(\bar{x}, \bar{y}, \bar{v})$, where $\bar{L}_r=\max\left\{\frac{C_{fy}L_{l,2}}{\mu} + L_{f,1}, C_{l_{yy}}\right\}$;
    \item[7).] \({v}^*(\bar{x})\) is \(L_v\)-Lipschitz continuous with respect to \(\bar{x}\) and \({v}^*(\bar{x}, \bar{y})\) is \(\bar{L}_v\)-Lipschitz continuous with respect to \(\bar{y}\), where $L_v := \left(\frac{L_{f,1}}{\mu} + \frac{C_{f_y} L_{l,2}}{\mu^2} \right)(1 + L_y)$ and $\bar{L}_v := \frac{L_{f,1}}{\mu} + \frac{C_{f_y} L_{l,2}}{\mu^2}$.
\end{enumerate}
\end{lemma}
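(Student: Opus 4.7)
The plan is to treat the seven items as a sequence of routine computations that unpack the quadratic structure of $r(x,y,v) = \tfrac{1}{2} v^\top \nabla_y \nabla_y l(x,y) v - v^\top \nabla_y f(x,y)$ using only \cref{Assumption1}, \cref{Assumption2}, and \cref{remark1}. For item 1), I would observe that $\nabla_v^2 r(\bar x,\bar y,\bar v) = \nabla_y\nabla_y l(\bar x,\bar y)$; by $\mu$-strong convexity of $l$ in $y$ this Hessian satisfies $\nabla_y\nabla_y l \succeq \mu I$, and by \cref{remark1} its operator norm is bounded by $C_{l_{yy}}$, so strong convexity and smoothness of $r$ in $v$ with the claimed constants follow immediately. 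Item 3) is a triangle-inequality bound $\|\nabla_v r\| = \|\nabla_y\nabla_y l\cdot v - \nabla_y f\| \leq C_{l_{yy}}\|v\| + C_{f_y}$. For item 2), I would expand
\begin{equation*}
\nabla_v r(\bar x_1,\bar y_1,\bar v) - \nabla_v r(\bar x_2,\bar y_2,\bar v) = \bigl(\nabla_y\nabla_y l(\bar x_1,\bar y_1)-\nabla_y\nabla_y l(\bar x_2,\bar y_2)\bigr)\bar v - \bigl(\nabla_y f(\bar x_1,\bar y_1)-\nabla_y f(\bar x_2,\bar y_2)\bigr),
\end{equation*}
and apply the $L_{l,2}$- and $L_{f,1}$-Lipschitz bounds from \cref{Assumption2} to conclude.

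For item 4), the closed form $v^*(\bar x) = [\nabla_y\nabla_y l(\bar x,y^*(\bar x))]^{-1}\nabla_y f(\bar x,y^*(\bar x))$ combined with $\nabla_y\nabla_y l \succeq \mu I$ (so that the inverse has operator norm at most $1/\mu$) and $\|\nabla_y f\|\leq C_{f_y}$ yields $\|v^*(\bar x)\|\leq C_{f_y}/\mu$; the same argument applied to $v^*(\bar x,\bar y) = [\nabla_y\nabla_y l(\bar x,\bar y)]^{-1}\nabla_y f(\bar x,\bar y)$ gives the companion bound. Items 5) and 6) then recycle the bounds already established: because the auxiliary iterates are projected onto $\mathcal{V}$, chosen so that $\sup_{v\in\mathcal{V}}\|v\| \leq C_{f_y}/\mu$, we can replace $\|\bar v\|$ by $C_{f_y}/\mu$ inside the Lipschitz constants $L_{l,2}\|\bar v\|+L_{f,1}$ from item 3) of \cref{Lemma2} and from item 2) here. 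Combining this with the $L_{l,1}$-Lipschitz dependence of $\bar\nabla f$ on $\bar v$ (item 3 of \cref{Lemma2}) and with the $C_{l_{yy}}$-smoothness of $\nabla_v r$ in $\bar v$ (item 1 here) and taking the maximum yields $\bar L_f$ and $\bar L_r$ respectively.

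Item 7) is the one step that requires slightly more care. For the dependence of $v^*(\bar x,\bar y)$ on $\bar y$ I would use the matrix identity $A^{-1}-B^{-1}=A^{-1}(B-A)B^{-1}$ to split the difference as
\begin{equation*}
v^*(\bar x,\bar y_1)-v^*(\bar x,\bar y_2) = [\nabla_y\nabla_y l(\bar x,\bar y_1)]^{-1}\bigl(\nabla_y f(\bar x,\bar y_1)-\nabla_y f(\bar x,\bar y_2)\bigr) + \bigl([\nabla_y\nabla_y l(\bar x,\bar y_1)]^{-1}-[\nabla_y\nabla_y l(\bar x,\bar y_2)]^{-1}\bigr)\nabla_y f(\bar x,\bar y_2),
\end{equation*}
and bound each term using $\|(\nabla_y\nabla_y l)^{-1}\|\leq 1/\mu$, $\|\nabla_y f\|\leq C_{f_y}$, and the $L_{f,1}$- and $L_{l,2}$-Lipschitz bounds from \cref{Assumption2}, which produces the constant $\bar L_v = L_{f,1}/\mu + C_{f_y}L_{l,2}/\mu^2$. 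To pass from $v^*(\bar x,\bar y)$ to $v^*(\bar x)=v^*(\bar x,y^*(\bar x))$ I would apply the triangle inequality through $v^*(\bar x_1,y^*(\bar x_1))-v^*(\bar x_2,y^*(\bar x_2))$ and use the $L_y$-Lipschitz continuity of $y^*$ from \cref{Lemma2}, which contributes the factor $1+L_y$ and gives $L_v$.

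The only mildly delicate point is the bookkeeping in item 7), where two sources of variation (the inverse Hessian and $\nabla_y f$) must be tracked simultaneously and then composed with the implicit map $\bar x\mapsto y^*(\bar x)$; all remaining items reduce to a single application of the triangle inequality combined with the uniform bounds in \cref{remark1} and the strong-convexity lower bound on $\nabla_y\nabla_y l$. No new ideas beyond standard quadratic/implicit-function manipulations are needed.
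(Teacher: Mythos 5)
Your proposal is correct and follows essentially the same route as the paper for items 1)--6): read off the Hessian $\nabla_v\nabla_v r=\nabla_y\nabla_y l$ for strong convexity and smoothness, expand differences and apply the Lipschitz bounds of \cref{Assumption2} for 2) and 6), use the closed form $v^*=[\nabla_y\nabla_y l]^{-1}\nabla_y f$ with $\|[\nabla_y\nabla_y l]^{-1}\|\leq 1/\mu$ for 4), and absorb $\|\bar v\|\leq C_{f_y}/\mu$ (via the projection set $\mathcal{V}$) into the constants $\bar L_f$ and $\bar L_r$ exactly as the paper does implicitly in its displays. The one genuine divergence is in item 7): for the $\bar x$-Lipschitzness of $v^*(\bar x)$ the paper simply cites Lemma 4 of \citep{yang2024simfbo}, whereas you derive it self-containedly by first establishing the $\bar L_v$-Lipschitzness of $v^*(\bar x,\bar y)$ in each argument via the resolvent identity $A^{-1}-B^{-1}=A^{-1}(B-A)B^{-1}$ (the same identity the paper uses for the $\bar y$ part) and then composing with the $L_y$-Lipschitz map $\bar x\mapsto y^*(\bar x)$, which reproduces the factor $(1+L_y)$ in $L_v=\bar L_v(1+L_y)$. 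Your version is slightly more work but makes the origin of the constant $L_v$ transparent and removes the external dependency; otherwise the two proofs coincide.
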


\begin{proof}
First of all, since \(\nabla_{{v}}\nabla_{{v}}r(\bar{x}, \bar{y}, \bar{v}) = \nabla_{{y}}\nabla_{{y}}l(\bar{x}, \bar{y})\), we know \(\mu I \preceq \nabla_{{y}}\nabla_{{y}}l(\bar{x}, \bar{y})\). Thus, according to \cref{Assumption1} and \cref{Assumption2}, we have:
\begin{align}
    \|\nabla_{{v}}\nabla_{{v}}r(\bar{x}, \bar{y}, \bar{v}_1) - \nabla_{{v}}\nabla_{{v}}r(\bar{x}, \bar{y}, \bar{v}_2)\| \leq \|\nabla_{{y}}\nabla_{{y}}l(\bar{x}, \bar{y})\| \|\bar{v}_1 - \bar{v}_2\| \leq C_{l_{yy}} \|\bar{v}_1 - \bar{v}_2\|.\label{eq25}
\end{align}
Then 1) is proved. 

Next, by using Lipschitz continuity in \cref{Assumption2}, we have:
\begin{align}
    &\|\nabla_{{v}}r(\bar{x}_1, \bar{y}_1, \bar{v}) - \nabla_{{v}}r(\bar{x}_2, \bar{y}_2, \bar{v})\|\nonumber\\ &\leq \|\nabla_{{y}}\nabla_{{y}}l(\bar{x}_1, \bar{y}_1) - \nabla_{{y}}\nabla_{{y}}l(\bar{x}_2, \bar{y}_2)\| \|\bar{v}\| + \|\nabla_{{y}}f(\bar{x}_1, \bar{y}_1) - \nabla_{{y}}f(\bar{x}_2, \bar{y}_2)\|\nonumber \\
    &\leq (L_{l,2}\|\bar{v}\| + L_{f,1})(\|\bar{x}_1 - \bar{x}_2\| + \|\bar{y}_1 - \bar{y}_2\|).\label{eq26}
\end{align}
Then 2) is proved. 

By \cref{Assumption2}, we can easily prove 3) as:
\begin{align}
\|\nabla_{{v}}r(\bar{x}, \bar{y}, \bar{v})\| \leq \|\nabla_{{y}}\nabla_{{y}}l(\bar{x}, \bar{y})\| \|\bar{v}\| + \|\nabla_{{y}}f(\bar{x}, \bar{y})\| \leq C_{l_{yy}} \|\bar{v}\| + C_{f_y}.\label{eq27}
\end{align}

Next, for \({v}^*(\bar{x}, \bar{y})\), we have:
\begin{align}
\nabla_{{v}}r(\bar{x}, \bar{y}, \hat{v}^*(\bar{x}, \bar{y})) = \nabla_{{y}}\nabla_{{y}}l(\bar{x}, \bar{y})\hat{v}^*(\bar{x}, \bar{y}) - \nabla_{{y}}f(\bar{x}, \bar{y}) = 0,\label{eq28}
\end{align}
which indicates that
\begin{align}
\|\hat{v}^*(\bar{x}, \bar{y})\| = \|\left(\nabla_{{y}}\nabla_{{y}}l(\bar{x}, \bar{y})\right)^{-1}\nabla_{{y}}f(\bar{x}, \bar{y})\| \leq \|\left(\nabla_{{y}}\nabla_{{y}}l(\bar{x}, \bar{y})\right)^{-1}\| \|\nabla_{{y}}f(\bar{x}, \bar{y})\| \leq \frac{C_{f_y}}{\mu}.\label{eq29}
\end{align}
Since \({v}^*(\bar{x})\) is a special case where \({v}^*(\bar{x}) = \hat{v}^*(\bar{x}, {y}^*(\bar{x}))\), 4) is proved. 

By \cref{Lemma2}, we have:
\begin{align}
&\|\bar{\nabla} f(\bar{x}_1, \bar{y}_1, \bar{v}_1) - \bar{\nabla} f(\bar{x}_2, \bar{y}_2, \bar{v}_2)\| \nonumber\\
&\leq \|\bar{\nabla} f(\bar{x}_1, \bar{y}_1, \bar{v}_1) - \bar{\nabla} f(\bar{x}_2, \bar{y}_2, \bar{v}_1)\| + \|\bar{\nabla} f(\bar{x}_2, \bar{y}_2, \bar{v}_1) - \bar{\nabla} f(\bar{x}_2, \bar{y}_2, \bar{v}_2)\| \nonumber \\
&\leq (L_{l,2} \|\bar{v}_1\| + L_{f,1})(\|\bar{x}_1 - \bar{x}_2\| + \|\bar{y}_1 - \bar{y}_2\|) + L_{l,1} \|\bar{v}_1 - \bar{v}_2\|\nonumber\\
&\leq\bar{L}_f(\|\bar{x}_1 - \bar{x}_2\| + \|\bar{y}_1 - \bar{y}_2\|+\|\bar{v}_1 - \bar{v}_2\|),\label{eqlf}
\end{align}
where $\bar{L}_f=\max\{\frac{C_{fy}L_{l,2}}{\mu} + L_{f,1}, L_{l,1}\}$. Thus, 5) is proved.

For the proof of 6), we have:
\begin{align}
    &\|\nabla_v r(\bar{x}_1, \bar{y}_1, \bar{v}_1) - \nabla_v r(\bar{x}_2, \bar{y}_2, \bar{v}_2)\| \notag \\
    &\!= \!\|\nabla_y\nabla_y l(\bar{x}_1, \bar{y}_1)\bar{v}_1 - \nabla_y f(\bar{x}_1, \bar{y}_1) 
    - (\nabla_y\nabla_y l(\bar{x}_2, \bar{y}_2)\bar{v}_2 - \nabla_y f(\bar{x}_2, \bar{y}_2))\| \notag \\
    &\!= \!\|(\nabla_y\nabla_y l(\bar{x}_1, \bar{y}_1) \!-\!\! \nabla_y\nabla_y l(\bar{x}_2, \bar{y}_2))\bar{v}_1 
    \!+\! \nabla_y\nabla_y l(\bar{x}_2, \bar{y}_2)(\bar{v}_1 \!-\! \bar{v}_2) 
    \!-\! (\nabla_y f(\bar{x}_1, \bar{y}_1) \!-\! \nabla_y f(\bar{x}_2, \bar{y}_2))\|\notag\\
    &\!\leq \!\|\nabla_y\nabla_y l(\bar{x}_1, \bar{y}_1) - \nabla_y\nabla_y l(\bar{x}_2, \bar{y}_2)\|  \|\bar{v}_1\|  + \|\nabla_y\nabla_y l(\bar{x}_2, \bar{y}_2)\| \|\bar{v}_1 - \bar{v}_2\| \notag\\
    &\quad+ \|\nabla_y f(\bar{x}_1, \bar{y}_1) - \nabla_y f(\bar{x}_2, \bar{y}_2)\|\notag\\
     &\overset{(a)}{\leq}\! L_{l,2} (\|\bar{x}_1 - \bar{x}_2\| + \|\bar{y}_1 - \bar{y}_2\|) \|\bar{v}_1\| + C_{l_{yy}}  \|\bar{v}_1 - \bar{v}_2\| + L_{f,1} (\|\bar{x}_1 - \bar{x}_2\| + \|\bar{y}_1 - \bar{y}_2\|)\notag\\
     &\!\overset{(b)}{\leq}\! \left( \frac{C_{f_y}L_{l,2}}{\mu} + L_{f,1}\right) (\|\bar{x}_1 - \bar{x}_2\| + \|\bar{y}_1 - \bar{y}_2\|) + C_{l_{yy}} \|\bar{v}_1 - \bar{v}_2\|\notag\\
     &\!\leq\! \bar{L}_r(\|\bar{x}_1 - \bar{x}_2\| + \|\bar{y}_1 - \bar{y}_2\|+\|\bar{v}_1 - \bar{v}_2\|),\label{eqvnew}
\end{align}
where (a) uses \cref{Assumption2}, (b) uses \cref{eq29} and $\bar{L}_r=\max\{\frac{C_{fy}L_{l,2}}{\mu} + L_{f,1}, C_{l_{yy}}\}$. Then the proof of 6) is finished.

The proof of the first part of 7) can refer to Lemma 4 in \citep{yang2024simfbo}; for the second part, we have:
\begin{align}
   & \big\|\hat{v}^*(\bar{x}, \bar{y}_1) - \hat{v}^*(\bar{x}, \bar{y}_2)\big\|\nonumber\\
   &= \big\|\left[\nabla_{{y}}\nabla_{{y}}l(\bar{x}, \bar{y}_1)\right]^{-1}\nabla_{{y}}f(\bar{x}, \bar{y}_1) - \left[\nabla_{{y}}\nabla_{{y}}l(\bar{x}, \bar{y}_2)\right]^{-1}\nabla_{{y}}f(\bar{x}, \bar{y}_2)\big\|\nonumber\\
    &\leq \big\|\left[\nabla_{{y}}\nabla_{{y}}l(\bar{x}, \bar{y}_1)\right]^{-1}\left(\nabla_{{y}}f(\bar{x}, \bar{y}_1) - \nabla_{{y}}f(\bar{x}, \bar{y}_2)\right)\big\|\nonumber \\
    &\quad + \big\|([\nabla_{{y}}\nabla_{{y}}l(\bar{x}, \bar{y}_1)]^{-1} - \left[\nabla_{{y}}\nabla_{{y}}l(\bar{x}, \bar{y}_2)\right]^{-1})\nabla_{{y}}f(\bar{x}, \bar{y}_2)\big\| \nonumber\\
    &\leq \frac{L_{f,1}}{\mu} \big\|\bar{y}_1 - \bar{y}_2\big\| + C_{f_y}\big\|\left[\nabla_{{y}}\nabla_{{y}}l(\bar{x}, \bar{y}_1)\right]^{-1}\left(\nabla_{{y}}\nabla_{{y}}l(\bar{x}, \bar{y}_2) - \nabla_{{y}}\nabla_{{y}}l(\bar{x}, \bar{y}_1)\right)\left[\nabla_{{y}}\nabla_{{y}}l(\bar{x}, \bar{y}_2)\right]^{-1}\!\!\big\| \nonumber\\
    &\leq \left(\frac{L_{f,1}}{\mu} + \frac{C_{f_y} L_{l,2}}{\mu^2}\right)\big\|\bar{y}_1 - \bar{y}_2\big\|.\label{eq30}
\end{align}
Thus, the second part of 7) is proved, and the proof of \cref{Lemma3} is complete.
\end{proof}

\begin{lemma}[\textbf{The Upper Bound of $\sum_{k=0}^{t}\|\mathbf{x}_k-\mathbf{1}\bar{x}_k\|^2$}]\label{Lemmanewx} Suppose \cref{Assumption3}, \cref{Assumption1}, and \cref{Assumption2} hold. Then, the consensus error for the primal variable satisfies:
\begin{align}
 \sum_{k=0}^{t} \|\mathbf{x}_k-\mathbf{1}\bar{x}_k\|^2 
&\leq \frac{2\Delta_x^0}{1 - \rho_W}  
+ \frac{8 \gamma_x^2 \rho_W (1 + \zeta_q^2)}{(1 - \rho_W)^2} \sum_{k=0}^{t} 
 \bar{q}_{k+1}^{-2} \|\bar{\nabla}F(\mathbf{x}_k, \mathbf{y}_k, \mathbf{v}_k)\|^2, \label{eqnewx39}
\end{align}
where \(\Delta_0^x=\|\mathbf{x}_0-\mathbf{1}\bar{x}_0\|^2\) is the initial consensus error for the primary variable $x$, which can be set to 0 with proper initialization.
\end{lemma}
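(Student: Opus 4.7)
The plan is to exploit the spectral contraction of $W-\mathbf{J}$ on the consensus subspace, combine it with a Young-style inequality, and then absorb the stepsize heterogeneity through the multiplicative bound provided by $\zeta_q^2$. First, I would apply $I-\mathbf{J}$ to the update rule $\mathbf{x}_{k+1}=W(\mathbf{x}_k-\gamma_x Q_{k+1}^{-1}\bar{\nabla}F(\mathbf{x}_k,\mathbf{y}_k,\mathbf{v}_k))$ and use the double-stochasticity identity $\mathbf{J}W=\mathbf{J}$ from \cref{Assumption3}, together with $(W-\mathbf{J})\mathbf{1}\bar{x}_k=0$, to derive the one-step recursion
\[
\mathbf{x}_{k+1}-\mathbf{1}\bar{x}_{k+1}=(W-\mathbf{J})(\mathbf{x}_k-\mathbf{1}\bar{x}_k)-\gamma_x(W-\mathbf{J})Q_{k+1}^{-1}\bar{\nabla}F(\mathbf{x}_k,\mathbf{y}_k,\mathbf{v}_k).
\]
Squaring, invoking $\|W-\mathbf{J}\|_2^2=\rho_W$, and applying the scalar Young inequality $\|a+b\|^2\le(1+\alpha)\|a\|^2+(1+1/\alpha)\|b\|^2$ with a free $\alpha>0$ yields a bound of the form $(1+\alpha)\rho_W\|\mathbf{x}_k-\mathbf{1}\bar{x}_k\|^2+(1+1/\alpha)\rho_W\gamma_x^2\|Q_{k+1}^{-1}\bar{\nabla}F\|^2$.

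Second, to convert $\|Q_{k+1}^{-1}\bar{\nabla}F\|^2$ into the quantity $\bar{q}_{k+1}^{-2}\|\bar{\nabla}F\|^2$ appearing in \cref{eqnewx39}, I would split $q_{i,k+1}^{-1}=\bar{q}_{k+1}^{-1}+(q_{i,k+1}^{-1}-\bar{q}_{k+1}^{-1})$ and invoke the supremum definition of $\zeta_q^2$ from \cref{sec3.2}. A straightforward $(a+b)^2\le 2a^2+2b^2$ argument gives the per-coordinate bound $(q_{i,k+1}^{-1})^2\le 2(1+\zeta_q^2)\bar{q}_{k+1}^{-2}$, and hence $\|Q_{k+1}^{-1}\bar{\nabla}F(\mathbf{x}_k,\mathbf{y}_k,\mathbf{v}_k)\|^2\le 2(1+\zeta_q^2)\bar{q}_{k+1}^{-2}\|\bar{\nabla}F(\mathbf{x}_k,\mathbf{y}_k,\mathbf{v}_k)\|^2$, cleanly isolating the role of stepsize heterogeneity.

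Third, I would tune $\alpha$ to produce a strict contraction. The choice $\alpha=(1-\sqrt{\rho_W})/\sqrt{\rho_W}$ gives $(1+\alpha)\rho_W=\sqrt{\rho_W}<1$ and $1+1/\alpha=1/(1-\sqrt{\rho_W})$. Summing the resulting recursion from $k=0$ to $t$ and regrouping yields
\[
(1-\sqrt{\rho_W})\sum_{k=0}^{t}\|\mathbf{x}_k-\mathbf{1}\bar{x}_k\|^2\le \Delta_x^{0}+\frac{2\rho_W\gamma_x^2(1+\zeta_q^2)}{1-\sqrt{\rho_W}}\sum_{k=0}^{t}\bar{q}_{k+1}^{-2}\|\bar{\nabla}F(\mathbf{x}_k,\mathbf{y}_k,\mathbf{v}_k)\|^2.
\]
Finally, dividing by $1-\sqrt{\rho_W}$ and using the elementary bounds $1/(1-\sqrt{\rho_W})\le 2/(1-\rho_W)$ and $1/(1-\sqrt{\rho_W})^2\le 4/(1-\rho_W)^2$ consolidates the constants into $2/(1-\rho_W)$ and $8\gamma_x^2\rho_W(1+\zeta_q^2)/(1-\rho_W)^2$, matching \cref{eqnewx39} exactly.

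The main obstacle I anticipate is not the recursion itself, which is a standard decentralized-optimization manipulation, but rather selecting the Young parameter $\alpha$ carefully enough to recover the clean constants stated in the lemma and correctly tracking the interplay between $\sqrt{\rho_W}$ and $\rho_W$ while simplifying. The stepsize-heterogeneity step is routine once the definition of $\zeta_q^2$ is unpacked.
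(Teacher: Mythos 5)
Your proposal is correct and follows essentially the same route as the paper's proof: the same $(W-\mathbf{J})$ recursion, the same split of $Q_{k+1}^{-1}$ into $\bar{q}_{k+1}^{-1}\mathbf{I}$ plus the discrepancy controlled by $\zeta_q^2$, and a Young parameter tuned to a strict contraction before summing. The only (immaterial) differences are your choice $\alpha=(1-\sqrt{\rho_W})/\sqrt{\rho_W}$ in place of the paper's $\lambda=(1-\rho_W)/(2\rho_W)$ and your direct summation of the recursion rather than unrolling a geometric series; both yield the identical constants $2/(1-\rho_W)$ and $8\gamma_x^2\rho_W(1+\zeta_q^2)/(1-\rho_W)^2$.
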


\begin{proof}
By the updating rule of the primal variable, we have:
\begin{align}
&\|\mathbf{x}_{t+1} - \mathbf{1} \bar{x}_{t+1} \|^2\nonumber\\
&= \left\| W \left( \mathbf{x}_t - \gamma_x Q_{t+1}^{-1} \bar{\nabla} F(\mathbf{x}_t, \mathbf{y}_t, \mathbf{v}_t) \right) 
- \mathbf{J} \left( \mathbf{x}_t - \gamma_x Q_{t+1}^{-1} \bar{\nabla} F(\mathbf{x}_t, \mathbf{y}_t, \mathbf{v}_t) \right) \right\|^2 \nonumber \\
&\overset{(a)}{\leq} (1 + \lambda) \rho_W  \|\mathbf{x}_t - \mathbf{1} \bar{x}_t\|^2
+2 \left( 1 + \frac{1}{\lambda} \right)\rho_W  \gamma_x^2 \bar{q}_{t+1}^{-2} \|\bar{\nabla} F(\mathbf{x}_t, \mathbf{y}_t, \mathbf{v}_t) \|^2 \nonumber \\
&\quad + 2\left( 1 + \frac{1}{\lambda} \right)\rho_W  \gamma_x^2  
 \left\| \left(Q_{t+1}^{-1} - \bar{q}_{t+1}^{-1} \mathbf{I} \right) \bar{\nabla} F(\mathbf{x}_t, \mathbf{y}_t, \mathbf{v}_t) \right\|^2\nonumber\\
&\overset{(b)}{\leq} \frac{1 + \rho_W}{2}  \|\mathbf{x}_t - \mathbf{1} \bar{x}_t\|^2 
+ \frac{2 \gamma_x^2 (1 + \rho_W) \rho_W}{1 - \rho_W} \bar{q}_{t+1}^{-2} \|\bar{\nabla} F(\mathbf{x}_t, \mathbf{y}_t, \mathbf{v}_t) \|^2 \nonumber \\
&\quad + \frac{2 \gamma_x^2 (1 + \rho_W) \rho_W}{1 - \rho_W} 
 \left\| \left(Q_{t+1}^{-1} - \bar{q}_{t+1}^{-1} \mathbf{I} \right) \bar{\nabla} F(\mathbf{x}_t, \mathbf{y}_t, \mathbf{v}_t) \right\|^2, \label{eq40}
\end{align}
where (a) uses Young's inequality and we take $\lambda=\frac{1-\rho_W}{2\rho_W}$ in (b).

By the definition of \(\zeta_q^2\) in \cref{sec3.2}, we have:
\begin{align}
\left\| \left( Q_{t+1}^{-1} - \bar{q}_{t+1}^{-1} \mathbf{I} \right) \bar{\nabla} F(\mathbf{x}_t, \mathbf{y}_t, \mathbf{v}_t) \right\|^2  
\leq \zeta_q^2 \bar{q}_{t+1}^{-2} \|\bar{\nabla} F(\mathbf{x}_t, \mathbf{y}_t, \mathbf{v}_t)\|^2.\label{eq41}
\end{align}

Thus, summing over \(k = 0, \ldots, t-1\), we have:
\begin{align}
&\sum_{k=0}^{t-1} \|\mathbf{x}_{k+1} - \mathbf{1} \bar{x}_{k+1} \|^2 \nonumber\\
&\leq \sum_{k=0}^{t-1}\left(\frac{1 + \rho_W}{2} \right)^k \|\mathbf{x}_0 - \mathbf{1} \bar{x}_0\|^2 
+ \frac{4 \gamma_x^2 \rho_W(1 + \zeta_q^2)}{1 - \rho_W}\sum_{k=0}^{t-1} \left(\frac{1 + \rho_W}{2} \right)^k\bar{q}_{k+1}^{-2} \|\bar{\nabla} F(\mathbf{x}_k, \mathbf{y}_k, \mathbf{v}_k) \|^2\nonumber\\
&\leq \frac{2}{1 - \rho_W} \|\mathbf{x}_0 - \mathbf{1} \bar{x}_0\|^2 
+ \frac{8 \gamma_x^2 \rho_W (1 + \zeta_q^2)}{(1 - \rho_W)^2} \sum_{k=0}^{t-1} 
 \bar{q}_{k+1}^{-2} \|\bar{\nabla}F(\mathbf{x}_k, \mathbf{y}_k, \mathbf{v}_k)\|^2.\label{eq42}
\end{align}
Thus, we can get the result in \cref{eqnewx39}.
\end{proof}

\begin{lemma}\label{Lemma6}
Let variables \(x\), \(y\), and \(v\) be updated over \(T_1\), \(T_2\), and \(T_3\) iterations, respectively. Suppose that the sequences \(\{\bar{m}^x_{t}\}\), \(\{\bar{m}^y_{t}\}\), and \(\{\bar{m}^v_{t}\}\) are generated by \cref{alg1}. For any constants \(C_{m^x} \geq \bar{m}^x_{0}\), \(C_{m^y} \geq \bar{m}^y_{0}\), and \(C_{m^v} \geq \bar{m}^v_{0}\), the following statements hold:
\begin{enumerate}
    \item[(1)] Either \(\bar{m}^x_{t} \leq C_{m^x}\) for any \(t \leq T_1\), or \(\exists k_1 \leq T_1\) such that \(\bar{m}^x_{k_1} \leq C_{m^x}\), \(\bar{m}^x_{k_1+1} > C_{m^x}\).
    \item[(2)] Either \(\bar{m}^y_{t} \leq C_{m^y}\) for any \(t \leq T_2\), or \(\exists k_2 \leq T_2\) such that \(\bar{m}^y_{k_2} \leq C_{m^y}\), \(\bar{m}^y_{k_2+1} > C_{m^y}\).
    \item[(3)] Either \(\bar{m}^v_{t} \leq C_{m^v}\) for any \(t \leq T_3\), or \(\exists k_3 \leq T_3\) such that \(\bar{m}^v_{k_3} \leq C_{m^v}\), \(\bar{m}^v_{k_3+1} > C_{m^v}\).
\end{enumerate}
\end{lemma}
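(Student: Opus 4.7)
The plan is to reduce each of the three claims to an elementary dichotomy on a non-decreasing scalar sequence. The first step is to establish that the averaged accumulator $\bar{m}^b_t$ is monotonically non-decreasing in $t$ for every $b \in \{x, y, v\}$. From the tracking rule written in the algorithm description, $[m^b_{i,t+1}]^2 = \sum_{j=1}^n w_{ij}\bigl([m^b_{j,t}]^2 + \|g^b_{j,t}\|^2\bigr)$, so dropping the non-negative gradient-norm term yields $[m^b_{i,t+1}]^2 \geq \sum_j w_{ij} [m^b_{j,t}]^2$. Applying the power-mean inequality (RMS dominates arithmetic mean for non-negative numbers with weights summing to one) gives $m^b_{i,t+1} \geq \sum_j w_{ij} m^b_{j,t}$. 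Summing over $i$ and invoking the column-stochasticity of $W$ from \cref{Assumption3} produces $\sum_i m^b_{i,t+1} \geq \sum_j m^b_{j,t}$, that is, $\bar{m}^b_{t+1} \geq \bar{m}^b_t$.

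With monotonicity in hand, each of the three statements is identical in structure; I write the argument for (1) and invoke the same for (2) and (3). By hypothesis $C_{m^x} \geq \bar{m}^x_0$, so $\bar{m}^x_0 \leq C_{m^x}$. Define
\begin{equation*}
k_1 \;:=\; \min\bigl\{\, t \in \{0, 1, \dots, T_1\} \;:\; \bar{m}^x_{t+1} > C_{m^x} \,\bigr\},
\end{equation*}
with the convention $\min \emptyset = +\infty$. If $k_1 = +\infty$, then $\bar{m}^x_{t+1} \leq C_{m^x}$ for every $t < T_1$; combining this with $\bar{m}^x_0 \leq C_{m^x}$ gives $\bar{m}^x_t \leq C_{m^x}$ for all $t \leq T_1$, which is the first alternative of (1). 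Otherwise, $k_1 \leq T_1$ exists; by the definition of $k_1$ we have $\bar{m}^x_{k_1 + 1} > C_{m^x}$, and by minimality $\bar{m}^x_{k_1} \leq C_{m^x}$ (using monotonicity to propagate the inequality to index $k_1$ itself), which is the second alternative. Repeating the argument with $(C_{m^y}, T_2)$ and $(C_{m^v}, T_3)$ delivers (2) and (3).

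There is no substantive obstacle here: the entire content of the lemma is the monotonicity of $\bar{m}^b_t$ under the tracking scheme, after which a one-line set-minimum yields the dichotomy. The only subtlety worth emphasizing is that the tracking acts on squared accumulators while $\bar{m}^b_t$ averages the accumulators themselves, which is precisely why the RMS-versus-mean step is needed to transfer monotonicity from $\tfrac{1}{n}\sum_i [m^b_{i,t}]^2$ to $\tfrac{1}{n}\sum_i m^b_{i,t}$.
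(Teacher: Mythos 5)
Your proof is correct and follows essentially the same route as the paper's: a first-crossing dichotomy applied to the monotone non-decreasing sequence $\bar{m}^b_t$, carried out for $b=x$ and repeated verbatim for $y$ and $v$. The only difference is that you explicitly justify the monotonicity of the averaged accumulator under the tracking scheme (via the RMS-versus-arithmetic-mean inequality together with the double stochasticity of $W$), a step the paper simply asserts; this is a harmless and welcome addition rather than a different approach.
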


\begin{proof}
The proof is analogous to that of Lemma 4.1 in \citep{ward2020adagrad}. We will demonstrate the argument for part (1) concerning \(\bar{m}^x_{t}\); the proofs for parts (2) and (3) follow similarly.

Assume that \(\bar{m}^x_{T_1} > C_{m^x}\). Since \(C_{m^x} \geq \bar{m}^x_{0}\) and the sequence \(\{\bar{m}^x_{t}\}\) is monotonically increasing, there must exist an iteration \(k_1 \leq T_1\) where \(\bar{m}^x_{k_1} \leq C_{m^x}\) and \(\bar{m}^x_{k_1+1} > C_{m^x}\).
If no such \(k_1\) exists, then it must be that \(\bar{m}^x_{t} \leq C_{m^x}\) for all \(t \leq T_1\).
This completes the proof for part (1). 
\end{proof}

\section{Proofs of \cref{theorem 1} and \cref{collary1}}
\subsection{Notation}

We stack the gradient accumulators as: 
\begin{equation}
    \mathbf{m}^x_t := [m^x_{1,t}, m^x_{2,t}, \dots, m^x_{n,t}]^\top \in \mathbb{R}^{n \times p},\quad
    \mathbf{m}^y_t:= [m^y_{1,t}, m^y_{2,t}, \dots, m^y_{n,t}]^\top \in \mathbb{R}^{n \times q},\nonumber
    \end{equation}
    \begin{equation}
        \mathbf{m}^v_t := [m^v_{1,t}, m^v_{2,t}, \dots, m^v_{n,t}]^\top \in \mathbb{R}^{n \times q}.
    \end{equation}

Similarly, the gradient vectors are stacked as:
\begin{equation}
\begin{aligned}
    \!\bar{\nabla} {F}(\mathbf{x}_t, \mathbf{y}_t, \mathbf{v}_t) &:=
    \begin{bmatrix}
         \bar{\nabla}f_1(x_{1,t}, y_{1,t}, v_{1,t}),\bar{\nabla}f_2(x_{2,t}, y_{2,t}, v_{2,t}),\cdots, \bar{\nabla}f_n(x_{n,t}, y_{n,t}, v_{n,t})
    \end{bmatrix}^\top\!\!\!,\\
    \!\nabla_y {L}(\mathbf{x}_t, \mathbf{y}_t) &:=
    \begin{bmatrix}
        \nabla_yl_1(x_{1,t}, y_{1,t}),\nabla_yl_2(x_{2,t}, y_{2,t}),\cdots,  \nabla_yl_n(x_{n,t}, y_{n,t})
    \end{bmatrix}^\top\!\!\!,\\
    \!\nabla_v {R}(\mathbf{x}_t, \mathbf{y}_t, \mathbf{v}_t) &:=
    \begin{bmatrix}
        \nabla_v r_1(x_{1,t}, y_{1,t}, v_{1,t}),\nabla_v r_2(x_{2,t}, y_{2,t}, v_{2,t}),\cdots,\nabla_v r_n(x_{n,t}, y_{n,t}, v_{n,t})
    \end{bmatrix}^\top\!\!\!,\\
    \!{\nabla}_y {F}(\mathbf{x}_t, \mathbf{y}_t) &:=
    \begin{bmatrix}
         {\nabla}_yf_1(x_{1,t}, y_{1,t}),{\nabla}_yf_2(x_{2,t}, y_{2,t}),\cdots, {\nabla}_yf_n(x_{n,t}, y_{n,t})
    \end{bmatrix}^\top\!\!\!.
\end{aligned}
\end{equation}

For notational simplicity, we also use $\mathbf{g}_t^x$, $\mathbf{g}_t^y$, and $\mathbf{g}_t^v$ to denote $\bar{\nabla} F(\mathbf{x}_t, \mathbf{y}_t, \mathbf{v}_t)$, $\nabla_y L(\mathbf{x}_t, \mathbf{y}_t)$, and $\nabla_v R(\mathbf{x}_t, \mathbf{y}_t, \mathbf{v}_t)$, respectively.

The stepsize discrepancy vectors can be defined as:
\begin{equation}\label{eqquz}
\begin{aligned}
&\tilde{\mathbf{q}}_t^{-1} := \big[  q_{1,t}^{-1} - \bar{q}_t^{-1}, q_{2,t}^{-1} - \bar{q}_t^{-1},\cdots, q_{n,t}^{-1} - \bar{q}_t^{-1}\big]^\top,\\
&\tilde{\mathbf{u}}_t^{-1} := \big[u_{1,t}^{-1} - \bar{u}_t^{-1},u_{2,t}^{-1} - \bar{u}_t^{-1}, \cdots, u_{n,t}^{-1} - \bar{u}_t^{-1} \big]^\top,\\
&\tilde{\mathbf{z}}_t^{-1} := \big[ z_{1,t}^{-1} - \bar{z}_t^{-1},z_{2,t}^{-1} - \bar{z}_t^{-1},\cdots, z_{n,t}^{-1} - \bar{z}_t^{-1} \big]^\top.
\end{aligned}
\end{equation}

Additionally, for ease of presentation, we define the following notations:
\begin{equation}\label{eqnewlemmaproof1}
\begin{aligned}
\!\!\!\!\!\begin{cases}
\bar{x}_t := \frac{1}{n} \sum_{i=1}^n x_{i,t},\quad \bar{y}_t := \frac{1}{n} \sum_{i=1}^n y_{i,t}, \quad \bar{v}_t := \frac{1}{n} \sum_{i=1}^n v_{i,t},\\
\bar{q}_t := \frac{1}{n} \sum_{i=1}^n q_{i,t},\quad \bar{u}_t := \frac{1}{n} \sum_{i=1}^n u_{i,t}, \quad \bar{z}_t := \frac{1}{n} \sum_{i=1}^n z_{i,t}.
\end{cases}
\end{aligned}
\end{equation}

We define the following metrics to represent the level of stepsize inconsistency for the primal, dual, and auxiliary variables:
\begin{align}
\zeta_q^2 := \!\!\!\!\sup_{i \in [n], t > 0} \!\!\!\!
\frac{\big(q_{i,t}^{-1} - \bar{q}_t^{-1}\big)^2}{\big(\bar{q}_t^{-1}\big)^2},\quad
\zeta_u^2 := \!\!\!\!\sup_{i \in [n], t > 0} \!\!\!\!
\frac{\big(u_{i,t}^{-1} - \bar{u}_t^{-1}\big)^2}{\big(\bar{u}_t^{-1}\big)^2},\quad
\zeta_z^2 := \!\!\!\!\sup_{i \in [n], t > 0} \!\!\!\!
\frac{\big(z_{i,t}^{-1} - \bar{z}_t^{-1}\big)^2}{\big(\bar{z}_t^{-1}\big)^2},\notag\\
\sigma_q^2 := \!\!\!\!\inf_{i \in [n], t > 0} \!\!\!\!
\frac{\big(q_{i,t}^{-1} - \bar{q}_t^{-1}\big)^2}{\big(\bar{q}_t^{-1}\big)^2},\quad
\sigma_u^2 := \!\!\!\!\inf_{i \in [n], t > 0} \!\!\!\!
\frac{\big(u_{i,t}^{-1} - \bar{u}_t^{-1}\big)^2}{\big(\bar{u}_t^{-1}\big)^2},\quad
\sigma_z^2 := \!\!\!\!\inf_{i \in [n], t > 0} \!\!\!\!
\frac{\big(z_{i,t}^{-1} - \bar{z}_t^{-1}\big)^2}{\big(\bar{z}_t^{-1}\big)^2}.\label{eq20}
\end{align}

Below, we define several preset constants for notational convenience at their first use. We first define some Lipschitzness parameters for \(\Phi(\bar{x})\) as:
\begin{equation}
  L_\Phi := \left( L_{f,1} + \frac{L_{l,2} C_{f_y}}{\mu} \right) \left( 1 + \frac{C_{l_{xy}}}{\mu} \right)^2,  \label{eq50}
\end{equation}
\begin{equation}
    \bar{L} := \max \left\{ 2 \left( \frac{C_{f_y}^2 L_{l,2}^2}{\mu^2} + L_{f,1}^2 \right)^{\frac{1}{2}}, \, \sqrt{2} C_{l_{xy}} \right\}.\label{eq51}
\end{equation}

Next, we define the following constants as thresholds for parameters \(\bar{m}_{t}^x\), \(\bar{m}_{t}^y\), and \(\bar{m}_{t}^v\) as:
\begin{equation}
    C_{m^x} := \max \left\{ \frac{8n\gamma_x L_\Phi \left(1 +\zeta_q^2\right)}{\bar{z}_0}, \bar{m}^x_0\right\},\label{eq52}
\end{equation}
\begin{equation}
   C_{m^y} := \max \left\{ \frac{2\gamma_y(1+\zeta_u^2)(\mu + L_{l,1})}{\sqrt{1+\sigma_u^2}}, \frac{\gamma_y \mu L_{l,1} }{\mu + L_{l,1}}, \bar{m}^y_0, 64 a_0^2, 1 \right\}, \label{eq53}
\end{equation}
\begin{equation}
   C_{m^v} := \max \left\{ \frac{2\gamma_v(1+\zeta_z^2)(\mu + C_{l_{yy}})}{\sqrt{1+\sigma_z^2}}, \frac{2\mu C_{l_{yy}}\gamma_v}{\mu + C_{l_{yy}}}, \bar{m}^v_0, 64 a_0^2, 1, C_{l_{yy}} \right\}, \label{eq54}
\end{equation}
\begin{equation}
    C_{z} := C_{m^y} + C_{m^v}.\label{eq55}
\end{equation}

The constant \(a_0\) is defined as:
\begin{align}
a_0 &:= \left( \frac{1}{\mu^2}\left(8+\frac{4(\mu + C_{l_{yy}})^2}{\mu C_{l_{yy}}} \right)\left(\frac{nL_{l,2}C_{f_y}}{\mu}+L_{f,1}\right)^2+1\right)  \notag \\
&\quad \cdot\left(\frac{32 \gamma_x^2 \rho_W(1 + \zeta_q^2)}{n(1 - \rho_W)^2} \left(\frac{(\mu+L_{l,1})^3}{\gamma_y\mu^2}+\frac{2L_{l,1}(\mu+L_{l,1})}{\gamma_y\mu}\right)+\frac{8\gamma_x^2  L_y^2\left( 1 + \zeta_q^2 \right) (\mu + L_{l,1})^2}{n\gamma_y^2\mu L_{l,1}\sqrt{1+\sigma_u^2}}\right)\nonumber\\
&\quad +\frac{16(\mu + C_{l_{yy}})}{n\gamma_v\mu^2\sqrt{1+\sigma_u^2}}\left(\frac{nL_{l,2}C_{f_y}}{\mu}\!+\!L_{f,1}\right)^2\nonumber\\
&\quad\cdot\left(\frac{4 \gamma_x^2 \rho_W (1 + \zeta_q^2)}{\bar{z}_0^2(1 - \rho_W)^2} \left(\frac{(\mu+L_{l,1})^2\sqrt{1+\sigma_u^2}}{n\mu^2 }+\frac{2L_{l,1}\sqrt{1+\sigma_u^2}}{n\mu }\right)\right.\nonumber\\
&\quad +\!\left.\frac{\gamma_x^2L_y^2(1+\zeta_q^2)(\mu + L_{l,1})}{n\bar{z}_0\gamma_y \mu L_{l,1}}\!\right)
\!+ \!\frac{64 \gamma_x^2 \rho_W \bar{L}_r^2(\mu + C_{l_{yy}})(1 + \zeta_q^2)}{n\mu C_{l_{yy}}\gamma_v(1 - \rho_W)^2}\! +\! \frac{2\gamma_x^2L_v^2(1+\zeta_q^2)(\mu + C_{l_{yy}})^2}{n\mu C_{l_{yy}}\bar{m}_0^v\gamma_v^2\sqrt{1+\sigma_z^2}}.\label{eq56}
\end{align}

\subsection{Descent in Objective Function}\label{secc2}
\begin{lemma}[\textbf{Descent Lemma}]\label{Lemma4} Suppose \cref{Assumption3}, \cref{Assumption1}, and \cref{Assumption2} hold. Then, no matter $k_1$ in \cref{Lemma6} exists or not, we always have:
\begin{align}
&\Phi(\bar{x}_{t+1})\nonumber\\
&\leq\Phi(\bar{x}_t)-\frac{\gamma_x \bar{q}_{t+1}^{-1}}{8}\|\nabla \Phi(\bar{x}_t)\|^2+\frac{\gamma_x\bar{q}_{t+1}^{-1}(2\gamma_x \bar{L}_{f}^2L_\Phi \bar{q}_{t+1}^{-1} \left(1 +\zeta_q^2\right)+\bar{L}_{f}^2)}{n} \Delta_t\nonumber\\
&\quad -\left(\frac{\gamma_x}{2}-{2n\gamma_x^2 L_\Phi \bar{q}_{t+1}^{-1} \left(1 +\zeta_q^2\right)}\right)\frac{\|\nabla_x f(\bar{x}_t, \bar{y}_t, \bar{v}_t)\|^2}{\bar{q}_{t+1}}
+ \frac{\bar{L}^2\gamma_x }{\mu^2} \frac{\|\nabla_v r(\bar{x}_t, \bar{y}_t, \bar{v}_t)\|^2}{\bar{q}_{t+1}} \nonumber \\
&\quad +\left( \frac{\gamma_x\bar{L}^2}{2\mu^2} \!\!+\!\! \frac{\gamma_x\bar{L}^2}{\mu^4} \left( \frac{L_{l,2} C_{f_y}}{\mu} \!\!+ \!\!L_{f,1} \right)^2 \right) \frac{\|\nabla_y l(\bar{x}_t, \bar{y}_t)\|^2}{\bar{q}_{t+1}}\!+\! 2\gamma_x \bar{q}_{t+1}^{-1}  \left\| \frac{\big(\tilde{\mathbf{q}}_{t+1}^{-1}\big)^\top}{n\bar{q}_{t+1}^{-1}} \bar{\nabla} F(\mathbf{x}_t, \mathbf{y}_t, \mathbf{v}_t) \right\|^2\!.\label{eqnew1}
\end{align}
Additionally, if $k_1$ in \cref{Lemma6} exists, we have:
\begin{align}
&\Phi(\bar{x}_{t+1})\nonumber\\
&\leq\Phi(\bar{x}_t)-\frac{\gamma_x \bar{q}_{t+1}^{-1}}{8}\|\nabla \Phi(\bar{x}_t)\|^2+\frac{\gamma_x\bar{q}_{t+1}^{-1}(2\gamma_x \bar{L}_{f}^2L_\Phi \bar{q}_{t+1}^{-1} \left(1 +\zeta_q^2\right)+\bar{L}_{f}^2)}{n} \Delta_t\nonumber\\
&\quad -\frac{\gamma_x }{4}\frac{\|\nabla_x f(\bar{x}_t, \bar{y}_t, \bar{v}_t)\|^2}{\bar{q}_{t+1}}
+ \frac{\bar{L}^2\gamma_x }{\mu^2} \frac{\|\nabla_v r(\bar{x}_t, \bar{y}_t, \bar{v}_t)\|^2}{\bar{q}_{t+1}}+ 2\gamma_x \bar{q}_{t+1}^{-1}  \left\| \frac{\big(\tilde{\mathbf{q}}_{t+1}^{-1}\big)^\top}{n\bar{q}_{t+1}^{-1}} \bar{\nabla} F(\mathbf{x}_t, \mathbf{y}_t, \mathbf{v}_t) \right\|^2\nonumber\\
&\quad+\left(\! \frac{\gamma_x\bar{L}^2}{2\mu^2} + \frac{\gamma_x\bar{L}^2}{\mu^4} \!\!\left(\! \frac{L_{l,2} C_{f_y}}{\mu} + L_{f,1} \!\right)^2 \right)\!\! \frac{\|\nabla_y l(\bar{x}_t, \bar{y}_t)\|^2}{\bar{q}_{t+1}},\label{eqnew2}
\end{align}
where $\bar{L} := \max \left\{ 2 \left( \frac{C_{f_y}^2 L_{l,2}^2}{\mu^2} + L_{f,1}^2 \right)^{\frac{1}{2}}, \, \sqrt{2} C_{l_{xy}} \right\}$.
\end{lemma}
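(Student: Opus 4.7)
The starting point is the $L_\Phi$-smoothness of $\Phi$ from \cref{Lemma2}(1), which gives
\[
\Phi(\bar{x}_{t+1}) \leq \Phi(\bar{x}_t) + \langle\nabla\Phi(\bar{x}_t),\, \bar{x}_{t+1}-\bar{x}_t\rangle + \tfrac{L_\Phi}{2}\|\bar{x}_{t+1}-\bar{x}_t\|^2.
\]
Into this I would substitute the decomposition \eqref{eqcontentnew6} of the averaged update into the centralized part $-\gamma_x\bar{q}_{t+1}^{-1}\cdot\tfrac{1}{n}\mathbf{1}^\top\bar{\nabla}F(\mathbf{x}_t,\mathbf{y}_t,\mathbf{v}_t)$ and the stepsize-inconsistency perturbation $-\tfrac{\gamma_x}{n}(\tilde{\mathbf{q}}_{t+1}^{-1})^\top\bar{\nabla}F(\mathbf{x}_t,\mathbf{y}_t,\mathbf{v}_t)$. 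Applying Young's inequality separately to the inner product and to the quadratic peels off the perturbation contribution $2\gamma_x\bar{q}_{t+1}^{-1}\|(\tilde{\mathbf{q}}_{t+1}^{-1})^\top\bar{\nabla}F/(n\bar{q}_{t+1}^{-1})\|^2$ that appears in the conclusion, while leaving a centralized descent inequality to analyze in detail.

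For the centralized piece, I would use the polarization identity $-\langle u,w\rangle = -\tfrac{1}{2}\|u\|^2 - \tfrac{1}{2}\|w\|^2 + \tfrac{1}{2}\|u-w\|^2$ with $u=\nabla\Phi(\bar{x}_t)$ and $w=\tfrac{1}{n}\mathbf{1}^\top\bar{\nabla}F$. This simultaneously extracts a negative $\|\nabla\Phi(\bar{x}_t)\|^2/\bar{q}_{t+1}$ piece, a reserve $-\tfrac{1}{2}\|\tfrac{1}{n}\mathbf{1}^\top\bar{\nabla}F\|^2/\bar{q}_{t+1}$ that I later convert into $-\|\nabla_xf(\bar{x}_t,\bar{y}_t,\bar{v}_t)\|^2/\bar{q}_{t+1}$ via $\|\tfrac{1}{n}\mathbf{1}^\top\bar{\nabla}F\|^2 \geq \tfrac{1}{2}\|\nabla_xf(\bar{x}_t,\bar{y}_t,\bar{v}_t)\|^2 - \bar{L}_f^2\Delta_t/n$, and a residual $\tfrac{1}{2}\|\nabla\Phi(\bar{x}_t)-\tfrac{1}{n}\mathbf{1}^\top\bar{\nabla}F\|^2$ that splits, via the triangle inequality, into (a) the consensus gap $\|\tfrac{1}{n}\mathbf{1}^\top\bar{\nabla}F-\bar{\nabla}f(\bar{x}_t,\bar{y}_t,\bar{v}_t)\|^2 \leq \bar{L}_f^2\Delta_t/n$ (by \cref{Lemma3}(5)) and (b) the bilevel approximation gap $\|\nabla\Phi(\bar{x}_t)-\bar{\nabla}f(\bar{x}_t,\bar{y}_t,\bar{v}_t)\|^2$. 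For (b), I would plug the implicit formula \eqref{eqcontent2} into the difference and separate $y$- and $v$-errors; strong convexity via \cref{Assumption1} and \cref{Lemma3}(1) converts $\|y^*(\bar{x}_t)-\bar{y}_t\|$ and $\|\hat{v}^*(\bar{x}_t,\bar{y}_t)-\bar{v}_t\|$ into $\|\nabla_yl(\bar{x}_t,\bar{y}_t)\|/\mu$ and $\|\nabla_vr(\bar{x}_t,\bar{y}_t,\bar{v}_t)\|/\mu$, and collecting the Lipschitz constants of the Hessians and $\nabla f$ into the composite $\bar{L}$ yields precisely the $\bar{L}^2/\mu^2$- and $\big(\bar{L}^2/(2\mu^2)+(\bar{L}^2/\mu^4)(L_{l,2}C_{f_y}/\mu+L_{f,1})^2\big)$-weighted terms in \eqref{eqnew1}.

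The quadratic $\tfrac{L_\Phi}{2}\|\bar{x}_{t+1}-\bar{x}_t\|^2$ is then bounded using $\|Q_{t+1}^{-1}\bar{\nabla}F\|^2 \leq (1+\zeta_q^2)\bar{q}_{t+1}^{-2}\|\bar{\nabla}F\|^2$ from \eqref{eq20}, followed by $\|\bar{\nabla}F\|^2 \leq 2n\|\nabla_xf(\bar{x}_t,\bar{y}_t,\bar{v}_t)\|^2 + 2\bar{L}_f^2\Delta_t$; combining this with the Young reserve gives the coefficient $\gamma_x/2 - 2n\gamma_x^2L_\Phi\bar{q}_{t+1}^{-1}(1+\zeta_q^2)$ in front of $\|\nabla_xf(\bar{x}_t,\bar{y}_t,\bar{v}_t)\|^2/\bar{q}_{t+1}$, matching \eqref{eqnew1}. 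For the sharper \eqref{eqnew2} I would invoke \cref{Lemma6}: once $k_1$ exists and $t>k_1$, $\bar{m}^x_{t+1}>C_{m^x}$, so $\bar{q}_{t+1}^{-1} = 1/(\bar{m}^x_{t+1}\bar{z}_{t+1}) \leq 1/(C_{m^x}\bar{z}_0)$, and the threshold $C_{m^x} \geq 8n\gamma_xL_\Phi(1+\zeta_q^2)/\bar{z}_0$ from \eqref{eq52} forces $2n\gamma_x^2L_\Phi\bar{q}_{t+1}^{-1}(1+\zeta_q^2) \leq \gamma_x/4$, upgrading the coefficient to $-\gamma_x/4$. The main obstacle I expect is coordinating the three bookkeeping layers—bilevel approximation, consensus, and stepsize inconsistency—so that the $\|\nabla_yl\|^2/\bar{q}_{t+1}$ and $\|\nabla_vr\|^2/\bar{q}_{t+1}$ remainders come out in exactly the form later absorbable through \cref{Lemma11} and \cref{lemmanew12}, while preserving a uniform $-\gamma_x\bar{q}_{t+1}^{-1}\|\nabla\Phi(\bar{x}_t)\|^2/8$ descent in both stages of the two-stage threshold argument.
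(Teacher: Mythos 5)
Your plan follows essentially the same route as the paper's proof: $L_\Phi$-smoothness, splitting the averaged update into a centralized part and a stepsize-discrepancy perturbation, a polarization identity to extract the $-\|\nabla\Phi(\bar{x}_t)\|^2$ and $-\|\nabla_x f(\bar{x}_t,\bar{y}_t,\bar{v}_t)\|^2$ terms, conversion of the $y$- and $v$-tracking errors into $\|\nabla_y l\|^2$ and $\|\nabla_v r\|^2$ via strong convexity, and the threshold $C_{m^x}\ge 8n\gamma_x L_\Phi(1+\zeta_q^2)/\bar{z}_0$ to upgrade the coefficient to $-\gamma_x/4$ in the second stage. The only (immaterial) difference is that the paper inserts $\pm\tfrac{1}{n}\mathbf{1}^\top\bar{\nabla}F(\mathbf{1}\bar{x}_t,\mathbf{1}\bar{y}_t,\mathbf{1}\bar{v}_t)$ \emph{before} applying polarization, so the consensus gap is charged against $\|\nabla\Phi(\bar{x}_t)\|^2$ by a separate Young step and the bilevel approximation gap retains its factor $\tfrac12$, whereas splitting the residual $\tfrac12\|\nabla\Phi(\bar{x}_t)-\tfrac{1}{n}\mathbf{1}^\top\bar{\nabla}F\|^2$ after polarization, as you propose, costs a factor of $2$ on the $\|\nabla_y l\|^2$ and $\|\nabla_v r\|^2$ coefficients without changing the substance of the lemma.
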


\begin{proof}
By the smoothness of $\Phi$, we have:
\begin{equation}
    \Phi(\bar{x}_{t+1}) - \Phi(\bar{x}_t) \leq \langle \nabla \Phi(\bar{x}_t), \bar{x}_{t+1} - \bar{x}_t \rangle 
+ \frac{L_\Phi}{2} \|\bar{x}_{t+1} - \bar{x}_t\|^2.\label{eq32}
\end{equation}
Noticing that the scalars $\bar{q}_t, \bar{u}_t$, and $\bar{z}_t$ are random variables, we then have:
\begin{align}
\frac{\Phi(\bar{x}_{t+1}) - \Phi(\bar{x}_t)}{\gamma_x \bar{q}_{t+1}^{-1}}
&\leq -\Bigg\langle \nabla \Phi(\bar{x}_t), \frac{\mathbf{1}^\top}{n} \bar{\nabla} F(\mathbf{x}_t, \mathbf{y}_t, \mathbf{v}_t) \Bigg\rangle - \Bigg\langle \nabla \Phi(\bar{x}_t), \frac{\big(\tilde{\mathbf{q}}_{t+1}^{-1}\big)^\top}{n\bar{q}_{t+1}^{-1}} \bar{\nabla} F(\mathbf{x}_t, \mathbf{y}_t, \mathbf{v}_t) \Bigg\rangle \nonumber \\
&\quad + \frac{\gamma_x L_\Phi}{2\bar{q}_{t+1}^{-1}}  \bigg\|\bigg(\frac{\bar{q}_{t+1}^{-1}\mathbf{1}^\top}{n} 
+ \frac{\big(\tilde{\mathbf{q}}_{t+1}^{-1}\big)^\top}{n} \bigg)\bar{\nabla} F(\mathbf{x}_t, \mathbf{y}_t, \mathbf{v}_t)\bigg\|^2.\label{eq33}
\end{align}
Then, we bound the inner-product terms on the right-hand side (RHS). Firstly,
\begin{align}
&-\Bigg\langle \nabla \Phi(\bar{x}_t), \frac{\mathbf{1}^\top}{n} \bar{\nabla} F(\mathbf{x}_t, \mathbf{y}_t, \mathbf{v}_t) \Bigg\rangle \nonumber \\
&= -\Bigg\langle \nabla \Phi(\bar{x}_t), \frac{\mathbf{1}^\top}{n} \bar{\nabla} F(\mathbf{x}_t, \mathbf{y}_t, \mathbf{v}_t) - \frac{\mathbf{1}^\top}{n}\bar{\nabla} F(\mathbf{1}\bar{x}_t, \mathbf{1}\bar{y}_t, \mathbf{1}\bar{v}_t) + \frac{\mathbf{1}^\top}{n}\bar{\nabla} F(\mathbf{1}\bar{x}_t, \mathbf{1}\bar{y}_t, \mathbf{1}\bar{v}_t) \Bigg\rangle \nonumber \\
&\leq \frac{1}{4}\|\nabla \Phi(\bar{x}_t)\|^2
+ \bigg\|\frac{\mathbf{1}^\top}{n} \bar{\nabla} F(\mathbf{x}_t, \mathbf{y}_t, \mathbf{v}_t) - \frac{\mathbf{1}^\top}{n}\bar{\nabla} F(\mathbf{1}\bar{x}_t, \mathbf{1}\bar{y}_t, \mathbf{1}\bar{v}_t)\bigg\|^2 \nonumber \\
&\quad + \frac{1}{2}\bigg(\|\nabla \Phi(\bar{x}_t) -\bar{\nabla} f(\bar{x}_t, \bar{y}_t, \bar{v}_t)\|^2
- \|\nabla \Phi(\bar{x}_t)\|^2 - \|\bar{\nabla} f(\bar{x}_t, \bar{y}_t, \bar{v}_t)\|^2\bigg) \nonumber \\
&\leq -\frac{1}{4}\|\nabla \Phi(\bar{x}_t)\|^2
+ \bigg\|\frac{\mathbf{1}^\top}{n} \bar{\nabla} F(\mathbf{x}_t, \mathbf{y}_t, \mathbf{v}_t) - \frac{\mathbf{1}^\top}{n}\bar{\nabla} F(\mathbf{1}\bar{x}_t, \mathbf{1}\bar{y}_t, \mathbf{1}\bar{v}_t)\bigg\|^2 \nonumber\\
&+ \frac{1}{2}\|\nabla \Phi(\bar{x}_t) -\bar{\nabla} f(\bar{x}_t, \bar{y}_t, \bar{v}_t)\|^2- \frac{1}{2}\|\nabla_x f(\bar{x}_t, \bar{y}_t,\bar{v}_t)\|^2.\label{eq34}
\end{align}
Additionally, the gradient approximation error satisfies:
\begin{align}
&\|\nabla \Phi(\bar{x}_t) -\bar{\nabla} f(\bar{x}_t, \bar{y}_t, \bar{v}_t)\|^2 \nonumber\\
&= \|\bar{\nabla} f(\bar{x}_t, y^*(\bar{x}_t), v^*(\bar{x}_t)) - \bar{\nabla} f(\bar{x}_t, \bar{y}_t, \bar{v}_t)\|^2 \nonumber\\
&\leq 2 \|\bar{\nabla} f(\bar{x}_t, y^*(\bar{x}_t), v^*(\bar{x}_t)) - \bar{\nabla} f(\bar{x}_t, \bar{y}_t, v^*(\bar{x}_t)\|^2 + 2 \|\bar{\nabla} f(\bar{x}_t, \bar{y}_t, v^*(\bar{x}_t) - \bar{\nabla} f(\bar{x}_t, \bar{y}_t, \bar{v}_t)\|^2 \nonumber\\
&\leq 4 \|\nabla_x \nabla_y l(\bar{x}_t, y^*(\bar{x}_t)) v^*(\bar{x}_t) - \nabla_x \nabla_y l(\bar{x}_t, \bar{y}_t) v^*(\bar{x}_t)\|^2 \nonumber\\
&\quad + 4 \|\nabla_x f(\bar{x}_t, y^*(\bar{x}_t)) - \nabla_x f(\bar{x}_t, \bar{y}_t)\|^2 
+ 2 \|\nabla_x \nabla_y l(\bar{x}_t, \bar{y}_t) (v^*(\bar{x}_t) - \bar{v}_t)\|^2 \nonumber\\
&\overset{(a)}{\leq} 4 \left( \frac{C_{f_y}^2 L_{l,2}^2}{\mu^2} + L_{f,1}^2 \right) \|\bar{y}_t - y^*(\bar{x}_t)\|^2 
+ 2 C_{l_{xy}}^2 \|\bar{v}_t - v^*(\bar{x}_t)\|^2 \nonumber\\
&\leq \bar{L}^2 \left( \|\bar{y}_t - y^*(\bar{x}_t)\|^2 + \|\bar{v}_t - v^*(\bar{x}_t)\|^2 \right)\nonumber\\
&\overset{(b)}{\leq} \frac{\bar{L}^2}{\mu^2} \left\| \nabla_y l(\bar{x}_t, \bar{y}_t) - \nabla_y l(\bar{x}_t, y^*(\bar{x}_t)) \right\|^2 
+ \frac{\bar{L}^2}{\mu^2} \left\| \nabla_v r(\bar{x}_t, \bar{y}_t, \bar{v}_t) - \nabla_v r(\bar{x}_t, \bar{y}_t, v^*(\bar{x}_t)) \right\|^2 \nonumber \\
&\overset{(c)}{\leq} \frac{\bar{L}^2}{\mu^2} \|\nabla_y l(\bar{x}_t, \bar{y}_t)\|^2 
+ \frac{2\bar{L}^2}{\mu^2} \|\nabla_v r(\bar{x}_t, \bar{y}_t, \bar{v}_t)\|^2 \notag\\
&\quad+ \frac{2\bar{L}^2}{\mu^2} \left\| \nabla_v r(\bar{x}_t, \bar{y}_t, v^*(\bar{x}_t)) - \nabla_v r(\bar{x}_t, y^*(\bar{x}_t), v^*(\bar{x}_t)) \right\|^2 \nonumber \\
&\overset{(d)}{\leq} \frac{\bar{L}^2}{\mu^2} \|\nabla_y l(\bar{x}_t, \bar{y}_t)\|^2 
+ \frac{2\bar{L}^2}{\mu^2} \|\nabla_v r(\bar{x}_t, \bar{y}_t, \bar{v}_t)\|^2 
+ \frac{2\bar{L}^2}{\mu^2} \left( \frac{L_{l,2} C_{f_y}}{\mu} + L_{f,1} \right)^2 \|\bar{y}_t - y^*(\bar{x}_t)\|^2 \nonumber \\
&\overset{(e)}{\leq} \left( \frac{\bar{L}^2}{\mu^2} + \frac{2\bar{L}^2}{\mu^4} \left( \frac{L_{l,2} C_{f_y}}{\mu} + L_{f,1} \right)^2 \right) \|\nabla_y l(\bar{x}_t, \bar{y}_t)\|^2 
+ \frac{2\bar{L}^2}{\mu^2} \|\nabla_v r(\bar{x}_t, \bar{y}_t, \bar{v}_t)\|^2,\label{eq35}
\end{align}
where (a) is using 4) in \cref{Lemma3} and $\bar{L} := \max \left\{ 2 \left( \frac{C_{f_y}^2 L_{l,2}^2}{\mu^2} + L_{f,1}^2 \right)^{\frac{1}{2}}, \, \sqrt{2} C_{l_{xy}} \right\}$; (b) and (e) use the strong convexity; (c) and (e) result from \(\nabla_y l(\bar{x}, y^*(\bar{x})) = 0\) and \(\nabla_v r(\bar{x}, y^*(\bar{x}), v^*(\bar{x})) = 0\); (d) uses \cref{Lemma3}. Then substituting \cref{eq35} to \cref{eq34}, we have:
\begin{align}
&-\Bigg\langle \nabla \Phi(\bar{x}_t), \frac{\mathbf{1}^\top}{n} \bar{\nabla} F(\mathbf{x}_t, \mathbf{y}_t, \mathbf{v}_t) \Bigg\rangle \nonumber \\
&\overset{(a)}{\leq}-\frac{1}{4}\|\nabla \Phi(\bar{x}_t)\|^2+\frac{\bar{L}_{f}^2}{n}\Delta_t+ \left( \frac{\bar{L}^2}{2\mu^2} + \frac{\bar{L}^2}{\mu^4} \left( \frac{L_{l,2} C_{f_y}}{\mu} + L_{f,1} \right)^2 \right) \|\nabla_y l(\bar{x}_t, \bar{y}_t)\|^2 
\nonumber\\
&\quad+ \frac{\bar{L}^2}{\mu^2} \|\nabla_v r(\bar{x}_t, \bar{y}_t, \bar{v}_t)\|^2-\frac{1}{2}\|\nabla_x f(\bar{x}_t, \bar{y}_t, \bar{v}_t)\|^2,\label{eq36}
\end{align}
where (a) uses \cref{Lemma3} and $\Delta_t := \|\mathbf{x}_t - \mathbf{1} \bar{x}_t\|^2 + \|\mathbf{y}_t - \mathbf{1} \bar{y}_t\|^2+ \|\mathbf{v}_t - \mathbf{1} \bar{v}_t\|^2$ is the consensus error for primal, dual, and auxiliary variables.

For the second inner-product in \cref{eq33}, using Young’s inequality, we get:
\begin{align}
&- \Bigg\langle \nabla \Phi(\bar{x}_t), \frac{\big(\tilde{\mathbf{q}}_{t+1}^{-1}\big)^\top}{n\bar{q}_{t+1}^{-1}} \bar{\nabla} F(\mathbf{x}_t, \mathbf{y}_t, \mathbf{v}_t) \Bigg\rangle  \leq \frac{1}{8}\|\nabla \Phi(\bar{x}_t)\|^2 
+ 2  \left\| \frac{\big(\tilde{\mathbf{q}}_{t+1}^{-1}\big)^\top}{n\bar{q}_{t+1}^{-1}} \bar{\nabla} F(\mathbf{x}_t, \mathbf{y}_t, \mathbf{v}_t) \right\|^2. \label{eq37}
\end{align}

For the last term on the RHS of \cref{eq33}, recalling the definition of stepsize inconsistency in \cref{eq20}, we have:
\begin{align}
&\frac{\gamma_x L_\Phi}{2\bar{q}_{t+1}^{-1}}  \bigg\|\bigg(\frac{\bar{q}_{t+1}^{-1}\mathbf{1}^\top}{n} 
+ \frac{\big(\tilde{\mathbf{q}}_{t+1}^{-1}\big)^\top}{n} \bigg)\bar{\nabla} F(\mathbf{x}_t, \mathbf{y}_t, \mathbf{v}_t)\bigg\|^2 \nonumber\\
&\leq \frac{\gamma_x L_\Phi \bar{q}_{t+1}^{-1} \left(1 +\zeta_q^2\right)}{n} \|\bar{\nabla} F(\mathbf{x}_t, \mathbf{y}_t, \mathbf{v}_t)\|^2\nonumber\\
&\overset{(a)}{\leq} {2n\gamma_x L_\Phi \bar{q}_{t+1}^{-1} \left(1 +\zeta_q^2\right)} \|\bar{\nabla} f(\bar{x}_t, \bar{y}_t, \bar{v}_t)\|^2+\frac{2\gamma_x \bar{L}_{f}^2 L_\Phi\bar{q}_{t+1}^{-1} \left(1 +\zeta_q^2\right)}{n}\Delta_t,\label{eq38} 
\end{align}
where (a) uses $ \|\bar{\nabla} F(\mathbf{x}_t, \mathbf{y}_t, \mathbf{v}_t)\|^2\leq 2\|\bar{\nabla} F(\mathbf{1}\bar{x}_t, \mathbf{1}\bar{y}_t, \mathbf{1}\bar{v}_t)\|^2 + 2\bar{L}_{f}^2 (\|\mathbf{x}_t - \mathbf{1}\bar{x}_t\|^2 + \|\mathbf{y}_t - \mathbf{1}\bar{y}_t\|^2+ \|\mathbf{v}_t - \mathbf{1}\bar{v}_t\|^2)$, $\|\bar{\nabla} F(\mathbf{1}\bar{x}_t, \mathbf{1}\bar{y}_t, \mathbf{1}\bar{v}_t)\|^2 \leq\|\bar{\nabla} F(\mathbf{1}\bar{x}_t, \mathbf{1}\bar{y}_t, \mathbf{1}\bar{v}_t)\|_{\rm F}^2=\|n\bar{\nabla} f(\bar{x}_t,\bar{y}_t,\bar{v}_t)\|^2$, and \cref{Lemma3}. By plugging \cref{eq36}, \cref{eq37}, and \cref{eq38} into \cref{eq33}, we obtain \cref{eqnew1}. Moreover, if $k_1$ in \cref{Lemma6} exists, then for $t\geq k_1$, we have $\bar{m}^x>C_{m^x}>\frac{8n\gamma_x L_\Phi \left(1 +\zeta_q^2\right)}{\bar{z}_0}$. Therefore, from \cref{eqnew1} we can get \cref{eqnew2}. 
\end{proof}

\subsection{The Upper Bound of $\sum_{k=k_2}^t \frac{ \|\nabla_y L(\mathbf{x}_k, \mathbf{y}_k)\|^2}{\bar{u}_{k+1}}$}
\begin{lemma}\label{Lemma7} Under \cref{Assumption1} and \cref{Assumption2}, for \cref{alg1}, suppose the total iteration rounds is \(T\). If \(k_2\) in \cref{Lemma6} exists within \(T\) iterations, for all integers \(t \in [k_2, T]\), we have:
\begin{align}
    &\sum_{k=k_2}^t \frac{ \|\nabla_y L(\mathbf{x}_k, \mathbf{y}_k)\|^2}{\bar{u}_{k+1}}\nonumber\\
    &\leq\frac{ 2nC_{m^y}^2(\mu + L_{l,1})}{\mu^2\gamma_y\sqrt{1+\sigma_u^2}} +\left(\frac{8\Delta_0^x}{1-\rho_W}+\frac{32 \gamma_x^2 \rho_WC_{m^x}^2 (1 + \zeta_q^2)}{\bar{q}_0^2(1 - \rho_W)^2} \right)\left(\frac{(\mu+L_{l,1})^3}{\gamma_y\mu^2}+\frac{2L_{l,1}(\mu+L_{l,1})}{\gamma_y\mu}\right)\nonumber\\
&\quad+ \frac{32 \gamma_x^2 \rho_W(1 + \zeta_q^2)}{\bar{z}_0(1 - \rho_W)^2} \left(\frac{(\mu+L_{l,1})^3}{\gamma_y\mu^2}+\frac{2L_{l,1}(\mu+L_{l,1})}{\gamma_y\mu}\right)+\frac{8\gamma_x^2  L_y^2\left( 1 + \zeta_q^2 \right) (\mu + L_{l,1})^2}{\gamma_y^2\mu L_{l,1}\bar{z}_0\sqrt{1+\sigma_u^2}} \nonumber\\
&\quad+ \left[\frac{32 \gamma_x^2 \rho_W(1 + \zeta_q^2)}{(1 - \rho_W)^2} \left(\frac{(\mu+L_{l,1})^3}{\gamma_y\mu^2}+\frac{2L_{l,1}(\mu+L_{l,1})}{\gamma_y\mu}\right)\right.\nonumber\\
&\quad\left.+\frac{8\gamma_x^2  L_y^2\left( 1 + \zeta_q^2 \right) (\mu + L_{l,1})^2}{\gamma_y^2\mu L_{l,1}\sqrt{1+\sigma_u^2}}\right]\!\! \sum_{k=\min\{k_1,k_2\}}^t \frac{\|\bar{\nabla} F(\mathbf{x}_k,\mathbf{y}_k,\mathbf{v}_k)\|^2}{[\bar{m}^x_{k+1}]^2\max \big\{\bar{m}^v_{k+1}, \bar{m}^y_{k+1}\big\}}.\label{eq58}
\end{align}
\end{lemma}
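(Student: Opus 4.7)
\textbf{Proof plan for \cref{Lemma7}.} The strategy is to telescope a one--step recursion for $\|\bar{y}_k-y^*(\bar x_k)\|^2$, extract a negative term proportional to $\|\nabla_y l(\bar x_k,\bar y_k)\|^2/\bar u_{k+1}$ from the strongly convex lower--level descent, and then convert everything back to $\|\nabla_y L(\mathbf{x}_k,\mathbf{y}_k)\|^2/\bar u_{k+1}$ via a consensus correction.

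\textbf{Step 1: one--step recursion on the lower--level suboptimality.} I would write
\[
\bar y_{k+1} = \bar y_k - \gamma_y\,\bar u_{k+1}^{-1}\,\tfrac{\mathbf{1}^\top}{n}\nabla_y L(\mathbf{x}_k,\mathbf{y}_k) - \gamma_y\,\tfrac{(\tilde{\mathbf{u}}_{k+1}^{-1})^\top}{n}\nabla_y L(\mathbf{x}_k,\mathbf{y}_k),
\]
insert $\pm\,\nabla_y l(\bar x_k,\bar y_k)$ inside the aggregated gradient, and expand $\|\bar y_{k+1}-y^*(\bar x_{k+1})\|^2$ using
\[
\|\bar y_{k+1}-y^*(\bar x_{k+1})\|^2\le (1+\alpha_k)\|\bar y_{k+1}-y^*(\bar x_k)\|^2+(1+\alpha_k^{-1})L_y^2\|\bar x_{k+1}-\bar x_k\|^2,
\]
with a suitable $\alpha_k$, invoking the $L_y$--Lipschitzness of $y^*(\cdot)$ from \cref{Lemma3}. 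The standard strongly convex / smooth inequality gives
\[
\|\bar y_k-\eta_k\nabla_y l(\bar x_k,\bar y_k)-y^*(\bar x_k)\|^2\le \Big(1-\tfrac{2\mu L_{l,1}}{\mu+L_{l,1}}\eta_k\Big)\|\bar y_k-y^*(\bar x_k)\|^2-\tfrac{2\eta_k}{\mu+L_{l,1}}\|\nabla_y l(\bar x_k,\bar y_k)\|^2
\]
provided $\eta_k\le 2/(\mu+L_{l,1})$; this is where the threshold $C_{m^y}\ge 2\gamma_y(1+\zeta_u^2)(\mu+L_{l,1})/\sqrt{1+\sigma_u^2}$ in \cref{eq53} is used---once $k\ge k_2$ we have $\bar u_{k+1}\ge C_{m^y}$ and the effective stepsize is in the admissible range.

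\textbf{Step 2: account for gradient mismatch and stepsize inconsistency.} The difference $\tfrac{\mathbf{1}^\top}{n}\nabla_y L(\mathbf{x}_k,\mathbf{y}_k)-\nabla_y l(\bar x_k,\bar y_k)$ is bounded via the $L_{l,1}$--smoothness of $\nabla_y l_i$ by a constant times $(\|\mathbf{x}_k-\mathbf{1}\bar x_k\|^2+\|\mathbf{y}_k-\mathbf{1}\bar y_k\|^2)/n$, and the stepsize--inconsistency term is controlled via the $\zeta_u$ bound in \cref{eq20}. Both absorb cleanly into coefficients of the form $(\mu+L_{l,1})^3/(\gamma_y\mu^2)$ and $L_{l,1}(\mu+L_{l,1})/(\gamma_y\mu)$ after dividing through by the effective stepsize.

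\textbf{Step 3: telescope and convert the $\|\bar x_{k+1}-\bar x_k\|^2$ term.} Summing the recursion from $k=k_2$ to $t$, the telescoped suboptimality contributes at most $(\mu+L_{l,1})\|\bar y_{k_2}-y^*(\bar x_{k_2})\|^2/(\gamma_y\sqrt{1+\sigma_u^2})$ in front of the $\|\nabla_y l\|^2/\bar u_{k+1}$ series; since $\|\bar y_{k_2}-y^*(\bar x_{k_2})\|^2\le \|\nabla_y l(\bar x_{k_2},\bar y_{k_2})\|^2/\mu^2\le (\bar m^y_{k_2+1})^2/\mu^2\le C_{m^y}^2/\mu^2$ (modulo an $n$ factor from the aggregation), this yields the leading $2nC_{m^y}^2(\mu+L_{l,1})/(\mu^2\gamma_y\sqrt{1+\sigma_u^2})$ term. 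The residual $L_y^2\sum_k\|\bar x_{k+1}-\bar x_k\|^2$ contributes the terms with $L_y^2$, after using $\|\bar x_{k+1}-\bar x_k\|^2\le \gamma_x^2(1+\zeta_q^2)\|\bar\nabla F\|^2/(n\bar q_{k+1}^2)$ and bounding $1/\bar q_{k+1}=1/(\bar m^x_{k+1}\max\{\bar m^y_{k+1},\bar m^v_{k+1}\})$ by separating the factor $1/\bar z_{k+1}$ (crude bound by $1/\bar z_0$ before the $y$--threshold is crossed, and by the running $\bar z_{k+1}$ afterwards). Finally, the consensus error $\sum_{k=0}^{t}\|\mathbf{x}_k-\mathbf{1}\bar x_k\|^2$ is replaced by the right--hand side of \cref{Lemmanewx}, producing the $\Delta_0^x/(1-\rho_W)$ and $\gamma_x^2\rho_W C_{m^x}^2/\bar q_0^2$ constants together with a residual sum that starts at $\min\{k_1,k_2\}$ and supplies the last bracket on the right of \cref{eq58}.

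\textbf{Main obstacle.} The principal difficulty is coordinating the three simultaneous time--varying denominators $\bar u_{k+1}$, $\bar z_{k+1}$, and $\bar q_{k+1}$ so that no coefficient acquires a spurious dependence on problem parameters or on the iteration index. In particular, one must treat separately the segment $k<\min\{k_1,k_2\}$---where crude bounds $\bar m^x_{k+1}\le C_{m^x}$ and $\bar z_{k+1}\ge \bar z_0$ yield the ``initial'' constants involving $C_{m^x}^2/\bar q_0^2$ and $1/\bar z_0$---from the segment $k\ge \min\{k_1,k_2\}$, where the running gradient sum is retained. Keeping the constants $C_{m^x},C_{m^y}$ consistent with their definitions in \cref{eq52}--\cref{eq54}, so that the recursion contraction factor $1-\tfrac{2\mu L_{l,1}}{(\mu+L_{l,1})\bar u_{k+1}}\gamma_y$ remains in $[0,1)$, is the bookkeeping step that requires the most care; everything else reduces to Young's inequality, smoothness, and the strong convexity of $l$ in $y$.
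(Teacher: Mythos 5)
Your skeleton matches the paper's: a one-step contraction on the lower-level suboptimality driven by strong convexity and the threshold $C_{m^y}$, telescoping from $k_2$, the $L_y$-Lipschitzness of $y^*(\cdot)$ to absorb the drift $\|\bar x_{k+1}-\bar x_k\|^2$, \cref{Lemmanewx} for the $\mathbf{x}$-consensus error, and the split at $\min\{k_1,k_2\}$ with the crude bound $\bar m^x_{k+1}\le C_{m^x}$ on the early segment. However, there is one structural choice where you diverge from the paper and where your plan has a genuine gap: you run the recursion on the \emph{averaged} iterate, i.e.\ on $\|\bar y_k-y^*(\bar x_k)\|^2$, extracting a negative term in $\|\nabla_y l(\bar x_k,\bar y_k)\|^2$, and then propose to "convert back" to the stacked quantity $\|\nabla_y L(\mathbf{x}_k,\mathbf{y}_k)\|^2$ via a consensus correction. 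That conversion costs an additive term of order $L_{l,1}^2\big(\|\mathbf{x}_k-\mathbf{1}\bar x_k\|^2+\|\mathbf{y}_k-\mathbf{1}\bar y_k\|^2\big)$ per step, and you assert in Step~2 that the $\|\mathbf{y}_k-\mathbf{1}\bar y_k\|^2$ piece "absorbs cleanly." It does not: at this point in the lemma hierarchy the only consensus bound available without circularity is the one for $\mathbf{x}$ (\cref{Lemmanewx}), whose right-hand side involves only $\sum_k\bar q_{k+1}^{-2}\|\bar\nabla F\|^2$. The analogous bound for $\mathbf{y}$ would involve $\sum_k\bar u_{k+1}^{-2}\|\nabla_y L(\mathbf{x}_k,\mathbf{y}_k)\|^2$ — essentially the quantity \cref{Lemma7} is trying to control — which is why the paper proves \cref{Lemma7} \emph{before} establishing the $\mathbf{y}$- and $\mathbf{v}$-consensus bounds. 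A self-bounding absorption is conceivable, but it would force $C_{m^y}$ to depend on $\rho_W$ and would not reproduce the stated bound \cref{eq58}, which contains no $y$-consensus constants at all.

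The paper sidesteps this entirely by choosing a different Lyapunov quantity: it tracks the stacked distance $\frac{1}{n}\|\mathbf{y}_k-\mathbf{1}y^*(\bar x_k)\|^2$ (see \cref{eq63}) and applies the coercivity inequality agent-by-agent to the inner products $\langle\nabla_y l_i(x_{i,k},y_{i,k})-\nabla_y l_i(x_{i,k},y^*(\bar x_k)),\,y_{i,k}-y^*(\bar x_k)\rangle$. Because each agent's iterate is measured against the common target $y^*(\bar x_k)$, the $y$-consensus and $y$-optimization errors are bundled into a single contracting quantity, the negative term comes out directly as a multiple of $\|\nabla_y L(\mathbf{x}_k,\mathbf{y}_k)\|^2$, and the only residual heterogeneity is the difference $\nabla_y l_i(x_{i,k},\cdot)-\nabla_y l_i(\bar x_k,\cdot)$, which is controlled by $\|\mathbf{x}_k-\mathbf{1}\bar x_k\|^2$ alone. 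To repair your argument, replace $\|\bar y_k-y^*(\bar x_k)\|^2$ with $\frac{1}{n}\|\mathbf{y}_k-\mathbf{1}y^*(\bar x_k)\|^2$ throughout Step~1; the rest of your plan (Steps~2--3 and the bookkeeping over the two time segments) then goes through essentially as you describe.
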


\begin{proof}
For \(k_2 \leq t < T\), we have \(\bar{m}^y_{k_2} \leq C_{m^y}\) and \(\bar{m}^y_{t+1} > C_{m^y}\). For any positive scalar \(\bar{\lambda}_{t+1}\), using Young’s inequality, we have:
\begin{equation}
   \frac{1}{n}\|\mathbf{y}_{t+1} - \mathbf{1}y^*(\bar{x}_{t+1})\|^2 
\leq \frac{(1 + \bar{\lambda}_{t+1})}{n} \|\mathbf{y}_{t+1} - \mathbf{1}y^*(\bar{x}_t)\|^2 
+ \left( 1 + \frac{1}{\bar{\lambda}_{t+1}} \right) \|y^*(\bar{x}_t) - y^*(\bar{x}_{t+1})\|^2. \label{eq62}
\end{equation}

For the first term on the RHS of \cref{eq62}, we have:
\begin{align}
&\frac{1}{n}\|\mathbf{y}_{t+1} - \mathbf{1}y^*(\bar{x}_t)\|^2 \nonumber\\
&= \frac{1}{n}\left\|\mathbf{y}_{t} - \gamma_y U^{-1}_{t+1} \nabla_y L(\mathbf{x}_t, \mathbf{y}_t) - \mathbf{1}y^*(\bar{x}_t) \right\|^2 \nonumber \\
&= \frac{1}{n}\|\mathbf{y}_{t} - \mathbf{1}y^*(\bar{x}_t))\|^2 
+ \frac{\gamma_y^2}{n} \|U^{-1}_{t+1} \nabla_y L(\mathbf{x}_t, \mathbf{y}_t)\|^2 
- \frac{1}{n}\sum_{i=1}^n 2\bar{u}_{t+1}^{-1} \gamma_y\langle \nabla_y l_i(x_{i,t}, y_{i,t}), y_{i,t}-y^*(\bar{x}_t) \rangle \nonumber \\
&\quad-\frac{1}{n}\sum_{i=1}^n 2\bar{u}_{t+1}^{-1} \gamma_y\Bigg\langle \left(\frac{u_{i,t+1}^{-1}-\bar{u}_{t+1}^{-1}}{\bar{u}_{t+1}^{-1}}\right)\nabla_y l_i(x_{i,t}, y_{i,t}), y_{i,t}-y^*(\bar{x}_t)\Bigg\rangle \nonumber\\
&\leq \frac{1}{n}\|\mathbf{y}_{t} - \mathbf{1}y^*(\bar{x}_t))\|^2 
+ \frac{\gamma_y^2}{n} \|U^{-1}_{t+1} \nabla_y L(\mathbf{x}_t, \mathbf{y}_t)\|^2 \nonumber\\
&\quad- \frac{1}{n}\sum_{i=1}^n 2\bar{u}_{t+1}^{-1} \gamma_y\langle \nabla_y l_i(x_{i,t}, y_{i,t})- \nabla_y l_i(x_{i,t}, y^*(\bar{x}_t)), y_{i,t}-y^*(\bar{x}_t) \rangle\nonumber\\
&\quad - \frac{1}{n}\sum_{i=1}^n 2\bar{u}_{t+1}^{-1} \gamma_y\langle \nabla_y l_i(x_{i,t}, y^*(\bar{x}_t))-\nabla_y l_i(\bar{x}_{t}, y^*(\bar{x}_t)), y_{i,t}-y^*(\bar{x}_t) \rangle \nonumber \\
&\quad-\frac{1}{n}\sum_{i=1}^n 2\bar{u}_{t+1}^{-1} \gamma_y\Bigg\langle \left(\frac{u_{i,t+1}^{-1}-\bar{u}_{t+1}^{-1}}{\bar{u}_{t+1}^{-1}}\right)(\nabla_y l_i(x_{i,t}, y_{i,t})- \nabla_y l_i(x_{i,t}, y^*(\bar{x}_t))), y_{i,t}-y^*(\bar{x}_t)\Bigg\rangle \nonumber\\
&\quad -\frac{1}{n}\sum_{i=1}^n 2\bar{u}_{t+1}^{-1} \gamma_y\Bigg\langle \left(\frac{u_{i,t+1}^{-1}-\bar{u}_{t+1}^{-1}}{\bar{u}_{t+1}^{-1}}\right)\left(\nabla_y l_i(x_{i,t}, y^*(\bar{x}_t))-\nabla_y l_i(\bar{x}_{t}, y^*(\bar{x}_t))\right), y_{i,t}-y^*(\bar{x}_t)\Bigg\rangle \nonumber\\
&\overset{(a)}{\leq} \left( \frac{1}{n} - \frac{\gamma_y\bar{u}^{-1}_{t+1} \mu L_{l,1}}{n(\mu + L_{l,1})} \right) \|\mathbf{y}_{t} - \mathbf{1}y^*(\bar{x}_t))\|^2+ \frac{\gamma_y^2\bar{u}^{-2}_{t+1}(1+\zeta_u^2)}{n}\|\nabla_y L(\mathbf{x}_t, \mathbf{y}_t)\|^2 \nonumber\\
&\quad- \frac{2\gamma_y\bar{u}_{t+1}^{-1}\sqrt{1+\sigma_u^2}}{n(\mu + L_{l,1})} \|\nabla_y L(\mathbf{x}_t, \mathbf{y}_t)\pm\nabla_yL(\mathbf{1}\bar{x}_t, \mathbf{1}{y}^*(\bar{x}_t))-\nabla_y L(\mathbf{x}_t, \mathbf{1}{y}^*(\bar{x}_t))\|^2 \nonumber \\
&\quad+ \frac{\gamma_y\bar{u}_{t+1}^{-1}(\mu+L_{l,1})\sqrt{1+\sigma_u^2}}{n\mu L_{l,1}}\|\nabla_y L(\mathbf{x}_t, \mathbf{1}{y}^*(\bar{x}_t))-\nabla_yL(\mathbf{1}\bar{x}_t, \mathbf{1}{y}^*(\bar{x}_t))\|^2 \nonumber \\
&\overset{(b)}{\leq} \left( \frac{1}{n} - \frac{\gamma_y\bar{u}^{-1}_{t+1} \mu L_{l,1}}{n(\mu + L_{l,1})} \right) \|\mathbf{y}_{t} - \mathbf{1}y^*(\bar{x}_t))\|^2\notag\\
&\quad+ \frac{\gamma_y\bar{u}^{-1}_{t+1}}{n} \left( \gamma_y\bar{u}^{-1}_{t+1}(1+\zeta_u^2) - \frac{\sqrt{1+\sigma_u^2}}{\mu + L_{l,1}} \right) \|\nabla_y L(\mathbf{x}_t, \mathbf{y}_t)\|^2\nonumber\\
&\quad+\! \left(\frac{\gamma_y\bar{u}_{t+1}^{-1}(\mu+L_{l,1})\sqrt{1+\sigma_u^2}}{n\mu L_{l,1}}\!\!+\!\!\frac{2\gamma_y\bar{u}_{t+1}^{-1}\sqrt{1+\sigma_u^2}}{n(\mu + L_{l,1})}\right)\|\nabla_y L(\mathbf{x}_t, \mathbf{1}{y}^*(\bar{x}_t))\!-\!\!\nabla_yL(\mathbf{1}\bar{x}_t, \mathbf{1}{y}^*(\bar{x}_t))\|^2 \nonumber \\
&\overset{(c)}{\leq} \left( \frac{1}{n} - \frac{\gamma_y\bar{u}^{-1}_{t+1} \mu L_{l,1}}{n(\mu + L_{l,1})} \right) \|\mathbf{y}_{t} - \mathbf{1}y^*(\bar{x}_t))\|^2\notag\\
&\quad+ \frac{\gamma_y\bar{u}^{-1}_{t+1}}{n} \left( \gamma_y\bar{u}^{-1}_{t+1}(1+\zeta_u^2) - \frac{\sqrt{1+\sigma_u^2}}{\mu + L_{l,1}} \right) \|\nabla_y L(\mathbf{x}_t, \mathbf{y}_t)\|^2 \nonumber\\
&\quad + \left(\frac{\gamma_y\bar{u}_{t+1}^{-1}L_{l,1}(\mu+L_{l,1})\sqrt{1+\sigma_u^2}}{n\mu }+\frac{2\gamma_y\bar{u}_{t+1}^{-1}L_{l,1}^2\sqrt{1+\sigma_u^2}}{n(\mu + L_{l,1})}\right) \|\mathbf{x}_t-\mathbf{1}\bar{x}_t\|^2 \nonumber \\
&\overset{(d)}{\leq} \left( \frac{1}{n} - \frac{\gamma_y\bar{u}^{-1}_{t+1} \mu L_{l,1}}{n(\mu + L_{l,1})} \right) \|\mathbf{y}_{t} - \mathbf{1}y^*(\bar{x}_t))\|^2  
- \frac{{\gamma_y\bar{u}^{-1}_{t+1}\sqrt{1+\sigma_u^2}}}{2n (\mu + L_{l,1})} \|\nabla_y L(\mathbf{x}_t, \mathbf{y}_t)\|^2 \nonumber\\
&\quad + \left(\frac{\gamma_y\bar{u}_{t+1}^{-1}L_{l,1}(\mu+L_{l,1})\sqrt{1+\sigma_u^2}}{n\mu }+\frac{2\gamma_y\bar{u}_{t+1}^{-1}L_{l,1}^2\sqrt{1+\sigma_u^2}}{n(\mu + L_{l,1})}\right) \|\mathbf{x}_t-\mathbf{1}\bar{x}_t\|^2,\label{eq63}
\end{align}
where (a) employs Lemma 3.11 in \citep{bubeck2015convex},
(b) uses Young's inequality, (c) refers to \cref{Assumption2}, and (d) follows from \(\bar{m}^y_{t+1} \geq C_{m^y} \geq \frac{2\gamma_y(1+\zeta_u^2)(\mu + L_{l,1})}{\sqrt{1+\sigma_u^2}}\).
By plugging equation \cref{eq63} into equation \cref{eq62}, we have:
\begin{align}
&\frac{1}{n}\|\mathbf{y}_{t+1} - \mathbf{1}y^*(\bar{x}_{t+1})\|^2 \nonumber\\
&\leq (1 + \bar{\lambda}_{t+1}) \frac{1}{n}\left( 1 - \frac{\gamma_y\bar{u}^{-1}_{t+1} \mu L_{l,1}}{\mu + L_{l,1}} \right) \|\mathbf{y}_{t} - \mathbf{1}y^*(\bar{x}_t))\|^2\notag\\
&\quad- (1 + \bar{\lambda}_{t+1})  \frac{{\gamma_y\bar{u}^{-1}_{t+1}\sqrt{1+\sigma_u^2}}}{2n (\mu + L_{l,1})}  \|\nabla_y L(\mathbf{x}_t, \mathbf{y}_t)\|^2\nonumber \\
&\quad +(1 + \bar{\lambda}_{t+1})  \left(\frac{\gamma_y\bar{u}_{t+1}^{-1}L_{l,1}(\mu+L_{l,1})\sqrt{1+\sigma_u^2}}{n\mu }+\frac{2\gamma_y\bar{u}_{t+1}^{-1}L_{l,1}^2\sqrt{1+\sigma_u^2}}{n(\mu + L_{l,1})}\right) \|\mathbf{x}_t-\mathbf{1}\bar{x}_t\|^2\nonumber\\
&\quad+ \left( 1 + \frac{1}{\bar{\lambda}_{t+1}} \right) \|y^*(\bar{x}_t) - y^*(\bar{x}_{t+1})\|^2. \label{eq64}
\end{align}

By rearranging the terms in \cref{eq64}, we have:
\begin{align}
&(1 + \bar{\lambda}_{t+1})  \frac{{\gamma_y\bar{u}^{-1}_{t+1}\sqrt{1+\sigma_u^2}}}{2n (\mu + L_{l,1})} \|\nabla_y L(\mathbf{x}_t, \mathbf{y}_t)\|^2\nonumber\\
&\leq (1 + \bar{\lambda}_{t+1}) \frac{1}{n}\left( 1 - \frac{\gamma_y\bar{u}^{-1}_{t+1} \mu L_{l,1}}{\mu + L_{l,1}} \right) \|\mathbf{y}_{t} - \mathbf{1}y^*(\bar{x}_t))\|^2
- \frac{1}{n}\|\mathbf{y}_{t+1} - \mathbf{1}y^*(\bar{x}_{t+1})\|^2 \nonumber \\
&\quad +(1 + \bar{\lambda}_{t+1})  \left(\frac{\gamma_y\bar{u}_{t+1}^{-1}L_{l,1}(\mu+L_{l,1})\sqrt{1+\sigma_u^2}}{n\mu }+\frac{2\gamma_y\bar{u}_{t+1}^{-1}L_{l,1}^2\sqrt{1+\sigma_u^2}}{n(\mu + L_{l,1})}\right) \|\mathbf{x}_t-\mathbf{1}\bar{x}_t\|^2\nonumber\\
&\quad+ \left( 1 + \frac{1}{\bar{\lambda}_{t+1}} \right) \|y^*(\bar{x}_t) - y^*(\bar{x}_{t+1})\|^2.\label{eq65}
\end{align}

We take \(\bar{\lambda}_{t+1} := \frac{\gamma_y\bar{u}^{-1}_{t+1} \mu L_{l,1}}{\mu + L_{l,1}}\). Since \(\bar{m}^y_{t+1} > C_{m^y} \geq \frac{\gamma_y \mu L_{l,1}}{\mu + L_{l,1}}\) in \cref{eq53}, we have \(\bar{\lambda}_{t+1} \leq 1\). Then, we have:
\begin{align}
&\frac{\gamma_y\bar{u}^{-1}_{t+1}\sqrt{1+\sigma_u^2}}{2n} \|\nabla_y L(\mathbf{x}_t, \mathbf{y}_t)\|^2\nonumber\\
&\leq (1 + \bar{\lambda}_{t+1}) \frac{\gamma_y\bar{u}^{-1}_{t+1}\sqrt{1+\sigma_u^2}}{2n} \|\nabla_y L(\mathbf{x}_t, \mathbf{y}_t)\|^2  \nonumber \\
&\leq \frac{\mu + L_{l,1}}{n} \left( \|\mathbf{y}_{t} - \mathbf{1}y^*(\bar{x}_t))\|^2
- \|\mathbf{y}_{t+1} - \mathbf{1}y^*(\bar{x}_{t+1})\|^2 \right) + \frac{2 (\mu + L_{l,1}) }{\bar{\lambda}_{t+1}} \|y^*(\bar{x}_t) - y^*(\bar{x}_{t+1})\|^2 \nonumber\\
&\quad +\left(\frac{2\gamma_y\bar{u}_{t+1}^{-1}L_{l,1}(\mu+L_{l,1})^2\sqrt{1+\sigma_u^2}}{n\mu }+\frac{4\gamma_y\bar{u}_{t+1}^{-1}L_{l,1}^2\sqrt{1+\sigma_u^2}}{n}\right) \|\mathbf{x}_t-\mathbf{1}\bar{x}_t\|^2\nonumber \\
&= \frac{\mu + L_{l,1}}{n} \left( \|\mathbf{y}_{t} - \mathbf{1}y^*(\bar{x}_t))\|^2
- \|\mathbf{y}_{t+1} - \mathbf{1}y^*(\bar{x}_{t+1})\|^2 \right)+ \frac{2(\mu + L_{l,1})^2}{\gamma_y\bar{u}^{-1}_{t+1} \mu L_{l,1}} \|y^*(\bar{x}_t) - y^*(\bar{x}_{t+1})\|^2 \nonumber\\
&\quad+\left(\frac{2\gamma_y\bar{u}_{t+1}^{-1}L_{l,1}(\mu+L_{l,1})^2\sqrt{1+\sigma_u^2}}{n\mu }+\frac{4\gamma_y\bar{u}_{t+1}^{-1}L_{l,1}^2\sqrt{1+\sigma_u^2}}{n}\right) \|\mathbf{x}_t-\mathbf{1}\bar{x}_t\|^2\nonumber \\
&\overset{(a)}{\leq} \frac{\mu + L_{l,1}}{n} \left( \|\mathbf{y}_{t} - \mathbf{1}y^*(\bar{x}_t))\|^2
- \|\mathbf{y}_{t+1} - \mathbf{1}y^*(\bar{x}_{t+1})\|^2 \right)+ \frac{2(\mu + L_{l,1})^2L_y^2}{\gamma_y\bar{u}^{-1}_{t+1} \mu L_{l,1}} \|\bar{x}_t -\bar{x}_{t+1}\|^2 \nonumber\\
&\quad+\left(\frac{2(\mu+L_{l,1})^3\sqrt{1+\sigma_u^2}}{n\mu^2 }+\frac{4L_{l,1}(\mu+L_{l,1})\sqrt{1+\sigma_u^2}}{n\mu }\right) \|\mathbf{x}_t-\mathbf{1}\bar{x}_t\|^2,\label{eq66}
\end{align}
where (a) uses the Lipschitzness of \(y^*(\bar{x})\). Summing the above inequality over \(k = k_2, \dots, t\), we have:
\begin{align}
&\sum_{k=k_2}^t \frac{\gamma_y\bar{u}^{-1}_{k+1}\sqrt{1+\sigma_u^2}}{2n} \|\nabla_y L(\mathbf{x}_k, \mathbf{y}_k)\|^2\nonumber\\
&\leq \sum_{k=k_2-1}^t \frac{\gamma_y\bar{u}^{-1}_{k+1}\sqrt{1+\sigma_u^2}}{2n} \|\nabla_y L(\mathbf{x}_k, \mathbf{y}_k)\|^2\nonumber \\
&\leq \frac{\mu + L_{l,1}}{n}\|\mathbf{y}_{k_2-1} - \mathbf{1}y^*(\bar{x}_{k_2-1})\|^2
+ \frac{2(\mu + L_{l,1})^2L_y^2}{\gamma_y \mu L_{l,1}} \sum_{k=k_2-1}^t \bar{u}_{k+1} \|\bar{x}_k - \bar{x}_{k+1}\|^2\nonumber\\
&\quad+\left(\frac{2(\mu+L_{l,1})^3\sqrt{1+\sigma_u^2}}{n\mu^2 }+\frac{4L_{l,1}(\mu+L_{l,1})\sqrt{1+\sigma_u^2}}{n\mu }\right)\sum_{k=k_2-1}^t \|\mathbf{x}_k-\mathbf{1}\bar{x}_k\|^2\nonumber \\
&\overset{(a)}{\leq} \frac{\mu + L_{l,1}}{n\mu^2} \sum_{i=1}^n \left\| \nabla_y l_i(x_{i,k_2-1}, y_{i,k_2-1}) - \nabla_y l_i(x_{i,k_2-1}, y^*(\bar{x}_{k_2-1})) \right\|^2\nonumber\\
&\quad+\left(\frac{2(\mu+L_{l,1})^3\sqrt{1+\sigma_u^2}}{n\mu^2 }+\frac{4L_{l,1}(\mu+L_{l,1})\sqrt{1+\sigma_u^2}}{n\mu }\right)\sum_{k=0}^t \|\mathbf{x}_k-\mathbf{1}\bar{x}_k\|^2 \nonumber\\
&\quad + \frac{2(\mu + L_{l,1})^2 L_y^2}{\gamma_y\mu L_{l,1}} \sum_{k=k_2-1}^t \bar{u}_{k+1} \left\| 
\gamma_x \bar{q}_{k+1}^{-1} \frac{\mathbf{1}^\top}{n} \bar{\nabla} F(\mathbf{x}_k,\mathbf{y}_k,\mathbf{v}_k) 
- \gamma_x \frac{\left(\tilde{\mathbf{q}}_{k+1}^{-1}\right)^\top}{n} \bar{\nabla} F(\mathbf{x}_k,\mathbf{y}_k,\mathbf{v}_k)
\right\|^2\nonumber \\
&\overset{(b)}{\leq} \frac{\mu + L_{l,1}}{n\mu^2} \sum_{i=1}^n \left\| \nabla_y l_i(x_{i,k_2-1}, y_{i,k_2-1}) - \nabla_y l_i(x_{i,k_2-1}, y^*(\bar{x}_{k_2-1})) \right\|^2 \nonumber\\
&\quad + \frac{ 4\gamma_x^2  L_y^2\left( 1 + \zeta_q^2 \right) (\mu + L_{l,1})^2}{n\gamma_y\mu L_{l,1}} \sum_{k=k_2-1}^t \bar{u}_{k+1}
\bar{q}_{k+1}^{-2} \| \bar{\nabla} F(\mathbf{x}_k,\mathbf{y}_k,\mathbf{v}_k)\|^2 + \frac{16 \gamma_x^2 \rho_W (1 + \zeta_q^2)}{(1 - \rho_W)^2}\notag\\
&\quad\cdot\left(\frac{(\mu+L_{l,1})^3\sqrt{1+\sigma_u^2}}{n\mu^2 }+\frac{2L_{l,1}(\mu+L_{l,1})\sqrt{1+\sigma_u^2}}{n\mu }\right)\sum_{k=0}^{t} 
 \bar{q}_{k+1}^{-2} \|\bar{\nabla}F(\mathbf{x}_k, \mathbf{y}_k, \mathbf{v}_k)\|^2\nonumber\\
&\quad + \frac{4\Delta_0^x(\mu+L_{l,1})^3\sqrt{1+\sigma_u^2}}{n\mu^2(1-\rho_W) }+\frac{8\Delta_0^xL_{l,1}(\mu+L_{l,1})\sqrt{1+\sigma_u^2}}{n\mu(1-\rho_W) }\nonumber\\
&\overset{(c)}{\leq} \frac{(\mu + L_{l,1}) C_{m^y}^2}{\mu^2}+\left(\frac{4\Delta_0^x}{1-\rho_W}+\frac{16 \gamma_x^2 \rho_WC_{m^x}^2 (1 + \zeta_q^2)}{\bar{q}_0^2(1 - \rho_W)^2} \right)\notag\\
&\quad\cdot\left(\frac{(\mu+L_{l,1})^3\sqrt{1+\sigma_u^2}}{n\mu^2 }+\frac{2L_{l,1}(\mu+L_{l,1})\sqrt{1+\sigma_u^2}}{n\mu }\right) \nonumber\\
&\quad+ \left[\frac{16 \gamma_x^2 \rho_W(1 + \zeta_q^2)}{(1 - \rho_W)^2} \left(\frac{(\mu+L_{l,1})^3\sqrt{1+\sigma_u^2}}{n\mu^2}+\frac{2L_{l,1}(\mu+L_{l,1})\sqrt{1+\sigma_u^2}}{n\mu }\right)\right.\nonumber\\
&\quad\left.+\frac{4\gamma_x^2  L_y^2\left( 1 + \zeta_q^2 \right) (\mu + L_{l,1})^2}{n\gamma_y\mu L_{l,1}}\right]\!\! \sum_{k=\min\{k_1-1,k_2-1\}}^t \frac{\|\bar{\nabla} F(\mathbf{x}_k,\mathbf{y}_k,\mathbf{v}_k)\|^2}{[\bar{m}^x_{k+1}]^2\max \big\{\bar{m}^v_{k+1}, \bar{m}^y_{k+1}\big\}} \nonumber\\
&\leq \frac{(\mu + L_{l,1}) C_{m^y}^2}{\mu^2} +\left(\frac{4\Delta_0^x}{1-\rho_W}+\frac{16 \gamma_x^2 \rho_WC_{m^x}^2 (1 + \zeta_q^2)}{\bar{q}_0^2(1 - \rho_W)^2} \right)\notag\\
&\quad\cdot\left(\frac{(\mu+L_{l,1})^3\sqrt{1+\sigma_u^2}}{n\mu^2 }+\frac{2L_{l,1}(\mu+L_{l,1})\sqrt{1+\sigma_u^2}}{n\mu }\right)+\frac{4\gamma_x^2  L_y^2\left( 1 + \zeta_q^2 \right) (\mu + L_{l,1})^2}{n\gamma_y\mu L_{l,1}\bar{z}_0} \nonumber\\
&\quad+ \frac{16 \gamma_x^2 \rho_W(1 + \zeta_q^2)}{\bar{z}_0(1 - \rho_W)^2} \left(\frac{(\mu+L_{l,1})^3\sqrt{1+\sigma_u^2}}{n\mu^2}+\frac{2L_{l,1}(\mu+L_{l,1})\sqrt{1+\sigma_u^2}}{n\mu }\right)\nonumber\\
&\quad+ \left[\frac{16 \gamma_x^2 \rho_W(1 + \zeta_q^2)}{(1 - \rho_W)^2} \left(\frac{(\mu+L_{l,1})^3\sqrt{1+\sigma_u^2}}{n\mu^2}+\frac{2L_{l,1}(\mu+L_{l,1})\sqrt{1+\sigma_u^2}}{n\mu}\right)\right.\nonumber\\
&\quad\left.+\frac{4\gamma_x^2  L_y^2\left( 1 + \zeta_q^2 \right) (\mu + L_{l,1})^2}{n\gamma_y\mu L_{l,1}}\right]\!\! \sum_{k=\min\{k_1,k_2\}}^t \frac{\|\bar{\nabla} F(\mathbf{x}_k,\mathbf{y}_k,\mathbf{v}_k)\|^2}{[\bar{m}^x_{k+1}]^2\max \big\{\bar{m}^v_{k+1}, \bar{m}^y_{k+1}\big\}},\label{eq67}
\end{align}
where (a) uses \cref{Assumption1}; (b) refers to \cref{Lemmanewx}; (c) results from \(\|\nabla_y l_i({x}_{i,k_2-1}, {y}_{i,k_2-1})\|^2 \leq [m^y_{i,k_2}]^2 \leq C_{m^y}^2\) and \(\|\bar{\nabla} f_i({x}_{i,k_1-1}, {y}_{i,k_1-1},{v}_{i,k_1-1})\|^2 \leq [m^x_{i,k_1}]^2 \leq C_{m^x}^2\). Then, the proof is complete.
\end{proof}

\subsection{The Upper Bound of $\sum_{k=k_3}^t \frac{\|\nabla_v R(\mathbf{x}_k, \mathbf{y}_k, \mathbf{v}_k)\|^2 }{\bar{z}_{k+1}} $}
\begin{lemma}\label{Lemma8} Under \cref{Assumption1} and \cref{Assumption2}, for \cref{alg1}, suppose the total iteration rounds is \(T\). If \(k_3\) in \cref{Lemma6} exists within \(T\) iterations, for all integers \(t \in [k_3, T)\), we have:
\begin{align}
&\sum_{k=k_3}^t \frac{\|\nabla_v R(\mathbf{x}_k, \mathbf{y}_k, \mathbf{v}_k)\|^2 }{\bar{z}_{k+1}} \nonumber\\
&\leq 
\frac{4nC_{m^y}^2(\mu + C_{l_{yy}})}{\mu^4\gamma_v\sqrt{1+\sigma_z^2}}\left(\frac{nL_{l,2}C_{f_y}}{\mu}+L_{f,1}\right)^2 + \frac{8nC_{m^v}^2(\mu + C_{l_{yy}})}{\mu^2\gamma_v\sqrt{1+\sigma_z^2}}\notag \\
&\quad + \frac{16(\mu + C_{l_{yy}})}{\gamma_v\mu^2}\left(\frac{nL_{l,2}C_{f_y}}{\mu}\!+\!L_{f,1}\right)^2 \left(\frac{\Delta_0^x}{1\!-\!\rho_W}\!\!+\!\!\frac{4 \gamma_x^2 \rho_WC_{m^x}^2 (1 \!\!+\!\! \zeta_q^2)}{\bar{q}_0^2(1 \!-\! \rho_W)^2} \right)\left(\frac{(\mu\!\!+\!\!L_{l,1})^2}{n\mu^2 }\!\!+\!\!\frac{2L_{l,1}}{n\mu }\right)\notag \\
&\quad+\frac{16(\mu + C_{l_{yy}})}{\gamma_v\mu^2\sqrt{1+\sigma_u^2}}\left(\frac{nL_{l,2}C_{f_y}}{\mu}\!+\!L_{f,1}\right)^2\left[\frac{4 \gamma_x^2 \rho_W (1 + \zeta_q^2)}{\bar{z}_0^2(1 - \rho_W)^2} \left(\frac{(\mu+L_{l,1})^2\sqrt{1+\sigma_u^2}}{n\mu^2 }\right.\right.\nonumber\\
&\quad\left.\left.+\frac{2L_{l,1}\sqrt{1+\sigma_u^2}}{n\mu }\right) +\frac{\gamma_x^2L_y^2(1+\zeta_q^2)(\mu + L_{l,1})}{n\bar{z}_0\gamma_y \mu L_{l,1}}\right]\sum_{k=\min\{k_1-1,k_2-1\}}^{k_3-2} 
 \frac{\| \bar{\nabla} F(\mathbf{x}_{k},\mathbf{y}_{k},\mathbf{v}_{k})\|^2}{[\bar{m}^x_{k+1}]^2}\nonumber\\
&\quad+ \left(\frac{64 \gamma_x^2 \rho_W \bar{L}_r^2(\mu + C_{l_{yy}})(1 + \zeta_q^2)}{\mu C_{l_{yy}}\gamma_v(1 - \rho_W)^2} + \frac{2\gamma_x^2L_v^2(1+\zeta_q^2)(\mu + C_{l_{yy}})^2}{\mu C_{l_{yy}}C_{m^v}\gamma_v^2\sqrt{1+\sigma_z^2}}\right)\notag\\
&\quad\cdot\sum_{k=\min\{k_1-1,k_3-1\}}^{t} 
 \frac{\| \bar{\nabla} F(\mathbf{x}_{k},\mathbf{y}_{k},\mathbf{v}_{k})\|^2}{[\bar{m}^x_{k+1}]^2}\nonumber\\
&\quad  + \left(\frac{8}{\mu^2}+\frac{4(\mu + C_{l_{yy}})^2}{\mu^3 C_{l_{yy}}} \right)\left(\frac{nL_{l,2}C_{f_y}}{\mu}+L_{f,1}\right)^2 \sum_{k=k_3-1}^t\frac{\|\nabla_y L(\mathbf{x}_k,\mathbf{y}_k)\|^2}{\bar{m}_{k+1}^y}\nonumber\\
&\quad +\left(\frac{16\Delta_0^x}{1-\rho_W}+\frac{64\gamma_x^2\rho_WC_{m^x}^2(1+\zeta_q^2)}{\bar{q}_0^2(1-\rho_W)^2}\right)\left(\frac{ \bar{L}_r^2(\mu + C_{l_{yy}})}{\mu C_{l_{yy}}\gamma_v}+\frac{\bar{L}_r^2(\mu+C_{l_{yy}})}{\mu^2\gamma_v\sqrt{1+\sigma_z^2}}\right).\label{eq68}
\end{align}

\end{lemma}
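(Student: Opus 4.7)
The plan is to mirror the argument of \cref{Lemma7} but adapt it to the auxiliary variable $v$, whose optimal value $v^{*}(\bar{x},\bar{y})$ depends on both the primal and dual iterates. By \cref{Lemma3}, $r(\bar{x},\bar{y},\cdot)$ is $\mu$-strongly convex and $C_{l_{yy}}$-smooth, and its gradient $\nabla_v r$ is $\bar{L}_r$-Lipschitz jointly. Starting from the update rule $\mathbf{v}_{t+1} = W(\mathbf{v}_t - \gamma_v Z_{t+1}^{-1}\nabla_v R(\mathbf{x}_t,\mathbf{y}_t,\mathbf{v}_t))$ (projection is nonexpansive with respect to $\mathbf{1} v^{*}$), I would apply Young's inequality with a parameter $\lambda_{t+1} := \gamma_v \bar{z}_{t+1}^{-1}\mu C_{l_{yy}}/(\mu+C_{l_{yy}})$ to split
\begin{equation*}
\tfrac{1}{n}\|\mathbf{v}_{t+1}-\mathbf{1}v^{*}(\bar{x}_{t+1},\bar{y}_{t+1})\|^2 \leq \tfrac{1+\lambda_{t+1}}{n}\|\mathbf{v}_{t+1}-\mathbf{1}v^{*}(\bar{x}_t,\bar{y}_t)\|^2 + (1+\tfrac{1}{\lambda_{t+1}})\|v^{*}(\bar{x}_t,\bar{y}_t)-v^{*}(\bar{x}_{t+1},\bar{y}_{t+1})\|^2.
\end{equation*}

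Next I would expand the first term on the right using the update rule, and bound the inner product by Lemma 3.11 of \citet{bubeck2015convex}, splitting off two heterogeneity-induced mismatches: one coming from $Z_{t+1}^{-1}-\bar{z}_{t+1}^{-1}\mathbf{I}$ (controlled by $\zeta_z,\sigma_z$), and two coming from the consensus gap between $\nabla_v r_i(x_{i,t},y_{i,t},v^{*}(\bar{x}_t))$ and $\nabla_v r_i(\bar{x}_t,\bar{y}_t,v^{*}(\bar{x}_t))$, which through the joint Lipschitzness $\bar{L}_r$ produces terms in $\|\mathbf{x}_t-\mathbf{1}\bar{x}_t\|^2+\|\mathbf{y}_t-\mathbf{1}\bar{y}_t\|^2$. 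Using $\bar{m}^v_{t+1}>C_{m^v}$ together with the thresholds set in \cref{eq54} (in particular the choice $C_{m^v}\geq 2\gamma_v(1+\zeta_z^2)(\mu+C_{l_{yy}})/\sqrt{1+\sigma_z^2}$) allows me to absorb the quadratic $\gamma_v^2\bar{z}_{t+1}^{-2}(1+\zeta_z^2)\|\nabla_v R\|^2$ into half of the co-coercive negative term $\gamma_v\bar{z}_{t+1}^{-1}\sqrt{1+\sigma_z^2}\|\nabla_v R\|^2/(n(\mu+C_{l_{yy}}))$. For the drift term, \cref{Lemma3} yields the decomposition $\|v^{*}(\bar{x}_t,\bar{y}_t)-v^{*}(\bar{x}_{t+1},\bar{y}_{t+1})\|^2\leq 2L_v^2\|\bar{x}_t-\bar{x}_{t+1}\|^2+2\bar{L}_v^2\|\bar{y}_t-\bar{y}_{t+1}\|^2$, and I would substitute the primal and dual updates for these one-step differences.

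After telescoping over $k=k_3,\ldots,t$, the leading term $\frac{\mu+C_{l_{yy}}}{n}\|\mathbf{v}_{k_3-1}-\mathbf{1}v^{*}(\bar{x}_{k_3-1},\bar{y}_{k_3-1})\|^2$ is bounded via strong convexity by $(\mu+C_{l_{yy}})\sum_i\|\nabla_v r_i\|^2/(n\mu^2)\leq (\mu+C_{l_{yy}})C_{m^v}^2/\mu^2$, and similarly for the $y$-gap piece expressed in terms of $C_{m^y}$ (yielding the two leading constants in the claimed bound). The $\bar{x}$-step drift contributes $\gamma_x^2\bar{q}_{t+1}^{-2}(1+\zeta_q^2)\|\bar{\nabla}F\|^2/n$, which I route into the sum $\sum\|\bar{\nabla}F\|^2/[\bar{m}^x_{k+1}]^2$ upon multiplying by $\bar{z}_{t+1}^{-1}$ and recalling $\bar{q}_{t+1}=\bar{m}^x_{t+1}\bar{z}_{t+1}$ (for indices before $\min\{k_1,k_3\}$, the factor $[\bar{m}^x_{k+1}]^{-2}$ is bounded by $C_{m^x}^{-2}$, producing the $C_{m^x}^2$ residual constants). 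The $\bar{y}$-step drift contributes $\gamma_y^2\bar{u}_{t+1}^{-2}(1+\zeta_u^2)\|\nabla_y L\|^2/n$, which after dividing by $\bar{z}_{t+1}$ becomes proportional to $\|\nabla_y L\|^2/\bar{m}^y_{k+1}$ and is directly controlled by \cref{Lemma7}. The consensus pieces $\|\mathbf{x}_t-\mathbf{1}\bar{x}_t\|^2$ are handled by \cref{Lemmanewx}, while $\|\mathbf{y}_t-\mathbf{1}\bar{y}_t\|^2$ would be handled by an analogous consensus bound for $\mathbf{y}$, both of which reduce to $\Delta_0^x$ plus a $\sum\bar{q}_{k+1}^{-2}\|\bar{\nabla}F\|^2$ term that is again split at the threshold $k_1$.

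The main obstacle is the three-way coupling: $v$'s stepsize depends on $\max(\bar{m}^v,\bar{m}^y)$, $v^{*}$ drifts with both $\bar{x}$ and $\bar{y}$, and the cutoff iterations $k_1,k_2,k_3$ are in general distinct, so each previously-summable quantity must be invoked at the minimum of the relevant thresholds (hence the appearance of $\min\{k_1-1,k_2-1\}$ and $\min\{k_1-1,k_3-1\}$ in the statement). Managing the bookkeeping so that (i) every new error term can be rerouted either into the already-controlled tails from \cref{Lemma7} and \cref{Lemmanewx}, (ii) the pre-threshold residuals collapse into explicit constants through $\bar{m}^{\bullet}_{k+1}\leq C_{m^{\bullet}}$, and (iii) no residual depends on any problem parameter beyond those already in the constants $C_{m^x},C_{m^y},C_{m^v}$, is the delicate bookkeeping challenge; once this routing is fixed, collecting constants yields exactly the inequality stated.
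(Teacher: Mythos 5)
Your skeleton---Young's inequality with $\hat\lambda_{t+1}\propto\gamma_v\bar z_{t+1}^{-1}$, co-coercivity via Lemma 3.11 of \citep{bubeck2015convex}, absorption of the quadratic $\gamma_v^2\bar z_{t+1}^{-2}(1+\zeta_z^2)\|\nabla_v R\|^2$ term using the threshold $C_{m^v}$, telescoping, and routing of residuals into $\sum_k\|\bar{\nabla}F\|^2/[\bar m^x_{k+1}]^2$ and the \cref{Lemma7} tail---matches the paper. The critical divergence is your choice of anchor: you contract toward $v^*(\bar x_t,\bar y_t)$, whereas the paper contracts toward $v^*(\bar x_t)$, the solution of the linear system at the \emph{exact} lower-level optimum $y^*(\bar x_t)$. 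This is not cosmetic, and it is where your argument breaks.

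With your anchor, the inter-iteration drift $\|v^*(\bar x_t,\bar y_t)-v^*(\bar x_{t+1},\bar y_{t+1})\|^2$ contains a $\bar L_v^2\|\bar y_{t+1}-\bar y_t\|^2$ piece that gets multiplied by $(1+1/\hat\lambda_{t+1})\propto\bar z_{t+1}/\gamma_v$ after rearranging. Substituting the dual update yields a contribution of order $\bar z_{t+1}[\bar m^y_{t+1}]^{-2}\|\nabla_y L(\mathbf x_t,\mathbf y_t)\|^2$, which you claim ``becomes proportional to $\|\nabla_y L\|^2/\bar m^y_{k+1}$.'' That step needs $\bar z_{t+1}\lesssim\bar m^y_{t+1}$, which fails whenever $\bar m^v_{t+1}\gg\bar m^y_{t+1}$ (recall $\bar z_{t+1}=\max\{\bar m^y_{t+1},\bar m^v_{t+1}\}$); and you cannot instead invoke the logarithmic bound on $\bar z_t$ from \cref{Lemma10}, since that lemma is proved \emph{using} the present one. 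This is precisely the trap the paper's anchor avoids: $v^*(\bar x_t)$ drifts only with $\bar x$, whose hierarchically damped stepsize $\gamma_x/(\bar m^x\bar z)$ supplies a factor $\bar z_{t+1}^{-2}$ that cancels the $\bar z_{t+1}$ amplification, while the $y$-dependence enters only through the \emph{unamplified} residual $\nabla_v R(\mathbf x_t,\mathbf y_t,\mathbf 1 v^*(\bar x_t))-\nabla_v R(\mathbf x_t,\mathbf 1 y^*(\bar x_t),\mathbf 1 v^*(\bar x_t))$, carrying a prefactor $\gamma_v\bar z_{t+1}^{-1}\le\gamma_v[\bar m^y_{t+1}]^{-1}$ that lands cleanly in $\sum_k\|\nabla_y L\|^2/\bar m^y_{k+1}$ (which is why that sum, starting at $k_3-1$, appears explicitly in the statement rather than being pre-substituted by \cref{Lemma7}). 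Two secondary issues: with your anchor the co-coercivity residual involves $\nabla_v R$ evaluated at $\mathbf 1 v^*(\bar x_t,\bar y_t)$, which forces you to supply a $y$-consensus analogue of \cref{Lemmanewx}; and your one-line bound $\|\mathbf v_{k_3-1}-\mathbf 1 v^*\|^2\le nC_{m^v}^2/\mu^2$ skips the decomposition and the two-case analysis on $\bar m^y_{k_3}$ that the paper needs to produce the $C_{m^y}^2$ leading constant. The repair is to switch the anchor to $v^*(\bar x_t)$.
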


\begin{proof}
For \(k_3 \leq t < T\), we have \(\bar{m}^v_{t+1} > C_{m^v}\). For any positive scalar \(\hat{\lambda}_{t+1}\), using Young's inequality, we have:
\begin{align}
\frac{1}{n}\|\mathbf{v}_{t+1} - \mathbf{1}v^*(\bar{x}_{t+1})\|^2 
\leq \frac{(1 + \bar{\lambda}_{t+1})}{n} \|\mathbf{v}_{t+1} - \mathbf{1}v^*(\bar{x}_t)\|^2 
+ \left( 1 + \frac{1}{\bar{\lambda}_{t+1}} \right) \|v^*(\bar{x}_t) - v^*(\bar{x}_{t+1})\|^2. \label{eq69}
\end{align}

For the first term on the RHS of \cref{eq69}, we have:
\begin{align}
&\frac{1}{n}\|\mathbf{v}_{t+1} - \mathbf{1}v^*(\bar{x}_{t+1})\|^2 \nonumber\\
&= \frac{1}{n}\left\| \mathcal{P}_{\mathcal{V}}\left(\mathbf{v}_{t}- \gamma_v Z^{-1}_{t+1} \nabla_v R(\mathbf{x}_t, \mathbf{y}_t, \mathbf{v}_t) \right)- \mathbf{1}v^*(\bar{x}_{t}) \right\|^2 \nonumber \\
&\overset{(a)}{\leq} \frac{1}{n}\left\| \mathbf{v}_{t}- \gamma_v Z^{-1}_{t+1} \nabla_v R(\mathbf{x}_t, \mathbf{y}_t, \mathbf{v}_t) - \mathbf{1}v^*(\bar{x}_{t}) \right\|^2 \nonumber \\
&= \frac{1}{n}\|\mathbf{v}_{t} -\mathbf{1}v^*(\bar{x}_{t})\|^2\! 
+ \frac{\gamma_v^2}{n} \|Z^{-1}_{t+1}\nabla_v R(\mathbf{x}_t, \mathbf{y}_t, \mathbf{v}_t)\|^2 
\notag\\
&\quad- \frac{1}{n}\sum_{i=1}^n 2\bar{z}^{-1}_{t+1}\gamma_v \langle \nabla_v r_i(x_{i,t}, y_{i,t}, v_{i,t}) , v_{i,t} - v^*(\bar{x}_t)\rangle\nonumber\\
&\quad-\frac{1}{n}\sum_{i=1}^n 2\bar{z}_{t+1}^{-1} \gamma_y\Bigg\langle \left(\frac{z_{i,t+1}^{-1}-\bar{z}_{t+1}^{-1}}{\bar{z}_{t+1}^{-1}}\right) \nabla_v r_i(x_{i,t}, y_{i,t}, v_{i,t}) , v_{i,t} - v^*(\bar{x}_t)\Bigg\rangle,\label{eq70}
\end{align}
where (a) uses the non-expansiveness of the projection operator, as established in Lemma 1 of \citep{nedic2010constrained}. Then, for the last two terms on the RHS of \cref{eq70}, we have:
\begin{align}
&- \frac{1}{n}\sum_{i=1}^n 2\bar{z}^{-1}_{t+1}\gamma_v \langle \nabla_v r_i(x_{i,t}, y_{i,t}, v_{i,t}) , v_{i,t} - v^*(\bar{x}_t)\rangle\nonumber\\
&\quad-\frac{1}{n}\sum_{i=1}^n 2\bar{z}_{t+1}^{-1} \gamma_y\Bigg\langle\!\!\! \left(\!\!\frac{z_{i,t+1}^{-1}\!\!-\!\!\bar{z}_{t+1}^{-1}}{\bar{z}_{t+1}^{-1}}\right) \nabla_v r_i(x_{i,t}, y_{i,t}, v_{i,t}) , v_{i,t} \!\!-\!\! v^*(\bar{x}_t)\!\!\!\Bigg\rangle\nonumber\\ 
&= - \frac{2\gamma_v}{n}\sum_{i=1}^n \bar{z}^{-1}_{t+1} \langle \nabla_v r_i(x_{i,t}, y_{i,t}, v_{i,t})- \nabla_v r_i(x_{i,t}, y_{i,t}, v_i^*(\bar{x}_{t})) , v_{i,t} - v_i^*(\bar{x}_t)\rangle \nonumber \\
&\quad - \frac{2\gamma_v}{n}\sum_{i=1}^n \bar{z}^{-1}_{t+1} \langle \nabla_v r_i(x_{i,t}, y_{i,t}, v_i^*(\bar{x}_{t}))- \nabla_v r_i(x_{i,t},y_i^*(\bar{x}_{t}), v^*(\bar{x}_{t})) , v_{i,t} - v_i^*(\bar{x}_t) \rangle \nonumber \\
&\quad - \frac{2\gamma_v}{n}\sum_{i=1}^n \bar{z}^{-1}_{t+1} \Bigg\langle\!\!\! \left(\!\!\frac{z_{i,t+1}^{-1}\!\!-\!\!\bar{z}_{t+1}^{-1}}{\bar{z}_{t+1}^{-1}}\!\!\right)\!\!(\nabla_v r_i(x_{i,t}, y_{i,t}, v_{i,t})\!\!-\!\! \nabla_v r_i(x_{i,t}, y_{i,t}, v_i^*(\bar{x}_{t}))) , v_{i,t} \!\!-\!\! v_i^*(\bar{x}_t)\!\!\!\Bigg\rangle \nonumber \\
&\quad - \frac{2\gamma_v}{n}\sum_{i=1}^n \bar{z}^{-1}_{t+1} \Bigg\langle\!\!\! \left(\!\!\frac{z_{i,t+1}^{-1}\!\!-\!\!\bar{z}_{t+1}^{-1}}{\bar{z}_{t+1}^{-1}}\!\!\right)\!\!(\nabla_v r_i(x_{i,t}, y_{i,t}, v_i^*(\bar{x}_{t}))\!\!-\!\!\nabla_v r_i(x_{i,t},y_i^*(\bar{x}_{t}), v^*(\bar{x}_{t}))) , v_{i,t} \!\!-\!\! v_i^*(\bar{x}_t) \!\!\!\Bigg\rangle \nonumber \\
&\overset{(a)}{\leq} -\frac{2\gamma_v\bar{z}_{t+1}^{-1}\sqrt{1+\sigma_z^2}}{n(\mu + C_{l_{yy}})} \|\nabla_v R(\mathbf{x}_t, \mathbf{y}_t, \mathbf{v}_t) - \nabla_v R(\mathbf{x}_t, \mathbf{y}_t, \mathbf{1}v^*(\bar{x}_t))\|^2 \nonumber \\
&\quad + \frac{\gamma_v\bar{z}_{t+1}^{-1}\sqrt{1+\sigma_z^2}(\mu + C_{l_{yy}})}{n\mu C_{l_{yy}}} \|\nabla_v R(\mathbf{x}_t, \mathbf{y}_t, \mathbf{1}v^*(\bar{x}_t)) - \nabla_v R(\mathbf{x}_t, \mathbf{1}y^*(\bar{x}_t), \mathbf{1}v^*(\bar{x}_t))\|^2\nonumber\\
&\quad+ \frac{2\mu C_{l_{yy}}\gamma_v\bar{z}_{t+1}^{-1}}{n(\mu + C_{l_{yy}})} \|\mathbf{v}_t - \mathbf{1}v^*(\bar{x}_t)\|^2  - \frac{4\mu C_{l_{yy}}\gamma_v\bar{z}_{t+1}^{-1}}{n(\mu + C_{l_{yy}})} \|\mathbf{v}_t - \mathbf{1}v^*(\bar{x}_t)\|^2\nonumber\\
&\overset{(b)}{\leq} -\frac{\gamma_v\bar{z}_{t+1}^{-1}\sqrt{1+\sigma_z^2}}{n(\mu + C_{l_{yy}})} \|\nabla_v R(\mathbf{x}_t, \mathbf{y}_t, \mathbf{v}_t)\|^2 
+ \frac{2\gamma_v\bar{z}_{t+1}^{-1}\sqrt{1+\sigma_z^2}}{n(\mu + C_{l_{yy}})}\| \nabla_v R(\mathbf{x}_t, \mathbf{y}_t, \mathbf{1}v^*(\bar{x}_t))\|^2 \nonumber \\
&\quad + \frac{\gamma_v\bar{z}_{t+1}^{-1}\sqrt{1+\sigma_z^2}(\mu + C_{l_{yy}})}{n\mu C_{l_{yy}}} \|\nabla_v R(\mathbf{x}_t, \mathbf{y}_t, \mathbf{1}v^*(\bar{x}_t)) - \nabla_v R(\mathbf{x}_t, \mathbf{1}y^*(\bar{x}_t), \mathbf{1}v^*(\bar{x}_t))\|^2 \nonumber \\
&\quad - \frac{2\mu C_{l_{yy}}\gamma_v\bar{z}_{t+1}^{-1}}{n(\mu + C_{l_{yy}})}  \|\mathbf{v}_t - \mathbf{1}v^*(\bar{x}_t)\|^2 \nonumber\\
&\overset{(c)}{=} -\frac{\gamma_v\bar{z}_{t+1}^{-1}\sqrt{1+\sigma_z^2}}{n(\mu + C_{l_{yy}})} \|\nabla_v R(\mathbf{x}_t, \mathbf{y}_t, \mathbf{v}_t)\|^2 - \frac{2\mu C_{l_{yy}}\gamma_v\bar{z}_{t+1}^{-1}}{n(\mu + C_{l_{yy}})}  \|\mathbf{v}_t - \mathbf{1}v^*(\bar{x}_t)\|^2\nonumber\\
&\quad+\frac{4\gamma_v\bar{z}_{t+1}^{-1}\bar{L}_r^2\sqrt{1+\sigma_z^2}}{n(\mu + C_{l_{yy}})}\|\mathbf{x}_t-\mathbf{1}\bar{x}_t\|^2+\left(\frac{4\gamma_v\bar{z}_{t+1}^{-1}\sqrt{1+\sigma_z^2}}{n(\mu + C_{l_{yy}})}+\frac{\gamma_v\bar{z}_{t+1}^{-1}\sqrt{1+\sigma_z^2}(\mu + C_{l_{yy}})}{n\mu C_{l_{yy}}} \right)\nonumber \\
&\quad \cdot\|\nabla_v R(\mathbf{x}_t, \mathbf{y}_t, \mathbf{1}v^*(\bar{x}_t)) \!-\! \nabla_v R(\mathbf{x}_t, \mathbf{1}y^*(\bar{x}_t), \mathbf{1}v^*(\bar{x}_t))\|^2  \nonumber\\
&\overset{(d)}{\leq} -\frac{\gamma_v\bar{z}_{t+1}^{-1}\sqrt{1+\sigma_z^2}}{n(\mu + C_{l_{yy}})} \|\nabla_v R(\mathbf{x}_t, \mathbf{y}_t, \mathbf{v}_t)\|^2 - \frac{2\mu C_{l_{yy}}\gamma_v\bar{z}_{t+1}^{-1}}{n(\mu + C_{l_{yy}})}  \|\mathbf{v}_t - \mathbf{1}v^*(\bar{x}_t)\|^2\nonumber\\
&\quad+\frac{4\gamma_v\bar{z}_{t+1}^{-1}\bar{L}_r^2\sqrt{1+\sigma_z^2}}{n(\mu + C_{l_{yy}})}\|\mathbf{x}_t-\mathbf{1}\bar{x}_t\|^2 +\left(\frac{2\gamma_v\bar{z}_{t+1}^{-1}\sqrt{1+\sigma_z^2}}{n(\mu + C_{l_{yy}})}+\frac{\gamma_v\bar{z}_{t+1}^{-1}\sqrt{1+\sigma_z^2}(\mu + C_{l_{yy}})}{n\mu C_{l_{yy}}} \right)\nonumber \\
&\quad \cdot\left(L_{l,2}\|\mathbf{1}v^*(\bar{x}_t)\|+L_{f,1}\right)^2\|\mathbf{y}_t - \mathbf{1}y^*(\bar{x}_t)\|^2\nonumber\\
&\overset{(e)}{\leq} -\frac{\gamma_v\bar{z}_{t+1}^{-1}\sqrt{1+\sigma_z^2}}{n(\mu + C_{l_{yy}})} \|\nabla_v R(\mathbf{x}_t, \mathbf{y}_t, \mathbf{v}_t)\|^2 - \frac{2\mu C_{l_{yy}}\gamma_v\bar{z}_{t+1}^{-1}}{n(\mu + C_{l_{yy}})}  \|\mathbf{v}_t - \mathbf{1}v^*(\bar{x}_t)\|^2\notag\\
&\quad+\frac{4\gamma_v\bar{z}_{t+1}^{-1}\bar{L}_r^2\sqrt{1+\sigma_z^2}}{n(\mu + C_{l_{yy}})}\|\mathbf{x}_t-\mathbf{1}\bar{x}_t\|^2+ \left(\frac{2\gamma_v\bar{z}_{t+1}^{-1}\sqrt{1+\sigma_z^2}}{n(\mu + C_{l_{yy}})}+\frac{\gamma_v\bar{z}_{t+1}^{-1}\sqrt{1+\sigma_z^2}(\mu + C_{l_{yy}})}{n\mu C_{l_{yy}}} \right) \nonumber \\
&\quad \cdot\left(\frac{nL_{l,2}C_{f_y}}{\mu}+L_{f,1}\right)^2\|\mathbf{y}_t - \mathbf{1}y^*(\bar{x}_t)\|^2,\label{eq71}
\end{align}
where (a) follows from Lemma 3.11 in \citep{bubeck2015convex}; (b) uses $-\|a - b\|^2 \leq -\frac{1}{2}\|a\|^2 + \|b\|^2$ since $\|a - b + b\|^2 \leq 2\|a - b\|^2 + 2\|b\|^2$; (c) uses $\nabla_v R(\mathbf{1}\bar{x}_t, \mathbf{1}y^*(\bar{x}_t)), \mathbf{1}v^*(\bar{x}_t))= 0$; (d) and (e) use \cref{Lemma3}. Plugging \cref{eq71} into \cref{eq70}, we have:
\begin{align}
&\frac{1}{n}\|\mathbf{v}_{t+1} - \mathbf{1}v^*(\bar{x}_{t})\|^2 \nonumber\\
&\leq \frac{1}{n}\left( 1 -  \frac{2\mu C_{l_{yy}}\gamma_v\bar{z}_{t+1}^{-1}}{\mu + C_{l_{yy}}} \right) \|\mathbf{v}_t - \mathbf{1}v^*(\bar{x}_t)\|^2 +\frac{4\gamma_v\bar{z}_{t+1}^{-1}\bar{L}_r^2\sqrt{1+\sigma_z^2}}{n(\mu + C_{l_{yy}})}\|\mathbf{x}_t-\mathbf{1}\bar{x}_t\|^2\nonumber \\
&\quad + \frac{\gamma_v\bar{z}_{t+1}^{-1}}{n}\left(\gamma_v\bar{z}_{t+1}^{-1}(1+\zeta_z^2)-\frac{\sqrt{1+\sigma_z^2}}{\mu+C_{l_{yy}}}\right)\|\nabla_v R(\mathbf{x}_t, \mathbf{y}_t, \mathbf{v}_t)\|^2
 \nonumber \\
&\quad + \frac{1}{n}\left(\frac{2\gamma_v\sqrt{1+\sigma_z^2}}{\mu + C_{l_{yy}}}+\frac{\gamma_v\sqrt{1+\sigma_z^2}(\mu + C_{l_{yy}})}{\mu C_{l_{yy}}} \right)\left(\frac{nL_{l,2}C_{f_y}}{\mu}+L_{f,1}\right)^2\bar{z}_{t+1}^{-1}\|\mathbf{y}_t - \mathbf{1}y^*(\bar{x}_t)\|^2\nonumber \\
&\overset{(a)}{\leq} \frac{1}{n}\left( 1 -  \frac{2\mu C_{l_{yy}}\gamma_v\bar{z}_{t+1}^{-1}}{\mu + C_{l_{yy}}} \right) \|\mathbf{v}_t - \mathbf{1}v^*(\bar{x}_t)\|^2 \notag\\
&\quad- \frac{\gamma_v\bar{z}_{t+1}^{-1}\sqrt{1+\sigma_z^2}}{2n(\mu + C_{l_{yy}}) } \|\nabla_v R(\mathbf{x}_t, \mathbf{y}_t, \mathbf{v}_t)\|^2+\frac{4\gamma_v\bar{z}_{t+1}^{-1}\bar{L}_r^2\sqrt{1+\sigma_z^2}}{n(\mu + C_{l_{yy}})}\|\mathbf{x}_t-\mathbf{1}\bar{x}_t\|^2 \nonumber \\
&\quad + \frac{1}{n}\left(\frac{2\gamma_v\sqrt{1+\sigma_z^2}}{\mu + C_{l_{yy}}}+\frac{\gamma_v\sqrt{1+\sigma_z^2}(\mu + C_{l_{yy}})}{\mu C_{l_{yy}}} \right)\left(\frac{nL_{l,2}C_{f_y}}{\mu}+L_{f,1}\right)^2\bar{z}_{t+1}^{-1}\|\mathbf{y}_t - \mathbf{1}y^*(\bar{x}_t)\|^2,\label{eq72}
\end{align}
where (a) follows from $\bar{z}_{t+1} \geq \bar{m}^v_{t+1} \geq C_{m^v} \geq \frac{2\gamma_v(1+\zeta_z^2)(\mu + C_{l_{yy}})}{\sqrt{1+\sigma_z^2}}$. Combining \cref{eq72} with \cref{eq69}, we have:
\begin{align}
  &\frac{1}{n}\|\mathbf{v}_{t+1} - \mathbf{1}v^*(\bar{x}_{t+1})\|^2 \nonumber\\
&\leq (1 + \hat{\lambda}_{t+1}) \frac{1}{n}\left( 1 -  \frac{2\mu C_{l_{yy}}\gamma_v\bar{z}_{t+1}^{-1}}{\mu + C_{l_{yy}}} \right) \|\mathbf{v}_t - \mathbf{1}v^*(\bar{x}_t)\|^2 \nonumber\\
&\quad- (1 + \hat{\lambda}_{t+1})\frac{\gamma_v\bar{z}_{t+1}^{-1}\sqrt{1+\sigma_z^2}}{2n(\mu + C_{l_{yy}}) } \|\nabla_v R(\mathbf{x}_t, \mathbf{y}_t, \mathbf{v}_t)\|^2 \nonumber \\
&\quad + (1 + \hat{\lambda}_{t+1})\frac{1}{n}\left(\frac{2\gamma_v\sqrt{1+\sigma_z^2}}{\mu + C_{l_{yy}}}+\frac{\gamma_v\sqrt{1+\sigma_z^2}(\mu + C_{l_{yy}})}{\mu C_{l_{yy}}} \right)\left(\frac{nL_{l,2}C_{f_y}}{\mu}+L_{f,1}\right)^2\nonumber\\
&\quad\cdot\bar{z}_{t+1}^{-1}\|\mathbf{y}_t - \mathbf{1}y^*(\bar{x}_t)\|^2 +(1 + \hat{\lambda}_{t+1})\frac{4\gamma_v\bar{z}_{t+1}^{-1}\bar{L}_r^2\sqrt{1+\sigma_z^2}}{n(\mu + C_{l_{yy}})}\|\mathbf{x}_t-\mathbf{1}\bar{x}_t\|^2\nonumber\\
&\quad+\left( 1 + \frac{1}{\hat{\lambda}_{t+1}} \right)  \|v^*(\bar{x}_t) - v^*(\bar{x}_{t+1})\|^2.\label{eq73}
\end{align}
By rearranging the terms in \cref{eq73}, we have:
\begin{align}
&(1 + \hat{\lambda}_{t+1})\frac{\gamma_v\bar{z}_{t+1}^{-1}\sqrt{1+\sigma_z^2}}{2n(\mu + C_{l_{yy}}) } \|\nabla_v R(\mathbf{x}_t, \mathbf{y}_t, \mathbf{v}_t)\|^2  \nonumber \\
&\leq (1 + \hat{\lambda}_{t+1}) \frac{1}{n}\left( 1 -  \frac{2\mu C_{l_{yy}}\gamma_v\bar{z}_{t+1}^{-1}}{\mu + C_{l_{yy}}} \right) \|\mathbf{v}_t - \mathbf{1}v^*(\bar{x}_t)\|^2 - \frac{1}{n}\|\mathbf{v}_{t+1} - \mathbf{1}v^*(\bar{x}_{t+1})\|^2 \nonumber \\
&\quad + (1 + \hat{\lambda}_{t+1})\frac{1}{n}\left(\frac{2\gamma_v\sqrt{1+\sigma_z^2}}{\mu + C_{l_{yy}}}+\frac{\gamma_v\sqrt{1+\sigma_z^2}(\mu + C_{l_{yy}})}{\mu C_{l_{yy}}} \right)\left(\frac{nL_{l,2}C_{f_y}}{\mu}+L_{f,1}\right)^2\nonumber\\
&\quad\cdot\bar{z}_{t+1}^{-1}\|\mathbf{y}_t - \mathbf{1}y^*(\bar{x}_t)\|^2+(1 + \hat{\lambda}_{t+1})\frac{4\gamma_v\bar{z}_{t+1}^{-1}\bar{L}_r^2\sqrt{1+\sigma_z^2}}{n(\mu + C_{l_{yy}})}\|\mathbf{x}_t-\mathbf{1}\bar{x}_t\|^2\nonumber \\
&\quad + \left( 1 + \frac{1}{\hat{\lambda}_{t+1}} \right)  \|v^*(\bar{x}_t) - v^*(\bar{x}_{t+1})\|^2.\label{eq74}
\end{align}
We now take $\hat{\lambda}_{t+1} :=  \frac{2\mu C_{l_{yy}}\gamma_v\bar{z}_{t+1}^{-1}}{\mu + C_{l_{yy}}}$. Since 
$\bar{z}_{t+1} \geq \bar{m}^v_{t+1} \geq C_{m^v} \geq \frac{2\mu C_{l_{yy}}\gamma_v}{\mu + C_{l_{yy}}}$ in \cref{eq54}, 
we have $\hat{\lambda}_{t+1} \leq 1$. Then we get:                                           
\begin{align}
&\frac{\gamma_v\bar{z}_{t+1}^{-1}\sqrt{1+\sigma_z^2}}{n} \|\nabla_v R(\mathbf{x}_t, \mathbf{y}_t, \mathbf{v}_t)\|^2 \nonumber\\
&\leq (1 + \hat{\lambda}_{t+1}) 
\frac{\gamma_v\bar{z}_{t+1}^{-1}\sqrt{1+\sigma_z^2}}{n} \|\nabla_v R(\mathbf{x}_t, \mathbf{y}_t, \mathbf{v}_t)\|^2 \nonumber \\
&\overset{(a)}{\leq} \frac{2(\mu + C_{l_{yy}})}{n}
\left( \|\mathbf{v}_t - \mathbf{1}v^*(\bar{x}_t)\|^2 - \|\mathbf{v}_{t+1} - \mathbf{1}v^*(\bar{x}_{t+1})\|^2 \right) \nonumber \\
&\quad + \frac{4(\mu + C_{l_{yy}})}{n}\left(\frac{2\gamma_v\sqrt{1+\sigma_z^2}}{\mu + C_{l_{yy}}}+\frac{\gamma_v\sqrt{1+\sigma_z^2}(\mu + C_{l_{yy}})}{\mu C_{l_{yy}}} \right)\left(\frac{nL_{l,2}C_{f_y}}{\mu}+L_{f,1}\right)^2\nonumber\\
&\quad\cdot\bar{z}_{t+1}^{-1}\|\mathbf{y}_t - \mathbf{1}y^*(\bar{x}_t)\|^2+\frac{16\gamma_v\bar{z}_{t+1}^{-1}\bar{L}_r^2\sqrt{1+\sigma_z^2}}{n}\|\mathbf{x}_t-\mathbf{1}\bar{x}_t\|^2 \nonumber \\
&\quad + 2(\mu + C_{l_{yy}}) 
\left(1 +  \frac{\mu + C_{l_{yy}}}{2\mu C_{l_{yy}}\gamma_v\bar{z}_{t+1}^{-1}} \right) 
L_v^2 \|\bar{x}_t - \bar{x}_{t+1}\|^2 \nonumber \\
&\overset{(b)}{\leq}  \frac{2(\mu + C_{l_{yy}})}{n}
\left( \|\mathbf{v}_t - \mathbf{1}v^*(\bar{x}_t)\|^2 - \|\mathbf{v}_{t+1} - \mathbf{1}v^*(\bar{x}_{t+1})\|^2 \right) \nonumber \\
&\quad + \frac{1}{n}\left(8\gamma_v\sqrt{1+\sigma_z^2}+\frac{4\gamma_v\sqrt{1+\sigma_z^2}(\mu + C_{l_{yy}})^2}{\mu C_{l_{yy}}} \right)\left(\frac{nL_{l,2}C_{f_y}}{\mu}+L_{f,1}\right)^2\bar{z}_{t+1}^{-1}\|\mathbf{y}_t - \mathbf{1}y^*(\bar{x}_t)\|^2 \nonumber \\
&\quad +\frac{8\bar{L}_r^2(\mu + C_{l_{yy}})\sqrt{1+\sigma_z^2}}{n\mu C_{l_{yy}}}\|\mathbf{x}_t-\mathbf{1}\bar{x}_t\|^2+  \frac{2(\mu + C_{l_{yy}})^2L_v^2}{\mu C_{l_{yy}}\gamma_v\bar{z}_{t+1}^{-1}} \|\bar{x}_t - \bar{x}_{t+1}\|^2,\label{eq75}
\end{align}
where (a) multiplies both sides of \cref{eq74} by $2(\mu + C_{l_{yy}})$ and uses $\hat{\lambda}_{t+1} \leq 1$; (b) uses $\bar{z}_{t+1} \geq \bar{m}^v_{t+1} \geq C_{m^v} \geq \frac{2\mu C_{l_{yy}}\gamma_v}{\mu + C_{l_{yy}}}$. Take summation of \cref{eq75}, then we have:
\begin{align}
&\sum_{k=k_3}^t \frac{\gamma_v\bar{z}_{k+1}^{-1}\sqrt{1+\sigma_z^2}}{n} \|\nabla_v R(\mathbf{x}_k, \mathbf{y}_k, \mathbf{v}_k)\|^2 \nonumber\\
&\leq \sum_{k=k_3-1}^t \frac{\gamma_v\bar{z}_{k+1}^{-1}\sqrt{1+\sigma_z^2}}{n} \|\nabla_v R(\mathbf{x}_k, \mathbf{y}_k, \mathbf{v}_k)\|^2\nonumber \\
&\leq \frac{2(\mu + C_{l_{yy}})}{n}\|\mathbf{v}_{k_3-1} - \mathbf{1}v^*(\bar{x}_{k_3-1})\|^2 + \frac{2(\mu + C_{l_{yy}})^2L_v^2}{\mu C_{l_{yy}}\gamma_v}  \sum_{k=k_3-1}^t \bar{z}_{k+1}\|\bar{x}_k - \bar{x}_{k+1}\|^2\nonumber\\
&\quad + \frac{1}{n}\left(8\gamma_v\sqrt{1+\sigma_z^2}+\frac{4\gamma_v\sqrt{1+\sigma_z^2}(\mu + C_{l_{yy}})^2}{\mu C_{l_{yy}}} \right)\left(\frac{nL_{l,2}C_{f_y}}{\mu}+L_{f,1}\right)^2 \nonumber\\
&\quad\cdot\sum_{k=k_3-1}^t\bar{z}_{k+1}^{-1}\|\mathbf{y}_k - \mathbf{1}y^*(\bar{x}_k)\|^2 +\frac{8\bar{L}_r^2(\mu + C_{l_{yy}})\sqrt{1+\sigma_z^2}}{n\mu C_{l_{yy}}} \sum_{k=k_3-1}^t\|\mathbf{x}_k-\mathbf{1}\bar{x}_k\|^2 \nonumber\\
&\leq  \frac{2(\mu + C_{l_{yy}})}{n}\|\mathbf{v}_{k_3-1} - \mathbf{1}v^*(\bar{x}_{k_3-1})\|^2+\frac{8\bar{L}_r^2(\mu + C_{l_{yy}})\sqrt{1+\sigma_z^2}}{n\mu C_{l_{yy}}} \sum_{k=0}^t\|\mathbf{x}_k-\mathbf{1}\bar{x}_k\|^2 \nonumber\\
&\quad+ \frac{2(\mu + C_{l_{yy}})^2L_v^2}{\mu C_{l_{yy}}\gamma_v}  \sum_{k=k_3-1}^t \bar{z}_{k+1} \left\| 
\gamma_x \bar{q}_{k+1}^{-1} \frac{\mathbf{1}^\top}{n} \bar{\nabla} F(\mathbf{x}_k,\mathbf{y}_k,\mathbf{v}_k) 
- \gamma_x \frac{\left(\tilde{\mathbf{q}}_{k+1}^{-1}\right)^\top}{n} \bar{\nabla} F(\mathbf{x}_k,\mathbf{y}_k,\mathbf{v}_k)
\right\|^2 \nonumber \\
&\quad +\frac{1}{n}\left(8\gamma_v\sqrt{1+\sigma_z^2}+\frac{4\gamma_v\sqrt{1+\sigma_z^2}(\mu + C_{l_{yy}})^2}{\mu C_{l_{yy}}} \right)\left(\frac{nL_{l,2}C_{f_y}}{\mu}+L_{f,1}\right)^2\nonumber\\
&\quad\cdot\sum_{k=k_3-1}^t\bar{z}_{k+1}^{-1}\|\mathbf{y}_k - \mathbf{1}y^*(\bar{x}_k)\|^2 \nonumber \\
&\overset{(a)}{\leq} \frac{2(\mu + C_{l_{yy}})}{n}\|\mathbf{v}_{k_3-1} - \mathbf{1}v^*(\bar{x}_{k_3-1})\|^2 \nonumber\\
&\quad+ \frac{2\gamma_x^2L_v^2(1+\zeta_q^2)(\mu + C_{l_{yy}})^2}{n\mu C_{l_{yy}}\gamma_v}  \sum_{k=k_3-1}^t \bar{z}_{k+1}\bar{q}_{k+1}^{-2} \left\| \bar{\nabla} F(\mathbf{x}_k,\mathbf{y}_k,\mathbf{v}_k)
\right\|^2 \nonumber \\
&\quad + \frac{1}{n}\left(8\gamma_v\sqrt{1+\sigma_z^2}+\frac{4\gamma_v\sqrt{1+\sigma_z^2}(\mu + C_{l_{yy}})^2}{\mu C_{l_{yy}}} \right)\left(\frac{nL_{l,2}C_{f_y}}{\mu}+L_{f,1}\right)^2\nonumber\\
&\quad\cdot\sum_{k=k_3-1}^t[\bar{m}^y_{k+1}]^{-1}\|\mathbf{y}_k - \mathbf{1}y^*(\bar{x}_k)\|^2+ \frac{16\Delta_0^x  \bar{L}_r^2(\mu + C_{l_{yy}})\sqrt{1+\sigma_z^2}}{n\mu C_{l_{yy}}(1 - \rho_W)} \nonumber \\
&\quad+ \frac{64 \gamma_x^2 \rho_W \bar{L}_r^2(\mu + C_{l_{yy}})(1 + \zeta_q^2)\sqrt{1+\sigma_z^2}}{n\mu C_{l_{yy}}(1 - \rho_W)^2} \sum_{k=0}^{t} 
 \bar{q}_{k+1}^{-2} \|\bar{\nabla}F(\mathbf{x}_k, \mathbf{y}_k, \mathbf{v}_k)\|^2 \nonumber\\
&\overset{(b)}{\leq}  \frac{2(\mu + C_{l_{yy}})}{n}\|\mathbf{v}_{k_3-1} - \mathbf{1}v^*(\bar{x}_{k_3-1})\|^2 \nonumber\\
&\quad+\frac{16 \bar{L}_r^2(\mu + C_{l_{yy}})\sqrt{1+\sigma_z^2}}{n\mu C_{l_{yy}}}\left(\frac{\Delta_0^x}{1-\rho_W}+\frac{4\gamma_x^2\rho_WC_{m^x}^2(1+\zeta_q^2)}{\bar{q}_0^2(1-\rho_W)^2}\right) \nonumber\\
&\quad+ \left(\frac{64 \gamma_x^2 \rho_W \bar{L}_r^2(\mu + C_{l_{yy}})(1 + \zeta_q^2)\sqrt{1+\sigma_z^2}}{n\mu C_{l_{yy}}(1 - \rho_W)^2} + \frac{2\gamma_x^2L_v^2(1+\zeta_q^2)(\mu + C_{l_{yy}})^2}{n\mu C_{l_{yy}}C_{m^v}\gamma_v}\right)\nonumber\\
&\quad\cdot\sum_{k=\min\{k_1-1,k_3-1\}}^{t} 
 \frac{\| \bar{\nabla} F(\mathbf{x}_{k},\mathbf{y}_{k},\mathbf{v}_{k})\|^2}{[\bar{m}^x_{k+1}]^2}+ \frac{1}{n}\left(8\gamma_v\sqrt{1+\sigma_z^2}+\frac{4\gamma_v\sqrt{1+\sigma_z^2}(\mu + C_{l_{yy}})^2}{\mu C_{l_{yy}}} \right)\nonumber\\
&\quad \cdot\left(\frac{nL_{l,2}C_{f_y}}{\mu}+L_{f,1}\right)^2 \frac{1}{\mu^2}\sum_{k=k_3-1}^t\frac{\|\nabla_y L(\mathbf{x}_k,\mathbf{y}_k)\|^2}{\bar{m}_{k+1}^y},\label{eq76}
\end{align}

where (a) uses \cref{Lemmanewx} and (b) refers to \cref{Assumption1} and \(\|\bar{\nabla} f_i({x}_{i,k_1-1}, {y}_{i,k_1-1},{v}_{i,k_1-1})\|^2 \leq [m^x_{i,k_1}]^2 \leq C_{m^x}^2\). Further, the approximation term $\|\mathbf{v}_{k_3-1} - \mathbf{1}v^*(\bar{x}_{k_3-1})\|^2$ satisfies:
\begin{align}
    &\|\mathbf{v}_{k_3-1} - \mathbf{1}v^*(\bar{x}_{k_3-1})\|^2\nonumber\\
    &\overset{(a)}{\leq}\frac{1}{\mu^2}\sum_{i=1}^n \|\nabla_v r_i(x_{i,k_3-1}, y_{i,k_3-1}, v_{i,k_3-1}) - \nabla_v r_i(x_{i,k_3-1}, y_{i,k_3-1}, v^*(\bar{x}_{k_3-1}))\|^2\nonumber\\
     &{\leq}\frac{2}{\mu^2}\sum_{i=1}^n \|\nabla_v r_i(x_{i,k_3-1}, y^*(\bar{x}_{k_3-1}), v^*(\bar{x}_{k_3-1})) - \nabla_v r_i(x_{i,k_3-1}, y_{i,k_3-1}, v^*(\bar{x}_{k_3-1}))\|^2 \nonumber\\
&\quad + \frac{2}{\mu^2}\sum_{i=1}^n \|\nabla_v r_i(x_{i,k_3-1}, y_{i,k_3-1}, v_{i,k_3-1}) - \nabla_v r_i(x_{i,k_3-1}, y^*(\bar{x}_{k_3-1}), v^*(\bar{x}_{k_3-1}))\|^2 \nonumber\\
&\overset{(b)}{\leq}  \frac{2}{\mu^2}\left(\frac{nL_{l,2}C_{f_y}}{\mu}+L_{f,1}\right)^2\!\!\|\mathbf{y}_{k_3-1}\!-\mathbf{1}y^*(\bar{x}_{k_3-1})\|^2 + \frac{4}{\mu^2}\sum_{i=1}^n \|\nabla_v r_i(x_{i,k_3-1}, y_{i,k_3-1}, v_{i,k_3-1})\|^2\nonumber\\
&\quad+ \frac{4\bar{L}_r^2}{\mu^2}\|\mathbf{x}_{k_3-1}-\mathbf{1}\bar{x}_{k_3-1}\|^2\nonumber\\
&\overset{(c)}{\leq} \frac{2}{\mu^2}\left(\frac{nL_{l,2}C_{f_y}}{\mu}+L_{f,1}\right)^2\!\!\|\mathbf{y}_{k_3-1}\!-\mathbf{1}y^*(\bar{x}_{k_3-1})\|^2 + \frac{4}{\mu^2}\sum_{i=1}^n \|\nabla_v r_i(x_{i,k_3-1}, y_{i,k_3-1}, v_{i,k_3-1})\|^2\nonumber\\
&\quad+\frac{8\bar{L}_r^2}{\mu^2}\left(\frac{\Delta_0^x}{1-\rho_W}+  \frac{4 \gamma_x^2 \rho_WC_{m^x}^2 (1 + \zeta_q^2)}{\bar{q}_0^2(1 - \rho_W)^2} \right)
,\label{eqnew3}
\end{align}
where (a) uses the strong convexity; (b) follows from \cref{Lemma3} and $\nabla_v r_i(\bar{x}_{k_3-1}, y^*(\bar{x}_{k_3-1}), v^*(\bar{x}_{k_3-1})) = 0$; (c) refers to \cref{Lemmanewx}. By plugging \cref{eqnew3} into \cref{eq76}, we have:
\begin{align}
&\sum_{k=k_3}^t \frac{\gamma_v\bar{z}_{k+1}^{-1}\sqrt{1+\sigma_z^2}}{n} \|\nabla_v R(\mathbf{x}_k, \mathbf{y}_k, \mathbf{v}_k)\|^2 \nonumber\\
&\leq  \frac{4(\mu + C_{l_{yy}})}{n\mu^2}\left(\frac{nL_{l,2}C_{f_y}}{\mu}+L_{f,1}\right)^2\!\!\|\mathbf{y}_{k_3-1}\!-\mathbf{1}y^*(\bar{x}_{k_3-1})\|^2 \nonumber\\
&\quad+ \frac{8(\mu + C_{l_{yy}})}{n\mu^2}\sum_{i=1}^n \|\nabla_v r_i(x_{i,k_3-1}, y_{i,k_3-1}, v_{i,k_3-1})\|^2 \nonumber\\
&\quad+ \left(\frac{64 \gamma_x^2 \rho_W \bar{L}_r^2(\mu + C_{l_{yy}})(1 + \zeta_q^2)\sqrt{1+\sigma_z^2}}{n\mu C_{l_{yy}}(1 - \rho_W)^2} + \frac{2\gamma_x^2L_v^2(1+\zeta_q^2)(\mu + C_{l_{yy}})^2}{n\mu C_{l_{yy}}C_{m^v}\gamma_v}\right)\nonumber\\
&\quad\cdot\sum_{k=\min\{k_1-1,k_3-1\}}^{t}  
 \frac{\| \bar{\nabla} F(\mathbf{x}_{k},\mathbf{y}_{k},\mathbf{v}_{k})\|^2}{[\bar{m}^x_{k+1}]^2}+ \frac{1}{n}\left(8\gamma_v\sqrt{1+\sigma_z^2}+\frac{4\gamma_v\sqrt{1+\sigma_z^2}(\mu + C_{l_{yy}})^2}{\mu C_{l_{yy}}} \right)\nonumber\\
&\quad \cdot\left(\frac{nL_{l,2}C_{f_y}}{\mu}+L_{f,1}\right)^2 \frac{1}{\mu^2}\sum_{k=k_3-1}^t\frac{\|\nabla_y L(\mathbf{x}_k,\mathbf{y}_k)\|^2}{\bar{m}_{k+1}^y}\nonumber\\
&\quad +\left(\frac{16\Delta_0^x}{1-\rho_W}+\frac{64\gamma_x^2\rho_WC_{m^x}^2(1+\zeta_q^2)}{\bar{q}_0^2(1-\rho_W)^2}\right)\left(\frac{ \bar{L}_r^2(\mu + C_{l_{yy}})\sqrt{1+\sigma_z^2}}{n\mu C_{l_{yy}}}+\frac{\bar{L}_r^2(\mu+C_{l_{yy}})}{n\mu^2}\right).\label{eqnew4}
\end{align}

Our next step is bounding $\|\mathbf{y}_{k_3-1}\!-\mathbf{1}y^*(\bar{x}_{k_3-1})\|^2$ on the RHS of \cref{eqnew4} in two cases. The first case is $\bar{m}^y_{k_3} \leq C_{m^y}$. In this case, by using strong convexity of $l$ and the definition of $\bar{m}^y_{k_3}$, we can easily have:
\begin{align}
\|\mathbf{y}_{k_3-1}\!-\mathbf{1}y^*(\bar{x}_{k_3-1})\|^2 &\leq
\frac{1}{\mu^2} \|\nabla_y l(\mathbf{x}_{k_3-1}, \mathbf{y}_{k_3-1})\|^2 \leq \frac{nC_{m^y}^2}{\mu^2}.\label{eq77}
\end{align}

For the second case, when $\bar{m}^y_{k_3} > C_{m^y}$, note that $k_2$ exists and $k_3 > k_2$ by \cref{Lemma6}. By plugging $\bar{\lambda}_{k_3-1} := \frac{\gamma_y\bar{u}^{-1}_{k_3-1} \mu L_{l,1}}{\mu + L_{l,1}}$ into \cref{eq64} and noting $\bar{\lambda}_{k_3-1} \leq 1$, we have:
\begin{align}
&\|\mathbf{y}_{k_3-1}\!-\mathbf{1}y^*(\bar{x}_{k_3-1})\|^2 \notag\\
&\leq \|\mathbf{y}_{k_3-2}\!-\mathbf{1}y^*(\bar{x}_{k_3-2})\|^2 + \frac{2(\mu + L_{l,1})}{
\gamma_y\bar{u}^{-1}_{k_3-1} \mu L_{l,1}} \|y^*(\bar{x}_{k_3-2}) -y^*(\bar{x}_{k_3-1})\|^2 \notag \\
&\quad +  \left(\frac{2\gamma_y\bar{u}_{t+1}^{-1}L_{l,1}(\mu+L_{l,1})\sqrt{1+\sigma_u^2}}{n\mu }+\frac{4\gamma_y\bar{u}_{t+1}^{-1}L_{l,1}^2\sqrt{1+\sigma_u^2}}{n(\mu + L_{l,1})}\right) \|\mathbf{x}_{k_3-2}-\mathbf{1}\bar{x}_{k_3-2}\|^2\nonumber\\
&\overset{(a)}{\leq} \|\mathbf{y}_{k_3-2}\!-\mathbf{1}y^*(\bar{x}_{k_3-2})\|^2 +  \frac{2(\mu + L_{l,1})L_y^2}{\gamma_y\bar{u}^{-1}_{k_3-1} \mu L_{l,1}} \|\bar{x}_{k_3-2} - \bar{x}_{k_3-1}\|^2 \notag \\
&\quad +   \left(\frac{2(\mu+L_{l,1})^2\sqrt{1+\sigma_u^2}}{n\mu^2 }+\frac{4L_{l,1}\sqrt{1+\sigma_u^2}}{n\mu}\right)\sum_{k=0}^{k_3-2}\|\mathbf{x}_{k}-\mathbf{1}\bar{x}_{k}\|^2\nonumber\\
&\overset{(b)}{\leq}  \|\mathbf{y}_{k_2-1}\!-\!\mathbf{1}y^*(\bar{x}_{k_2-1})\|^2 \!\!+\! \frac{4\gamma_x^2L_y^2(1\!+\!\zeta_q^2)(\mu \!+\! L_{l,1})}{n\gamma_y \mu L_{l,1}}  \bar{u}_{k_3-1}
\bar{q}_{k_3-1}^{-2} \| \bar{\nabla} F(\mathbf{x}_{k_3-2},\mathbf{y}_{k_3-2},\mathbf{v}_{k_3-2})\|^2 \notag \\
&\quad + \frac{16 \gamma_x^2 \rho_W (1 + \zeta_q^2)}{(1 - \rho_W)^2} \left(\frac{(\mu+L_{l,1})^2\sqrt{1+\sigma_u^2}}{n\mu^2 }\!+\!\frac{2L_{l,1}\sqrt{1+\sigma_u^2}}{n\mu }\right)\sum_{k=0}^{k_3-2} 
 \bar{q}_{k+1}^{-2} \|\bar{\nabla}F(\mathbf{x}_k, \mathbf{y}_k, \mathbf{v}_k)\|^2\nonumber\\
&\quad + \frac{4\Delta_0^x(\mu+L_{l,1})^2\sqrt{1+\sigma_u^2}}{n\mu^2(1-\rho_W) }+\frac{8\Delta_0^xL_{l,1}\sqrt{1+\sigma_u^2}}{n\mu(1-\rho_W) }\nonumber\\
&\overset{(c)}{\leq}\frac{nC_{m^y}^2}{\mu^2}+\left(\frac{4\Delta_0^x}{1-\rho_W}+\frac{16 \gamma_x^2 \rho_WC_{m^x}^2 (1 + \zeta_q^2)}{\bar{q}_0^2(1 - \rho_W)^2} \right)\left(\frac{(\mu+L_{l,1})^2\sqrt{1+\sigma_u^2}}{n\mu^2 }+\frac{2L_{l,1}\sqrt{1+\sigma_u^2}}{n\mu }\right)\notag \\
&\quad + \left[\frac{16 \gamma_x^2 \rho_W (1 + \zeta_q^2)}{\bar{z}_0^2(1 - \rho_W)^2} \left(\frac{(\mu+L_{l,1})^2\sqrt{1+\sigma_u^2}}{n\mu^2 }+\frac{2L_{l,1}\sqrt{1+\sigma_u^2}}{n\mu }\right)\right.\nonumber\\
&\quad +\left.\frac{4\gamma_x^2L_y^2(1+\zeta_q^2)(\mu + L_{l,1})}{n\bar{z}_0\gamma_y \mu L_{l,1}}\right]\sum_{k=\min\{k_1-1,k_2-1\}}^{k_3-2} 
 \frac{\| \bar{\nabla} F(\mathbf{x}_{k},\mathbf{y}_{k},\mathbf{v}_{k})\|^2}{[\bar{m}^x_{k+1}]^2},\label{eq78}
\end{align}
where (a) uses $L_y$-Lipschitz continuous and \(\bar{m}^y_{k_3+1} > C_{m^y} \geq \frac{\gamma_y \mu L_{l,1}}{\mu + L_{l,1}}\); (b) refers to \cref{Lemmanewx}; (c) follows from \cref{eq77} by replacing $k_3$ with $k_2$ since $\bar{m}_{k_2}^y \leq C_{m^y}$ and \(\|\bar{\nabla} f_i({x}_{i,k_1-1}, {y}_{i,k_1-1},{v}_{i,k_1-1})\|^2 \leq [m^x_{i,k_1}]^2 \leq C_{m^x}^2\).
By combining \cref{eq77} and \cref{eq78}, we obtain a general upper bound for $\|\mathbf{y}_{k_3-1}\!-\mathbf{1}y^*(\bar{x}_{k_3-1})\|^2$ as:
\begin{align}
&\|\mathbf{y}_{k_3-1}\!-\mathbf{1}y^*(\bar{x}_{k_3-1})\|^2\notag\\ &\leq
\frac{nC_{m^y}^2}{\mu^2}+\left(\frac{4\Delta_0^x}{1-\rho_W}+\frac{16 \gamma_x^2 \rho_WC_{m^x}^2 (1 + \zeta_q^2)}{\bar{q}_0^2(1 - \rho_W)^2} \right)\left(\frac{(\mu+L_{l,1})^2\sqrt{1+\sigma_u^2}}{n\mu^2 }+\frac{2L_{l,1}\sqrt{1+\sigma_u^2}}{n\mu }\right)\notag \\
&\quad + \left[\frac{16 \gamma_x^2 \rho_W (1 + \zeta_q^2)}{\bar{z}_0^2(1 - \rho_W)^2} \left(\frac{(\mu+L_{l,1})^2\sqrt{1+\sigma_u^2}}{n\mu^2 }+\frac{2L_{l,1}\sqrt{1+\sigma_u^2}}{n\mu }\right)\right.\nonumber\\
&\quad +\left.\frac{4\gamma_x^2L_y^2(1+\zeta_q^2)(\mu + L_{l,1})}{n\bar{z}_0\gamma_y \mu L_{l,1}}\right]\sum_{k=\min\{k_1-1,k_2-1\}}^{k_3-2} 
 \frac{\| \bar{\nabla} F(\mathbf{x}_{k},\mathbf{y}_{k},\mathbf{v}_{k})\|^2}{[\bar{m}^x_{k+1}]^2},\label{eq79}
\end{align}
where we define $\sum_{t=m}^n p_t = 0$ for any $m > n$ and non-negative sequence $\{p_t\}$. By plugging \cref{eq79} into \cref{eq76} and using $\|\nabla_v r_i(x_{i,k_3-1}, y_{i,k_3-1}, v_{i,k_3-1})\|^2 \leq [\bar{m}^v_{k_3}]^2 \leq C_{m^v}^2$, we have:
\begin{align}
&\sum_{k=k_3}^t \frac{\gamma_v\bar{z}_{k+1}^{-1}\sqrt{1+\sigma_z^2}}{n} \|\nabla_v R(\mathbf{x}_k, \mathbf{y}_k, \mathbf{v}_k)\|^2 \nonumber\\
&\leq 
\frac{4C_{m^y}^2(\mu + C_{l_{yy}})}{\mu^4}\left(\frac{nL_{l,2}C_{f_y}}{\mu}+L_{f,1}\right)^2\nonumber\\
&\quad+ \frac{8(\mu + C_{l_{yy}})C_{m^v}^2}{\mu^2}+ \frac{16(\mu + C_{l_{yy}})}{n\mu^2}\left(\frac{nL_{l,2}C_{f_y}}{\mu}\!+\!L_{f,1}\right)^2\nonumber\\
&\quad \cdot \left(\frac{\Delta_0^x}{1-\rho_W}\!+\!\frac{4 \gamma_x^2 \rho_WC_{m^x}^2 (1 + \zeta_q^2)}{\bar{q}_0^2(1 - \rho_W)^2} \right)\left(\frac{(\mu+L_{l,1})^2\sqrt{1+\sigma_u^2}}{n\mu^2 }\!+\!\frac{2L_{l,1}\sqrt{1+\sigma_u^2}}{n\mu }\right)\notag \\
&\quad+\frac{16(\mu\! +\! C_{l_{yy}})}{n\mu^2}\!\!\left(\frac{nL_{l,2}C_{f_y}}{\mu}\!\!+\!\!L_{f,1}\right)^2\!\!\left[\frac{4 \gamma_x^2 \rho_W (1 \!+\! \zeta_q^2)}{\bar{z}_0^2(1 \!-\! \rho_W)^2} \left(\frac{(\mu\!+\!L_{l,1})^2\sqrt{1\!+\!\sigma_u^2}}{n\mu^2 }\!+\!\frac{2L_{l,1}\sqrt{1\!+\!\sigma_u^2}}{n\mu }\right)\right.\nonumber\\
&\quad +\left.\frac{\gamma_x^2L_y^2(1+\zeta_q^2)(\mu + L_{l,1})}{n\bar{z}_0\gamma_y \mu L_{l,1}}\right]\sum_{k=\min\{k_1-1,k_2-1\}}^{k_3-2} 
 \frac{\| \bar{\nabla} F(\mathbf{x}_{k},\mathbf{y}_{k},\mathbf{v}_{k})\|^2}{[\bar{m}^x_{k+1}]^2}\nonumber\\
&\quad+ \left(\frac{64 \gamma_x^2 \rho_W \bar{L}_r^2(\mu + C_{l_{yy}})(1 + \zeta_q^2)\sqrt{1+\sigma_z^2}}{n\mu C_{l_{yy}}(1 - \rho_W)^2} + \frac{2\gamma_x^2L_v^2(1+\zeta_q^2)(\mu + C_{l_{yy}})^2}{n\mu C_{l_{yy}}C_{m^v}\gamma_v}\right)\nonumber\\
&\quad\cdot\sum_{k=\min\{k_1-1,k_3-1\}}^{t} 
 \frac{\| \bar{\nabla} F(\mathbf{x}_{k},\mathbf{y}_{k},\mathbf{v}_{k})\|^2}{[\bar{m}^x_{k+1}]^2}+ \frac{1}{n}\left(8\gamma_v\sqrt{1+\sigma_z^2}+\frac{4\gamma_v\sqrt{1+\sigma_z^2}(\mu + C_{l_{yy}})^2}{\mu C_{l_{yy}}} \right)\nonumber\\
&\quad \cdot\left(\frac{nL_{l,2}C_{f_y}}{\mu}+L_{f,1}\right)^2 \frac{1}{\mu^2}\sum_{k=k_3-1}^t\frac{\|\nabla_y L(\mathbf{x}_k,\mathbf{y}_k)\|^2}{\bar{m}_{k+1}^y}\nonumber\\
&\quad +\left(\frac{16\Delta_0^x}{1-\rho_W}+\frac{64\gamma_x^2\rho_WC_{m^x}^2(1+\zeta_q^2)}{\bar{q}_0^2(1-\rho_W)^2}\right)\left(\frac{ \bar{L}_r^2(\mu + C_{l_{yy}})\sqrt{1+\sigma_z^2}}{n\mu C_{l_{yy}}}+\frac{\bar{L}_r^2(\mu+C_{l_{yy}})}{n\mu^2}\right).\label{eq80}
\end{align}

Then, the proof is complete.
\end{proof}

\subsection{The Upper Bounds of $\bar{m}^y_t$ and $\bar{z}_t$}
Supported by \cref{Lemma7} and \cref{Lemma8}, we derive upper bounds of $\bar{m}^y_t$ and $\bar{z}_t$.

\begin{lemma}\label{Lemma9} Suppose the total iteration rounds of \cref{alg1} is $T$. Under \cref{Assumption1} and \cref{Assumption2}, if $k_2$ in \cref{Lemma6} exists within $T$ iterations, we have:
\begin{align}
\bar{m}^y_{t+1} \leq 
\begin{cases} 
C_{m^y}, & t < k_2, \\
C_{m^y}+c_0+d_0 \sum_{k=\min\{k_1,k_2\}}^t \frac{\|\bar{\nabla} F(\mathbf{x}_k,\mathbf{y}_k,\mathbf{v}_k)\|^2}{[\bar{m}^x_{k+1}]^2\max \big\{\bar{m}^v_{k+1}, \bar{m}^y_{k+1}\big\}}, 
& t \geq k_2.
\end{cases}\label{eqnewc0d081}
\end{align}
where $c_0, d_0$ are defined as:
\begin{align}
&c_0:= \frac{ 2C_{m^y}^2(\mu \!+\! L_{l,1})}{\mu^2\gamma_y\sqrt{1\!+\!\sigma_u^2}}\! \!+\!\!\left(\frac{8\Delta_0^x}{n(1-\rho_W)}\!\!+\!\!\frac{32 \gamma_x^2 \rho_WC_{m^x}^2 (1 + \zeta_q^2)}{n\bar{q}_0^2(1 - \rho_W)^2} \right)\!\!\left(\frac{(\mu+L_{l,1})^3}{\gamma_y\mu^2}\!\!+\!\!\frac{2L_{l,1}(\mu+L_{l,1})}{\gamma_y\mu}\right)\notag\\
&\quad+ \frac{32 \gamma_x^2 \rho_W(1 + \zeta_q^2)}{n\bar{z}_0(1 - \rho_W)^2} \left(\frac{(\mu+L_{l,1})^3}{\gamma_y\mu^2}+\frac{2L_{l,1}(\mu+L_{l,1})}{\gamma_y\mu}\right)+\frac{8\gamma_x^2  L_y^2\left( 1 + \zeta_q^2 \right) (\mu + L_{l,1})^2}{n\gamma_y^2\mu L_{l,1}\bar{z}_0\sqrt{1+\sigma_u^2}},\notag\\
&d_0:= \left[\frac{32 \gamma_x^2 \rho_W(1 + \zeta_q^2)}{n(1 - \rho_W)^2} \left(\frac{(\mu+L_{l,1})^3}{\gamma_y\mu^2}+\frac{2L_{l,1}(\mu+L_{l,1})}{\gamma_y\mu}\right)+\frac{8\gamma_x^2  L_y^2\left( 1 + \zeta_q^2 \right) (\mu + L_{l,1})^2}{n\gamma_y^2\mu L_{l,1}\sqrt{1+\sigma_u^2}}\right],\label{eq84}
\end{align}

When such $k_2$ does not exist, $\bar{m}^y_{t+1} \leq C_{m^y}$ holds for any $t < T$.
\end{lemma}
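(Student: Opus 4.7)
The plan is to combine the weighted-sum bound of \cref{Lemma7} with the averaged squared-accumulator recursion and resolve a quadratic inequality for $\bar{m}^y_{t+1}$. Three ingredients drive the argument: monotonicity of $\bar{m}^y_t$, Jensen's inequality to pass from per-agent squared accumulators to their mean, and a control on $S_{k_2}:=\tfrac{1}{n}\sum_i[m^y_{i,k_2}]^2$ via the threshold $C_{m^y}$ in \eqref{eq53}.

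First I would show that $\bar{m}^y_t$ is nondecreasing. From the gossip update $[m^y_{i,t+1}]^2 = \sum_j w_{ij}([m^y_{j,t}]^2 + \|g^y_{j,t}\|^2)$ and Cauchy--Schwarz we get $m^y_{i,t+1} \geq \sqrt{\sum_j w_{ij}[m^y_{j,t}]^2} \geq \sum_j w_{ij} m^y_{j,t}$; averaging over $i$ and using doubly stochasticity of $W$ yields $\bar{m}^y_{t+1} \geq \bar{m}^y_t$. Hence, if $k_2$ does not exist within $T$ iterations, \cref{Lemma6} and monotonicity give $\bar{m}^y_{t+1} \leq C_{m^y}$ for every $t<T$. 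When $k_2$ exists, monotonicity again yields $\bar{m}^y_{t+1} \leq \bar{m}^y_{k_2} \leq C_{m^y}$ for $t < k_2$, matching the first case of \eqref{eqnewc0d081}.

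Second, for $t \geq k_2$, averaging the squared recursion telescopes cleanly to $S_{t+1} = S_{k_2} + \tfrac{1}{n}\sum_{k=k_2}^t \|\nabla_y L(\mathbf{x}_k,\mathbf{y}_k)\|^2$, and Jensen gives $[\bar{m}^y_{t+1}]^2 \leq S_{t+1}$. Using monotonicity $\bar{u}_{k+1} \leq \bar{m}^y_{t+1}$ for $k \in [k_2,t]$,
\begin{equation*}
\sum_{k=k_2}^t \|\nabla_y L(\mathbf{x}_k,\mathbf{y}_k)\|^2 \leq \bar{m}^y_{t+1} \sum_{k=k_2}^t \frac{\|\nabla_y L(\mathbf{x}_k,\mathbf{y}_k)\|^2}{\bar{u}_{k+1}} \leq \bar{m}^y_{t+1}\cdot A,
\end{equation*}
where $A$ denotes the RHS of \cref{Lemma7}. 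Combining produces the quadratic $[\bar{m}^y_{t+1}]^2 \leq S_{k_2} + \tfrac{A}{n}\,\bar{m}^y_{t+1}$; solving via $\sqrt{p^2+4q} \leq p+2\sqrt{q}$ gives $\bar{m}^y_{t+1} \leq A/n + \sqrt{S_{k_2}}$. Plugging in the explicit decomposition of $A$ and grouping the initialization/consensus-error pieces into $c_0$ and the $\|\bar{\nabla}F\|^2/([\bar{m}^x_{k+1}]^2\max\{\bar{m}^v_{k+1},\bar{m}^y_{k+1}\})$ coefficient into $d_0$---each divided by $n$, matching \eqref{eq84}---recovers the second case of \eqref{eqnewc0d081}, provided $\sqrt{S_{k_2}} \leq C_{m^y}$.

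The main obstacle will be justifying $\sqrt{S_{k_2}} \leq C_{m^y}$: a~priori we only know that the arithmetic mean $\bar{m}^y_{k_2}$ lies below $C_{m^y}$, whereas $S_{k_2}$ depends on the per-agent squared accumulators, which can in principle be larger. I would leverage the uniform initialization in \cref{alg1} (so $S_0 = [\bar{m}^y_0]^2 \leq C_{m^y}^2$) together with the stepsize-tracking gossip on $[m^y_{i,t}]^2$, which keeps the per-agent squared accumulators close to $[\bar{m}^y_t]^2$ with dispersion controlled by $\zeta_u$. Because \eqref{eq53} chooses $C_{m^y}$ generously (in particular via the $(1+\zeta_u^2)$ factor), this dispersion can be absorbed, closing the argument; this delicate interplay is also what forces the specific form of the constants $c_0$ and $d_0$.
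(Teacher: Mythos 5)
Your skeleton — monotonicity of $\bar m^y_t$, telescoping the accumulator recursion from $k_2$, and invoking \cref{Lemma7} to bound the weighted sum, with $c_0$ and $d_0$ obtained by dividing the constant and coefficient parts of that bound by $n$ — is the same route the paper takes, and the first case ($t<k_2$ and the no-$k_2$ case) is handled identically. The divergence, and the genuine gap, is in how the telescope is anchored at $k_2$. The paper writes the per-step identity $\bar m^y_{t+1}=\bar m^y_t+\frac{\|\nabla_y L(\mathbf{x}_t,\mathbf{y}_t)\|^2}{n(\bar m^y_{t+1}+\bar m^y_t)}$ (step (a) of \cref{eq82}), so its telescope starts directly at $\bar m^y_{k_2}\le C_{m^y}$ and produces $\bar m^y_{t+1}\le \bar m^y_{k_2}+\sum_{k=k_2}^t\frac{\|\nabla_y L(\mathbf{x}_k,\mathbf{y}_k)\|^2}{n\bar m^y_{k+1}}$ with no quadratic inequality needed. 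You instead telescope the exact quantity $S_t:=\frac1n\sum_i[m^y_{i,t}]^2$ and use Jensen to get $[\bar m^y_{t+1}]^2\le S_{t+1}$, which forces you to control $\sqrt{S_{k_2}}$ — and that is where your argument breaks.

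Concretely: Jensen gives $S_{k_2}\ge(\bar m^y_{k_2})^2$, so the two-stage framework's guarantee $\bar m^y_{k_2}\le C_{m^y}$ bounds $\sqrt{S_{k_2}}$ from \emph{below}, not above. Your proposed fix — absorbing the dispersion via the $(1+\zeta_u^2)$ factor in \cref{eq53} — does not work with the constants as defined: that factor exists to guarantee the contraction in \cref{eq63}, not to pad $C_{m^y}$ against accumulator variance, and in any case $\zeta_u$ controls relative deviations of the \emph{inverse} stepsizes, which at best yields $m^y_{i,k_2}\le\bar m^y_{k_2}/(1-\zeta_u)$ (vacuous when $\zeta_u\ge1$). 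The honest version of your route gives $\sqrt{S_{k_2}}\le C_{m^y}+\sqrt{\tfrac1n\sum_i(m^y_{i,k_2}-\bar m^y_{k_2})^2}$, where the variance term is controlled by the tracking mechanism (cf.\ \cref{eq11119}) but is \emph{not} present in the stated $c_0$ of \cref{eq84}; so you would prove the lemma only with a strictly larger additive constant. To recover the lemma exactly you should drop the detour through $S_t$ and the quadratic inequality, and instead telescope $\bar m^y_t$ itself as the paper does. (Your extra care does expose that the paper's step (a) implicitly identifies $[\bar m^y_t]^2$ with $S_t$, i.e.\ treats $\bar m^y_t$ as a root-mean-square rather than the arithmetic mean it is defined to be — a conflation worth noting, but it is the paper's to own, not a repair your proposal supplies.)
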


\begin{proof}
According to \cref{Lemma6}, the proof can be split into the following three cases:

\textbf{Case 1:} $k_2$ does not exist. In this case, based on \cref{Lemma6}, we have $\bar{m}^y_T \leq C_{m^y}$, and hence $\bar{m}^y_{t+1} \leq C_{m^y}$ for any $t < T$ because $\bar{m}^y_t$ is non-decreasing with $t$.

\textbf{Case 2:} $k_2$ exists and $t < k_2$: In this case, based on \cref{Lemma6}, we have $\bar{m}^y_{t+1} \leq C_{m^y}$.

\textbf{Case 3:} $k_2$ exists and $t \geq k_2$: Using telescoping, we have:
\begin{align}
\bar{m}^y_{t+1} &\overset{(a)}{=} \bar{m}^y_t + \frac{\|\nabla_y L(\mathbf{x}_t, \mathbf{y}_t)\|^2}{n(\bar{m}^y_{t+1} + \bar{m}^y_t)}\nonumber \\
&\leq \bar{m}^y_t + \frac{\|\nabla_y L(\mathbf{x}_t, \mathbf{y}_t)\|^2}{n\bar{m}^y_{t+1}} \nonumber\\
&= \bar{m}^y_{k_2} + \sum_{k=k_2}^{t-1}  \frac{\|\nabla_y L(\mathbf{x}_k, \mathbf{y}_k)\|^2}{n(\bar{m}^y_{t+1} + \bar{m}^y_t)}+\frac{\|\nabla_y L(\mathbf{x}_t, \mathbf{y}_t)\|^2}{n\bar{m}^y_{t+1}} \nonumber\\
&\leq \bar{m}^y_{k_2} + \sum_{k=k_2}^t  \frac{\|\nabla_y L(\mathbf{x}_k, \mathbf{y}_k)\|^2}{n\bar{m}^y_{k+1}} \nonumber \\
&\overset{(b)}{\leq} C_{m^y}+\frac{ 2C_{m^y}^2(\mu + L_{l,1})}{\mu^2\gamma_y\sqrt{1+\sigma_u^2}} \nonumber\\
&\quad+\left(\frac{8\Delta_0^x}{n(1-\rho_W)}+\frac{32 \gamma_x^2 \rho_WC_{m^x}^2 (1 + \zeta_q^2)}{n\bar{q}_0^2(1 - \rho_W)^2} \right)\left(\frac{(\mu+L_{l,1})^3}{\gamma_y\mu^2}+\frac{2L_{l,1}(\mu+L_{l,1})}{\gamma_y\mu}\right)\nonumber\\
&\quad+ \frac{32 \gamma_x^2 \rho_W(1 + \zeta_q^2)}{n\bar{z}_0(1 - \rho_W)^2} \left(\frac{(\mu+L_{l,1})^3}{\gamma_y\mu^2}+\frac{2L_{l,1}(\mu+L_{l,1})}{\gamma_y\mu}\right)+\frac{8\gamma_x^2  L_y^2\left( 1 + \zeta_q^2 \right) (\mu + L_{l,1})^2}{n\gamma_y^2\mu L_{l,1}\bar{z}_0\sqrt{1+\sigma_u^2}} \nonumber\\
&\quad+ \left[\frac{32 \gamma_x^2 \rho_W(1 + \zeta_q^2)}{n(1 - \rho_W)^2} \left(\frac{(\mu+L_{l,1})^3}{\gamma_y\mu^2}+\frac{2L_{l,1}(\mu+L_{l,1})}{\gamma_y\mu}\right)\right.\nonumber\\
&\quad\left.+\frac{8\gamma_x^2  L_y^2\left( 1 + \zeta_q^2 \right) (\mu + L_{l,1})^2}{n\gamma_y^2\mu L_{l,1}\sqrt{1+\sigma_u^2}}\right]\!\! \sum_{k=\min\{k_1,k_2\}}^t \frac{\|\bar{\nabla} F(\mathbf{x}_k,\mathbf{y}_k,\mathbf{v}_k)\|^2}{[\bar{m}^x_{k+1}]^2\max \big\{\bar{m}^v_{k+1}, \bar{m}^y_{k+1}\big\}},\label{eq82}
\end{align}
where (a) employs $(\bar{m}^y_{t+1}+\bar{m}^y_{t})(\bar{m}^y_{t+1}-\bar{m}^y_{t})=[\bar{m}^y_{t+1}]^2-[\bar{m}^y_{t}]^2$ and (b) uses \cref{Lemma7}. Thus, the proof is complete. 
\end{proof}

\begin{lemma}\label{Lemma10} Under \cref{Assumption1} and \cref{Assumption2}, suppose the total iteration rounds of \cref{alg1} is $T$. If at least one of $k_2$ and $k_3$ in \cref{Lemma6} exists, we denote $k_{\text{min}} := \min\{k_2, k_3\}$. Then we have the upper bound of $\bar{z}_t$ as:
\begin{align}
\bar{z}_t \leq
\begin{cases}
C_z, & t \leq k_{\text{min}}, \\
a_1 \log(t) + b_1, & t > k_{\text{min}},
\end{cases}\label{eq83}
\end{align}
where $a_1, b_1$ are defined as:
\begin{align}
a_1:= 6a_0,\quad b_1:= 4a_0 \log \left( 1 + \frac{nC_{l_{xy}} \bar{b} + nC_{f_x} + \bar{m}^x_0}{nC_{l_{xy}} \bar{a}} \right) + 4a_0\log \left( nC_{l_{xy}} \bar{a}\right) + 4a_0 + 2b_0,\label{eq84}
\end{align}
in which we define constants
\begin{align}
\bar{a} &:= \frac{\sqrt{2n}}{\mu}, \quad
\bar{b} := \frac{\sqrt{2n}C_{f_y}}{\mu}, \notag\\
a_0 &:= \left( \frac{1}{\mu^2}\left(8+\frac{4(\mu + C_{l_{yy}})^2}{\mu C_{l_{yy}}} \right)\left(\frac{nL_{l,2}C_{f_y}}{\mu}+L_{f,1}\right)^2+1\right) \notag \\
&\quad \cdot\left(\frac{32 \gamma_x^2 \rho_W(1 + \zeta_q^2)}{n(1 - \rho_W)^2} \left(\frac{(\mu+L_{l,1})^3}{\gamma_y\mu^2}+\frac{2L_{l,1}(\mu+L_{l,1})}{\gamma_y\mu}\right)+\frac{8\gamma_x^2  L_y^2\left( 1 + \zeta_q^2 \right) (\mu + L_{l,1})^2}{n\gamma_y^2\mu L_{l,1}\sqrt{1+\sigma_u^2}}\right)\nonumber\\
&\quad +\frac{16(\mu + C_{l_{yy}})}{n\gamma_v\mu^2\sqrt{1+\sigma_u^2}}\left(\frac{nL_{l,2}C_{f_y}}{\mu}\!+\!L_{f,1}\right)^2\left(\frac{4 \gamma_x^2 \rho_W (1 + \zeta_q^2)}{\bar{z}_0^2(1 - \rho_W)^2} \right.\nonumber\\
&\quad \left.\cdot\left(\frac{(\mu+L_{l,1})^2\sqrt{1+\sigma_u^2}}{n\mu^2 }+\frac{2L_{l,1}\sqrt{1+\sigma_u^2}}{n\mu }\right)+\frac{\gamma_x^2L_y^2(1+\zeta_q^2)(\mu + L_{l,1})}{n\bar{z}_0\gamma_y \mu L_{l,1}}\right)\nonumber\\
&\quad+ \frac{64 \gamma_x^2 \rho_W \bar{L}_r^2(\mu + C_{l_{yy}})(1 + \zeta_q^2)}{n\mu C_{l_{yy}}\gamma_v(1 - \rho_W)^2} + \frac{2\gamma_x^2L_v^2(1+\zeta_q^2)(\mu + C_{l_{yy}})^2}{n\mu C_{l_{yy}}\bar{m}_0^v\gamma_v^2\sqrt{1+\sigma_z^2}}, \notag\\
b_0 &:=C_{m^y} + C_{m^v} +\frac{4C_{m^y}^2(\mu + C_{l_{yy}})}{\mu^4\gamma_v\sqrt{1+\sigma_z^2}}\left(\frac{nL_{l,2}C_{f_y}}{\mu}+L_{f,1}\right)^2 \nonumber\\
&\quad+ \frac{8(\mu + C_{l_{yy}})C_{m^v}^2}{\mu^2\gamma_v\sqrt{1+\sigma_z^2}} + \frac{16(\mu + C_{l_{yy}})}{n\gamma_v\mu^2}\left(\frac{nL_{l,2}C_{f_y}}{\mu}\!+\!L_{f,1}\right)^2 \notag \\
&\quad \cdot \left(\frac{\Delta_0^x}{1-\rho_W}\!+\!\frac{4 \gamma_x^2 \rho_WC_{m^x}^2 (1 + \zeta_q^2)}{\bar{q}_0^2(1 - \rho_W)^2} \right)\left(\frac{(\mu+L_{l,1})^2}{n\mu^2 }\!+\!\frac{2L_{l,1}}{n\mu }\right)\notag \\
&\quad+\left(\frac{16\Delta_0^x}{n(1-\rho_W)}+\frac{64\gamma_x^2\rho_WC_{m^x}^2(1+\zeta_q^2)}{n\bar{q}_0^2(1-\rho_W)^2}\right)\left(\frac{ \bar{L}_r^2(\mu + C_{l_{yy}})}{\mu C_{l_{yy}}\gamma_v}+\frac{\bar{L}_r^2(\mu+C_{l_{yy}})}{\mu^2\gamma_v\sqrt{1+\sigma_z^2}}\right) \notag \\
&\quad  +\left[ \frac{1}{\mu^2}\left(8+\frac{4(\mu + C_{l_{yy}})^2}{\mu C_{l_{yy}}} \right)\left(\frac{nL_{l,2}C_{f_y}}{\mu}+L_{f,1}\right)^2+1\right]\left(\frac{C_{m^y}^2}{\bar{m}^y_0} - {\bar{m}^y_0}\right)  \notag \\
&\quad  +\left[ \frac{1}{\mu^2}\left(8+\frac{4(\mu + C_{l_{yy}})^2}{\mu C_{l_{yy}}} \right)\left(\frac{nL_{l,2}C_{f_y}}{\mu}+L_{f,1}\right)^2+1\right]\left(\frac{ 2C_{m^y}^2(\mu + L_{l,1})}{\mu^2\gamma_y\sqrt{1+\sigma_u^2}} \right.\notag\\
&\quad+\left(\frac{8\Delta_0^x}{n(1-\rho_W)}+\frac{32 \gamma_x^2 \rho_WC_{m^x}^2 (1 + \zeta_q^2)}{n\bar{q}_0^2(1 - \rho_W)^2} \right)\left(\frac{(\mu+L_{l,1})^3}{\gamma_y\mu^2}+\frac{2L_{l,1}(\mu+L_{l,1})}{\gamma_y\mu}\right)\nonumber\\
&\quad\left.+ \frac{32 \gamma_x^2 \rho_W(1 + \zeta_q^2)}{n\bar{z}_0(1 - \rho_W)^2} \left(\frac{(\mu+L_{l,1})^3}{\gamma_y\mu^2}+\frac{2L_{l,1}(\mu+L_{l,1})}{\gamma_y\mu}\right)+\frac{8\gamma_x^2  L_y^2\left( 1 + \zeta_q^2 \right) (\mu + L_{l,1})^2}{n\gamma_y^2\mu L_{l,1}\bar{z}_0\sqrt{1+\sigma_u^2}} \right). \label{eq85}
\end{align}

When neither $k_2$ nor $k_3$ exists, we have $\bar{z}_t \leq C_z$ for all $t \leq T$.
\end{lemma}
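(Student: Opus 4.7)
The strategy is an AdaGrad-Norm-style bootstrap: derive a self-referential inequality of the form $[\bar z_t]^2\le\text{const}+\mathcal{O}(\bar z_t\log t)$ and resolve it with the quadratic formula. The analysis splits according to the two-stage cutoffs of \cref{Lemma6}.

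\emph{Pre-cutoff regimes.} If neither $k_2$ nor $k_3$ exists, \cref{Lemma6} gives $\bar m^y_t\le C_{m^y}$ and $\bar m^v_t\le C_{m^v}$ for all $t\le T$. Because $z_{i,t}=\max\{m^v_{i,t},m^y_{i,t}\}\le m^v_{i,t}+m^y_{i,t}$, averaging across agents yields $\bar z_t\le \bar m^v_t+\bar m^y_t\le C_{m^v}+C_{m^y}=C_z$. The same triangle estimate covers the window $t\le k_{\min}$ when at least one cutoff exists, since both averaged accumulators still lie below their thresholds there.

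\emph{Post-cutoff bootstrap ($t>k_{\min}$).} Telescoping the squared increments $[\bar m^y_{k+1}]^2-[\bar m^y_k]^2=\|\nabla_y L_k\|^2/n$ and its $v$-analog (as in step (a) of \cref{Lemma9}) and combining with $[\bar z_t]^2\le 2([\bar m^y_t]^2+[\bar m^v_t]^2)$ gives
\begin{equation*}
[\bar z_t]^2\le 2\bigl([\bar m^y_0]^2+[\bar m^v_0]^2\bigr)+\frac{2}{n}\sum_{k<t}\bigl(\|\nabla_y L_k\|^2+\|\nabla_v R_k\|^2\bigr).
\end{equation*}
Monotonicity of $\bar z$ together with $\bar u_{k+1}=\bar m^y_{k+1}\le\bar z_{k+1}\le\bar z_t$ lets me factor $\bar z_t$ out of each raw gradient sum:
\begin{equation*}
\sum_{k<t}\|\nabla_y L_k\|^2\le\bar z_t\sum_{k<t}\frac{\|\nabla_y L_k\|^2}{\bar u_{k+1}},\qquad
\sum_{k<t}\|\nabla_v R_k\|^2\le\bar z_t\sum_{k<t}\frac{\|\nabla_v R_k\|^2}{\bar z_{k+1}}.
\end{equation*}
The head indices $k<k_2$ (resp.\ $k<k_3$) contribute only constants via $\sum_{k<k_2}\|\nabla_y L_k\|^2/n\le C_{m^y}^2$ and $\bar u_{k+1}\ge\bar m^y_0$; the tails are controlled by \cref{Lemma7} and \cref{Lemma8}, each of which produces a bound of the form $\alpha\log t+\beta+\gamma\sum_k\|\bar\nabla F_k\|^2/([\bar m^x_{k+1}]^2\max\{\bar m^v_{k+1},\bar m^y_{k+1}\})$. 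The remaining $\bar\nabla F$-sum is itself $\mathcal O(\log t)$: applying \cref{Lemma1} with $a_l=\|\bar\nabla F_l\|^2/n$ (so $\sum_{j\le l}a_j\le[\bar m^x_{l+1}]^2$) yields $\sum_l\|\bar\nabla F_l\|^2/[\bar m^x_{l+1}]^2\le n(\log[\bar m^x_t]^2+1)$, and $\max\{\bar m^v_{k+1},\bar m^y_{k+1}\}\ge\bar z_0$ absorbs the extra denominator. Collecting terms yields $[\bar z_t]^2\le 2a_0\bar z_t\log t+b_0\bar z_t$ with the constants matching $a_0,b_0$ in \eqref{eq85}, and the identity $x^2\le px+q\Rightarrow x\le p+\sqrt q$ delivers $\bar z_t\le 6a_0\log t+b_1=a_1\log t+b_1$.

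The principal obstacle is bookkeeping. The \cref{Lemma8} tail carries an auxiliary coupling term $\sum_k\|\nabla_y L_k\|^2/\bar m^y_{k+1}$ that must itself be routed through \cref{Lemma7} before being absorbed into the log budget, and the $\log[\bar m^x_t]^2$ factor produced by \cref{Lemma1} is tamed by the gradient bound $\|g^x_{i,t}\|\le C_{l_{xy}}\|v_{i,t}\|+C_{f_x}$ together with the projection $\mathcal P_{\mathcal V}$ (yielding the $\log(1+(nC_{l_{xy}}\bar b+nC_{f_x}+\bar m^x_0)/(nC_{l_{xy}}\bar a))$ and $\log(nC_{l_{xy}}\bar a)$ contributions to $b_1$). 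The network-induced factors $(1+\zeta_q^2)/(1-\rho_W)^2$ and the lower bounds on $\bar z_0,\bar m^x_0,\bar m^y_0,\bar m^v_0$ must be propagated consistently through \cref{Lemmanewx} and the substitutions; the multiplier $6a_0$ (rather than a bare $a_0$) arises from absorbing several log-bounded coupling contributions into a single leading coefficient.
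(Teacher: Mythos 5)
Your overall route is essentially the paper's, repackaged: the paper telescopes $\bar z_{t+1}-\bar z_t\le\frac{\|\nabla_y L_t\|^2}{n\bar m^y_{t+1}}+\frac{\|\nabla_v R_t\|^2}{n\bar z_{t+1}}$ linearly (via $\bar z_{t+1}^2-\bar z_t^2\le([\bar m^y_{t+1}]^2-[\bar m^y_t]^2)+([\bar m^v_{t+1}]^2-[\bar m^v_t]^2)$ and division by $\bar z_{t+1}+\bar z_t$), whereas you telescope the squares and then factor $\bar z_t$ out of the raw gradient sums using monotonicity. After dividing your quadratic inequality by $\bar z_t$ the two become the same inequality, and both then feed the normalized tails into \cref{Lemma7}, \cref{Lemma8}, and \cref{Lemma1}. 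So the decomposition, the case split over $k_2,k_3$, and the head-segment constants are all handled correctly and in the same spirit as the paper.

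There is, however, one genuine gap in how you close the argument. The bound on $\sum_k\|\bar\nabla F_k\|^2/[\bar m^x_{k+1}]^2$ obtained from \cref{Lemma1} is not $\mathcal O(\log t)$ outright: because $\|g^x_{i,t}\|\le C_{l_{xy}}\|v_{i,t}\|+C_{f_x}$ and $\|\mathbf v_t\|\le\bar a\,\bar z_{t+1}+\bar b\sqrt t$ (which, note, comes from strong convexity of $l$ and the accumulator $\bar m^v$ as in \cref{eq93}, not from the projection $\mathcal P_{\mathcal V}$), it carries an extra additive term $2\log(nC_{l_{xy}}\bar a\,\bar z_{t+1}+\cdots)$. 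Your displayed inequality $[\bar z_t]^2\le 2a_0\bar z_t\log t+b_0\bar z_t$ silently drops this $\bar z_t\log\bar z_{t+1}$ contribution, and the identity $x^2\le px+q\Rightarrow x\le p+\sqrt q$ cannot resolve an inequality in which $p$ itself contains $\log x$. The paper closes this loop with the self-bounding step $4a_0\log(\bar z_{t+1})\le\bar z_{t+1}$, which is only valid because $\bar z_{t+1}\ge\min\{C_{m^y},C_{m^v}\}\ge\max\{64a_0^2,1\}$ for $t>k_{\min}$ — and this is precisely why the $64a_0^2$ term is built into the definitions of $C_{m^y}$ and $C_{m^v}$ in \cref{eq53} and \cref{eq54}. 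You allude to this absorption when explaining where the factor $6a_0$ comes from, but you never invoke the threshold condition that makes it legitimate; without it the bootstrap does not terminate. Making that step explicit (move $\tfrac12\bar z_{t+1}$ to the left as in \cref{eq103}) would complete your proof and reconcile your constants with $a_1=6a_0$ and the extra $\log(1+(nC_{l_{xy}}\bar b+nC_{f_x}+\bar m^x_0)/(nC_{l_{xy}}\bar a))$ and $\log(nC_{l_{xy}}\bar a)$ terms in $b_1$.
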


\begin{proof}
To begin with, we first show the following result as the first two lines of \cref{eq82}: since $\bar{m}^y_t$ and $\bar{m}^v_t$ are positive and increasing monotonically with $t$, we can easily have:
\begin{align}
0& \leq \min\{[\bar{m}^y_{t+1}]^2, [\bar{m}^v_{t+1}]^2\} - \min\{[\bar{m}^y_{t}]^2, [\bar{m}^v_{t}]^2\} \nonumber\\
&= \left([\bar{m}^y_{t+1}]^2 + [\bar{m}^v_{t+1}]^2 - \max\{[\bar{m}^y_{t+1}]^2, [\bar{m}^v_{t+1}]^2\}\right) - \left([\bar{m}^y_{t}]^2 +[\bar{m}^v_{t}]^2 - \max\{[\bar{m}^y_{t}]^2, [\bar{m}^v_{t}]^2\}\right) \nonumber\\
&\overset{(a)}{=} ([\bar{m}^y_{t+1}]^2 + [\bar{m}^v_{t+1}]^2) - ([\bar{m}^y_{t}]^2 +[\bar{m}^v_{t}]^2) - (\bar{z}_{t+1}^2 - \bar{z}_t^2),\label{eq86}
\end{align}
where (a) uses the definition $\bar{z}_t := \max\{\bar{m}^v_t, \bar{m}^y_t\}$. Similar to \cref{eq82}, we have:
\begin{align}
\bar{z}_{t+1}^2 - \bar{z}_t^2
\leq ([\bar{m}^y_{t+1}]^2-[\bar{m}^y_{t}]^2) + ([\bar{m}^v_{t+1}]^2-[\bar{m}^v_{t}]^2)= \frac{\|\nabla_y L(\mathbf{x}_t, \mathbf{y}_t)\|^2}{n} + \frac{\|\nabla_v R(\mathbf{x}_t, \mathbf{y}_t, \mathbf{v}_t)\|^2}{n},\label{eq87}
\end{align}
which indicates that
\begin{align}
\bar{z}_{t+1} 
&\leq \bar{z}_{t} + \frac{\|\nabla_y L(\mathbf{x}_t, \mathbf{y}_t)\|^2}{n(\bar{z}_{t+1}+\bar{z}_{t})} + \frac{\|\nabla_v R(\mathbf{x}_t, \mathbf{y}_t, \mathbf{v}_t)\|^2}{n(\bar{z}_{t+1}+\bar{z}_{t})} \notag \\
&\leq \bar{z}_{t} + \frac{\|\nabla_y L(\mathbf{x}_t, \mathbf{y}_t)\|^2}{n(\bar{m}^y_{t+1}+\bar{m}^y_{t})} + \frac{\|\nabla_v R(\mathbf{x}_t, \mathbf{y}_t, \mathbf{v}_t)\|^2}{n\bar{z}_{t+1}}\notag \\
&\leq \bar{z}_{t} + \frac{\|\nabla_y L(\mathbf{x}_t, \mathbf{y}_t)\|^2}{n\bar{m}^y_{t+1}} + \frac{\|\nabla_v R(\mathbf{x}_t, \mathbf{y}_t, \mathbf{v}_t)\|^2}{n\bar{z}_{t+1}}.\label{eq88}
\end{align}

Note that, to simplify the proof, we define $\sum_{t=m}^n p_t = 0$ for any $m > n$ and non-negative sequence $\{p_t\}$.
According to the definitions of $k_2$ and $k_3$ in \cref{Lemma6}, the proof can be split into the following four cases.

\textbf{Case 1: Neither $k_2$ nor $k_3$ exists.}  
For any $t \in (0, T)$, we can easily have $\bar{z}_t= \max\{\bar{m}^y_{t}, \bar{m}^v_{t}\} \leq \max\{C_{m^y}, C_{m^v}\} \leq C_z$.

\textbf{Case 2: $k_2$ exists but $k_3$ does not.}  
By using the fourth line of \cref{eq82}, for any $t \in (0, T)$, we have:
\begin{align}
\bar{z}_{t+1}  \leq \bar{m}^y_{t+1}+\bar{m}^v_{t+1} \leq C_{m^y} + \sum_{k=k_2}^t  \frac{\|\nabla_y L(\mathbf{x}_k, \mathbf{y}_k)\|^2}{n\bar{m}^y_{k+1}}  + C_{m^v},\label{eq89}
\end{align}
where we take $\sum_{k=k_2}^t \frac{\|\nabla_y L(\mathbf{x}_k, \mathbf{y}_k)\|^2}{\bar{m}^y_{k+1}} = 0$ for any $t < k_2$.

\textbf{Case 3: $k_3$ exists but $k_2$ does not.}  
From the second line of \cref{eq88}, for any $t \in (0, T)$, we have:
\begin{align}
\bar{z}_{t+1} 
&\leq \bar{z}_{t} + \frac{\|\nabla_y L(\mathbf{x}_t, \mathbf{y}_t)\|^2}{n(\bar{m}^y_{t+1}+\bar{m}^y_{t})} + \frac{\|\nabla_v R(\mathbf{x}_t, \mathbf{y}_t, \mathbf{v}_t)\|^2}{n\bar{z}_{t+1}} \notag \\
&\leq \bar{z}_{k_3} + \sum_{k=k_3}^t  \frac{\|\nabla_y L(\mathbf{x}_k, \mathbf{y}_k)\|^2}{n(\bar{m}^y_{k+1}+\bar{m}^y_{k})} + \sum_{k=k_3}^t \frac{\|\nabla_v R(\mathbf{x}_k, \mathbf{y}_k, \mathbf{v}_k)\|^2}{n\bar{z}_{k+1}}  \notag \\
&\leq \bar{m}^y_{k_3} + \bar{m}^v_{k_3} + \sum_{k=k_3}^t  \frac{\|\nabla_y L(\mathbf{x}_k, \mathbf{y}_k)\|^2}{n(\bar{m}^y_{k+1}+\bar{m}^y_{k})} + \sum_{k=k_3}^t \frac{\|\nabla_v R(\mathbf{x}_k, \mathbf{y}_k, \mathbf{v}_k)\|^2}{n\bar{z}_{k+1}} \notag \\
&\overset{(a)}{=} \bar{m}^y_{t+1} + \bar{m}^v_{k_3} + \sum_{k=k_3}^t \frac{\|\nabla_v R(\mathbf{x}_k, \mathbf{y}_k, \mathbf{v}_k)\|^2}{n\bar{z}_{k+1}} \notag \\
&\leq C_{m^y} + C_{m^v} + \sum_{k=k_3}^t \frac{\|\nabla_v R(\mathbf{x}_k, \mathbf{y}_k, \mathbf{v}_k)\|^2}{n\bar{z}_{k+1}},\label{eq90}
 \end{align}
where we take $\sum_{k=k_3}^t \frac{\|\nabla_v  R(\mathbf{x}_k, \mathbf{y}_k, \mathbf{v}_k)\|^2}{\bar{z}_{k+1}} = 0$ for any $t < k_3$; (a) uses the first line of \cref{eq82}.

\textbf{Case 4: Both $k_2$ and $k_3$ exist.}  
From the third line of \cref{eq90}, for any $t \in (0, T)$, we have:
\begin{align}
\bar{z}_{t+1} 
&\leq \bar{m}^y_{k_3} + \bar{m}^v_{k_3} + \sum_{k=k_3}^t  \frac{\|\nabla_y L(\mathbf{x}_k, \mathbf{y}_k)\|^2}{n\bar{m}^y_{k+1}} + \sum_{k=k_3}^t \frac{\|\nabla_v R(\mathbf{x}_k, \mathbf{y}_k, \mathbf{v}_k)\|^2}{n\bar{z}_{k+1}} \notag \\
&\overset{(a)}{\leq} \bar{m}^y_{k_2}  + \sum_{k=k_2}^{k_3-1} \frac{\|\nabla_y L(\mathbf{x}_k, \mathbf{y}_k)\|^2}{n\bar{m}^y_{k+1}}\nonumber\\
&\quad+ C_{m^v} + \sum_{k=k_3}^t \frac{\|\nabla_y L(\mathbf{x}_k, \mathbf{y}_k)\|^2}{n\bar{m}^y_{k+1}} + \sum_{k=k_3}^t\frac{\|\nabla_v R(\mathbf{x}_k, \mathbf{y}_k, \mathbf{v}_k)\|^2}{n\bar{z}_{k+1}} \notag \\
&= C_{m^y} + C_{m^v} + \sum_{k=k_2}^t \frac{\|\nabla_y L(\mathbf{x}_k, \mathbf{y}_k)\|^2}{n\bar{m}^y_{k+1}} + \sum_{k=k_3}^t \frac{\|\nabla_v R(\mathbf{x}_k, \mathbf{y}_k, \mathbf{v}_k)\|^2}{n\bar{z}_{k+1}},\label{eq91}
\end{align}
where (a) uses the fourth line of \cref{eq82}; we take $\sum_{k=k_2}^{k_3-1} \frac{\|\nabla_y L(\mathbf{x}_k, \mathbf{y}_k)\|^2}{\bar{m}^y_{k+1}} = 0$ when $k_2 \geq k_3$, $\sum_{k=k_2}^t \frac{\|\nabla_y L(\mathbf{x}_k, \mathbf{y}_k)\|^2}{\bar{m}^y_{k+1}} = 0$ for any $t < k_2$, and $\sum_{k=k_3}^t \frac{\|\nabla_v R(\mathbf{x}_k, \mathbf{y}_k, \mathbf{v}_k)\|^2}{\bar{z}_{k+1}} = 0$ for any $t < k_3$. 
It is easy to see that the upper bound of $\bar{z}_{t+1}$ in \cref{eq91} is the largest among all cases. Thus, in the remaining proof, we only explore the upper bound of $\bar{z}_t$ in Case 4.

To further explore the bound of $\bar{z}_t$, we need to use some auxiliary results and bounds. So we split them into three parts as follows:

\textbf{Part I: An Auxiliary Bound of $\sum \frac{\|\bar{\nabla} F(\mathbf{x}_t,\mathbf{y}_t,\mathbf{v}_t)\|^2}{[\bar{m}^x_{t+1}]^2}$.}  
To further explore Case 4, we begin with a common term $\sum_{k=k_0}^t \frac{\|\bar{\nabla} F(\mathbf{x}_k,\mathbf{y}_k,\mathbf{v}_k)\|^2}{[\bar{m}^x_{k+1}]^2}$ for any $k_0 \leq t$. By the strong convexity of $l$ in \cref{Assumption1}, we have:
\begin{align}
\sum_{k=1}^t \frac{\mu^2}{n} \|\mathbf{v}_k\|^2 
&\leq \sum_{k=1}^t \frac{\|\nabla_y \nabla_y L(\mathbf{x}_k, \mathbf{y}_k) \mathbf{v}_k\|^2}{n} \notag \\
&\leq \sum_{k=1}^t \frac{2 \|\nabla_y \nabla_y L(\mathbf{x}_k, \mathbf{y}_k) \mathbf{v}_k - \nabla_y F(\mathbf{x}_k, \mathbf{y}_k)\|^2}{n} 
+ \sum_{k=1}^t \frac{2 \|\nabla_y F(\mathbf{x}_k, \mathbf{y}_k)\|^2}{n} \notag \\
&= \sum_{k=1}^t \frac{2 \|\nabla_v R(\mathbf{x}_k, \mathbf{y}_k, \mathbf{v}_k)\|^2}{n} + \sum_{k=1}^t \frac{2 \|\nabla_y F(\mathbf{x}_k, \mathbf{y}_k)\|^2}{n} \notag \\
&\leq 2 [\bar{m}^v_{t+1}]^2 + 2t C_{f_y}^2,\label{eq92}
\end{align}
which indicates that for any $t \geq 0$, $\|\mathbf{v}_t\|$ can be bounded as:
\begin{align}
\|\mathbf{v}_t\| \leq \frac{\sqrt{2n [\bar{m}^v_{t+1}]^2 + 2nt C_{f_y}^2}}{\mu}\leq \frac{\sqrt{2n [\bar{z}_{t+1}]^2 + 2nt C_{f_y}^2}}{\mu} \leq \frac{\sqrt{2n}\left( \bar{z}_{t+1} + \sqrt{t} C_{f_y} \right)}{\mu}.\label{eq93}
\end{align}
Then we have:
\begin{equation}
    \|\mathbf{v}_t\| \leq \frac{\sqrt{2n}}{\mu} \bar{z}_{t+1} + \frac{\sqrt{2n}C_{f_y}}{\mu} \sqrt{t} =: \bar{a} \bar{z}_{t+1} + \bar{b} \sqrt{t},\label{eq94}
\end{equation}
where $\bar{a}$ and $\bar{b}$ refer to \cref{eq85}. According to \cref{Lemma1}, since $\bar{m}^x_0 \geq 1$, for any integer $t > 0$, we have:
\begin{align}
&\sum_{k=k_0}^t \frac{\| \bar{\nabla} F(\mathbf{x}_{k},\mathbf{y}_{k},\mathbf{v}_{k})\|^2}{[\bar{m}^x_{k+1}]^2}\notag\\
&\leq \sum_{k=0}^t \frac{\| \bar{\nabla} F(\mathbf{x}_{k},\mathbf{y}_{k},\mathbf{v}_{k})\|^2}{[\bar{m}^x_{k+1}]^2} \notag \\
&\leq \log \left( \sum_{k=0}^t \|\bar{\nabla} F(\mathbf{x}_{k},\mathbf{y}_{k},\mathbf{v}_{k})\|^2 + [\bar{m}^x_0]^2 \right) + 1 \notag \\
&\leq \log \left( \sum_{k=0}^t \left(nC_{l_{xy}} \bar{a} \bar{z}_{k+1} + nC_{l_{xy}} \bar{b} \sqrt{k} + nC_{f_x} \right)^2 + [\bar{m}^x_0]^2 \right) + 1 \notag \\
&\leq \log \left( \left(\sum_{k=0}^t nC_{l_{xy}} \bar{a} \bar{z}_{k+1} +n C_{l_{xy}} \bar{b} \sqrt{k} +nC_{f_x} + \bar{m}^x_0 \right)^2\right) + 1 \notag \\
&=2\log \left( \sum_{k=0}^t nC_{l_{xy}} \bar{a} \bar{z}_{k+1} + nC_{l_{xy}} \bar{b} \sqrt{k} + nC_{f_x} + \bar{m}^x_0 \right) + 1 \notag \\
&\leq 2 \log \left((t+1)\left( nC_{l_{xy}} \bar{a} \bar{z}_{t+1} + nC_{l_{xy}} \bar{b} \sqrt{t} + nC_{f_x} + \bar{m}^x_0 \right)\right) + 1 \notag \\
&\leq 2 \log(t + 1) + 2 \log \left(\left( nC_{l_{xy}} \bar{a} \bar{z}_{t+1} + nC_{l_{xy}} \bar{b} + nC_{f_x} + \bar{m}^x_0 \right)\sqrt{t}\right) + 1 \notag \\
&\leq 3 \log(t + 1) + 2 \log \left( nC_{l_{xy}} \bar{a} \bar{z}_{t+1} + nC_{l_{xy}} \bar{b} + nC_{f_x} +\bar{m}^x_0 \right) + 1,\label{eq95}
\end{align}
here we obtain the upper bound of $\sum_{k=k_0}^t \frac{\| \bar{\nabla} F(\mathbf{x}_{k},\mathbf{y}_{k},\mathbf{v}_{k})\|^2}{[\bar{m}^x_{k+1}]^2}$ for any $k_0 \leq t$ in \cref{eq95}. Part I is completed.

\textbf{Part II: A More General Bound of} $\sum\frac{\|\nabla_y L(\mathbf{x}_t, \mathbf{y}_t)\|^2}{\bar{m}^y_{t+1}}$.

In \cref{Lemma7}, we show the bound of $\sum_{k=k_2}^t \frac{\|\nabla_y L(\mathbf{x}_k, \mathbf{y}_k)\|^2}{\bar{m}^y_{k+1}}$
when $k_2$ exists. In Part II, we further provide a rough bound of $\sum_{k=\tilde{k}}^t \frac{\|\nabla_y L(\mathbf{x}_k, \mathbf{y}_k)\|^2}{\bar{m}^y_{k+1}}$
for any potential $\tilde{k} \leq T$. Firstly, if $\tilde{k} \geq k_2$, it is easy to have:
\begin{align}
\sum_{k=\tilde{k}}^t \frac{\|\nabla_y L(\mathbf{x}_k, \mathbf{y}_k)\|^2}{\bar{m}^y_{k+1}}
\leq \sum_{k=k_2}^t \frac{\|\nabla_y L(\mathbf{x}_k, \mathbf{y}_k)\|^2}{\bar{m}^y_{k+1}}.\label{eq96}
\end{align}

Secondly, if $\tilde{k} < k_2$, we have:
\begin{align}
\sum_{k=\tilde{k}}^t \frac{\|\nabla_y L(\mathbf{x}_k, \mathbf{y}_k)\|^2}{\bar{m}^y_{k+1}}
&\leq \sum_{k=\tilde{k}}^{k_2-1}\frac{\|\nabla_y L(\mathbf{x}_k, \mathbf{y}_k)\|^2}{\bar{m}^y_{k+1}}
+ \sum_{k=k_2}^t \frac{\|\nabla_y L(\mathbf{x}_k, \mathbf{y}_k)\|^2}{\bar{m}^y_{k+1}}\nonumber \\
&\leq \frac{\sum_{k=\tilde{k}}^{k_2-1}\|\nabla_y L(\mathbf{x}_k, \mathbf{y}_k)\|^2}{\bar{m}^y_{0}}
+ \sum_{k=k_2}^t \frac{\|\nabla_y L(\mathbf{x}_k, \mathbf{y}_k)\|^2}{\bar{m}^y_{k+1}}\nonumber\\
&\leq \frac{n([\bar{m}^y_{k_2}]^2 - [\bar{m}^y_{\tilde{k}}]^2)}{\bar{m}^y_{0}} 
+ \sum_{k=k_2}^t \frac{\|\nabla_y L(\mathbf{x}_k, \mathbf{y}_k)\|^2}{\bar{m}^y_{k+1}}\nonumber \\
&\leq \frac{n(C_{m^y}^2- [\bar{m}^y_0]^2)}{\bar{m}^y_0}+\sum_{k=k_2}^t \frac{\|\nabla_y L(\mathbf{x}_k, \mathbf{y}_k)\|^2}{\bar{m}^y_{k+1}}\nonumber \\
&= \frac{nC_{m^y}^2}{\bar{m}^y_0} - n\bar{m}^y_0
+\sum_{k=k_2}^t \frac{\|\nabla_y L(\mathbf{x}_k, \mathbf{y}_k)\|^2}{\bar{m}^y_{k+1}}.\label{eq97}
\end{align}

Combining these two situations, since $C_{m^y} \geq \bar{m}^y_0$, for any $\tilde{k} \leq t$, we have:
\begin{align}
&\sum_{k=\tilde{k}}^t \frac{\|\nabla_y L(\mathbf{x}_k, \mathbf{y}_k)\|^2}{\bar{m}^y_{k+1}}\nonumber\\
&\leq \frac{nC_{m^y}^2}{\bar{m}^y_0} - n\bar{m}^y_0
+\sum_{k=k_2}^t \frac{\|\nabla_y L(\mathbf{x}_k, \mathbf{y}_k)\|^2}{\bar{m}^y_{k+1}}\nonumber \\
&\leq \frac{nC_{m^y}^2}{\bar{m}^y_0} - n\bar{m}^y_0 + \frac{ 2nC_{m^y}^2(\mu + L_{l,1})}{\mu^2\gamma_y\sqrt{1+\sigma_u^2}}\nonumber\\
&\quad+\left(\frac{8\Delta_0^x}{1-\rho_W}+\frac{32 \gamma_x^2 \rho_WC_{m^x}^2 (1 + \zeta_q^2)}{\bar{q}_0^2(1 - \rho_W)^2} \right)\left(\frac{(\mu+L_{l,1})^3}{\gamma_y\mu^2}+\frac{2L_{l,1}(\mu+L_{l,1})}{\gamma_y\mu}\right)\nonumber\\
&\quad+ \frac{32 \gamma_x^2 \rho_W(1 + \zeta_q^2)}{\bar{z}_0(1 - \rho_W)^2} \left(\frac{(\mu+L_{l,1})^3}{\gamma_y\mu^2}+\frac{2L_{l,1}(\mu+L_{l,1})}{\gamma_y\mu}\right)+\frac{8\gamma_x^2  L_y^2\left( 1 + \zeta_q^2 \right) (\mu + L_{l,1})^2}{\gamma_y^2\mu L_{l,1}\bar{z}_0\sqrt{1+\sigma_u^2}} \nonumber\\
&\quad+ \left[\frac{32 \gamma_x^2 \rho_W(1 + \zeta_q^2)}{(1 - \rho_W)^2} \left(\frac{(\mu+L_{l,1})^3}{\gamma_y\mu^2}+\frac{2L_{l,1}(\mu+L_{l,1})}{\gamma_y\mu}\right)\right.\nonumber\\
&\quad\left.+\frac{8\gamma_x^2  L_y^2\left( 1 + \zeta_q^2 \right) (\mu + L_{l,1})^2}{\gamma_y^2\mu L_{l,1}\sqrt{1+\sigma_u^2}}\right]\!\! \sum_{k=\min\{k_1,k_2\}}^t \frac{\|\bar{\nabla} F(\mathbf{x}_k,\mathbf{y}_k,\mathbf{v}_k)\|^2}{[\bar{m}^x_{k+1}]^2\max \big\{\bar{m}^v_{k+1}, \bar{m}^y_{k+1}\big\}},\label{eq98}
\end{align}
where the second inequality uses \cref{Lemma7}. Thus, Part II is completed.

\textbf{Part III: The Bound of } $\bar{z}_t$ \textbf{ in Case 4.}

Here, we explore the upper bound of $\bar{z}_t$ in Case 4. Recalling \cref{eq91}, we have:
\begin{align}
\bar{z}_{t+1} &\leq C_{m^y} + C_{m^v} + \sum_{k=k_2}^t \frac{\|\nabla_y L(\mathbf{x}_k, \mathbf{y}_k)\|^2}{n\bar{m}^y_{k+1}} + \sum_{k=k_3}^t \frac{\|\nabla_v R(\mathbf{x}_k, \mathbf{y}_k, \mathbf{v}_k)\|^2}{n\bar{z}_{k+1}} = C_{m^y} + C_{m^v} = C_z,\label{eq99}
\end{align}
for $t \leq k_{\min} := \min\{k_2, k_3\}$. For $t > k_{\min}$, we have:
\begin{align}
\bar{z}_{t+1} &\leq C_{m^y} + C_{m^v} + \sum_{k=k_2}^t \frac{\|\nabla_y L(\mathbf{x}_k, \mathbf{y}_k)\|^2}{n\bar{m}^y_{k+1}} + \sum_{k=k_3}^t \frac{\|\nabla_v R(\mathbf{x}_k, \mathbf{y}_k, \mathbf{v}_k)\|^2}{n\bar{z}_{k+1}}  \notag \\
&\overset{(a)}{\leq} C_{m^y} + C_{m^v}  +\frac{4C_{m^y}^2(\mu + C_{l_{yy}})}{\mu^4\gamma_v\sqrt{1+\sigma_z^2}}\left(\frac{nL_{l,2}C_{f_y}}{\mu}+L_{f,1}\right)^2 \nonumber\\
&\quad + \frac{8(\mu + C_{l_{yy}})C_{m^v}^2}{\mu^2\gamma_v\sqrt{1+\sigma_z^2}}+ \frac{16(\mu + C_{l_{yy}})}{n\gamma_v\mu^2}\left(\frac{nL_{l,2}C_{f_y}}{\mu}\!+\!L_{f,1}\right)^2\nonumber\\
&\quad\cdot\left(\frac{\Delta_0^x}{1-\rho_W}\!+\!\frac{4 \gamma_x^2 \rho_WC_{m^x}^2 (1 + \zeta_q^2)}{\bar{q}_0^2(1 - \rho_W)^2} \right)\left(\frac{(\mu+L_{l,1})^2}{n\mu^2 }\!+\!\frac{2L_{l,1}}{n\mu }\right)\notag \\
&\quad+\left(\frac{16\Delta_0^x}{n(1-\rho_W)}+\frac{64\gamma_x^2\rho_WC_{m^x}^2(1+\zeta_q^2)}{n\bar{q}_0^2(1-\rho_W)^2}\right)\left(\frac{ \bar{L}_r^2(\mu + C_{l_{yy}})}{\mu C_{l_{yy}}\gamma_v}+\frac{\bar{L}_r^2(\mu+C_{l_{yy}})}{\mu^2\gamma_v\sqrt{1+\sigma_z^2}}\right) \notag \\
&\quad+\frac{16(\mu + C_{l_{yy}})}{n\gamma_v\mu^2\sqrt{1+\sigma_u^2}}\left(\frac{nL_{l,2}C_{f_y}}{\mu}\!+\!L_{f,1}\right)^2\nonumber\\
&\quad\cdot\left[\frac{4 \gamma_x^2 \rho_W (1 + \zeta_q^2)}{\bar{z}_0^2(1 - \rho_W)^2} \left(\frac{(\mu+L_{l,1})^2\sqrt{1+\sigma_u^2}}{n\mu^2 }+\frac{2L_{l,1}\sqrt{1+\sigma_u^2}}{n\mu }\right)\right.\nonumber\\
&\quad +\left.\frac{\gamma_x^2L_y^2(1+\zeta_q^2)(\mu + L_{l,1})}{n\bar{z}_0\gamma_y \mu L_{l,1}}\right]\sum_{k=\min\{k_1-1,k_2-1\}}^{t} 
 \frac{\| \bar{\nabla} F(\mathbf{x}_{k},\mathbf{y}_{k},\mathbf{v}_{k})\|^2}{[\bar{m}^x_{k+1}]^2}\nonumber\\
&\quad + \left(\frac{64 \gamma_x^2 \rho_W \bar{L}_r^2(\mu + C_{l_{yy}})(1 + \zeta_q^2)}{n\mu C_{l_{yy}}\gamma_v(1 - \rho_W)^2} + \frac{2\gamma_x^2L_v^2(1+\zeta_q^2)(\mu + C_{l_{yy}})^2}{n\mu C_{l_{yy}}C_{m^v}\gamma_v^2\sqrt{1+\sigma_z^2}}\right)\notag\\
&\quad\cdot\sum_{k=\min\{k_1-1,k_3-1\}}^{t} 
 \frac{\| \bar{\nabla} F(\mathbf{x}_{k},\mathbf{y}_{k},\mathbf{v}_{k})\|^2}{[\bar{m}^x_{k+1}]^2}\notag \\
&\quad  +\left[ \frac{1}{\mu^2}\left(8+\frac{4(\mu + C_{l_{yy}})^2}{\mu C_{l_{yy}}} \right)\left(\frac{nL_{l,2}C_{f_y}}{\mu}+L_{f,1}\right)^2+1\right]\left(\frac{C_{m^y}^2}{\bar{m}^y_0} - {\bar{m}^y_0}\right)  \notag \\
&\quad  +\frac{1}{n}\left[ \frac{1}{\mu^2}\left(8+\frac{4(\mu + C_{l_{yy}})^2}{\mu C_{l_{yy}}} \right)\left(\frac{nL_{l,2}C_{f_y}}{\mu}+L_{f,1}\right)^2+1\right]\sum_{k=k_2}^t\frac{\|\nabla_yL(\mathbf{x}_k,\mathbf{y}_k)\|^2}{\bar{m}_{k+1}^y}  \notag \\
&\overset{(b)}{\leq} C_{m^y} + C_{m^v} +\frac{4C_{m^y}^2(\mu + C_{l_{yy}})}{\mu^4\gamma_v\sqrt{1+\sigma_z^2}}\left(\frac{nL_{l,2}C_{f_y}}{\mu}+L_{f,1}\right)^2   \notag \\
&\quad + \frac{8(\mu + C_{l_{yy}})C_{m^v}^2}{\mu^2\gamma_v\sqrt{1+\sigma_z^2}}+ \frac{16(\mu + C_{l_{yy}})}{n\gamma_v\mu^2}\left(\frac{nL_{l,2}C_{f_y}}{\mu}\!+\!L_{f,1}\right)^2 \nonumber\\
&\quad\cdot\left(\frac{\Delta_0^x}{1-\rho_W}\!+\!\frac{4 \gamma_x^2 \rho_WC_{m^x}^2 (1 + \zeta_q^2)}{\bar{q}_0^2(1 - \rho_W)^2} \right)\left(\frac{(\mu+L_{l,1})^2}{n\mu^2 }\!+\!\frac{2L_{l,1}}{n\mu }\right)\notag \\
&\quad+\left(\frac{16\Delta_0^x}{n(1-\rho_W)}+\frac{64\gamma_x^2\rho_WC_{m^x}^2(1+\zeta_q^2)}{n\bar{q}_0^2(1-\rho_W)^2}\right)\left(\frac{ \bar{L}_r^2(\mu + C_{l_{yy}})}{\mu C_{l_{yy}}\gamma_v}+\frac{\bar{L}_r^2(\mu+C_{l_{yy}})}{\mu^2\gamma_v\sqrt{1+\sigma_z^2}}\right) \notag \\
&\quad + \left[\left( \frac{1}{\mu^2}\left(8+\frac{4(\mu + C_{l_{yy}})^2}{\mu C_{l_{yy}}} \right)\left(\frac{nL_{l,2}C_{f_y}}{\mu}+L_{f,1}\right)^2+1\right)  \right.\notag \\
&\quad \cdot\left(\frac{32 \gamma_x^2 \rho_W(1 + \zeta_q^2)}{n(1 - \rho_W)^2} \left(\frac{(\mu+L_{l,1})^3}{\gamma_y\mu^2}+\frac{2L_{l,1}(\mu+L_{l,1})}{\gamma_y\mu}\right)+\frac{8\gamma_x^2  L_y^2\left( 1 + \zeta_q^2 \right) (\mu + L_{l,1})^2}{n\gamma_y^2\mu L_{l,1}\sqrt{1+\sigma_u^2}}\right)\nonumber\\
&\quad +\frac{16(\mu + C_{l_{yy}})}{n\gamma_v\mu^2\sqrt{1+\sigma_u^2}}\left(\frac{nL_{l,2}C_{f_y}}{\mu}\!+\!L_{f,1}\right)^2\nonumber\\
&\quad\cdot\left(\frac{4 \gamma_x^2 \rho_W (1 + \zeta_q^2)}{\bar{z}_0^2(1 - \rho_W)^2} \left(\frac{(\mu+L_{l,1})^2\sqrt{1+\sigma_u^2}}{n\mu^2 }+\frac{2L_{l,1}\sqrt{1+\sigma_u^2}}{n\mu }\right)\right.\nonumber\\
&\quad +\left.\left.\frac{\gamma_x^2L_y^2(1+\zeta_q^2)(\mu + L_{l,1})}{n\bar{z}_0\gamma_y \mu L_{l,1}}\right)\right]
\sum_{k=\{k_1-1,k_2-1\}}^{t}  \frac{\| \bar{\nabla} F(\mathbf{x}_{k},\mathbf{y}_{k},\mathbf{v}_{k})\|^2}{[\bar{m}^x_{k+1}]^2}\notag\\
&\quad + \left(\frac{64 \gamma_x^2 \rho_W \bar{L}_r^2(\mu + C_{l_{yy}})(1 + \zeta_q^2)}{n\mu C_{l_{yy}}\gamma_v(1 - \rho_W)^2} + \frac{2\gamma_x^2L_v^2(1+\zeta_q^2)(\mu + C_{l_{yy}})^2}{n\mu C_{l_{yy}}\bar{m}_0^v\gamma_v^2\sqrt{1+\sigma_z^2}}\right)\nonumber\\
&\quad\cdot\sum_{k=\min\{k_1-1,k_3-1\}}^{t} 
 \frac{\| \bar{\nabla} F(\mathbf{x}_{k},\mathbf{y}_{k},\mathbf{v}_{k})\|^2}{[\bar{m}^x_{k+1}]^2} \notag \\
&\quad  +\left[ \frac{1}{\mu^2}\left(8+\frac{4(\mu + C_{l_{yy}})^2}{\mu C_{l_{yy}}} \right)\left(\frac{nL_{l,2}C_{f_y}}{\mu}+L_{f,1}\right)^2+1\right]\left(\frac{C_{m^y}^2}{\bar{m}^y_0} - {\bar{m}^y_0}\right)  \notag \\
&\quad  +\left[ \frac{1}{\mu^2}\left(8+\frac{4(\mu + C_{l_{yy}})^2}{\mu C_{l_{yy}}} \right)\left(\frac{nL_{l,2}C_{f_y}}{\mu}+L_{f,1}\right)^2+1\right]\left(\frac{ 2C_{m^y}^2(\mu + L_{l,1})}{\mu^2\gamma_y\sqrt{1+\sigma_u^2}} \right.\notag\\
&\quad+\left(\frac{8\Delta_0^x}{n(1-\rho_W)}+\frac{32 \gamma_x^2 \rho_WC_{m^x}^2 (1 + \zeta_q^2)}{n\bar{q}_0^2(1 - \rho_W)^2} \right)\left(\frac{(\mu+L_{l,1})^3}{\gamma_y\mu^2}+\frac{2L_{l,1}(\mu+L_{l,1})}{\gamma_y\mu}\right)\nonumber\\
&\quad\left.+ \frac{32 \gamma_x^2 \rho_W(1 + \zeta_q^2)}{n\bar{z}_0(1 - \rho_W)^2} \left(\frac{(\mu+L_{l,1})^3}{\gamma_y\mu^2}+\frac{2L_{l,1}(\mu+L_{l,1})}{\gamma_y\mu}\right)+\frac{8\gamma_x^2  L_y^2\left( 1 + \zeta_q^2 \right) (\mu + L_{l,1})^2}{n\gamma_y^2\mu L_{l,1}\bar{z}_0\sqrt{1+\sigma_u^2}} \right)\nonumber\\
&\overset{(c)}{=}: a_0 \sum_{k=\min\{k_1-1,k_2-1, k_3-1\}}^t \frac{\| \bar{\nabla} F(\mathbf{x}_{k},\mathbf{y}_{k},\mathbf{v}_{k})\|^2}{[\bar{m}^x_{k+1}]^2} + b_0, \notag \\
&\leq a_0 \sum_{k=\min\{k_1,k_2, k_3\}}^t \frac{\| \bar{\nabla} F(\mathbf{x}_{k},\mathbf{y}_{k},\mathbf{v}_{k})\|^2}{[\bar{m}^x_{k+1}]^2} + a_0 + b_0\notag \\
&\overset{(d)}{\leq} a_0 \left(3 \log(t + 1) + 2 \log \left(  \bar{z}_{t+1} + \frac{nC_{l_{xy}} \bar{b} + nC_{f_x} + \bar{m}^x_0}{nC_{l_{xy}} \bar{a}} \right)+2 \log \left( nC_{l_{xy}} \bar{a}\right) + 1\right)\nonumber\\
&\quad+ a_0 + b_0,\label{eq100}
\end{align}
where (a) uses \cref{Lemma8} and the first line in \cref{eq98} by replacing $\tilde{k}$ with $k_3 - 1$; (b) results from \cref{Lemma7}; (c) refers to \cref{eq85}; (d) uses \cref{eq95}. Since $\min\{k_2, k_3\} \leq T$, we have $\bar{z}_{t+1} \geq \min\{C_{m^y}, C_{m^v}\} \geq \max\{64a_0^2, 1\}$, which indicates that

(i) if $8a_0 \leq 1$, we have:
\begin{equation}
    4a_0 \log(\bar{z}_{t+1}) \leq \frac{\log(\bar{z}_{t+1})}{2} \leq \frac{\bar{z}_{t+1}}{2} \leq \bar{z}_{t+1};\label{eq101}
\end{equation}

(ii) if $8a_0 > 1$, we have:
\begin{equation}
\bar{z}_{t+1} - 4a_0 \log(\bar{z}_{t+1}) = \bar{z}_{t+1} - 8a_0 \log(\sqrt{\bar{z}_{t+1}}) \geq 8a_0 \left(\sqrt{\bar{z}_{t+1}} - \log(\sqrt{\bar{z}_{t+1}})\right) \geq 0.\label{eq102}
\end{equation}

Combining (i) and (ii), we have $4a_0 \log(\bar{z}_{t+1}) \leq \bar{z}_{t+1}$. Then we obtain:
\begin{align}
  \bar{z}_{t+1} & \leq a_0 \left(3 \log(t + 1) + 2 \log \left(  \bar{z}_{t+1} + \frac{nC_{l_{xy}} \bar{b} + nC_{f_x} + \bar{m}^x_0}{nC_{l_{xy}} \bar{a}} \right)+2 \log \left(n C_{l_{xy}} \bar{a}\right) + 1\right)\nonumber\\
  &\quad+ a_0 + b_0\nonumber\\ 
  &\leq  a_0 \left(3 \log(t + 1) \!+\! 2 \log \left(  \bar{z}_{t+1}\right)\!+\!2 \log \left(\! 1\! +\! \frac{nC_{l_{xy}} \bar{b} \!+\! nC_{f_x} \!+\! \bar{m}^x_0}{nC_{l_{xy}} \bar{a}} \right)\!+\!2 \log \left( nC_{l_{xy}} \bar{a}\right) \!+\! 1\right)\nonumber\\
  &\quad + a_0 + b_0\nonumber\\
  &\leq \frac{1}{2} \bar{z}_{t+1} + a_0 \left(3 \log(t + 1) +2 \log \left( 1 + \frac{nC_{l_{xy}} \bar{b} + nC_{f_x} + \bar{m}^x_0}{nC_{l_{xy}} \bar{a}} \right)+2 \log \left( nC_{l_{xy}} \bar{a}\right) + 1\right)\nonumber\\
  &\quad + a_0 + b_0,\label{eq103}
\end{align}
which indicates that
\begin{align}
    \bar{z}_{t+1}&\leq 6a_0 \log(t + 1) + 4a_0 \log \left( 1 + \frac{nC_{l_{xy}} \bar{b} + nC_{f_x} + \bar{m}^x_0}{nC_{l_{xy}} \bar{a}} \right) + 4a_0\log \left( nC_{l_{xy}} \bar{a}\right) + 4a_0 + 2b_0\nonumber\\
    &\overset{(a)}{=}a_1 \log(t + 1) + b_1,\label{eq104}
\end{align}
where (a) refers to \cref{eq84}. Therefore, we complete the proof of this lemma.
\end{proof}

\subsection{The Upper Bounds of $\sum \frac{\| \bar{\nabla} F(\mathbf{x}_{t},\mathbf{y}_{t},\mathbf{v}_{t})\|^2}{[\bar{m}^x_{t+1}]^2}$, $\sum \frac{\|\nabla_y L(\mathbf{x}_t, \mathbf{y}_t)\|^2}{\bar{m}^y_{t+1}}$, and $\sum \frac{\|\nabla_v R(\mathbf{x}_t, \mathbf{y}_t, \mathbf{v}_t)\|^2}{\bar{z}_{t+1}}$}\label{secc.6}
\begin{lemma}\label{Lemma11} Under \cref{Assumption1} and \cref{Assumption2}, for any integer $k_0 \in [0, t)$, we have the upper bounds in terms of logarithmic functions as:
\begin{align}
\sum_{k=k_0}^t \frac{\|\bar{\nabla} F(\mathbf{x}_k,\mathbf{y}_k,\mathbf{v}_k)\|^2}{[\bar{m}^x_{k+1}]^2} &\leq 5 \log(t + 1) + c_2,\nonumber\\
\sum_{k=k_0}^t \frac{\|\nabla_y L(\mathbf{x}_k, \mathbf{y}_k)\|^2}{\bar{m}^y_{k+1}} &\leq a_2 \log(t + 1) + b_2, \nonumber\\
\sum_{k=k_0}^t \frac{\|\nabla_v R(\mathbf{x}_k, \mathbf{y}_k, \mathbf{v}_k)\|^2}{\bar{z}_{k+1}} &\leq a_3 \log(t + 1) + b_3,\label{eq105}
\end{align}
where, referring to \cref{eq84} and \cref{eq85}, $c_2$, $a_2$, $b_2$, $a_3$, and $b_3$ are defined as:
\begin{align}
c_2 &:= 2 \log \left( nC_{l_{xy}} \bar{a} a_1+nC_{l_{xy}} \bar{a}b_1 + nC_{l_{xy}} \bar{b} + nC_{f_x} +\bar{m}^x_0\right) + 1, \nonumber\\
a_2 &:= \frac{160 \gamma_x^2 \rho_W(1 + \zeta_q^2)}{(1 - \rho_W)^2} \left(\frac{(\mu+L_{l,1})^3}{\gamma_y\mu^2}+\frac{2L_{l,1}(\mu+L_{l,1})}{\gamma_y\mu}\right)+\frac{40\gamma_x^2  L_y^2\left( 1 + \zeta_q^2 \right) (\mu + L_{l,1})^2}{\gamma_y^2\mu L_{l,1}\sqrt{1+\sigma_u^2}}, \nonumber\\
b_2 &:=\frac{nC_{m^y}^2}{\bar{m}^y_0} - n\bar{m}^y_0 + \frac{ 2nC_{m^y}^2(\mu + L_{l,1})}{\mu^2\gamma_y\sqrt{1+\sigma_u^2}}\nonumber\\
&\quad+\left(\frac{8\Delta_0^x}{1-\rho_W}+\frac{32 \gamma_x^2 \rho_WC_{m^x}^2 (1 + \zeta_q^2)}{\bar{q}_0^2(1 - \rho_W)^2} \right)\left(\frac{(\mu+L_{l,1})^3}{\gamma_y\mu^2}+\frac{2L_{l,1}(\mu+L_{l,1})}{\gamma_y\mu}\right)\nonumber\\
&\quad +\!\left[\!\frac{32 \gamma_x^2 \rho_W(1 \!+\! \zeta_q^2)}{(1 \!-\! \rho_W)^2} \!\!\left(\frac{(\mu\!+\!L_{l,1})^3}{\gamma_y\mu^2}\!+\!\frac{2L_{l,1}(\mu\!+\!L_{l,1})}{\gamma_y\mu}\right)\!+\!\frac{8\gamma_x^2  L_y^2\left( 1 \!+\! \zeta_q^2 \right) (\mu \!+\! L_{l,1})^2}{\gamma_y^2\mu L_{l,1}\sqrt{1\!+\!\sigma_u^2}}\!\right]\!\!\left(\!\frac{1}{\bar{z}_0}\!+\!c_2\!\right), \nonumber\\
a_3 &:=\frac{80(\mu + C_{l_{yy}})}{\gamma_v\mu^2\sqrt{1+\sigma_u^2}}\left(\frac{nL_{l,2}C_{f_y}}{\mu}\!+\!L_{f,1}\right)^2\nonumber\\
&\quad\cdot\left(\frac{4 \gamma_x^2 \rho_W (1 + \zeta_q^2)}{\bar{z}_0^2(1 - \rho_W)^2} \left(\frac{(\mu+L_{l,1})^2\sqrt{1+\sigma_u^2}}{n\mu^2 }+\frac{2L_{l,1}\sqrt{1+\sigma_u^2}}{n\mu }\right)\right.\nonumber\\
&\quad +\!\left.\frac{\gamma_x^2L_y^2(1+\zeta_q^2)(\mu + L_{l,1})}{n\bar{z}_0\gamma_y \mu L_{l,1}}\right) \!+\! \frac{320\gamma_x^2 \rho_W \bar{L}_r^2(\mu + C_{l_{yy}})(1 + \zeta_q^2)}{\mu C_{l_{yy}}\gamma_v(1 - \rho_W)^2} \notag \\
&\quad  + \frac{10\gamma_x^2L_v^2(1+\zeta_q^2)(\mu + C_{l_{yy}})^2}{\mu C_{l_{yy}}C_{m^v}\gamma_v^2\sqrt{1+\sigma_z^2}}+ \left(\frac{8a_2}{\mu^2}+\frac{4a_2(\mu + C_{l_{yy}})^2}{\mu^3 C_{l_{yy}}} \right)\left(\frac{nL_{l,2}C_{f_y}}{\mu}+L_{f,1}\right)^2,  \notag\\
b_3 &:=\frac{nC_{m^v}^2}{\bar{m}^v_{0}}\! -\! n\bar{m}^v_{0} \!+\! \frac{4nC_{m^y}^2(\mu + C_{l_{yy}})}{\mu^4\gamma_v\sqrt{1+\sigma_z^2}}\left(\frac{nL_{l,2}C_{f_y}}{\mu}+L_{f,1}\right)^2 + \frac{8nC_{m^v}^2(\mu + C_{l_{yy}})}{\mu^2\gamma_v\sqrt{1+\sigma_z^2}}\notag \\ 
&\quad +\left(\frac{16\Delta_0^x}{1-\rho_W}+\frac{64\gamma_x^2\rho_WC_{m^x}^2(1+\zeta_q^2)}{\bar{q}_0^2(1-\rho_W)^2}\right)\left(\frac{ \bar{L}_r^2(\mu + C_{l_{yy}})}{\mu C_{l_{yy}}\gamma_v}+\frac{\bar{L}_r^2(\mu+C_{l_{yy}})}{\mu^2\gamma_v\sqrt{1+\sigma_z^2}}\right)\nonumber\\
&\quad + \frac{16(\mu + C_{l_{yy}})}{\gamma_v\mu^2}\left(\frac{nL_{l,2}C_{f_y}}{\mu}\!+\!L_{f,1}\right)^2 \left(\frac{\Delta_0^x}{1-\rho_W}\!+\!\frac{4 \gamma_x^2 \rho_WC_{m^x}^2 (1 + \zeta_q^2)}{\bar{q}_0^2(1 - \rho_W)^2} \right)\notag\\
&\quad\cdot\left(\frac{(\mu+L_{l,1})^2}{n\mu^2 }\!+\!\frac{2L_{l,1}}{n\mu }\right)+\frac{16c_2(\mu + C_{l_{yy}})}{\gamma_v\mu^2\sqrt{1+\sigma_u^2}}\left(\frac{nL_{l,2}C_{f_y}}{\mu}\!+\!L_{f,1}\right)^2\notag \\
&\quad \cdot\left(\!\frac{4 \gamma_x^2 \rho_W (1 + \zeta_q^2)}{\bar{z}_0^2(1 - \rho_W)^2} \left(\frac{(\mu+L_{l,1})^2\sqrt{1+\sigma_u^2}}{n\mu^2 }\!+\!\frac{2L_{l,1}\sqrt{1+\sigma_u^2}}{n\mu }\right)\!+\!\frac{\gamma_x^2L_y^2(1+\zeta_q^2)(\mu + L_{l,1})}{n\bar{z}_0\gamma_y \mu L_{l,1}}\!\right) \nonumber\\
&\quad \!+\! \frac{64 c_2\gamma_x^2 \rho_W \bar{L}_r^2(\mu + C_{l_{yy}})(1 + \zeta_q^2)}{\mu C_{l_{yy}}\gamma_v(1 - \rho_W)^2} \!+\! \frac{2c_2\gamma_x^2L_v^2(1+\zeta_q^2)(\mu + C_{l_{yy}})^2}{\mu C_{l_{yy}}C_{m^v}\gamma_v^2\sqrt{1+\sigma_z^2}} \notag \\
&\quad  + \left(\frac{8b_2}{\mu^2}+\frac{4b_2(\mu + C_{l_{yy}})^2}{\mu^3 C_{l_{yy}}} \right)\left(\frac{nL_{l,2}C_{f_y}}{\mu}+L_{f,1}\right)^2  .\label{eq106}
\end{align}
\end{lemma}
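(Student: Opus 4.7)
The plan is to prove the three inequalities of \cref{Lemma11} sequentially, because the second uses the first and the third uses both. In each case I will extend the bounds established in the two-stage framework (\cref{Lemma6}, \cref{Lemma7}, \cref{Lemma8}) to an arbitrary starting index $k_0\in[0,t)$ by bounding the partial sum over the ``early'' regime via the telescoping identity for gradient-norm accumulators, then invoking the ``late'' regime bound and the logarithmic accumulator bound from \cref{Lemma10}.

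For the first inequality, I would start from the AdaGrad-style estimate in \cref{Lemma1}: since $\sum_{k=k_0}^t \tfrac{\|\bar{\nabla}F_k\|^2}{[\bar{m}^x_{k+1}]^2}\le\sum_{k=0}^t \tfrac{\|\bar{\nabla}F_k\|^2}{[\bar{m}^x_{k+1}]^2}$, the partial-sum identity $[\bar{m}^x_{k+1}]^2=[\bar{m}^x_0]^2+\sum_{j\le k}\|\bar{\nabla}F_j\|^2/n$ together with \cref{Lemma1} gives a bound of the form $\log\!\bigl(\sum_k\|\bar{\nabla}F_k\|^2+[\bar{m}^x_0]^2\bigr)+1$. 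Using part 4) of \cref{Lemma2} coordinatewise together with the bound $\|\mathbf{v}_k\|\le \bar a\,\bar z_{k+1}+\bar b\sqrt{k}$ established in \cref{eq94}, one reproduces \cref{eq95}. The key trick now is to substitute the logarithmic envelope $\bar z_{t+1}\le a_1\log(t+1)+b_1$ from \cref{Lemma10} and collapse the resulting $\log(\log)$ through the simple estimate
\[
\log\!\bigl(\alpha\log(t{+}1)+\beta\bigr)\le \log\!\bigl((\alpha+\beta)(t{+}1)\bigr)=\log(\alpha+\beta)+\log(t{+}1),
\]
which converts the $2\log(\cdots)$ contribution into $2\log(t+1)+\text{const}$, yielding the claimed $5\log(t+1)+c_2$.

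For the second inequality, I split the sum at $k_2$: for $k<k_2$ I use $\|\nabla_y L_k\|^2=n([\bar m^y_{k+1}]^2-[\bar m^y_k]^2)$ and the monotonicity $\bar m^y_{k+1}\ge\bar m^y_0$ together with $\bar m^y_{k_2}\le C_{m^y}$ to get the constant bound $\tfrac{nC_{m^y}^2}{\bar m^y_0}-n\bar m^y_0$, as in \cref{eq97}; for $k\ge k_2$ I invoke \cref{Lemma7} directly. The remaining work is to control the term $\sum \tfrac{\|\bar\nabla F\|^2}{[\bar m^x_{k+1}]^2\max\{\bar m^v_{k+1},\bar m^y_{k+1}\}}$ appearing in \cref{Lemma7}: since $\max\{\bar m^v_{k+1},\bar m^y_{k+1}\}=\bar z_{k+1}\ge \bar z_0>0$, this sum is dominated by $\tfrac{1}{\bar z_0}\sum\tfrac{\|\bar\nabla F\|^2}{[\bar m^x_{k+1}]^2}$, to which the first inequality applies. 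Collecting terms produces $a_2\log(t+1)+b_2$ with the explicit constants in \cref{eq106}.

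For the third inequality, I repeat the same split at $k_3$: the early regime is handled by the telescoping $\sum_{k<k_3}\tfrac{\|\nabla_v R_k\|^2}{\bar z_{k+1}}\le \tfrac{nC_{m^v}^2}{\bar m^v_0}-n\bar m^v_0$, and for $k\ge k_3$ I invoke \cref{Lemma8}, whose right-hand side is a linear combination of a constant, of $\sum\tfrac{\|\bar\nabla F\|^2}{[\bar m^x_{k+1}]^2}$, and of $\sum\tfrac{\|\nabla_y L\|^2}{\bar m^y_{k+1}}$. Substituting the first and second inequalities just established, one obtains $a_3\log(t+1)+b_3$. The main obstacle, which is essentially bookkeeping rather than conceptual, is to verify that this substitution cascade does not break the problem-parameter-free structure: the constants $c_2, a_2, b_2, a_3, b_3$ must remain expressible purely in terms of the smoothness/convexity constants, network spectral gap, and the stepsize inconsistency quantities $\zeta_q,\sigma_u,\zeta_q,\ldots$, without reintroducing any hyperparameter that depends on the horizon $T$. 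Careful tracking through the definitions in \cref{eq84}--\cref{eq85} shows that this is indeed the case, and the three stated bounds follow.
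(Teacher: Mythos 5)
Your proposal is correct and follows essentially the same route as the paper: Eq.~\eqref{eq95} combined with the logarithmic envelope $\bar z_{t+1}\le a_1\log(t+1)+b_1$ from \cref{Lemma10} and the $\log(\alpha\log(t+1)+\beta)\le\log(\alpha+\beta)+\log(t+1)$ collapse for the first bound, then a split at $k_2$ (telescoping the pre-threshold regime, invoking \cref{Lemma7} for the tail, and dominating the coupled sum by $\tfrac{1}{\bar z_0}\sum\|\bar\nabla F\|^2/[\bar m^x_{k+1}]^2$) for the second, and the analogous split at $k_3$ with \cref{Lemma8} plus substitution of the first two bounds for the third. This is precisely the cascade the paper carries out in Parts I--III of its proof of \cref{Lemma11}.
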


\begin{proof}
Based on the results in \cref{Lemma10}, we have the following bounds.

\textbf{Part I: Bounding} $\sum \frac{\| \bar{\nabla} F(\mathbf{x}_{t},\mathbf{y}_{t},\mathbf{v}_{t})\|^2}{[\bar{m}^x_{t+1}]^2}$.

Firstly, we bound $\sum_{k=k_0}^t \frac{\| \bar{\nabla} F(\mathbf{x}_{k},\mathbf{y}_{k},\mathbf{v}_{k})\|^2}{[\bar{m}^x_{k+1}]^2}$ for arbitrary $k_0 < t$. Back to \cref{eq95}, by plugging in \cref{eq104}, we have:
\begin{align}
&\sum_{k=k_0}^t \frac{\| \bar{\nabla} F(\mathbf{x}_{k},\mathbf{y}_{k},\mathbf{v}_{k})\|^2}{[\bar{m}^x_{k+1}]^2}\notag\\
&\leq 3 \log(t + 1) + 2 \log \left( nC_{l_{xy}} \bar{a} \bar{z}_{t+1} + nC_{l_{xy}} \bar{b} + nC_{f_x} +\bar{m}^x_0 \right) + 1 \notag \\
&\overset{(a)}{\leq} 3 \log(t + 1) + 2 \log \left( nC_{l_{xy}} \bar{a} a_1 \log(t+1)+nC_{l_{xy}} \bar{a}b_1 + nC_{l_{xy}} \bar{b} + nC_{f_x} +\bar{m}^x_0 \right) + 1 \notag \\
&\leq 3 \log(t + 1) + 2 \log \left( nC_{l_{xy}} \bar{a} a_1 (t+1)+nC_{l_{xy}} \bar{a}b_1 + nC_{l_{xy}} \bar{b} + nC_{f_x} +\bar{m}^x_0 \right) + 1 \notag \\
&\leq 3 \log(t + 1) + 2 \log \left( \left(nC_{l_{xy}} \bar{a} a_1+nC_{l_{xy}} \bar{a}b_1 + nC_{l_{xy}} \bar{b} + nC_{f_x} +\bar{m}^x_0 \right)(t + 1) \right) + 1 \notag \\
&\leq 5 \log(t + 1) + 2 \log \left( nC_{l_{xy}} \bar{a} a_1+nC_{l_{xy}} \bar{a}b_1 + nC_{l_{xy}} \bar{b} + nC_{f_x} +\bar{m}^x_0\right) + 1 \notag \\
&\overset{(b)}{=}: 5 \log(t + 1) + c_2, \label{eq107}
\end{align}
where (a) results from \cref{eq104}; (b) refers to \cref{eq106}.

\textbf{Part II: Bounding} $\sum\frac{\|\nabla_y L(\mathbf{x}_t, \mathbf{y}_t)\|^2}{\bar{m}^y_{t+1}}$.

Secondly, we bound $\sum_{k=k_0}^t\frac{\|\nabla_y L(\mathbf{x}_k, \mathbf{y}_k)\|^2}{\bar{m}^y_{k+1}}$. We split this part into two cases using \cref{Lemma6}.

\textbf{Case 1:} If $\bar{m}^y_{t+1} \leq C_{m^y}$, we have:
\begin{align}
&\sum_{k=k_0}^t \frac{\|\nabla_y L(\mathbf{x}_k, \mathbf{y}_k)\|^2}{\bar{m}^y_{k+1}} \nonumber\\
&\leq \sum_{k=k_0}^t \frac{\|\nabla_y L(\mathbf{x}_k, \mathbf{y}_k)\|^2}{\bar{m}^y_{0}}\leq \frac{n([\bar{m}^y_{t+1}]^2 - [\bar{m}^y_{k_0}]^2)}{\bar{m}^y_0} \leq \frac{n(C_{m^y}^2 - [\bar{m}^y_{0}]^2)}{\bar{m}^y_{0}} = \frac{nC_{m^y}^2}{\bar{m}^y_{0}} - n\bar{m}^y_{0}\! \leq \!b_2.\label{eq108}
\end{align}

\textbf{Case 2:} If $\bar{m}^y_{t+1}> C_{m^y}$, we have $k_2 \leq t$, where $k_2$ refers to \cref{Lemma6}. Then based on \cref{eq98}, we have:
\begin{align}
&\sum_{k=k_0}^t \frac{\|\nabla_y L(\mathbf{x}_k, \mathbf{y}_k)\|^2}{\bar{m}^y_{k+1}}\nonumber\\
&\leq\frac{nC_{m^y}^2}{\bar{m}^y_0} - n\bar{m}^y_0 + \frac{ 2nC_{m^y}^2(\mu + L_{l,1})}{\mu^2\gamma_y\sqrt{1+\sigma_u^2}}\notag\\
&\quad+\left(\frac{8\Delta_0^x}{1-\rho_W}+\frac{32 \gamma_x^2 \rho_WC_{m^x}^2 (1 + \zeta_q^2)}{\bar{q}_0^2(1 - \rho_W)^2} \right)\left(\frac{(\mu+L_{l,1})^3}{\gamma_y\mu^2}+\frac{2L_{l,1}(\mu+L_{l,1})}{\gamma_y\mu}\right)\nonumber\\
&\quad+ \frac{32 \gamma_x^2 \rho_W(1 + \zeta_q^2)}{\bar{z}_0(1 - \rho_W)^2} \left(\frac{(\mu+L_{l,1})^3}{\gamma_y\mu^2}+\frac{2L_{l,1}(\mu+L_{l,1})}{\gamma_y\mu}\right)+\frac{8\gamma_x^2  L_y^2\left( 1 + \zeta_q^2 \right) (\mu + L_{l,1})^2}{\gamma_y^2\mu L_{l,1}\bar{z}_0\sqrt{1+\sigma_u^2}} \nonumber\\
&\quad+ \left[\frac{32 \gamma_x^2 \rho_W(1 + \zeta_q^2)}{(1 - \rho_W)^2} \left(\frac{(\mu+L_{l,1})^3}{\gamma_y\mu^2}+\frac{2L_{l,1}(\mu+L_{l,1})}{\gamma_y\mu}\right)+\frac{8\gamma_x^2  L_y^2\left( 1 + \zeta_q^2 \right) (\mu + L_{l,1})^2}{\gamma_y^2\mu L_{l,1}\sqrt{1+\sigma_u^2}}\right]\nonumber\\
&\quad\cdot\sum_{k=\min\{k_1,k_2\}}^t \frac{\|\bar{\nabla} F(\mathbf{x}_k,\mathbf{y}_k,\mathbf{v}_k)\|^2}{[\bar{m}^x_{k+1}]^2\max \big\{\bar{m}^v_{k+1}, \bar{m}^y_{k+1}\big\}}\nonumber\\
&\overset{(a)}{\leq}\frac{nC_{m^y}^2}{\bar{m}^y_0} - n\bar{m}^y_0 + \frac{ 2nC_{m^y}^2(\mu + L_{l,1})}{\mu^2\gamma_y\sqrt{1+\sigma_u^2}} \notag\\
&\quad+\left(\frac{8\Delta_0^x}{1-\rho_W}+\frac{32 \gamma_x^2 \rho_WC_{m^x}^2 (1 + \zeta_q^2)}{\bar{q}_0^2(1 - \rho_W)^2} \right)\left(\frac{(\mu+L_{l,1})^3}{\gamma_y\mu^2}+\frac{2L_{l,1}(\mu+L_{l,1})}{\gamma_y\mu}\right)\nonumber\\
&\quad+ \frac{32 \gamma_x^2 \rho_W(1 + \zeta_q^2)}{\bar{z}_0(1 - \rho_W)^2} \left(\frac{(\mu+L_{l,1})^3}{\gamma_y\mu^2}+\frac{2L_{l,1}(\mu+L_{l,1})}{\gamma_y\mu}\right)+\frac{8\gamma_x^2  L_y^2\left( 1 + \zeta_q^2 \right) (\mu + L_{l,1})^2}{\gamma_y^2\mu L_{l,1}\bar{z}_0\sqrt{1+\sigma_u^2}} \nonumber\\
&\quad+\left[\frac{160 \gamma_x^2 \rho_W(1 + \zeta_q^2)}{(1 - \rho_W)^2} \left(\frac{(\mu+L_{l,1})^3}{\gamma_y\mu^2}+\frac{2L_{l,1}(\mu+L_{l,1})}{\gamma_y\mu}\right)\right.\nonumber\\
&\quad\left.+\frac{40\gamma_x^2  L_y^2\left( 1 + \zeta_q^2 \right) (\mu + L_{l,1})^2}{\gamma_y^2\mu L_{l,1}\sqrt{1+\sigma_u^2}}\right]\log(t+1)\nonumber\\
&\quad +\left[\frac{32 \gamma_x^2 \rho_W(1 + \zeta_q^2)}{(1 - \rho_W)^2} \left(\frac{(\mu+L_{l,1})^3}{\gamma_y\mu^2}+\frac{2L_{l,1}(\mu+L_{l,1})}{\gamma_y\mu}\right)+\frac{8\gamma_x^2  L_y^2\left( 1 + \zeta_q^2 \right) (\mu + L_{l,1})^2}{\gamma_y^2\mu L_{l,1}\sqrt{1+\sigma_u^2}}\right]c_2\notag\\
&\overset{(b)}{=}:a_2\log(t+1)+b_2,\label{eq109}
\end{align}
where (a) uses \cref{eq107}, and (b) refers to \cref{eq109}. Since the upper bound of Case 2 is larger, we take \cref{eq106} as our final result.

\textbf{Part III: Bounding} $\sum\frac{\|\nabla_v  R(\mathbf{x}_t, \mathbf{y}_t, \mathbf{v}_t)\|^2}{\bar{z}_{t+1}}$.

Last, we bound $\sum_{k=k_0}^t \frac{\|\nabla_v  R(\mathbf{x}_k, \mathbf{y}_k, \mathbf{v}_k)\|^2}{\bar{z}_{k+1}}$. We split this part into two cases using \cref{Lemma6}.

\textbf{Case 1:} If $\bar{m}^v_{t+1} \leq C_{m^v}$, we have:
\begin{align}
&\sum_{k=k_0}^t \frac{\|\nabla_v  R(\mathbf{x}_k, \mathbf{y}_k, \mathbf{v}_k)\|^2}{\bar{z}_{k+1}}\notag\\
&\leq \sum_{k=k_0}^t \frac{\|\nabla_v  R(\mathbf{x}_k, \mathbf{y}_k, \mathbf{v}_k)\|^2}{\bar{z}_{0}} \leq \frac{n([\bar{m}^v_{t+1}]^2 - [\bar{m}^v_{0}]^2)}{\bar{m}^v_{0}}\leq \frac{nC_{m^v}^2}{\bar{m}^v_{0}} - n\bar{m}^v_{0} \leq b_3.\label{eq110}
\end{align}

\textbf{Case 2:} If $\bar{m}^v_{t+1} > C_{m^v}$, we have $k_3 \leq t$, where $k_3$ refers to \cref{Lemma6}.
\begin{align}
&\sum_{k=k_0}^t \frac{\|\nabla_v  R(\mathbf{x}_k, \mathbf{y}_k, \mathbf{v}_k)\|^2}{\bar{z}_{k+1}} \nonumber\\
&\overset{(a)}{\leq} \sum_{k=k_0}^{k_3-1} \frac{\|\nabla_v  R(\mathbf{x}_k, \mathbf{y}_k, \mathbf{v}_k)\|^2}{\bar{z}_{k+1}}+ \sum_{k=k_3}^t \frac{\|\nabla_v  R(\mathbf{x}_k, \mathbf{y}_k, \mathbf{v}_k)\|^2}{\bar{z}_{k+1}} \notag \\
&\overset{(b)}{\leq}\frac{nC_{m^v}^2}{\bar{m}^v_{0}}\! -\! n\bar{m}^v_{0} \!+\! \frac{4nC_{m^y}^2(\mu + C_{l_{yy}})}{\mu^4\gamma_v\sqrt{1+\sigma_z^2}}\left(\frac{nL_{l,2}C_{f_y}}{\mu}+L_{f,1}\right)^2 + \frac{8nC_{m^v}^2(\mu + C_{l_{yy}})}{\mu^2\gamma_v\sqrt{1+\sigma_z^2}}\notag \\
&\quad +\left(\frac{16\Delta_0^x}{1-\rho_W}+\frac{64\gamma_x^2\rho_WC_{m^x}^2(1+\zeta_q^2)}{\bar{q}_0^2(1-\rho_W)^2}\right)\left(\frac{ \bar{L}_r^2(\mu + C_{l_{yy}})}{\mu C_{l_{yy}}\gamma_v}+\frac{\bar{L}_r^2(\mu+C_{l_{yy}})}{\mu^2\gamma_v\sqrt{1+\sigma_z^2}}\right)\nonumber\\
&\quad + \frac{16(\mu + C_{l_{yy}})}{\gamma_v\mu^2}\left(\frac{nL_{l,2}C_{f_y}}{\mu}\!+\!L_{f,1}\right)^2 \left(\frac{\Delta_0^x}{1-\rho_W}\!+\!\frac{4 \gamma_x^2 \rho_WC_{m^x}^2 (1 + \zeta_q^2)}{\bar{q}_0^2(1 - \rho_W)^2} \right)\notag\\
&\quad\cdot\left(\frac{(\mu+L_{l,1})^2}{n\mu^2 }\!+\!\frac{2L_{l,1}}{n\mu }\right)+\frac{16(\mu + C_{l_{yy}})}{\gamma_v\mu^2\sqrt{1+\sigma_u^2}}\left(\frac{nL_{l,2}C_{f_y}}{\mu}\!+\!L_{f,1}\right)^2\notag \\
&\quad\cdot\left[\frac{4 \gamma_x^2 \rho_W (1 + \zeta_q^2)}{\bar{z}_0^2(1 - \rho_W)^2} \left(\frac{(\mu+L_{l,1})^2\sqrt{1+\sigma_u^2}}{n\mu^2 }+\frac{2L_{l,1}\sqrt{1+\sigma_u^2}}{n\mu }\right)\right.\nonumber\\
&\quad +\left.\frac{\gamma_x^2L_y^2(1+\zeta_q^2)(\mu + L_{l,1})}{n\bar{z}_0\gamma_y \mu L_{l,1}}\right]\sum_{k=\min\{k_1-1,k_2-1\}}^{k_3-2} 
 \frac{\| \bar{\nabla} F(\mathbf{x}_{k},\mathbf{y}_{k},\mathbf{v}_{k})\|^2}{[\bar{m}^x_{k+1}]^2}\nonumber\\
&\quad+ \left(\frac{64 \gamma_x^2 \rho_W \bar{L}_r^2(\mu + C_{l_{yy}})(1 + \zeta_q^2)}{\mu C_{l_{yy}}\gamma_v(1 - \rho_W)^2} + \frac{2\gamma_x^2L_v^2(1+\zeta_q^2)(\mu + C_{l_{yy}})^2}{\mu C_{l_{yy}}C_{m^v}\gamma_v^2\sqrt{1+\sigma_z^2}}\right)\notag\\
&\quad\cdot\sum_{k=\min\{k_1-1,k_3-1\}}^{t} 
 \frac{\| \bar{\nabla} F(\mathbf{x}_{k},\mathbf{y}_{k},\mathbf{v}_{k})\|^2}{[\bar{m}^x_{k+1}]^2}\nonumber\\
&\quad  + \left(\frac{8}{\mu^2}+\frac{4(\mu + C_{l_{yy}})^2}{\mu^3 C_{l_{yy}}} \right)\left(\frac{nL_{l,2}C_{f_y}}{\mu}+L_{f,1}\right)^2 \sum_{k=k_3-1}^t\frac{\|\nabla_y L(\mathbf{x}_k,\mathbf{y}_k)\|^2}{\bar{m}_{k+1}^y}\notag \\
&\overset{(c)}{\leq} \frac{nC_{m^v}^2}{\bar{m}^v_{0}}\! -\! n\bar{m}^v_{0} \!+\! \frac{4nC_{m^y}^2(\mu + C_{l_{yy}})}{\mu^4\gamma_v\sqrt{1+\sigma_z^2}}\left(\frac{nL_{l,2}C_{f_y}}{\mu}+L_{f,1}\right)^2 + \frac{8nC_{m^v}^2(\mu + C_{l_{yy}})}{\mu^2\gamma_v\sqrt{1+\sigma_z^2}}\notag \\ 
&\quad +\left(\frac{16\Delta_0^x}{1-\rho_W}+\frac{64\gamma_x^2\rho_WC_{m^x}^2(1+\zeta_q^2)}{\bar{q}_0^2(1-\rho_W)^2}\right)\left(\frac{ \bar{L}_r^2(\mu + C_{l_{yy}})}{\mu C_{l_{yy}}\gamma_v}+\frac{\bar{L}_r^2(\mu+C_{l_{yy}})}{\mu^2\gamma_v\sqrt{1+\sigma_z^2}}\right)\nonumber\\
&\quad + \frac{16(\mu + C_{l_{yy}})}{\gamma_v\mu^2}\left(\frac{nL_{l,2}C_{f_y}}{\mu}\!+\!L_{f,1}\right)^2 \left(\frac{\Delta_0^x}{1-\rho_W}\!+\!\frac{4 \gamma_x^2 \rho_WC_{m^x}^2 (1 + \zeta_q^2)}{\bar{q}_0^2(1 - \rho_W)^2} \right)\notag\\
&\quad\cdot\left(\frac{(\mu+L_{l,1})^2}{n\mu^2 }\!+\!\frac{2L_{l,1}}{n\mu }\right)+\left[\frac{16(\mu + C_{l_{yy}})}{\gamma_v\mu^2\sqrt{1+\sigma_u^2}}\left(\frac{nL_{l,2}C_{f_y}}{\mu}\!+\!L_{f,1}\right)^2\right.\notag \\
&\quad \cdot\left(\frac{4 \gamma_x^2 \rho_W (1 + \zeta_q^2)}{\bar{z}_0^2(1 - \rho_W)^2} \left(\frac{(\mu+L_{l,1})^2\sqrt{1+\sigma_u^2}}{n\mu^2 }+\frac{2L_{l,1}\sqrt{1+\sigma_u^2}}{n\mu }\right) +\frac{\gamma_x^2L_y^2(1+\zeta_q^2)(\mu + L_{l,1})}{n\bar{z}_0\gamma_y \mu L_{l,1}}\right)\nonumber\\
&\quad + \left.\left(\frac{64 \gamma_x^2 \rho_W \bar{L}_r^2(\mu + C_{l_{yy}})(1 + \zeta_q^2)}{\mu C_{l_{yy}}\gamma_v(1 - \rho_W)^2} \!+\! \frac{2\gamma_x^2L_v^2(1+\zeta_q^2)(\mu + C_{l_{yy}})^2}{\mu C_{l_{yy}}C_{m^v}\gamma_v^2\sqrt{1+\sigma_z^2}}\right)\right]\! (5 \log(t + 1) \!+\! c_2) \notag \\
&\quad  + \left(\frac{8}{\mu^2}+\frac{4(\mu + C_{l_{yy}})^2}{\mu^3 C_{l_{yy}}} \right)\left(\frac{nL_{l,2}C_{f_y}}{\mu}+L_{f,1}\right)^2 (a_2 \log(t + 1) + b_2)\notag \\
&\overset{(d)}{=}:a_3 \log(t + 1) + b_3, \label{eq111}
\end{align}
where (a) allows $\sum_{k=k_0}^{k_3-1} \frac{\|\nabla_v  R(\mathbf{x}_k, \mathbf{y}_k, \mathbf{v}_k)\|^2}{\bar{z}_{k+1}} = 0$ when $k_0 \geq k_3$; (b) uses $C_{m^v} \geq \bar{m}^v_0$ and \cref{Lemma8}; (c) follows from \cref{eq107} and \cref{eq109}; (d) refers to \cref{eq106}. Since the upper bound of Case 2 is larger, we take \cref{eq111} as our final result.

Thus, the proof is complete.
\end{proof}

\subsection{The Upper Bound of Consensus Errors}
\begin{lemma}\label{Lemma5} Suppose \cref{Assumption3}, \cref{Assumption1}, and \cref{Assumption2} hold. Then, the consensus error $\Delta$ satisfies:
\begin{align}
 \sum_{k=0}^{t} \Delta_k 
&\leq \frac{2 \Delta_0}{1 - \rho_W} 
+ \frac{8 \gamma_x^2 \rho_W (1 + \zeta_q^2)( 5 \log(t) + c_2)}{(1 - \rho_W)^2} \nonumber \\
&\quad + \frac{8 \gamma_y^2 \rho_W (1 + \zeta_u^2)(a_2 \log(t) + b_2)}{(1 - \rho_W)^2}+\frac{8 \gamma_v^2 \rho_W (1 + \zeta_z^2)(a_3 \log(t) + b_3)}{(1 - \rho_W)^2}, \label{eq39}
\end{align}
where \(\Delta_0 \) is the initial consensus error, which can be set to 0 with proper initialization.
\end{lemma}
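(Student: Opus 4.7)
My plan is to derive a one-step contraction on the consensus error of each of the three variables, telescope, and then substitute the sums using \cref{Lemma11}. Since \cref{Lemmanewx} already establishes the $x$-part, the bulk of the argument is to carry through the analogous recursion for $y$ and for $v$, and then combine the three bounds.

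First I would mimic the proof of \cref{Lemmanewx}. Using $W\mathbf{1}=\mathbf{1}$, $\mathbf{J}W=\mathbf{J}$, and \cref{Assumption3}, together with Young's inequality with parameter $\lambda=\frac{1-\rho_W}{2\rho_W}$, the update $\mathbf{y}_{t+1}=W\!\left(\mathbf{y}_t-\gamma_y U_{t+1}^{-1}\nabla_y L(\mathbf{x}_t,\mathbf{y}_t)\right)$ yields
\begin{align*}
\|\mathbf{y}_{t+1}-\mathbf{1}\bar{y}_{t+1}\|^2
&\leq \tfrac{1+\rho_W}{2}\|\mathbf{y}_t-\mathbf{1}\bar{y}_t\|^2
+\tfrac{2\gamma_y^2\rho_W(1+\rho_W)}{1-\rho_W}\,\bar{u}_{t+1}^{-2}\|\nabla_y L(\mathbf{x}_t,\mathbf{y}_t)\|^2\\
&\quad+\tfrac{2\gamma_y^2\rho_W(1+\rho_W)}{1-\rho_W}\bigl\|(U_{t+1}^{-1}-\bar{u}_{t+1}^{-1}\mathbf{I})\nabla_y L(\mathbf{x}_t,\mathbf{y}_t)\bigr\|^2,
\end{align*}
and the definition of $\zeta_u^2$ in \cref{eq20} collapses the last two terms into $\tfrac{2\gamma_y^2\rho_W(1+\rho_W)(1+\zeta_u^2)}{1-\rho_W}\bar{u}_{t+1}^{-2}\|\nabla_y L(\mathbf{x}_t,\mathbf{y}_t)\|^2$. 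The same derivation carries over to $v$ because the projection $\mathcal{P}_\mathcal{V}$ is non-expansive (Lemma~1 in \citep{nedic2010constrained}) and $\mathbf{1}\bar{v}_{t+1}\in\mathcal{V}^n$, so projection only tightens the bound. This gives, analogously, a one-step inequality controlled by $\gamma_v^2(1+\zeta_z^2)\bar{z}_{t+1}^{-2}\|\nabla_v R(\mathbf{x}_t,\mathbf{y}_t,\mathbf{v}_t)\|^2$.

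Next I would telescope each of the three recursions using the geometric factor $\tfrac{1+\rho_W}{2}<1$, exactly as in \cref{eq42}, to obtain
\begin{align*}
\sum_{k=0}^{t}\|\mathbf{y}_k-\mathbf{1}\bar{y}_k\|^2
&\leq \tfrac{2\Delta_0^y}{1-\rho_W}+\tfrac{8\gamma_y^2\rho_W(1+\zeta_u^2)}{(1-\rho_W)^2}\sum_{k=0}^{t}\bar{u}_{k+1}^{-2}\|\nabla_y L(\mathbf{x}_k,\mathbf{y}_k)\|^2,
\end{align*}
and the strictly analogous inequality for $v$. Summing the three consensus bounds and recalling $\bar{u}_{k+1}=\bar{m}^y_{k+1}$ and $\bar{z}_{k+1}=\max\{\bar{m}^v_{k+1},\bar{m}^y_{k+1}\}$ (and that $\bar{q}_{k+1}\geq \bar{m}^x_{k+1}$, since the max factor is at least $1$ by the choices of $C_{m^v},C_{m^y}\geq 1$ in \cref{eq53}--\cref{eq54}) lets me upgrade the trailing $\bar{q}^{-2},\bar{u}^{-2},\bar{z}^{-2}$ factors into $[\bar{m}^x]^{-2},[\bar{m}^y]^{-1},\bar{z}^{-1}$ forms—at which point \cref{Lemma11} applied with $k_0=0$ supplies the three logarithmic bounds $5\log(t)+c_2$, $a_2\log(t)+b_2$, and $a_3\log(t)+b_3$ respectively, yielding precisely \cref{eq39} after combining $\Delta_0=\Delta_0^x+\Delta_0^y+\Delta_0^v$.

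The main obstacle is the reconciliation step between the per-iteration inequalities (which naturally produce $\bar{q}^{-2},\bar{u}^{-2},\bar{z}^{-2}$-weighted gradient sums) and the forms controlled by \cref{Lemma11} (which carry only a single power of the accumulator in the denominator). This requires a careful monotonicity argument showing that, from the first step onwards, the accumulators are bounded below by quantities like $\bar{m}^x_{k+1}$, $\sqrt{\bar{m}^y_{k+1}}$, or $\sqrt{\bar{z}_{k+1}}$ times appropriate constants, so that one power of the accumulator can be absorbed into the gradient norm while the other is retained to match \cref{Lemma11}. Once that bookkeeping is done, the statement follows directly by combining the three telescoped inequalities.
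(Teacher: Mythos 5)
Your proposal matches the paper's own proof essentially step for step: a one-step contraction via Young's inequality with $\lambda=\frac{1-\rho_W}{2\rho_W}$ for each of $\mathbf{x},\mathbf{y},\mathbf{v}$ (reusing \cref{Lemmanewx} for $\mathbf{x}$, handling the projection by non-expansiveness, and absorbing stepsize heterogeneity into the $\zeta$ factors), followed by telescoping and an application of \cref{Lemma11} with $k_0=0$ to convert the weighted gradient sums into the stated logarithmic bounds. The reconciliation of the $\bar{q}^{-2},\bar{u}^{-2},\bar{z}^{-2}$ weights with the single-power denominators of \cref{Lemma11} that you flag is handled the same (implicit) way in the paper, so the argument is correct and not a different route.
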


\begin{proof}
According to \cref{Lemmanewx}, we have:
\begin{align}
\sum_{k=0}^{t-1} \|\mathbf{x}_{t+1} - \mathbf{1} \bar{x}_{t+1} \|^2 \leq \frac{2}{1 - \rho_W} \|\mathbf{x}_0 - \mathbf{1} \bar{x}_0\|^2 
+ \frac{8 \gamma_x^2 \rho_W (1 + \zeta_q^2)}{(1 - \rho_W)^2} \sum_{k=0}^{t-1} 
 \bar{q}_{k+1}^{-2} \|\bar{\nabla}F(\mathbf{x}_k, \mathbf{y}_k, \mathbf{v}_k)\|^2.\label{eq42}
\end{align}

With the help of \cref{Lemma11}, we have:
\begin{align}
\sum_{k=0}^{t-1}  \bar{q}_{k+1}^{-2} \|\bar{\nabla} F(\mathbf{x}_k, \mathbf{y}_k, \mathbf{v}_k)\|^2 \leq\sum_{k=0}^{t-1}\frac{\|\bar{\nabla} F(\mathbf{x}_k,\mathbf{y}_k,\mathbf{v}_k)\|^2}{[\bar{m}^x_{k+1}]^2}
 \leq  5 \log(t) + c_2.\label{eq43}
\end{align}

Similarly, we can get the following inequality for the dual variable:
\begin{align}
    &\sum_{k=0}^{t-1}\|\mathbf{y}_{k+1} - \mathbf{1} \bar{y}_{k+1} \|^2\leq  \frac{2}{1 - \rho_W} \|\mathbf{y}_0 - \mathbf{1} \bar{y}_0\|^2 
+ \frac{8 \gamma_y^2 \rho_W (1 + \zeta_u^2)(a_2 \log(t) + b_2)}{(1 - \rho_W)^2}.\label{eq44}
\end{align}

For the auxiliary variable, we have:
\begin{align}
    v_{t+1} = \mathcal{P}_{\mathcal{V}} \left( W \left( \mathbf{v}_t - \gamma_v Z_{t+1}^{-1} \nabla_v R(\mathbf{x}_t, \mathbf{y}_t, \mathbf{v}_t) \right) \right) = W \mathbf{v}_t - \gamma_v \nabla_v \hat{G},\label{eq45}
\end{align}
where
\begin{equation}
    \nabla_v \hat{G} = \frac{1}{\gamma_v} \left( \mathcal{P}_{\mathcal{V}}  \left( W \left( \mathbf{v}_t - \gamma_v Z_{t+1}^{-1} \nabla_v R(\mathbf{x}_t, \mathbf{y}_t, \mathbf{v}_t)\right) \right) - W \mathbf{v}_t\right).\label{eq46}
\end{equation}

Using Young’s inequality with parameter \(\lambda\), we have:
\begin{align}
&\|\mathbf{v}_{t+1} - \mathbf{1} \bar{v}_{t+1} \|^2\nonumber\\
&=  \left\| W \mathbf{v}_{t} - \gamma_v \nabla_v \hat{G} - \mathbf{J} \left( W\mathbf{v}_{t}- \gamma_v \nabla_v \hat{G} \right) \right\|^2 \nonumber \\
&\leq (1 + \lambda) \rho_W  \|\mathbf{v}_{t} - \mathbf{J} \mathbf{v}_{t}\|^2 
+ \left( 1 + \frac{1}{\lambda} \right)  \left\|\mathcal{P}_{\mathcal{V}}  \left( W \left( \mathbf{v}_t - \gamma_v Z_{t+1}^{-1} \nabla_v R(\mathbf{x}_t, \mathbf{y}_t, \mathbf{v}_t)\right) \right) - W \mathbf{v}_t \right\|^2 \nonumber \\
&\leq \frac{1 + \rho_W}{2}  \|\mathbf{v}_{t} - \mathbf{J} \mathbf{v}_{t}\|^2
+ \frac{1 + \rho_W}{1 - \rho_W} \left\|\mathcal{P}_{\mathcal{V}}  \left( W \left( \mathbf{v}_t - \gamma_v Z_{t+1}^{-1} \nabla_v R(\mathbf{x}_t, \mathbf{y}_t, \mathbf{v}_t)\right) \right) - W \mathbf{v}_t \right\|^2.\label{eq47}
\end{align}

Noticing that \(W \mathbf{v}_t = \mathcal{P}_{\mathcal{V}}(W \mathbf{v}_t)\) holds for the convex set \(\mathcal{V}\), we get:
\begin{align}
&\|\mathbf{v}_{t+1} - \mathbf{1} \bar{v}_{t+1} \|^2\nonumber\\ 
&\leq \frac{1 + \rho_W}{2}  \|\mathbf{v}_{t} - \mathbf{J} \mathbf{v}_{t}\|^2
+ \frac{1 + \rho_W}{1 - \rho_W} \left\|\mathcal{P}_{\mathcal{V}}  \left( W \left( \mathbf{v}_t - \gamma_v Z_{t+1}^{-1} \nabla_v R(\mathbf{x}_t, \mathbf{y}_t, \mathbf{v}_t)\right) \right) - \mathcal{P}_{\mathcal{V}}(W \mathbf{v}_t) \right\|^2\nonumber\\
&\overset{(a)}{\leq}\frac{1 + \rho_W}{2}  \|\mathbf{v}_{t} - \mathbf{J} \mathbf{v}_{t}\|^2
+ \frac{1 + \rho_W}{1 - \rho_W} \rho_W\| \gamma_v Z_{t+1}^{-1} \nabla_v R(\mathbf{x}_t, \mathbf{y}_t, \mathbf{v}_t) \|^2, \label{eq48}
\end{align}
where (a) uses the non-expansiveness of the projection operator, as shown in Lemma 1 of \citep{nedic2010constrained}. Then, we have:
\begin{align}
\sum_{k=0}^{t-1} \|\mathbf{v}_{k} - \mathbf{1} \bar{v}_{k} \|^2
&\leq \frac{2}{1 - \rho_W} \|\mathbf{v}_{0} - \mathbf{1} \mathbf{v}_{0} \|^2 
+ \frac{8 \gamma_v^2\rho_W (1 + \zeta_z^2)}{(1 - \rho_W)^2} \sum_{k=0}^{t-1} \bar{z}_{k+1}^{-2} \|\nabla_v R(\mathbf{x}_k, \mathbf{y}_k, \mathbf{v}_k)\|^2.\label{eq49}
\end{align}
Similar to the primal and the dual variable, we can bound the last term above, which completes the proof.
\end{proof}

\subsection{The Upper Bounds of $\sum \frac{\| \bar{\nabla} f(\bar{x}_{t},\bar{y}_{t},\bar{v}_{t})\|^2}{[\bar{m}^x_{t+1}]^2}$, $\sum \frac{\|\nabla_y l(\bar{x}_t, \bar{y}_t)\|^2}{\bar{m}^y_{t+1}}$, and $\sum \frac{\|\nabla_v r(\bar{x}_t, \bar{y}_t, \bar{v}_t)\|^2}{\bar{z}_{t+1}}$}\label{secc80}
Through \cref{Lemma3}, \cref{Lemma11}, and \cref{Lemma5}, we can derive the upper bound for $\sum_{k=k_0}^t \frac{\|\bar{\nabla} f(\bar{x}_k,\bar{y}_k,\bar{v}_k)\|^2}{[\bar{m}^x_{k+1}]^2} $, $\sum_{k=k_0}^t \frac{\|\nabla_y l(\bar{x}_k, \bar{y}_k)\|^2}{\bar{m}^y_{k+1}}$, and $\sum_{k=k_0}^t \frac{\|\nabla_v r(\bar{x}_k, \bar{y}_k, \bar{v}_k)\|^2}{\bar{z}_{k+1}}$ in the following lemma. 
\begin{lemma}\label{lemmanew12}
Under \cref{Assumption3}, \cref{Assumption1} and \cref{Assumption2}, for any integer $k_0 \in [0, t)$, we have the upper bounds in terms of logarithmic functions as:
\begin{align}
\sum_{k=k_0}^t \frac{\|\bar{\nabla} f(\bar{x}_k,\bar{y}_k,\bar{v}_k)\|^2}{[\bar{m}^x_{k+1}]^2} &\leq{a_4\log(t + 1)+b_4},\nonumber\\
\sum_{k=k_0}^t \frac{\|\nabla_y l(\bar{x}_k, \bar{y}_k)\|^2}{\bar{m}^y_{k+1}} &\leq {a_5 \log(t + 1)+b_5}, \nonumber\\
\sum_{k=k_0}^t \frac{\|\nabla_v R(\bar{x}_k, \bar{y}_k, \bar{v}_k)\|^2}{\bar{z}_{k+1}} &\leq {a_6 \log(t + 1) + b_6},\label{eqnewlemma}
\end{align}
where
\begin{align}
a_4&:=\frac{5}{2n^2}+ \frac{8\rho_W\bar{L}_{f}^2(5 \gamma_x^2 (1 + \zeta_v^2)+a_2 \gamma_y^2 (1 + \zeta_u^2)+a_3 \gamma_v^2  (1 + \zeta_z^2))}{[\bar{m}_0^x]^2(1 - \rho_W)^2 }, \nonumber\\
b_4&:=\frac{c_2}{2n^2}+\frac{2\bar{L}_{f}^2 \Delta_0}{[\bar{m}^x_0]^2(1 - \rho_W)}+\frac{8\rho_W\bar{L}_{f}^2(c_2 \gamma_x^2  (1 + \zeta_v^2)+b_2\gamma_y^2 (1 + \zeta_u^2)+b_3\gamma_v^2 (1 + \zeta_z^2))}{[\bar{m}^x_0]^2(1 - \rho_W)^2 }, \nonumber\\
a_5&:=\frac{a_2}{2n^2}+ \frac{8\rho_WL_{l,1}^2(5 \gamma_x^2 (1 + \zeta_v^2)+a_2 \gamma_y^2 (1 + \zeta_u^2))}{\bar{m}_0^y(1 - \rho_W)^2 }, \nonumber\\
b_5&:=\frac{b_2}{2n^2}+\frac{2L_{l,1}^2  (\|\mathbf{x}_{0} - \mathbf{1} \bar{x}_{0} \|^2+\|\mathbf{y}_{0} - \mathbf{1} \bar{y}_{0} \|^2)}{\bar{m}^y_0(1 - \rho_W)}+\frac{8\rho_WL_{l,1}(c_2 \gamma_x^2  (1 + \zeta_v^2)+b_2\gamma_y^2 (1 + \zeta_u^2))}{\bar{m}^y_0(1 - \rho_W)^2 }, \nonumber\\
a_6&:=\frac{a_3}{2n^2}+ \frac{8\rho_W\bar{L}_{R}^2(5 \gamma_x^2 (1 + \zeta_v^2)+a_2 \gamma_y^2 (1 + \zeta_u^2)+a_3 \gamma_v^2  (1 + \zeta_z^2))}{\bar{m}_0^v(1 - \rho_W)^2 }, \nonumber\\
b_6&:=\frac{b_3}{2n^2}+\frac{2\bar{L}_{R}^2 \Delta_0}{\bar{m}^v_0(1 - \rho_W)}+\frac{8\rho_W\bar{L}_{R}^2(c_2 \gamma_x^2  (1 + \zeta_v^2)+b_2\gamma_y^2 (1 + \zeta_u^2)+b_3\gamma_v^2 (1 + \zeta_z^2))}{\bar{m}^v_0(1 - \rho_W)^2 }.
\label{eqnewlemma13}
\end{align}
\end{lemma}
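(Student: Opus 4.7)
The plan is to reduce each of the three bounds on consensus-variable quantities to the corresponding network-variable bounds in \cref{Lemma11}, paying a correction controlled by the consensus errors of \cref{Lemma5}. The underlying mechanism is uniform across the three cases: write $\|\bar{\nabla} f(\bar{x}_k,\bar{y}_k,\bar{v}_k)\|^2$ (and analogously for $\nabla_y l$ and $\nabla_v r$) as a perturbation of the aggregated quantity via a Young-type split, invoke Lipschitz continuity to bound the perturbation by $\bar{L}_f^2\Delta_k$ (or $L_{l,1}^2$, $\bar{L}_r^2$), and exploit that the denominators $[\bar{m}^x_{k+1}]^2$, $\bar{m}^y_{k+1}$, $\bar{z}_{k+1}$ are all lower-bounded by their initial counterparts.

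Concretely, for the first bound I would start from the elementary relation $n^2\|\bar{\nabla} f(\bar{x}_k,\bar{y}_k,\bar{v}_k)\|^2 \le \tfrac12\|\bar{\nabla} F(\mathbf{x}_k,\mathbf{y}_k,\mathbf{v}_k)\|^2 + C\,\bar{L}_f^2\Delta_k$, obtained by adding and subtracting $\bar{\nabla} F(\mathbf{1}\bar{x}_k,\mathbf{1}\bar{y}_k,\mathbf{1}\bar{v}_k)$ and applying item 5) of \cref{Lemma3} (Lipschitzness of $\bar{\nabla} f$ with constant $\bar{L}_f$). Dividing by $[\bar{m}^x_{k+1}]^2$ and summing over $k_0\le k\le t$, the first term is bounded via the first inequality of \cref{Lemma11} by $\tfrac{1}{2n^2}(5\log(t+1)+c_2)$, contributing the $\tfrac{5}{2n^2}$ and $\tfrac{c_2}{2n^2}$ pieces of $a_4$ and $b_4$ in \cref{eqnewlemma13}. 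For the perturbation term, monotonicity gives $[\bar{m}^x_{k+1}]^2\ge[\bar{m}^x_0]^2$, so $\sum_k \Delta_k/[\bar{m}^x_{k+1}]^2 \le \sum_k \Delta_k/[\bar{m}^x_0]^2$, and plugging \cref{Lemma5} in produces the remaining $\frac{8\rho_W\bar{L}_f^2(\cdots)}{[\bar{m}^x_0]^2(1-\rho_W)^2}$ logarithmic and constant pieces.

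The other two bounds are structurally identical. For $\sum \|\nabla_y l(\bar{x}_k,\bar{y}_k)\|^2/\bar{m}^y_{k+1}$, I use $L_{l,1}$-Lipschitzness of $\nabla_y l_i$ from \cref{Assumption2}, the second bound of \cref{Lemma11}, and $\bar{m}^y_{k+1}\ge\bar{m}^y_0$ in the consensus-error term; because $\nabla_y l$ depends only on $(\bar{x},\bar{y})$, only the $x$- and $y$-components of $\Delta_k$ genuinely enter, explaining why $a_5$ and $b_5$ involve $\gamma_x$ and $\gamma_y$ contributions but no $\gamma_v,a_3,b_3$ term. For $\sum \|\nabla_v r(\bar{x}_k,\bar{y}_k,\bar{v}_k)\|^2/\bar{z}_{k+1}$, I use $\bar{L}_r$-Lipschitzness from item 6) of \cref{Lemma3}, the third bound of \cref{Lemma11}, and $\bar{z}_{k+1}\ge \bar{m}^v_0$ (since $\bar{z}_t=\max\{\bar{m}^y_t,\bar{m}^v_t\}$).

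The main obstacle will not be conceptual but bookkeeping: the consensus error $\Delta_k$ aggregates primal, dual, and auxiliary discrepancies, so applying \cref{Lemma5} couples the constants from all three bounds of \cref{Lemma11} into each of $a_4,b_4,a_5,b_5,a_6,b_6$, and one must match Lipschitz constants ($\bar{L}_f$, $L_{l,1}$, $\bar{L}_r$), denominator lower bounds ($[\bar{m}^x_0]^2$, $\bar{m}^y_0$, $\bar{m}^v_0$), and index shifts (\cref{Lemma5} starts at $k=0$, while our sum starts at $k_0$) carefully. The latter is harmless because every summand is non-negative, so extending the range only inflates the bound and the initial-condition term $\Delta_0$ is absorbed into the constant pieces $b_4,b_5,b_6$.
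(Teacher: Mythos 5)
Your proposal matches the paper's own proof: the paper likewise splits the consensus-point gradients from the stacked network gradients via a Young-type inequality plus the Lipschitz constants $\bar{L}_f$, $L_{l,1}$, $\bar{L}_r$ from Lemma~\ref{Lemma3}, bounds the first piece by Lemma~\ref{Lemma11}, and bounds the perturbation by lower-bounding the (monotone) accumulators by their initial values and invoking the consensus-error bound of Lemma~\ref{Lemma5} (its quantities $d_1,d_2$ are exactly your $\sum_k\Delta_k$ terms, with $d_2$ omitting the $v$-component just as you observe for the $\nabla_y l$ case). The only cosmetic difference is that the paper states the Lipschitz decompositions in the reverse direction before using them as you do, so there is no substantive gap.
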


\begin{proof}
According to \cref{Lemma3}, we have:
\begin{equation}
   \|\bar{\nabla} F(\mathbf{x}_t, \mathbf{y}_t, \mathbf{v}_t)\|^2 \leq 2\|\bar{\nabla} F(\mathbf{1}\bar{x}_t, \mathbf{1}\bar{y}_t, \mathbf{1}\bar{v}_t)\|^2 + 2\bar{L}_{f}^2 (\|\mathbf{x}_t - \mathbf{1}\bar{x}_t\|^2 + \|\mathbf{y}_t - \mathbf{1}\bar{y}_t\|^2+ \|\mathbf{v}_t - \mathbf{1}\bar{v}_t\|^2),\label{eq121}
\end{equation}
\begin{equation}
   \|\nabla_y L(\mathbf{x}_t, \mathbf{y}_t)\|^2 \leq 2\|\nabla_yL(\mathbf{1}\bar{x}_t, \mathbf{1}\bar{y}_t)\|^2 + 2L_{l,1}^2 (\|\mathbf{x}_t - \mathbf{1}\bar{x}_t\|^2 + \|\mathbf{y}_t - \mathbf{1}\bar{y}_t\|^2),\label{eq122}
\end{equation}
\begin{equation}
   \|\nabla_v R(\mathbf{x}_t, \mathbf{y}_t, \mathbf{v}_t)\|^2 \leq 2\|\nabla_v R(\mathbf{1}\bar{x}_t, \mathbf{1}\bar{y}_t, \mathbf{1}\bar{v}_t)\|^2 + 2\bar{L}_r^2( \|\mathbf{x}_t - \mathbf{1}\bar{x}_t\|^2 + \|\mathbf{y}_t - \mathbf{1}\bar{y}_t\|^2+ \|\mathbf{v}_t - \mathbf{1}\bar{v}_t\|^2).\label{eq123}
\end{equation}

Based on \cref{eqnewlemmaproof1}, we have $\|\bar{\nabla} F(\mathbf{1}\bar{x}_t, \mathbf{1}\bar{y}_t, \mathbf{1}\bar{v}_t)\|^2 \leq\|\bar{\nabla} F(\mathbf{1}\bar{x}_t, \mathbf{1}\bar{y}_t, \mathbf{1}\bar{v}_t)\|_{\rm F}^2=\|n\bar{\nabla} f(\bar{x}_t,\bar{y}_t,\bar{v}_t)\|^2$, $\|\nabla_y L(\mathbf{1}\bar{x}_t, \mathbf{1}\bar{y}_t)\|^2\leq\|\nabla_y L(\mathbf{1}\bar{x}_t, \mathbf{1}\bar{y}_t)\|_{\rm F}^2=\|n\nabla_y l(\bar{x}_t, \bar{y}_t)\|^2$ and $\|\nabla_v R(\mathbf{1}\bar{x}_t, \mathbf{1}\bar{y}_t, \mathbf{1}\bar{v}_t)\|^2\leq\|\nabla_v R(\mathbf{1}\bar{x}_t, \mathbf{1}\bar{y}_t, \mathbf{1}\bar{v}_t)\|_{\rm F}^2=\|n\nabla_v r(\bar{x}_t, \bar{y}_t, \bar{v}_t)\|^2$, then according to \cref{Lemma11} and \cref{Lemma5}, we have:
\begin{align}
\sum_{k=k_0}^t \frac{\|\bar{\nabla} f(\bar{x}_k,\bar{y}_k,\bar{v}_k)\|^2}{[\bar{m}^x_{k+1}]^2} &\leq \frac{5\log(t + 1)+c_2}{2n^2} + \frac{\bar{L}_{f}^2d_1}{[\bar{m}^x_0]^2}\leq{a_4\log(t + 1)+b_4},\nonumber\\
\sum_{k=k_0}^t \frac{\|\nabla_y l(\bar{x}_k, \bar{y}_k)\|^2}{\bar{m}^y_{k+1}} &\leq \frac{a_2 \log(t + 1)+b_2}{2n^2} + \frac{L_{l,1}^2d_2}{\bar{m}^y_0}\leq {a_5 \log(t + 1)+b_5}, \nonumber\\
\sum_{k=k_0}^t \frac{\|\nabla_v r(\bar{x}_k, \bar{y}_k, \bar{v}_k)\|^2}{\bar{z}_{k+1}} &\leq \frac{a_3 \log(t + 1) + b_3}{2n^2}+\frac{\bar{L}_{R}^2d_1}{\bar{m}^y_0}\leq {a_6 \log(t + 1) + b_6},\label{eq124}
\end{align}
where $a_4, b_4, a_5, b_5, a_6, b_6$ can refer to \cref{eqnewlemma13} and 
\begin{align}
d_1&:=\frac{2 (\|\mathbf{x}_{0} - \mathbf{1} \bar{x}_{0} \|^2+\|\mathbf{y}_{0} - \mathbf{1} \bar{y}_{0} \|^2+\|\mathbf{v}_{0} - \mathbf{1} \bar{v}_{0} \|^2)}{1 - \rho_W}  
+ \frac{8 \gamma_x^2 \rho_W (1 + \zeta_q^2)( 5 \log(t + 1) + c_2)}{(1 - \rho_W)^2 }, \nonumber \\
&\quad + \frac{8 \gamma_y^2 \rho_W (1 + \zeta_u^2)(a_2 \log(t + 1) + b_2)}{(1 - \rho_W)^2}+\frac{8 \gamma_v^2 \rho_W (1 + \zeta_z^2)(a_3 \log(t + 1) + b_3)}{(1 - \rho_W)^2}\nonumber\\
d_2&:=\frac{2 (\|\mathbf{x}_{0} - \mathbf{1} \bar{x}_{0} \|^2+\|\mathbf{y}_{0} - \mathbf{1} \bar{y}_{0} \|^2)}{1 - \rho_W} \notag\\
&\quad+ \frac{8 \gamma_x^2 \rho_W (1 + \zeta_q^2)( 5 \log(t + 1) + c_2)}{(1 - \rho_W)^2} + \frac{8 \gamma_y^2 \rho_W (1 + \zeta_u^2)(a_2 \log(t + 1) + b_2)}{(1 - \rho_W)^2}.
\label{eq125}
\end{align}
Thus, the proof is completed.
\end{proof}

\subsection{The Upper Bound of Stepsize Inconsistencies}
\begin{lemma}\label{Lemma12}Suppose \cref{Assumption3}, \cref{Assumption1}, and \cref{Assumption2} hold. For the proposed \cref{alg1}, we have:
\begin{align}
&\sum_{k=0}^{t-1} \left\| \frac{\big(\tilde{\mathbf{q}}_{k+1}^{-1}\big)^\top}{n\bar{q}_{k+1}^{-1}} \bar{\nabla} F(\mathbf{x}_k, \mathbf{y}_k, \mathbf{v}_k) \right\|^2\notag\\
&\leq \frac{(1+\zeta_q^2)(5 \log(t) + c_2)(2(1 + \rho_W) \rho_W)}{n\bar{z}_0^2(1 - \rho_W)^2}\left(\frac{C_{l_{xy}}C_{f_y}}{\mu}+C_{f_x}\right)^2\left[C_{l_y}^2+\left(\frac{C_{l_{yy}}C_{f_y}}{\mu}+C_{f_y}\right)^2\right],\notag\\
&\sum_{k=0}^{t-1} \left\| \frac{\big(\tilde{\mathbf{u}}_{k+1}^{-1}\big)^\top}{n\bar{u}_{k+1}^{-1}} {\nabla}_y L(\mathbf{x}_k, \mathbf{y}_k) \right\|^2\leq \frac{(1+\zeta_u^2)(a_2 \log(t) + b_2)(2(1 + \rho_W) \rho_W)C_{l_y}^2}{n\bar{u}_0(1 - \rho_W)^2},\notag\\
&\sum_{k=0}^{t-1} \left\| \frac{\big(\tilde{\mathbf{z}}_{k+1}^{-1}\big)^\top}{n\bar{z}_{k+1}^{-1}} \nabla_v R(\mathbf{x}_k, \mathbf{y}_k, \mathbf{v}_k) \right\|^2\notag\\
&\leq \frac{(1+\zeta_z^2)(a_3 \log(t) + b_3)(2(1 + \rho_W) \rho_W)}{n\bar{z}_0(1 - \rho_W)^2}\left[C_{l_y}^2+\left(\frac{C_{l_{yy}}C_{f_y}}{\mu}+C_{f_y}\right)^2\right].\label{eq112}
\end{align}
\end{lemma}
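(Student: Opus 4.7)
The plan is to handle the three sums in parallel, since they share the same structure, and I will write out the $\tilde{\mathbf{q}}^{-1}$ case explicitly; the other two follow by swapping $\bar{\nabla} F$ for $\nabla_y L$ or $\nabla_v R$ and using the bounds on $\|g^y_i\|\leq C_{l_y}$ and $\|g^v_i\|\leq C_{l_{yy}}C_{f_y}/\mu+C_{f_y}$ (the latter from \cref{Lemma3}(3) combined with the projection onto $\mathcal{V}$). The first step is the vector Cauchy--Schwarz inequality
\[
\left\|\frac{(\tilde{\mathbf{q}}_{k+1}^{-1})^{\top}\bar{\nabla} F(\mathbf{x}_k,\mathbf{y}_k,\mathbf{v}_k)}{n\bar{q}_{k+1}^{-1}}\right\|^2
\le \frac{\|\tilde{\mathbf{q}}_{k+1}^{-1}\|^2\,\|\bar{\nabla} F(\mathbf{x}_k,\mathbf{y}_k,\mathbf{v}_k)\|^2}{n^{2}(\bar{q}_{k+1}^{-1})^{2}},
\]
and then to bound $\|\bar{\nabla} f_i\|\le C_{l_{xy}}C_{f_y}/\mu+C_{f_x}$ via \cref{Lemma2}(4) and the projection of $v_{i,t}$ onto $\mathcal{V}$, which produces the first constant factor on the right-hand side of the claim.

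Next, I would apply a tracking consensus argument, analogous to the one used in \cref{Lemmanewx}, to the recursion $\mathbf{k}^{b}_{t+1}=W(\mathbf{k}^{b}_t+\mathbf{h}^{b}_t)$ for $b\in\{x,y,v\}$. Using $\|W-\mathbf{J}\|_2^{2}=\rho_W$ and Young's inequality with $\lambda=(1-\rho_W)/(2\rho_W)$, I would obtain
\[
\sum_{t=0}^{T-1}\|\mathbf{k}^{b}_{t+1}-\mathbf{1}\bar{\mathbf{k}}^{b}_{t+1}\|^2
\le \frac{2(1+\rho_W)\rho_W}{(1-\rho_W)^{2}}\sum_{t=0}^{T-1}\|\mathbf{h}^{b}_t\|^{2},
\]
where the initial consensus gap on the accumulators vanishes because every agent starts from the same $m^b_{i,0}$. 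To convert this control on $[m^b_i]^2-[\bar{m}^b]^2$ into control on the inverse stepsize discrepancy $\tilde{q}^{-1}_{i,t+1}=q_{i,t+1}^{-1}-\bar{q}_{t+1}^{-1}$, I would exploit the monotonicity of the accumulators and the identical initialization, which give $q_{i,t+1}=m^x_{i,t+1}\max\{m^y_{i,t+1},m^v_{i,t+1}\}\ge\bar{m}^x_{0}\bar{z}_{0}$. This lower bound yields the $\bar{z}_{0}^{2}$ factor in the denominator. Together with the definition of $\zeta_q^{2}$, which gives $\sum_i(q^{-1}_i-\bar q^{-1})^{2}\le n(1+\zeta_q^{2})(\bar q^{-1})^{2}$, this produces the prefactor $(1+\zeta_q^2)$.

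The final assembly plugs these bounds into the Cauchy--Schwarz estimate, sums over $k$, and invokes \cref{Lemma11} to control $\sum_{k=0}^{t-1}\|\bar{\nabla} F\|^{2}/[\bar{m}^{x}_{k+1}]^{2}\le 5\log(t)+c_2$; the analogous $\tilde{\mathbf{u}}^{-1}$ and $\tilde{\mathbf{z}}^{-1}$ bounds use the $(a_2\log(t)+b_2)$ and $(a_3\log(t)+b_3)$ bounds from \cref{Lemma11}. The bracket $[C_{l_y}^{2}+(C_{l_{yy}}C_{f_y}/\mu+C_{f_y})^{2}]$ arises as an upper bound on $\max_i(\|g^y_i\|^2+\|g^v_i\|^2)$, which enters through the tracking driving term $\|\mathbf{h}^y_t\|^2+\|\mathbf{h}^v_t\|^2$ feeding into $z_{i,t}=\max\{m^y_{i,t},m^v_{i,t}\}$.

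The main obstacle is the third step: bridging the tracking analysis, which naturally controls the squared accumulators $[m^b_{i}]^2-[\bar{m}^b]^2$, with the target inverse-stepsize discrepancy $q_{i}^{-1}-\bar{q}^{-1}$, where $q_i=m^{x}_i\cdot\max\{m^{y}_i,m^{v}_i\}$ is a nonsmooth product. This algebraic translation must simultaneously (i) preserve the $(1+\zeta_q^{2})$ factor, (ii) avoid any polynomial-in-$t$ growth so that the final bound remains $\mathcal{O}(\log t / T)$ when averaged, and (iii) extract the clean $1/\bar{z}_{0}^{2}$ scaling by judicious use of the common positive initialization. A secondary subtlety is that $z_i=\max\{m^y_i,m^v_i\}$ is nondifferentiable, so the consensus error for $z$ must be dominated by the consensus errors for both $m^y$ and $m^v$ simultaneously—this is where the summed bracket $[C_{l_y}^{2}+(\cdots)^{2}]$ becomes essential.
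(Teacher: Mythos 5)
Your overall architecture matches the paper's: Cauchy--Schwarz on the discrepancy-weighted average, the definition of $\zeta_q^2$, the uniform gradient bounds from \cref{Lemma2} and \cref{Lemma3}, the common positive initialization producing the $\bar z_0$ factor, and \cref{Lemma11} for the logarithmic sums. The genuine gap is precisely the step you yourself flag as ``the main obstacle'' and then leave unresolved: converting tracking-consensus control of the squared accumulators $[m^b_{i,t}]^2$ into control of the inverse-stepsize discrepancy $q_{i,t}^{-1}-\bar q_t^{-1}$ with $q_{i,t}=m^x_{i,t}\max\{m^y_{i,t},m^v_{i,t}\}$. Listing the requirements this translation must satisfy is not a proof of it, and the route you sketch is the harder one: passing from $[m^b_i]^2-[\bar m^b]^2$ to $m^b_i-\bar m^b$ and then to the product discrepancy via $|m^x_iz_i-m^x_jz_j|\le|m^x_i-m^x_j|\,z_i+m^x_j\,|z_i-z_j|$ picks up factors $z_i,m^x_j=\mathcal{O}(\log t)$ (by \cref{proposition2}), so the $k$-uniform, $t$-independent constant appearing on the right-hand side of the lemma would not come out; you would incur extra polylogarithmic growth.

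The paper avoids the bridge entirely. First, in its version of your Cauchy--Schwarz step (Eq.~\eqref{eq113}) it rewrites $(q_{i,t+1}^{-1}-\bar q_{t+1}^{-1})/\bar q_{t+1}^{-1}=(\bar q_{t+1}-q_{i,t+1})/q_{i,t+1}$, so only the \emph{direct} discrepancy $\|\mathbf{q}_{k+1}-\mathbf{1}\bar q_{k+1}\|^2$ remains to be controlled, with $1/q_{i,t+1}^2\le(1+\zeta_q^2)/\bar q_{t+1}^2\le(1+\zeta_q^2)/(\bar z_0^2[\bar m^x_{t+1}]^2)$. Second, it bounds $\|\mathbf{q}_{k+1}-\mathbf{1}\bar q_{k+1}\|^2$ uniformly in $k$ by running the geometric consensus recursion directly on the componentwise products $\mathbf{m}^x\mathbf{m}^y$ (resp.\ $\mathbf{m}^x\mathbf{m}^v$, splitting on which argument attains the max), with driving term $\|\mathbf{g}^x_k\mathbf{g}^y_k\|^2\le nC_{l_y}^2(C_{l_{xy}}C_{f_y}/\mu+C_{f_x})^2$; summing the two cases is what produces the bracket $\left[C_{l_y}^2+\left(C_{l_{yy}}C_{f_y}/\mu+C_{f_y}\right)^2\right]$ (Eqs.~\eqref{eq115}--\eqref{eq120}). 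The resulting $k$-independent bound is pulled out of the sum and multiplied by the $5\log(t)+c_2$ (resp.\ $a_2\log(t)+b_2$, $a_3\log(t)+b_3$) bounds from \cref{Lemma11}. Note also that you need a supremum-in-$k$ consensus bound, not the summed bound $\sum_t\|\mathbf{k}^b_{t+1}-\mathbf{1}\bar{k}^b_{t+1}\|^2\lesssim\sum_t\|\mathbf{h}^b_t\|^2$ you propose: the target is $\sum_k(\text{discrepancy at }k)\times(\text{weighted gradient at }k)$, and only a uniform bound on the first factor lets \cref{Lemma11} be applied to the second; moreover $\sum_t\|\mathbf{h}^x_t\|^2$ grows linearly in $t$, so the summed form is useless here.
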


\begin{proof}
By the definition of $q_{i,k}$ in \cref{eqquz}, we have:
\begin{align} 
&\left\|\frac{\big(\tilde{\mathbf{q}}_{t+1}^{-1}\big)^\top}{n\bar{q}_{t+1}^{-1}} \bar{\nabla} F(\mathbf{x}_t, \mathbf{y}_t, \mathbf{v}_t)
\right\|^2 \notag\\
&\leq 
\frac{1}{n^2} \sum_{i=1}^n 
\left(
\bar{q}_{t+1} - q_{i,t+1} 
\right)^2 
\frac{\| \bar{\nabla} F(\mathbf{x}_{t}, \mathbf{y}_{t}, \mathbf{v}_{t}) \|^2}{q_{i,t+1}^{2}}\notag \\
&\leq  \sum_{i=1}^n 
\left\| 
\bar{q}_{t+1} - q_{i,t+1}\right\|^2 \frac{1+\zeta_q^2}{n^2}\frac{\| \bar{\nabla} F(\mathbf{x}_{t}, \mathbf{y}_{t}, \mathbf{v}_{t}) \|^2}{\bar{q}_{t+1}^2}\notag\\
&\leq \sum_{i=1}^n 
\left\| 
\bar{q}_{t+1} - q_{i,t+1}\right\|^2 \frac{1+\zeta_q^2}{n^2\bar{z}_0^2}\frac{\| \bar{\nabla} F(\mathbf{x}_{t}, \mathbf{y}_{t}, \mathbf{v}_{t}) \|^2}{[\bar{m}^x_{t+1}]^2}.\label{eq113}
\end{align}

According to \cref{eq105}, we have:
\begin{align}
&\sum_{k=0}^{t-1} 
\left\|\frac{\big(\tilde{\mathbf{q}}_{k+1}^{-1}\big)^\top}{n\bar{q}_{k+1}^{-1}} \bar{\nabla} F(\mathbf{x}_k, \mathbf{y}_k, \mathbf{v}_k)
\right\|^2 \notag\\
&\leq\frac{1+\zeta_q^2}{n^2\bar{z}_0^2}\left\| \mathbf{q}_{k+1}-\mathbf{1}\bar{q}_{k+1}\right\|^2 \sum_{k=0}^{t-1}\frac{\| \bar{\nabla} F(\mathbf{x}_{k}, \mathbf{y}_{k}, \mathbf{v}_{k}) \|^2}{[\bar{m}^x_{k+1}]^2}\notag \\
&\overset{(a)}{\leq}\frac{(1+\zeta_q^2)(5 \log(t + 1) + c_2)}{n^2\bar{z}_0^2}\left\| \mathbf{q}_{k+1}-\mathbf{1}\bar{q}_{k+1}\right\|^2,\label{eq114}
\end{align}
where (a) uses \cref{Lemma11}.
Next, for the term of inconsistency of the stepsize $\left\| \mathbf{q}_{k}-\mathbf{1}\bar{q}_{k}\right\|^2$, we consider two cases due to the max operator used ({i.e.,} $\mathbf{m}_k^y \geq \mathbf{m}_k^v$ and $\mathbf{m}_k^y < \mathbf{m}_k^v$). First of all, we derive the bound for $\left\| \mathbf{m}_{k+1}^x-\mathbf{1}\bar{m}_{k+1}^x\right\|^2$, $\left\| \mathbf{m}_{k+1}^y-\mathbf{1}\bar{m}_{k+1}^y\right\|^2$, and $\left\| \mathbf{m}_{k+1}^v-\mathbf{1}\bar{m}_{k+1}^v\right\|^2$. For $\left\| \mathbf{m}_{k+1}^x-\mathbf{1}\bar{m}_{k+1}^x\right\|^2$, we have:
\begin{align}
&\left\| \mathbf{m}_{k+1}^x-\mathbf{1}\bar{m}_{k+1}^x\right\|^2\notag\\
&\leq\| (W - \mathbf{J}) ( \mathbf{m}_{k}^x-\mathbf{1}\bar{m}_{k}^x)\|^2 + \|(W - \mathbf{J}) \mathbf{g}_k^x \|^2  \notag \\
&\leq \frac{1 + \rho_W}{2} 
\| \mathbf{m}_k^x - \mathbf{1}\bar{m}_k^x \|^2 
+ \frac{(1 + \rho_W) \rho_W}{1 - \rho_W} \| \mathbf{g}_k^x \|^2 \notag \\
&\overset{(a)}{\leq} 
\left( \frac{1 + \rho_W}{2} \right)^k 
\| \mathbf{m}_0^x - \mathbf{1}\bar{m}_0^x \|^2 
+ \frac{n (1 + \rho_W) \rho_W}{(1 - \rho_W)^2}\left(\frac{C_{l_{xy}}C_{f_y}}{\mu}+C_{f_x}\right)^2 
\sum_{t=0}^k 
\left( \frac{1 + \rho_W}{2} \right)^{k-t} \notag \\
&\leq \frac{2n (1 + \rho_W) \rho_W}{(1 - \rho_W)^2}\left(\frac{C_{l_{xy}}C_{f_y}}{\mu}+C_{f_x}\right)^2,\label{eq115}
\end{align}
where (a) uses \cref{Lemma2}. For $\left\| \mathbf{m}_{k+1}^y-\mathbf{1}\bar{m}_{k+1}^y\right\|^2$, we have:
\begin{align}
&\left\| \mathbf{m}_{k+1}^y-\mathbf{1}\bar{m}_{k+1}^y\right\|^2\notag\\
&\leq\| (W - \mathbf{J}) ( \mathbf{m}_{k}^y-\mathbf{1}\bar{m}_{k}^y)\|^2 +  \|(W - \mathbf{J}) \mathbf{g}_k^y \|^2  \notag \\
&\leq \frac{1 + \rho_W}{2} 
\| \mathbf{m}_k^y - \mathbf{1}\bar{m}_k^y \|^2 
+ \frac{(1 + \rho_W) \rho_W}{1 - \rho_W} \| \mathbf{g}_k^y \|^2 \notag \\
&\overset{(a)}{\leq}
\left( \frac{1 + \rho_W}{2} \right)^k 
\| \mathbf{m}_0^y - \mathbf{1}\bar{m}_0^y \|^2 
+ \frac{nC_{l_y}^2 (1 + \rho_W) \rho_W}{(1 - \rho_W)^2} 
\sum_{t=0}^k 
\left( \frac{1 + \rho_W}{2} \right)^{k-t} \notag \\
&\leq \frac{2nC_{l_y}^2 (1 + \rho_W) \rho_W}{(1 - \rho_W)^2},\label{eq116}
\end{align}
where $C_{l_y} = n^2(C_{m^y} + c_0 + d_0(5 \log 2 + c_2))$. The inequality in (a) follows from \cref{Lemma9} and \cref{Lemma11}. Specifically, since $\|\mathbf{h}_0^y\|^2 \leq \|\mathbf{m}_1^y\|^2 \leq \|n \bar{m}_1^y\|^2 \leq n^2(C_{m^y} + c_0 + d_0(5 \log 2 + c_2))$, it follows that the magnitude of the gradient $\|\mathbf{h}_0^y\|$ is upper bounded by $C_{l_y}$.
For $\left\| \mathbf{m}_{k+1}^v-\mathbf{1}\bar{m}_{k+1}^v\right\|^2$, we have:
\begin{align}
&\left\| \mathbf{m}_{k+1}^v-\mathbf{1}\bar{m}_{k+1}^v\right\|^2\notag\\
&\leq\| (W - \mathbf{J}) ( \mathbf{m}_{k}^v-\mathbf{1}\bar{m}_{k}^v)\|^2  +  \|(W - \mathbf{J}) \mathbf{g}_k^v \|^2  \notag \\
&\leq \frac{1 + \rho_W}{2} 
\| \mathbf{m}_k^v - \mathbf{1}\bar{m}_k^v \|^2 
+ \frac{(1 + \rho_W) \rho_W}{1 - \rho_W} \| \mathbf{g}_k^v \|^2 \notag \\
&\overset{(a)}{\leq} 
\left( \frac{1 + \rho_W}{2} \right)^k 
\| \mathbf{m}_0^v - \mathbf{1}\bar{m}_0^v \|^2 
+ \frac{n (1 + \rho_W) \rho_W}{(1 - \rho_W)^2} \left(\frac{C_{l_{yy}}C_{f_y}}{\mu}+C_{f_y}\right)^2
\sum_{t=0}^k 
\left( \frac{1 + \rho_W}{2} \right)^{k-t} \notag \\
&\leq \frac{2n (1 + \rho_W) \rho_W}{(1 - \rho_W)^2}\left(\frac{C_{l_{yy}}C_{f_y}}{\mu}+C_{f_y}\right)^2,\label{eq117}
\end{align}
where (a) uses \cref{Lemma3}.

At iteration $k$, for the case $\mathbf{m}_k^y \geq \mathbf{m}_k^v$ with $\| \mathbf{m}_0^x\mathbf{m}_0^y - \mathbf{1}\bar{m}_0^x\bar{m}_0^y \|^2 = 0$, we have:
\begin{align}
&\left\| \mathbf{q}_{k+1}-\mathbf{1}\bar{q}_{k+1}\right\|^2=\left\| \mathbf{m}_{k+1}^x\mathbf{m}_{k+1}^y-\mathbf{1}\bar{m}_{k+1}^x\bar{m}_{k+1}^y\right\|^2\notag\\
&\leq\| (W - \mathbf{J}) ( \mathbf{m}_{k}^x\mathbf{m}_{k}^y-\mathbf{1}\bar{m}_{k}^x\bar{m}_{k}^y)\|^2 +  \|(W - \mathbf{J}) \mathbf{g}_k^x\mathbf{g}_k^y \|^2  \notag \\
&\leq \frac{1 + \rho_W}{2} 
\| \mathbf{m}_k^x\mathbf{m}_k^y - \mathbf{1}\bar{m}_k^x\bar{m}_k^y \|^2 
+ \frac{(1 + \rho_W) \rho_W}{1 - \rho_W} \| \mathbf{g}_k^x\mathbf{g}_k^y \|^2 \notag \\
&{\leq} 
\left( \frac{1 + \rho_W}{2} \right)^k 
\| \mathbf{m}_0^x\mathbf{m}_0^y - \mathbf{1}\bar{m}_0^x\bar{m}_0^y \|^2 
\nonumber\\
&\quad+ \frac{nC_{l_y}^2 (1 + \rho_W) \rho_W}{(1 - \rho_W)^2} 
\left(\frac{C_{l_{xy}}C_{f_y}}{\mu}+C_{f_x}\right)^2\sum_{t=0}^k 
\left( \frac{1 + \rho_W}{2} \right)^{k-t} \notag \\
&\leq \frac{2nC_{l_y}^2 (1 + \rho_W) \rho_W}{(1 - \rho_W)^2}\left(\frac{C_{l_{xy}}C_{f_y}}{\mu}+C_{f_x}\right)^2.\label{eq118}
\end{align}

For the case $\mathbf{m}_k^y < \mathbf{m}_k^v$ with $\| \mathbf{m}_0^x\mathbf{m}_0^v - \mathbf{1}\bar{m}_0^x\bar{m}_0^v\|^2 = 0$, we have:
\begin{align}
&\left\| \mathbf{q}_{k+1}-\mathbf{1}\bar{q}_{k+1}\right\|^2\notag\\
&=\left\| \mathbf{m}_{k+1}^x\mathbf{m}_{k+1}^v-\mathbf{1}\bar{m}_{k+1}^x\bar{m}_{k+1}^v\right\|^2\leq \frac{2n (1 + \rho_W) \rho_W}{(1 - \rho_W)^2}\left(\frac{C_{l_{xy}}C_{f_y}}{\mu}+C_{f_x}\right)^2\left(\frac{C_{l_{yy}}C_{f_y}}{\mu}+C_{f_y}\right)^2\!\!.\label{eq119}
\end{align}

By summing \cref{eq118} and \cref{eq119}, we obtain the following inequality:
\begin{align}
\left\| \mathbf{q}_{k+1}-\mathbf{1}\bar{q}_{k+1}\right\|^2\leq \frac{2n (1 + \rho_W) \rho_W}{(1 - \rho_W)^2}\left(\frac{C_{l_{xy}}C_{f_y}}{\mu}+C_{f_x}\right)^2\left[C_{l_y}^2+\left(\frac{C_{l_{yy}}C_{f_y}}{\mu}+C_{f_y}\right)^2\right].\label{eq120}
\end{align}

Combining \cref{eq120} and \cref{eq114}, we can get the upper bound for $\sum_{k=0}^{t-1} \left\| {\big(\tilde{\mathbf{u}}_{k+1}^{-1}\big)^\top} {\nabla}_y L(\mathbf{x}_k, \mathbf{y}_k) /{n\bar{u}_{k+1}^{-1}}\right\|^2$ in \cref{eq112}. 

Similarly, we have:
\begin{align}
\left\| \mathbf{u}_{k+1}-\mathbf{1}\bar{u}_{k+1}\right\|^2\leq \frac{2nC_{l_y}^2 (1 + \rho_W) \rho_W}{(1 - \rho_W)^2},\label{eq11119}
\end{align}
and
\begin{align}
\left\| \mathbf{z}_{k+1}-\mathbf{1}\bar{z}_{k+1}\right\|^2\leq \frac{2n (1 + \rho_W) \rho_W}{(1 - \rho_W)^2}\left[C_{l_y}^2+\left(\frac{C_{l_{yy}}C_{f_y}}{\mu}+C_{f_y}\right)^2\right].\label{eq111191}
\end{align}
Then combine the results in \cref{Lemma11}, we can get the upper bound for $\sum_{k=0}^{t-1} \left\| {\big(\tilde{\mathbf{z}}_{k+1}^{-1}\big)^\top} \nabla_v R(\mathbf{x}_k, \mathbf{y}_k, \mathbf{v}_k) /{n\bar{z}_{k+1}^{-1}}\right\|^2$ in \cref{eq112}.
Thus, the \cref{Lemma12} has been proved.
\end{proof}

\subsection{The Upper Bound of $\bar{m}^x_t$}
\begin{lemma}\label{Lemma13} Under \cref{Assumption1} and \cref{Assumption2}, suppose the number of total iteration rounds in \cref{alg1} is $T$. If there exists $k_1 \leq T$ as described in \cref{Lemma6}, then we have:
\begin{equation}
\bar{m}^x_t \leq
\begin{cases}
C_{m^x}, & t \leq k_1, \\
C_{m^x} + {\left(4\left(\frac{\Phi(\bar{x}_{0})-\Phi^*}{\gamma_x}\right)+a_7\log(t+1)+b_7\right)\bar{z}_{t+1}}, & t \geq k_1,
\end{cases}\label{eq126}
\end{equation}
where $a_7$ and $b_7$ are defined as:
\begin{align}
  a_7 &:=  \frac{4\bar{L}^2a_6 }{\mu^2\bar{m}^x_{0}}+\frac{2\bar{L}^2a_5}{\mu^2\bar{m}^x_{0}} \left( 1 + \frac{2}{\mu^2} \left( \frac{L_{l,2} C_{f_y}}{\mu} + L_{f,1} \right)^2 \right) \nonumber\\
  &\quad+ \frac{80(1+\zeta_q^2)((1 + \rho_W) \rho_W)}{n\bar{z}_0^2\bar{q}_0(1 - \rho_W)^2}\left(\frac{C_{l_{xy}}C_{f_y}}{\mu}+C_{f_x}\right)^2\left[C_{l_y}^2+\left(\frac{C_{l_{yy}}C_{f_y}}{\mu}+C_{f_y}\right)^2\right] \nonumber\\
&\quad+\frac{4\bar{q}_{0}^{-1}(4\gamma_x \bar{L}_{f}^2L_\Phi \bar{q}_0^{-1} \left(1 +\zeta_q^2\right)+\bar{L}_f^2)}{n}\left(\frac{2 \Delta_0}{1 - \rho_W} 
+ \frac{40 \gamma_x^2 \rho_W (1 + \zeta_q^2)}{(1 - \rho_W)^2}\right. \nonumber \\
&\quad \left.+ \frac{8a_2 \gamma_y^2 \rho_W (1 + \zeta_u^2)}{(1 - \rho_W)^2}+\frac{8a_3 \gamma_v^2 \rho_W (1 + \zeta_z^2)}{(1 - \rho_W)^2}\right),\nonumber
\end{align}
\begin{align}
b_7 &:=  \frac{4\bar{L}^2b_6}{\mu^2\bar{m}^x_{0}}+\frac{2\bar{L}^2b_5}{\mu^2\bar{m}^x_{0}} \left( 1 + \frac{2}{\mu^2} \left( \frac{L_{l,2} C_{f_y}}{\mu} + L_{f,1} \right)^2 \right)\nonumber\\
&\quad+ \frac{16c_2(1+\zeta_q^2)((1 + \rho_W) \rho_W)}{n\bar{z}_0^2\bar{q}_0(1 - \rho_W)^2}\left(\frac{C_{l_{xy}}C_{f_y}}{\mu}+C_{f_x}\right)^2\left[C_{l_y}^2+\left(\frac{C_{l_{yy}}C_{f_y}}{\mu}+C_{f_y}\right)^2\right] \nonumber\\
&\quad+\frac{4\bar{q}_{0}^{-1}(4\gamma_x \bar{L}_{f}^2L_\Phi \bar{q}_0^{-1} \left(1 +\zeta_q^2\right)+\bar{L}_f^2)}{n}\left(\frac{2 \Delta_0}{1 - \rho_W} 
+ \frac{8c_2 \gamma_x^2 \rho_W (1 + \zeta_q^2)}{(1 - \rho_W)^2}\right. \nonumber \\
&\quad \left.+ \frac{8b_2 \gamma_y^2 \rho_W (1 + \zeta_u^2)}{(1 - \rho_W)^2}+\frac{8 b_3\gamma_v^2 \rho_W (1 + \zeta_z^2)}{(1 - \rho_W)^2}\right)+ \frac{8n\gamma_x C_{m^x}^2 L_\Phi \left(1 +\zeta_q^2\right)}{\bar{q}_0^2},\label{eq128}
\end{align}
and the upper bound of $\bar{z}_t := \max\{\bar{m}^y_t, \bar{m}^v_t\}$ refers to \cref{Lemma10}. When such $k_1$ does not exist, we have $\bar{m}^x_t \leq C_{m^x}$ for any $t \leq T$.
\end{lemma}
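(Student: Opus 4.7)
The plan is to mirror the two-stage argument used for $\bar{m}^y_t$ in \cref{Lemma9}, but now drive the bound through the post-$k_1$ version of the descent inequality (\cref{Lemma4}, eqnew2) so as to connect $\bar{m}^x_{t+1}$ to $\bar{z}_{t+1}$ in exactly the way claimed. When $k_1$ does not exist, \cref{Lemma6} immediately gives $\bar{m}^x_t\le C_{m^x}$ for all $t\le T$; when $k_1$ exists, the same lemma handles $t\le k_1$. The substance is the range $t\ge k_1$.

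For $t\ge k_1$, I would start from the monotone-growth identity $[\bar{m}^x_{t+1}]^2-[\bar{m}^x_t]^2\le \|\bar{\nabla}F(\mathbf{x}_t,\mathbf{y}_t,\mathbf{v}_t)\|^2/n$ and telescope as in \cref{Lemma9} to obtain
\begin{equation*}
\bar{m}^x_{t+1}\le \bar{m}^x_{k_1}+\sum_{k=k_1}^{t}\frac{\|\bar{\nabla}F(\mathbf{x}_k,\mathbf{y}_k,\mathbf{v}_k)\|^2}{n\,\bar{m}^x_{k+1}}\le C_{m^x}+\bar{z}_{t+1}\sum_{k=k_1}^{t}\frac{\|\bar{\nabla}F(\mathbf{x}_k,\mathbf{y}_k,\mathbf{v}_k)\|^2}{n\,\bar{q}_{k+1}},
\end{equation*}
where I used $\bar{m}^x_{k+1}^{-1}=\bar{z}_{k+1}/\bar{q}_{k+1}$ and the monotonicity of $\{\bar{z}_{k+1}\}$ to pull $\bar{z}_{t+1}$ out of the sum. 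The rest of the proof is reduced to bounding the remaining sum by $4(\Phi(\bar{x}_0)-\Phi^*)/\gamma_x+a_7\log(t+1)+b_7$.

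To bound that sum, I would first apply \cref{Lemma3} in the form $\|\bar{\nabla}F\|^2\le 2n^2\|\bar{\nabla}f(\bar{x}_k,\bar{y}_k,\bar{v}_k)\|^2+2\bar{L}_f^2\Delta_k$. The $\Delta_k$-part is immediately controlled by $\bar{q}_{k+1}^{-1}\le\bar{q}_0^{-1}$ together with \cref{Lemma5}, producing only an $\mathcal{O}(\log(t+1))$ contribution. For the $\|\bar{\nabla}f\|^2$-part I would sum the post-$k_1$ descent inequality eqnew2 of \cref{Lemma4} from $k_1$ up to $t$, discard the non-positive $-\tfrac{\gamma_x\bar{q}_{k+1}^{-1}}{8}\|\nabla\Phi\|^2$ term, and rearrange, giving $\tfrac{\gamma_x}{4}\sum_{k=k_1}^{t}\|\bar{\nabla}f(\bar{x}_k,\bar{y}_k,\bar{v}_k)\|^2/\bar{q}_{k+1}\le \Phi(\bar{x}_0)-\Phi^*+\mathcal{R}_t$, where $\mathcal{R}_t$ collects the auxiliary error, lower-level error, consensus, and stepsize-inconsistency terms. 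These four residuals are controlled in exactly the order already available to us: the $\|\nabla_y l\|^2/\bar{q}_{k+1}$ and $\|\nabla_v r\|^2/\bar{q}_{k+1}$ sums via $\bar{q}_{k+1}^{-1}\le(\bar{m}^x_0)^{-1}\bar{z}_{k+1}^{-1}$ and \cref{lemmanew12} (using $\bar{z}_{k+1}\ge\bar{m}^y_{k+1}$ to convert $\bar{z}$-denominators into the forms proved there); the consensus sum via \cref{Lemma5}; and the stepsize-inconsistency sum via $\bar{q}_{k+1}^{-1}\le(\bar{m}^x_0\bar{z}_0)^{-1}$ and \cref{Lemma12}. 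Each produces at most a $\log(t+1)$ term, and when the resulting constants are aggregated they match precisely the definitions of $a_7,b_7$ in eq128 (the extra $8n\gamma_x C_{m^x}^2 L_\Phi(1+\zeta_q^2)/\bar{q}_0^2$ in $b_7$ compensates for starting the descent at $k_1$ rather than $0$). Substituting back into the telescoped bound yields the claim.

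The main obstacle is making the coefficient that multiplies $\bar{z}_{t+1}$ strictly logarithmic in $t$. This is only possible because the threshold $C_{m^x}$ in eq52 was chosen so that, for $k\ge k_1$, the bracket $\tfrac{\gamma_x}{2}-2n\gamma_x^2 L_\Phi\bar{q}_{k+1}^{-1}(1+\zeta_q^2)$ appearing in eqnew1 is at least $\gamma_x/4$, which is precisely what licenses the clean descent in eqnew2 and hence the telescoping. A secondary technical point is that the three different denominators $\bar{q}_{k+1}$, $\bar{m}^y_{k+1}$, $\bar{z}_{k+1}$ must be reconciled: here the hierarchical stepsize design pays off directly, since $\bar{q}_{k+1}=\bar{m}^x_{k+1}\bar{z}_{k+1}$ and $\bar{z}_{k+1}\ge \bar{m}^y_{k+1}$ allow each residual $\bar{q}_{k+1}^{-1}$ to be downgraded to a form in which \cref{Lemma11} or \cref{lemmanew12} applies with a $\mathcal{O}(\log(t+1))$ tail.
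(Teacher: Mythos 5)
Your proposal is correct and follows essentially the same route as the paper's own proof: the two-stage split from \cref{Lemma6}, the post-$k_1$ descent inequality \cref{eqnew2} summed and rearranged to control the weighted gradient sum, the pre-$k_1$ descent inequality telescoped to bound $\Phi(\bar{x}_{k_1})$ (which is exactly where the $8n\gamma_x C_{m^x}^2 L_\Phi(1+\zeta_q^2)/\bar{q}_0^2$ term in $b_7$ comes from), the residuals handled by \cref{lemmanew12}, \cref{Lemma5}, and \cref{Lemma12}, and finally the factorization $\bar{q}_{k+1}=\bar{m}^x_{k+1}\bar{z}_{k+1}$ together with monotonicity of $\bar{z}$ to extract the $\bar{z}_{t+1}$ factor. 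The only differences are cosmetic ordering (you telescope the accumulator before bounding the gradient sum, the paper does the reverse in \cref{eq135}--\cref{eq137}) and your explicit $\|\bar{\nabla}F\|^2\le 2n^2\|\bar{\nabla}f\|^2+2\bar{L}_f^2\Delta_k$ conversion, which is if anything a slightly more careful bridge between the averaged descent quantity and the stacked-gradient accumulator than the paper's final step.
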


\begin{proof}
According to \cref{Lemma6}, the proof can be split into the following three cases:

\textbf{Case 1:} If $\bar{m}^x_T \leq C_{m^x}$, for any $t < T$, we have the upper bound of $\bar{m}^x_{t+1}$ as $\bar{m}^x_{t+1} \leq C_{m^x}$.

\textbf{Case 2:} If $\bar{m}^x_T > C_{m^x}$, there exists $k_1 \leq T$ described in \cref{Lemma6}. Then we have the upper bound of $\bar{m}^x_{t+1}$ as $\bar{m}^x_{t+1} \leq C_{m^x}$ for any $t < k_1$.

\textbf{Case 3:} In the remaining proof, we only consider and explore the case $k_1 \leq t \leq T$ when $\bar{m}^x_T > C_{m^x}$.

From \cref{Lemma4}, for $k \geq k_1$, we have:
\begin{align}
&\Phi(\bar{x}_{t+1})\nonumber\\
&\leq\Phi(\bar{x}_t)-\frac{\gamma_x \bar{q}_{t+1}^{-1}}{8}\|\nabla \Phi(\bar{x}_t)\|^2+\frac{\gamma_x\bar{q}_{t+1}^{-1}(2\gamma_x \bar{L}_{f}^2L_\Phi \bar{q}_{t+1}^{-1} \left(1 +\zeta_q^2\right)+\bar{L}_{f}^2)}{n} \Delta_t\nonumber\\
&\quad -\left(\frac{\gamma_x}{2}-{2n\gamma_x^2 L_\Phi \bar{q}_{t+1}^{-1} \left(1 +\zeta_q^2\right)}\right)\frac{\|\nabla_x f(\bar{x}_t, \bar{y}_t, \bar{v}_t)\|^2}{\bar{q}_{t+1}}
+ \frac{\bar{L}^2\gamma_x }{\mu^2} \frac{\|\nabla_v r(\bar{x}_t, \bar{y}_t, \bar{v}_t)\|^2}{\bar{q}_{t+1}} \nonumber \\
&\quad +\!\left(\! \frac{\gamma_x\bar{L}^2}{2\mu^2} \!+\! \frac{\gamma_x\bar{L}^2}{\mu^4}\!\! \left(\! \frac{L_{l,2} C_{f_y}}{\mu} \!+\! L_{f,1} \!\right)^2 \right) \frac{\|\nabla_y l(\bar{x}_t, \bar{y}_t)\|^2}{\bar{q}_{t+1}}\!+\! 2\gamma_x \bar{q}_{t+1}^{-1}  \left\| \frac{\big(\tilde{\mathbf{q}}_{t+1}^{-1}\big)^\top}{n\bar{q}_{t+1}^{-1}} \bar{\nabla} F(\mathbf{x}_t, \mathbf{y}_t, \mathbf{v}_t) \right\|^2\!\!.\label{eq129}
\end{align}
In addition, if $k_1$ in \cref{Lemma6} exists, then for $t \geq k_1$, we have $\bar{m}^x_{t+1} > C_{m^x} \geq \frac{8n\gamma_x L_\Phi \left(1 +\zeta_q^2\right)}{\bar{z}_0}$ and 
\begin{align}
&\Phi(\bar{x}_{t+1})\nonumber\\
&\leq\Phi(\bar{x}_t)-\frac{\gamma_x \bar{q}_{t+1}^{-1}}{8}\|\nabla \Phi(\bar{x}_t)\|^2+\frac{\gamma_x\bar{q}_{t+1}^{-1}(2\gamma_x \bar{L}_{f}^2L_\Phi \bar{q}_{t+1}^{-1} \left(1 +\zeta_q^2\right)+\bar{L}_{f}^2)}{n} \Delta_t\nonumber\\
&\quad -\frac{\gamma_x }{4}\frac{\|\nabla_x f(\bar{x}_t, \bar{y}_t, \bar{v}_t)\|^2}{\bar{q}_{t+1}}
+ \frac{\bar{L}^2\gamma_x }{\mu^2} \frac{\|\nabla_v r(\bar{x}_t, \bar{y}_t, \bar{v}_t)\|^2}{\bar{q}_{t+1}} \nonumber\\
&\quad+\!\left(\! \frac{\gamma_x\bar{L}^2}{2\mu^2} \!+\! \frac{\gamma_x\bar{L}^2}{\mu^4} \!\!\left(\! \frac{L_{l,2} C_{f_y}}{\mu} + L_{f,1} \!\right)^2 \right)\!\! \frac{\|\nabla_y l(\bar{x}_t, \bar{y}_t)\|^2}{\bar{q}_{t+1}}\!+\! 2\gamma_x \bar{q}_{t+1}^{-1}  \left\| \frac{\big(\tilde{\mathbf{q}}_{t+1}^{-1}\big)^\top}{n\bar{q}_{t+1}^{-1}} \bar{\nabla}F(\mathbf{x}_t, \mathbf{y}_t, \mathbf{v}_t) \right\|^2\!\!,\label{eq131}
\end{align}
which indicates that
\begin{align}
&\bar{q}_{t+1}^{-1}\|\nabla_x f(\bar{x}_t, \bar{y}_t, \bar{v}_t)\|^2\nonumber\\
&\leq4\left(\frac{\Phi(\bar{x}_t)-\Phi(\bar{x}_{t+1})}{\gamma_x }\right)+\frac{4\bar{q}_{t+1}^{-1}(2\gamma_x \bar{L}_{f}^2L_\Phi \bar{q}_{t+1}^{-1} \left(1 +\zeta_q^2\right)+\bar{L}_f^2)}{n} \Delta_t+ \frac{4\bar{L}^2 }{\mu^2} \frac{\|\nabla_v r(\bar{x}_t, \bar{y}_t, \bar{v}_t)\|^2}{\bar{q}_{t+1}}\nonumber\\
&\quad + 8 \bar{q}_{t+1}^{-1}  \left\| \frac{\big(\tilde{\mathbf{q}}_{t+1}^{-1}\big)^\top}{n\bar{q}_{t+1}^{-1}} \bar{\nabla} F(\mathbf{x}_t, \mathbf{y}_t, \mathbf{v}_t) \right\|^2 +\left(\! \frac{2\bar{L}^2}{\mu^2} + \frac{4\bar{L}^2}{\mu^4} \!\!\left(\! \frac{L_{l,2} C_{f_y}}{\mu} + L_{f,1} \!\right)^2 \right)\!\! \frac{\|\nabla_y l(\bar{x}_t, \bar{y}_t)\|^2}{\bar{q}_{t+1}}.\label{eq132}
\end{align}
By taking summation, we have:
\begin{align}
&\sum_{k=k_1}^t  \bar{q}_{k+1}^{-1}\|\nabla_x f(\bar{x}_k, \bar{y}_k, \bar{v}_k)\|^2\nonumber\\
&\leq4\left(\frac{\Phi(\bar{x}_{k_1})-\Phi^*}{\gamma_x}\right)+\frac{4\bar{q}_{0}^{-1}(2\gamma_x \bar{L}_{f}^2L_\Phi \bar{q}_0^{-1} \left(1 +\zeta_q^2\right)+\bar{L}_f^2)}{n}\sum_{k=k_1}^t \Delta_k\nonumber\\
&\quad + \frac{4\bar{L}^2 }{\mu^2\bar{m}^x_{0}}\sum_{k=k_1}^t \frac{\|\nabla_v r(\bar{x}_k, \bar{y}_k, \bar{v}_k)\|^2}{\max\{\bar{m}^y_{k+1},\bar{m}^v_{k+1}\}}+ \frac{8}{\bar{q}_{0}}\sum_{k=k_1}^t \left\| \frac{\big(\tilde{\mathbf{q}}_{k+1}^{-1}\big)^\top}{n\bar{q}_{k+1}^{-1}} \bar{\nabla} F(\mathbf{x}_k, \mathbf{y}_k, \mathbf{v}_k) \right\|^2 \nonumber\\
&\quad+ \left( \frac{2\bar{L}^2}{\mu^2\bar{m}^x_{0}} + \frac{4\bar{L}^2}{\mu^4\bar{m}^x_{0}} \left( \frac{L_{l,2} C_{f_y}}{\mu} + L_{f,1} \right)^2 \right)\sum_{k=k_1}^t \frac{\|\nabla_y l(\bar{x}_k, \bar{y}_k)\|^2}{\max\{\bar{m}^y_{k+1},\bar{m}^v_{k+1}\}}.\label{eq133}
\end{align}
For $\Phi(\bar{x}_{k_1})$, by telescoping \cref{eq129}, we get:
\begin{align}
&\Phi(\bar{x}_{k_1})\leq\Phi(\bar{x}_0)+\frac{\gamma_x\bar{q}_{0}^{-1}(2\gamma_x \bar{L}_{f}^2L_\Phi \bar{q}_{0}^{-1} \left(1 +\zeta_q^2\right)+\bar{L}_f^2)}{n} \sum_{k=0}^{k_1-1}\Delta_k\nonumber\\
&\quad + \frac{\bar{L}^2\gamma_x }{\mu^2} \sum_{k=0}^{k_1-1} \frac{\|\nabla_v r(\bar{x}_k, \bar{y}_k, \bar{v}_k)\|^2}{\bar{m}^x_{k+1}\max\{\bar{m}^y_{k+1},\bar{m}^v_{k+1}\}}+ \frac{2\gamma_x}{\bar{q}_{0}}\sum_{k=0}^{k_1-1} \left\| \frac{\big(\tilde{\mathbf{q}}_{k+1}^{-1}\big)^\top}{n\bar{q}_{k+1}^{-1}} \bar{\nabla} F(\mathbf{x}_k, \mathbf{y}_k, \mathbf{v}_k) \right\|^2 
\nonumber\\
&\quad+{2n\gamma_x^2 L_\Phi \left(1 +\zeta_q^2\right)}\sum_{k=0}^{k_1-1}\frac{\|\nabla_x f(\bar{x}_k, \bar{y}_k, \bar{v}_k)\|^2}{[\bar{m}^x_{k+1}]^2\max\{[\bar{m}^y_{k+1}]^2,[\bar{m}^v_{k+1}]^2\}}\nonumber \\
&\quad + \left( \frac{\gamma_x\bar{L}^2}{2\mu^2} + \frac{\gamma_x\bar{L}^2}{\mu^4} \left( \frac{L_{l,2} C_{f_y}}{\mu} + L_{f,1} \right)^2 \right) \sum_{k=0}^{k_1-1}\frac{\|\nabla_y l(\bar{x}_k, \bar{y}_k)\|^2}{\bar{m}^x_{k+1}\max\{\bar{m}^y_{k+1},\bar{m}^v_{k+1}\}}.\label{eq134}
\end{align}
By plugging \cref{eq134} into \cref{eq133}, we have:
\begin{align}
&\sum_{k=k_1}^t  \bar{q}_{t+1}^{-1}\|\nabla_x f(\bar{x}_k, \bar{y}_k, \bar{v}_k)\|^2\nonumber\\
&\leq4\left(\frac{\Phi(\bar{x}_{0})-\Phi^*}{\gamma_x}\right)+\frac{4\bar{q}_{0}^{-1}(2\gamma_x \bar{L}_{f}^2L_\Phi \bar{q}_0^{-1} \left(1 +\zeta_q^2\right)+\bar{L}_f^2)}{n}\sum_{k=0}^t \Delta_k\nonumber\\
&\quad+ \frac{4\bar{L}^2}{\mu^2\bar{m}^x_{0}}\sum_{k=0}^t \frac{\|\nabla_v r(\bar{x}_k, \bar{y}_k, \bar{v}_k)\|^2}{\max\{\bar{m}^y_{k+1},\bar{m}^v_{k+1}\}} + \frac{8}{\bar{q}_{0}}\sum_{k=0}^t \left\| \frac{\big(\tilde{\mathbf{q}}_{k+1}^{-1}\big)^\top}{n\bar{q}_{k+1}^{-1}} \bar{\nabla} F(\mathbf{x}_k, \mathbf{y}_k, \mathbf{v}_k) \right\|^2 \nonumber\\
&\quad+\left( \frac{2\bar{L}^2}{\mu^2\bar{m}^x_{0}} + \frac{4\bar{L}^2}{\mu^4\bar{m}^x_{0}} \left( \frac{L_{l,2} C_{f_y}}{\mu} + L_{f,1} \right)^2 \right)\sum_{k=0}^t \frac{\|\nabla_y l(\bar{x}_k, \bar{y}_k)\|^2}{\max\{\bar{m}^y_{k+1},\bar{m}^v_{k+1}\}}+ \frac{8n\gamma_x C_{m^x}^2 L_\Phi \left(1 +\zeta_q^2\right)}{\bar{q}_0^2}\nonumber\\
&\leq4\left(\frac{\Phi(\bar{x}_{0})-\Phi^*}{\gamma_x}\right)+\frac{4\bar{q}_{0}^{-1}(2\gamma_x \bar{L}_{f}^2L_\Phi \bar{q}_0^{-1} \left(1 +\zeta_q^2\right)+\bar{L}_f^2)}{n}\sum_{k=0}^t \Delta_k\nonumber\\
&\quad + \frac{4\bar{L}^2}{\mu^2\bar{m}^x_{0}}\sum_{k=0}^t \frac{\|\nabla_v r(\bar{x}_k, \bar{y}_k, \bar{v}_k)\|^2}{\max\{\bar{m}^y_{k+1},\bar{m}^v_{k+1}\}}+ \frac{8}{\bar{q}_{0}}\sum_{k=0}^t \left\| \frac{\big(\tilde{\mathbf{q}}_{k+1}^{-1}\big)^\top}{n\bar{q}_{k+1}^{-1}} \bar{\nabla} F(\mathbf{x}_k, \mathbf{y}_k, \mathbf{v}_k) \right\|^2 \nonumber\\
&\quad+ \left( \frac{2\bar{L}^2}{\mu^2\bar{m}^x_{0}} + \frac{4\bar{L}^2}{\mu^4\bar{m}^x_{0}} \left( \frac{L_{l,2} C_{f_y}}{\mu} + L_{f,1} \right)^2 \right)\sum_{k=0}^t \frac{\|\nabla_y l(\bar{x}_k, \bar{y}_k)\|^2}{\bar{m}^y_{k+1}}+\frac{8n\gamma_x C_{m^x}^2 L_\Phi \left(1 +\zeta_q^2\right)}{\bar{q}_0^2}\nonumber\\
&\overset{(a)}{\leq}4\left(\frac{\Phi(\bar{x}_{0})-\Phi^*}{\gamma_x}\right)+ \frac{4\bar{L}^2(a_6\log(t + 1)+b_6) }{\mu^2\bar{m}^x_{0}}\nonumber\\
&\quad+\frac{2\bar{L}^2(a_5\log(t + 1)+b_5)}{\mu^2\bar{m}^x_{0}} \left( 1 + \frac{2}{\mu^2} \left( \frac{L_{l,2} C_{f_y}}{\mu} + L_{f,1} \right)^2 \right)\nonumber\\
&\quad + \frac{8(1+\zeta_q^2)(5 \log(t) + c_2)(2(1 + \rho_W) \rho_W)}{n\bar{z}_0^2\bar{q}_0(1 - \rho_W)^2}\left(\frac{C_{l_{xy}}C_{f_y}}{\mu}\!+\!C_{f_x}\right)^2\!\left[\!C_{l_y}^2\!+\!\left(\frac{C_{l_{yy}}C_{f_y}}{\mu}+C_{f_y}\right)^2\!\right] \nonumber\\
&\quad+\frac{4\bar{q}_{0}^{-1}(2\gamma_x \bar{L}_{f}^2L_\Phi \bar{q}_0^{-1} \left(1 +\zeta_q^2\right)+\bar{L}_f^2)}{n}\left(\frac{2 \Delta_0}{1 - \rho_W} 
+ \frac{8 \gamma_x^2 \rho_W (1 + \zeta_q^2)( 5 \log(t+1) + c_2)}{(1 - \rho_W)^2}\right. \nonumber \\
&\quad \left.+ \frac{8 \gamma_y^2 \rho_W (1 + \zeta_u^2)(a_2 \log(t+1) + b_2)}{(1 - \rho_W)^2}+\frac{8 \gamma_v^2 \rho_W (1 + \zeta_z^2)(a_3 \log(t+1) + b_3)}{(1 - \rho_W)^2}\right)\nonumber\\
&\quad+ \frac{8n\gamma_x C_{m^x}^2 L_\Phi \left(1 +\zeta_q^2\right)}{\bar{q}_0^2}\nonumber\\
&=:4\left(\frac{\Phi(\bar{x}_{0})-\Phi^*}{\gamma_x}\right)+a_7\log(t+1)+b_7,\label{eq135}
\end{align}
where (a) uses \cref{lemmanew12} and \cref{Lemma12}. This immediately implies that
\begin{align}
\sum_{k=k_1}^t  \frac{\|\nabla_x f(\bar{x}_k, \bar{y}_k, \bar{v}_k)\|^2}{\bar{m}^x_{k+1}}
\leq \left(4\left(\frac{\Phi(\bar{x}_{0})-\Phi^*}{\gamma_x}\right)+a_7\log(t+1)+b_7\right)\bar{z}_{t+1}.\label{eq136}
\end{align}
Similarly, we can have the upper bound of $\bar{m}^x_{t+1}$ as:
\begin{align}
   \bar{m}^x_{t+1} &\leq \bar{m}^x_{k_1} + \sum_{k=k_1}^t \frac{\|\nabla_x f(\bar{x}_k, \bar{y}_k, \bar{v}_k)\|^2}{\bar{m}^x_{k+1}}\nonumber\\
&\leq C_{m^x} + {\left(4\left(\frac{\Phi(\bar{x}_{0})-\Phi^*}{\gamma_x}\right)+a_7\log(t+1)+b_7\right)\bar{z}_{t+1}}. \label{eq137}
\end{align}
Then the upper bound of $\bar{m}^x_{t+1}$ is proved.
\end{proof}

\subsection{Proof of \cref{theorem 1}}
Here we still assume the total iteration rounds of \cref{alg1} is $T$. According to \cref{Lemma4}, the proof can be split into the following two cases.

\textbf{Case 1:} If $\bar{m}^x_{T} \leq C_{m^x}$, then we have:
\begin{align}
&{ \bar{q}_{t+1}^{-1}}\|\nabla \Phi(\bar{x}_t)\|^2\nonumber\\
&\leq8\left(\frac{\Phi(\bar{x}_t)-\Phi(\bar{x}_{t+1})}{\gamma_x}\right)+\frac{8\bar{q}_{t+1}^{-1}(2\gamma_x \bar{L}_{f}^2L_\Phi \bar{q}_{t+1}^{-1} \left(1 +\zeta_q^2\right)+\bar{L}_{f}^2)}{n} \Delta_t+ \frac{8\bar{L}^2}{\mu^2} \frac{\|\nabla_v r(\bar{x}_t, \bar{y}_t, \bar{v}_t)\|^2}{\bar{q}_{t+1}} \nonumber\\
&\quad +{16n\gamma_x L_\Phi \bar{q}_{t+1}^{-2} \left(1 +\zeta_q^2\right)}{\|\nabla_x f(\bar{x}_t, \bar{y}_t, \bar{v}_t)\|^2}
+ 16 \bar{q}_{t+1}^{-1}  \left\| \frac{\big(\tilde{\mathbf{q}}_{t+1}^{-1}\big)^\top}{n\bar{q}_{t+1}^{-1}} \bar{\nabla} F(\mathbf{x}_t, \mathbf{y}_t, \mathbf{v}_t) \right\|^2 \nonumber \\
&\quad +\left( \frac{4\bar{L}^2}{\mu^2} + \frac{8\bar{L}^2}{\mu^4} \left( \frac{L_{l,2} C_{f_y}}{\mu} + L_{f,1} \right)^2 \right) \frac{\|\nabla_y l(\bar{x}_t, \bar{y}_t)\|^2}{\bar{q}_{t+1}}.\label{eq138}
\end{align}

By taking the average, we have:
\begin{align}
&\frac{1}{T}\sum_{t=0}^{T-1}{\bar{q}_{t+1}^{-1}}\|\nabla \Phi(\bar{x}_t)\|^2\nonumber\\
&\leq\frac{8}{T}\left(\frac{\Phi(\bar{x}_0)-\Phi(\bar{x}_{T})}{\gamma_x}\right)+\frac{8\bar{q}_{t+1}^{-1}(2\gamma_x \bar{L}_{f}^2L_\Phi \bar{q}_{t+1}^{-1} \left(1 +\zeta_q^2\right)+\bar{L}_{f}^2)}{nT} \sum_{t=0}^{T-1}\Delta_t\nonumber\\
&\quad +\frac{16n\gamma_x L_\Phi \bar{q}_{0}^{-2} \left(1 +\zeta_q^2\right)}{T}\sum_{t=0}^{T-1}\|\nabla_x f(\bar{x}_t, \bar{y}_t, \bar{v}_t)\|^2
+ \frac{16\bar{q}_{0}^{-1}}{T}\sum_{t=0}^{T-1}  \left\| \frac{\big(\tilde{\mathbf{q}}_{t+1}^{-1}\big)^\top}{n\bar{q}_{t+1}^{-1}} \bar{\nabla} F(\mathbf{x}_t, \mathbf{y}_t, \mathbf{v}_t) \right\|^2 \nonumber \\
&\quad +\left( \frac{4\bar{L}^2}{\mu^2} + \frac{8\bar{L}^2}{\mu^4} \left( \frac{L_{l,2} C_{f_y}}{\mu} + L_{f,1} \right)^2 \right) \frac{1}{T}\sum_{t=0}^{T-1}\frac{\|\nabla_y l(\bar{x}_t, \bar{y}_t)\|^2}{\bar{m}^x_{t+1}\max\{\bar{m}^y_{t+1},\bar{m}^v_{t+1}\}}\nonumber\\
&\quad+ \frac{8\bar{L}^2 }{\mu^2T}\sum_{t=0}^{T-1}\frac{\|\nabla_v r(\bar{x}_t, \bar{y}_t, \bar{v}_t)\|^2}{\bar{m}^x_{t+1}\max\{\bar{m}^y_{t+1},\bar{m}^v_{t+1}\}}\nonumber\\
&\leq\frac{8}{T}\left(\frac{\Phi(\bar{x}_0)-\Phi(\bar{x}_{T})}{\gamma_x}\right)+\frac{8\bar{q}_{t+1}^{-1}(2\gamma_x \bar{L}_{f}^2L_\Phi \bar{q}_{t+1}^{-1} \left(1 +\zeta_q^2\right)+\bar{L}_{f}^2)}{nT}  \sum_{t=0}^{T-1}\Delta_t\nonumber\\
&\quad +\frac{16n\gamma_x L_\Phi \bar{q}_{0}^{-2} C_{m^x}^2\left(1 +\zeta_q^2\right)}{T}
+ \frac{16 \bar{q}_{0}^{-1}}{T}\sum_{t=0}^{T-1}  \left\| \frac{\big(\tilde{\mathbf{q}}_{t+1}^{-1}\big)^\top}{n\bar{q}_{t+1}^{-1}} \bar{\nabla} F(\mathbf{x}_t, \mathbf{y}_t, \mathbf{v}_t) \right\|^2 \nonumber \\
&\quad +\left( \frac{4\bar{L}^2}{\bar{m}^x_{0}\mu^2} + \frac{8\bar{L}^2}{\bar{m}^x_{0}\mu^4} \left( \frac{L_{l,2} C_{f_y}}{\mu} + L_{f,1} \right)^2 \right)\frac{1}{T} \sum_{t=0}^{T-1}\frac{\|\nabla_y l(\bar{x}_t, \bar{y}_t)\|^2}{\bar{m}^y_{t+1}}\nonumber\\
&\quad+ \frac{8\bar{L}^2 }{\mu^2T\bar{m}^x_{0}}\sum_{t=0}^{T-1}\frac{\|\nabla_v r(\bar{x}_t, \bar{y}_t, \bar{v}_t)\|^2}{\max\{\bar{m}^y_{t+1},\bar{m}^v_{t+1}\}}\nonumber\\
&\overset{(a)}{=}\frac{2}{T}\left(4\left(\frac{\Phi(\bar{x}_{0})-\Phi^*}{\gamma_x}\right)+a_7\log(t+1)+b_7\right),\label{eq139}
\end{align}
where (a) uses \cref{lemmanew12} with $k_0=0$.

\textbf{Case 2:} If $\bar{m}^x_{T} > C_{m^x}$, by \cref{Lemma6}, there exists $k_1 \leq T_0$ such that $\bar{m}^x_{k_1} \leq C_{m^x}$, $\bar{m}^x_{k_1+1} > C_{m^x}$.

Then for $t < k_1$ when $\bar{m}^x_{T} > C_{m^x}$, from \cref{eqnew1}, we have:
\begin{align}
&{\bar{q}_{t+1}^{-1}}\|\nabla \Phi(\bar{x}_t)\|^2\nonumber\\
&\leq8\left(\frac{\Phi(\bar{x}_t)-\Phi(\bar{x}_{t+1})}{\gamma_x}\right)+\frac{8\bar{q}_{t+1}^{-1}(2\gamma_x \bar{L}_{f}^2L_\Phi \bar{q}_{t+1}^{-1} \left(1 +\zeta_q^2\right)+\bar{L}_{f}^2)}{n} \Delta_t+ \frac{8\bar{L}^2 }{\mu^2} \frac{\|\nabla_v r(\bar{x}_t, \bar{y}_t, \bar{v}_t)\|^2}{\bar{q}_{t+1}} \nonumber\\
&\quad +{16n\gamma_x L_\Phi \bar{q}_{t+1}^{-2} \left(1 +\zeta_q^2\right)}{\|\nabla_x f(\bar{x}_t, \bar{y}_t, \bar{v}_t)\|^2}
+ 16 \bar{q}_{t+1}^{-1}  \left\| \frac{\big(\tilde{\mathbf{q}}_{t+1}^{-1}\big)^\top}{n\bar{q}_{t+1}^{-1}} \bar{\nabla} F(\mathbf{x}_t, \mathbf{y}_t, \mathbf{v}_t) \right\|^2 \nonumber \\
&\quad +\left( \frac{4\bar{L}^2}{\mu^2} + \frac{8\bar{L}^2}{\mu^4} \left( \frac{L_{l,2} C_{f_y}}{\mu} + L_{f,1} \right)^2 \right) \frac{\|\nabla_y l(\bar{x}_t, \bar{y}_t)\|^2}{\bar{q}_{t+1}}.\label{eq140}
\end{align}

For $t \geq k_1$ when $\bar{m}^x_{T} > C_{m^x}$, from \cref{eqnew2}, we have:
\begin{align}
&{\bar{q}_{t+1}^{-1}}\|\nabla \Phi(\bar{x}_t)\|^2\nonumber\\
&\leq8\left(\frac{\Phi(\bar{x}_t)-\Phi(\bar{x}_{t+1})}{\gamma_x}\right)+\frac{8\bar{q}_{t+1}^{-1}(2\gamma_x \bar{L}_{f}^2L_\Phi \bar{q}_{t+1}^{-1} \left(1 +\zeta_q^2\right)+\bar{L}_{f}^2)}{n} \Delta_t+ \frac{8\bar{L}^2 }{\mu^2} \frac{\|\nabla_v r(\bar{x}_t, \bar{y}_t, \bar{v}_t)\|^2}{\bar{q}_{t+1}} \nonumber\\
&\quad + 16 \bar{q}_{t+1}^{-1}  \left\| \frac{\big(\tilde{\mathbf{q}}_{t+1}^{-1}\big)^\top}{n\bar{q}_{t+1}^{-1}} \bar{\nabla} F(\mathbf{x}_t, \mathbf{y}_t, \mathbf{v}_t) \right\|^2  +\left( \frac{4\bar{L}^2}{\mu^2} + \frac{8\bar{L}^2}{\mu^4} \left( \frac{L_{l,2} C_{f_y}}{\mu} + L_{f,1} \right)^2 \right) \frac{\|\nabla_y l(\bar{x}_t, \bar{y}_t)\|^2}{\bar{q}_{t+1}}.\label{eq141}
\end{align}

By taking the average, we can merge $t < k_1$ and $t \geq k_1$ as:
\begin{align}
&\frac{1}{T} \sum_{t=0}^{T-1} { \bar{q}_{t+1}^{-1}}\|\nabla \Phi(\bar{x}_t)\|^2\nonumber\\
&= \frac{1}{T} \sum_{t=0}^{k_1-1} {\bar{q}_{t+1}^{-1}}\|\nabla \Phi(\bar{x}_t)\|^2
+ \frac{1}{T} \sum_{t=k_1}^{T-1} {\bar{q}_{t+1}^{-1}}\|\nabla \Phi(\bar{x}_t)\|^2\nonumber \\
&\leq \frac{8}{T}\left(\frac{\Phi(\bar{x}_0)-\Phi(\bar{x}_{k_1})}{\gamma_x}\right)+\frac{8\bar{q}_{t+1}^{-1}(2\gamma_x \bar{L}_{f}^2L_\Phi \bar{q}_{t+1}^{-1} \left(1 +\zeta_q^2\right)+\bar{L}_{f}^2)}{nT} \sum_{t=0}^{k_1-1}\Delta_t\nonumber\\
&\quad +\frac{16n\gamma_x L_\Phi \bar{q}_{0}^{-2} \left(1 +\zeta_q^2\right)}{T}\sum_{t=0}^{k_1-1}\|\nabla_x f(\bar{x}_t, \bar{y}_t, \bar{v}_t)\|^2
+ \frac{16\bar{q}_{0}^{-1}}{T}\sum_{t=0}^{k_1-1} \left\| \frac{\big(\tilde{\mathbf{q}}_{t+1}^{-1}\big)^\top}{n\bar{q}_{t+1}^{-1}} \bar{\nabla} F(\mathbf{x}_t, \mathbf{y}_t, \mathbf{v}_t) \right\|^2 \nonumber \\
&\quad +\left( \frac{4\bar{L}^2}{\mu^2} + \frac{8\bar{L}^2}{\mu^4} \left( \frac{L_{l,2} C_{f_y}}{\mu} + L_{f,1} \right)^2 \right) \frac{1}{T}\sum_{t=0}^{k_1-1}\frac{\|\nabla_y l(\bar{x}_t, \bar{y}_t)\|^2}{\bar{m}^x_{t+1}\max\{\bar{m}^y_{t+1},\bar{m}^v_{t+1}\}}\nonumber\\
&\quad+ \frac{8\bar{L}^2 }{\mu^2T}\sum_{t=0}^{k_1-1}\frac{\|\nabla_v r(\bar{x}_t, \bar{y}_t, \bar{v}_t)\|^2}{\bar{m}^x_{t+1}\max\{\bar{m}^y_{t+1},\bar{m}^v_{t+1}\}}\nonumber\\
&\quad+ \frac{8}{T}\left(\frac{\Phi(\bar{x}_{k_1})-\Phi(\bar{x}_{T})}{\gamma_x}\right)+\frac{8\bar{q}_{t+1}^{-1}(2\gamma_x \bar{L}_{f}^2L_\Phi \bar{q}_{t+1}^{-1} \left(1 +\zeta_q^2\right)+\bar{L}_{f}^2)}{nT} \sum_{t=k_1}^{T-1} \Delta_t\nonumber\\
&\quad + \frac{16 \bar{q}_{0}^{-1}}{T}\sum_{t=k_1}^{T-1}  \left\| \frac{\big(\tilde{\mathbf{q}}_{t+1}^{-1}\big)^\top}{n\bar{q}_{t+1}^{-1}} \bar{\nabla} F(\mathbf{x}_t, \mathbf{y}_t, \mathbf{v}_t) \right\|^2 \nonumber \\
&\quad +\left( \frac{4\bar{L}^2}{\mu^2} + \frac{8\bar{L}^2}{\mu^4} \left( \frac{L_{l,2} C_{f_y}}{\mu} + L_{f,1} \right)^2 \right) \frac{1}{T}\sum_{t=k_1}^{T-1} \frac{\|\nabla_y l(\bar{x}_t, \bar{y}_t)\|^2}{\bar{m}^x_{t+1}\max\{\bar{m}^y_{t+1},\bar{m}^v_{t+1}\}}\nonumber\\
&\quad+ \frac{8\bar{L}^2 }{\mu^2T}\sum_{t=k_1}^{T-1} \frac{\|\nabla_v r(\bar{x}_t, \bar{y}_t, \bar{v}_t)\|^2}{\bar{m}^x_{t+1}\max\{\bar{m}^y_{t+1},\bar{m}^v_{t+1}\}}\nonumber\\
&\leq \frac{8}{T}\left(\frac{\Phi(\bar{x}_0)-\Phi^*}{\gamma_x}\right)+\frac{8\bar{q}_{t+1}^{-1}(2\gamma_x \bar{L}_{f}^2L_\Phi \bar{q}_{t+1}^{-1} \left(1 +\zeta_q^2\right)+\bar{L}_{f}^2)}{nT} \sum_{t=0}^{T-1}\Delta_t\nonumber\\
&\quad +\frac{16n\gamma_x L_\Phi \bar{q}_{0}^{-2} \left(1 +\zeta_q^2\right)}{T}\sum_{t=0}^{k_1-1}\|\nabla_x f(\bar{x}_t, \bar{y}_t, \bar{v}_t)\|^2
+ \frac{16\bar{q}_{0}^{-1}}{T}\sum_{t=0}^{T-1} \left\| \frac{\big(\tilde{\mathbf{q}}_{t+1}^{-1}\big)^\top}{n\bar{q}_{t+1}^{-1}} \bar{\nabla} F(\mathbf{x}_t, \mathbf{y}_t, \mathbf{v}_t) \right\|^2 \nonumber \\
&\quad +\left( \frac{4\bar{L}^2}{\bar{m}^x_{0}\mu^2} + \frac{8\bar{L}^2}{\bar{m}^x_{0}\mu^4} \left( \frac{L_{l,2} C_{f_y}}{\mu} + L_{f,1} \right)^2 \right)\frac{1}{T}\sum_{t=0}^{T-1}\frac{\|\nabla_y l(\bar{x}_t, \bar{y}_t)\|^2}{\bar{m}^y_{t+1}}\nonumber\\
&\quad+ \frac{8\bar{L}^2 }{\mu^2T\bar{m}^x_{0}}\sum_{t=0}^{T-1}\frac{\|\nabla_v r(\bar{x}_t, \bar{y}_t, \bar{v}_t)\|^2}{\max\{\bar{m}^y_{t+1},\bar{m}^v_{t+1}\}}\nonumber\\
&\overset{(a)}{=}\frac{2}{T}\left(4\left(\frac{\Phi(\bar{x}_0)-\Phi^*}{\gamma_x}\right)+a_7\log(t+1)+b_7\right),\label{eq142}
\end{align}
where (a) uses \cref{lemmanew12} by plugging in $k_0=0$.

Note that Case 1 and Case 2 indicate the same result. Thus, we have:
\begin{align}
&\frac{1}{T} \sum_{t=0}^{T-1} \|\nabla \Phi(\bar{x}_t)\|^2\nonumber\\
&\leq \frac{2}{T}\left(4\left(\frac{\Phi(\bar{x}_0)-\Phi^*}{\gamma_x}\right)+a_7\log(T)+b_7\right)\bar{m}^x_T \bar{z}_T\nonumber \\
&\overset{(a)}{\leq} \frac{2}{T}\left[\left(4\left(\frac{\Phi(\bar{x}_0)-\Phi^*}{\gamma_x}\right)+a_7\log(T)+b_7\right)^2{\bar{z}_T^2} \right. \notag\\
&\quad\left.+ C_{m^x} \left(4\left(\frac{\Phi(\bar{x}_0)-\Phi^*}{\gamma_x}\right)+a_7\log(T)+b_7\right)\bar{z}_T\right] \nonumber \\
&\overset{(b)}{\leq} \frac{2}{T}\left[\left(4\left(\frac{\Phi(\bar{x}_0)-\Phi^*}{\gamma_x}\right)+a_7\log(T)+b_7\right)^2{(a_1 \log(T) + b_1)^2}  \right.\nonumber\\
&\quad \left.+ C_{m^x} \left(4\left(\frac{\Phi(\bar{x}_0)-\Phi^*}{\gamma_x}\right)+a_7\log(T)+b_7\right)(a_1 \log(T) + b_1)\right]\nonumber \\
&= \mathcal{O}\left(\frac{\log^4(T)}{T}\right),\label{eq143}
\end{align}
where (a) uses \cref{Lemma13} and (b) uses \cref{Lemma10}. Thus, the proof is finished.

\subsection{Proof of \cref{collary1}}
Recall from \cref{theorem 1} that there exists a constant \( M \) such that:
\begin{align}
    \frac{1}{T} \sum_{t=0}^{T-1} \|\nabla \Phi(x_t)\|^2 \leq \frac{M \log^4(T)}{T}.
\end{align}

By setting the total number of iterations \( T \) as $T = \frac{ML}{\epsilon} \log^4\left(\frac{M}{\epsilon}\right)$ and assuming the constant \( L = 12^4 \), we have:
\begin{align}
    \frac{M \log^4(T)}{T} 
    &= \frac{M \log^4\left(\frac{MN}{\epsilon} \log^4\left(\frac{M}{\epsilon}\right)\right)}{\frac{MN}{\epsilon} \log^4\left(\frac{M}{\epsilon}\right)} \notag \\
    &\leq \frac{\left[\log(N) + \log\left(\frac{M}{\epsilon}\right) + 4\log\left(\log\left(\frac{M}{\epsilon}\right)\right)\right]^4}{N \log^4\left(\frac{M}{\epsilon}\right)}  \epsilon \notag \\
    &{\leq} \frac{\left(\log(N) + 2\log\left(\frac{M}{\epsilon}\right)\right)^4}{N^{1 + \frac{1}{4}} \log\left(\frac{M}{\epsilon}\right)}  \epsilon 
    {\leq} \epsilon. \notag
\end{align}

Here we have used two key inequalities:
\begin{enumerate}
    \item \(\log\left(\log\left(\frac{M}{\epsilon}\right)\right) \leq \frac{1}{4} \log\left(\frac{M}{\epsilon}\right)\) for sufficiently small \(\epsilon\),
    \item \(\log(L) + 2 \log\left(\frac{M}{\epsilon}\right) \leq L^{1 + \frac{1}{4}} \log\left(\frac{M}{\epsilon}\right)\) when \(L = 12^4\) and \(\epsilon\) is sufficiently small.
\end{enumerate}
Then we can ensure that $\frac{M \log^4(T)}{T} \leq \epsilon$.
Thus, to achieve an \(\epsilon\)-accurate stationary point, the required number of iterations is:
\begin{align}
    T &= \frac{ML}{\epsilon} \log^4\left(\frac{M}{\epsilon}\right)= \mathcal{O}\left(\frac{1}{\epsilon} \log^4\left(\frac{1}{\epsilon}\right)\right).
\end{align}

Finally, the gradient complexity is given by:
\begin{align}
    \mathrm{Gc}(\epsilon)={\Omega}(T) = \mathcal{O}\left(\frac{1}{\epsilon} \log^4\left(\frac{1}{\epsilon}\right)\right).
\end{align}
Thus, the proof is finished.

\section{Additional Experiments}\label{experimentsec}
\subsection{Hyperparameter Optimization Problem}\label{hypersec}
Our experiments are conducted on the following hyperparameter optimization problem:
\begin{equation*}
\begin{aligned}
&\!\!\min_{\lambda \in \mathbb{R}^p} \frac{1}{n} \sum_{i=1}^n f_i(\lambda, \omega^*(\lambda)), \\
&\text{s.t.}\quad \omega^*(\lambda) = \arg \min_{\omega \in \mathbb{R}^q} \frac{1}{n} \sum_{i=1}^n l_i(\lambda, \omega),
\end{aligned}
\end{equation*}
where the goal is to find the optimal hyperparameter \(\lambda\), subject to the constraint that \(\omega^*(\lambda)\) represents the optimal model given \(\lambda\).

\subsection{Synthetic Data Experiments}\label{synthesec}
For the synthetic data experiments, we follow the experimental setups of prior works~\citep{pedregosa2016hyperparameter, grazzi2020iteration, chen2024decentralized}. For any agent $i$, the private objective functions \(f_i\) and \(l_i\) are defined as:
\begin{equation*}
\begin{aligned}
    f_i(\lambda, \omega) &= \sum_{(x_e, y_e) \in D'_i} \psi(y_e x_e^\top \omega),\\
    l_i(\lambda, \omega) &= \sum_{(x_e, y_e) \in D_i} \psi(y_e x_e^\top \omega) + \frac{1}{2} \sum_{j=1}^p e^{\lambda_j} \omega_j^2,
\end{aligned}
\end{equation*}
where \(\psi(x) = \log(1+e^{-x})\) and \(p\) represents the dimensionality of the data. A ground truth vector \(\omega^*\) is generated and each \(x_e \in \mathbb{R}^p\) is sampled from a normal distribution. The data distribution for \(x_e\) at node \(i\) follows \(\mathcal{N}(0, i^2\cdot r^2)\), where $r$ quantifies the degree of heterogeneity across agents. The corresponding labels \(y_e\) are defined as \(y_e = x_e^\top \omega^* + 0.1 z\), where \(z\) is sampled from a standard normal distribution.

\begin{figure*}[t]
\centering
\begin{minipage}[b]{0.41\textwidth}
    \includegraphics[width=\textwidth]{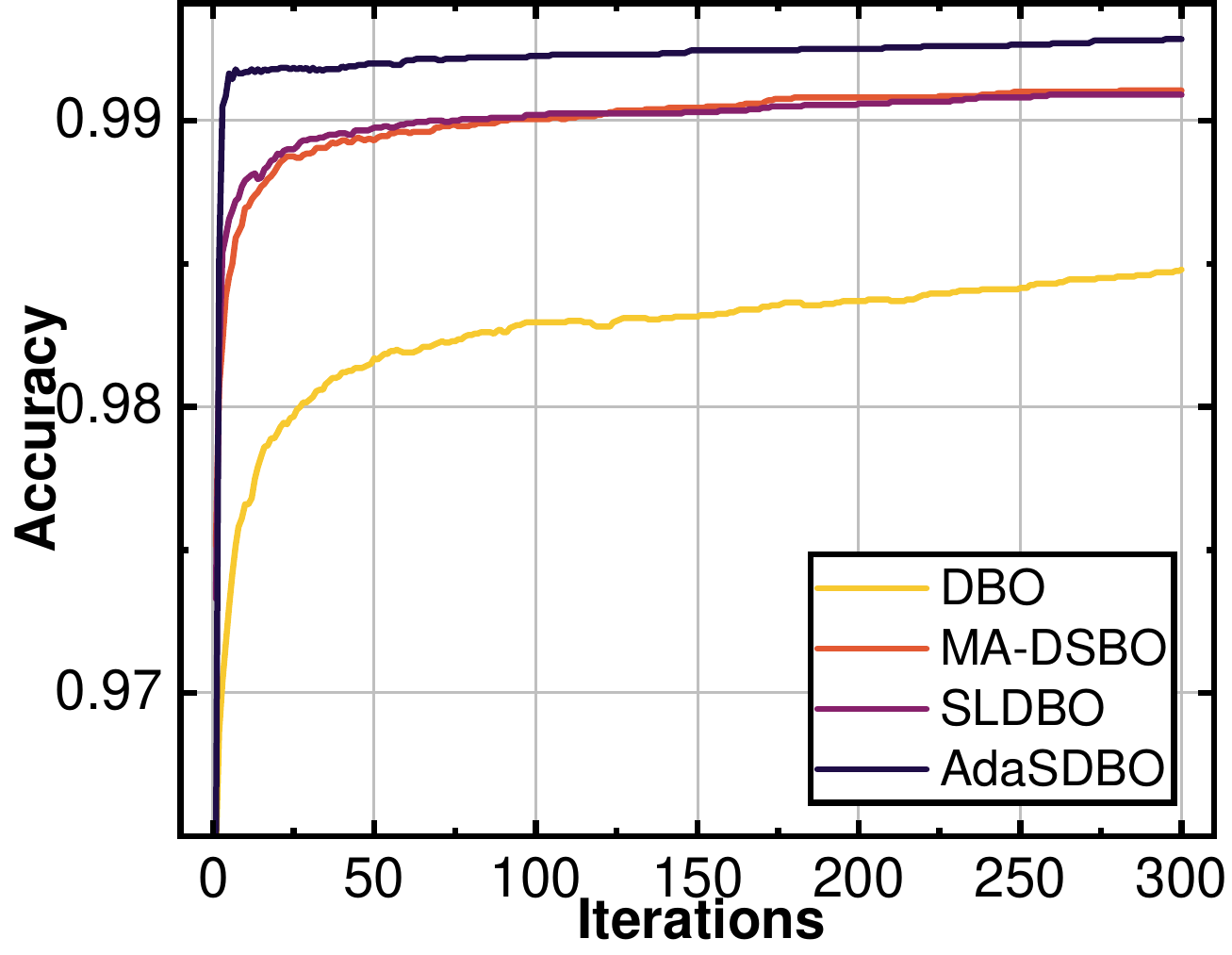}
    \par\vspace{-0.1cm}
    \makebox[\textwidth]{{\hspace{0.66cm}(a) Synthetic with $p=50$}}
\end{minipage}
\begin{minipage}[b]{0.41\textwidth}
    \includegraphics[width=\textwidth]{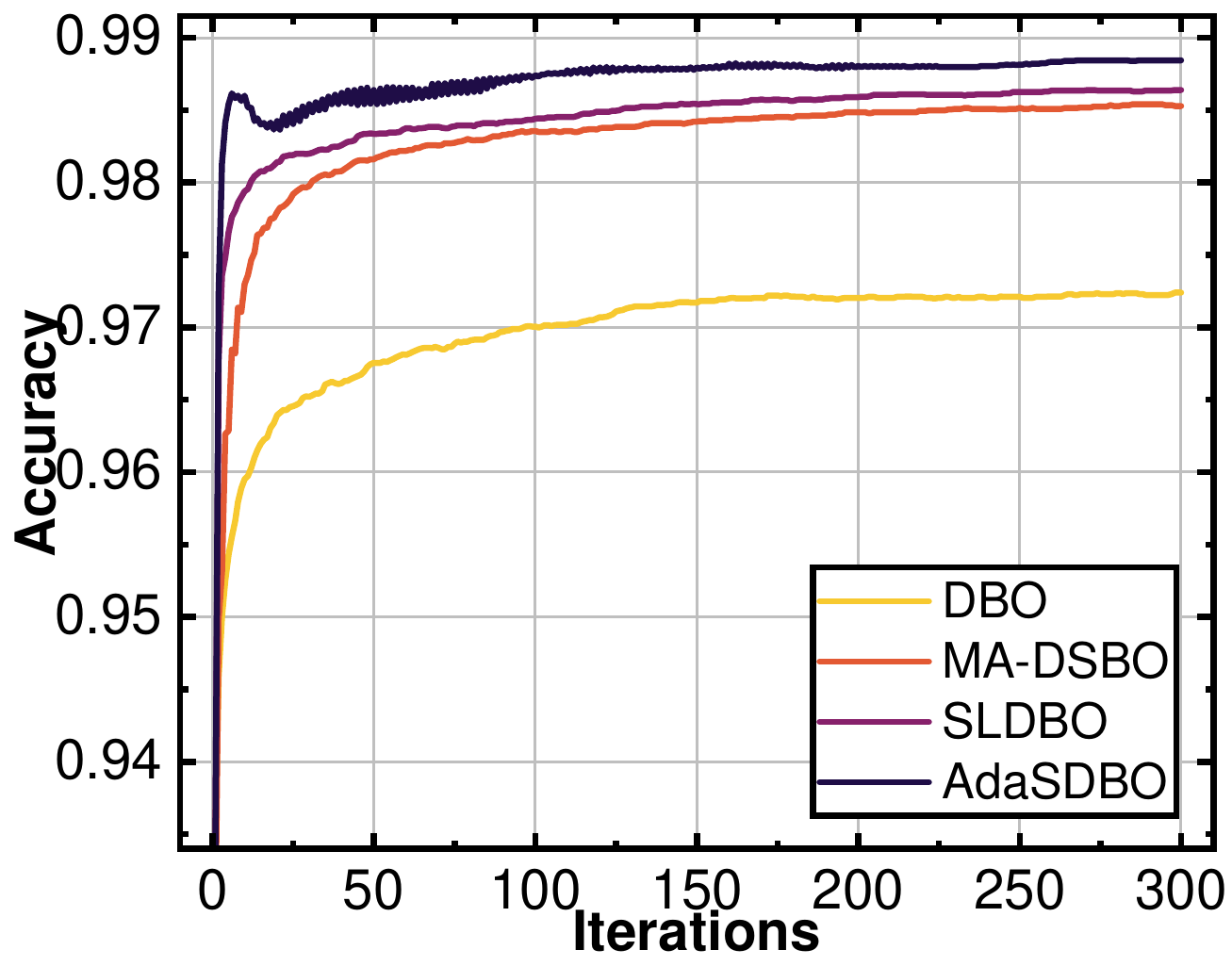}
    \par\vspace{-0.1cm}
    \makebox[\textwidth]{{\hspace{0.66cm}(b) Synthetic with $p=200$}}
\end{minipage}
\caption{Test accuracy on synthetic dataset with $r=5$.}
\label{fig:syntheticr5}
\end{figure*}

\begin{figure*}[t]
\centering
\begin{minipage}[b]{0.41\textwidth}
    \includegraphics[width=\textwidth]{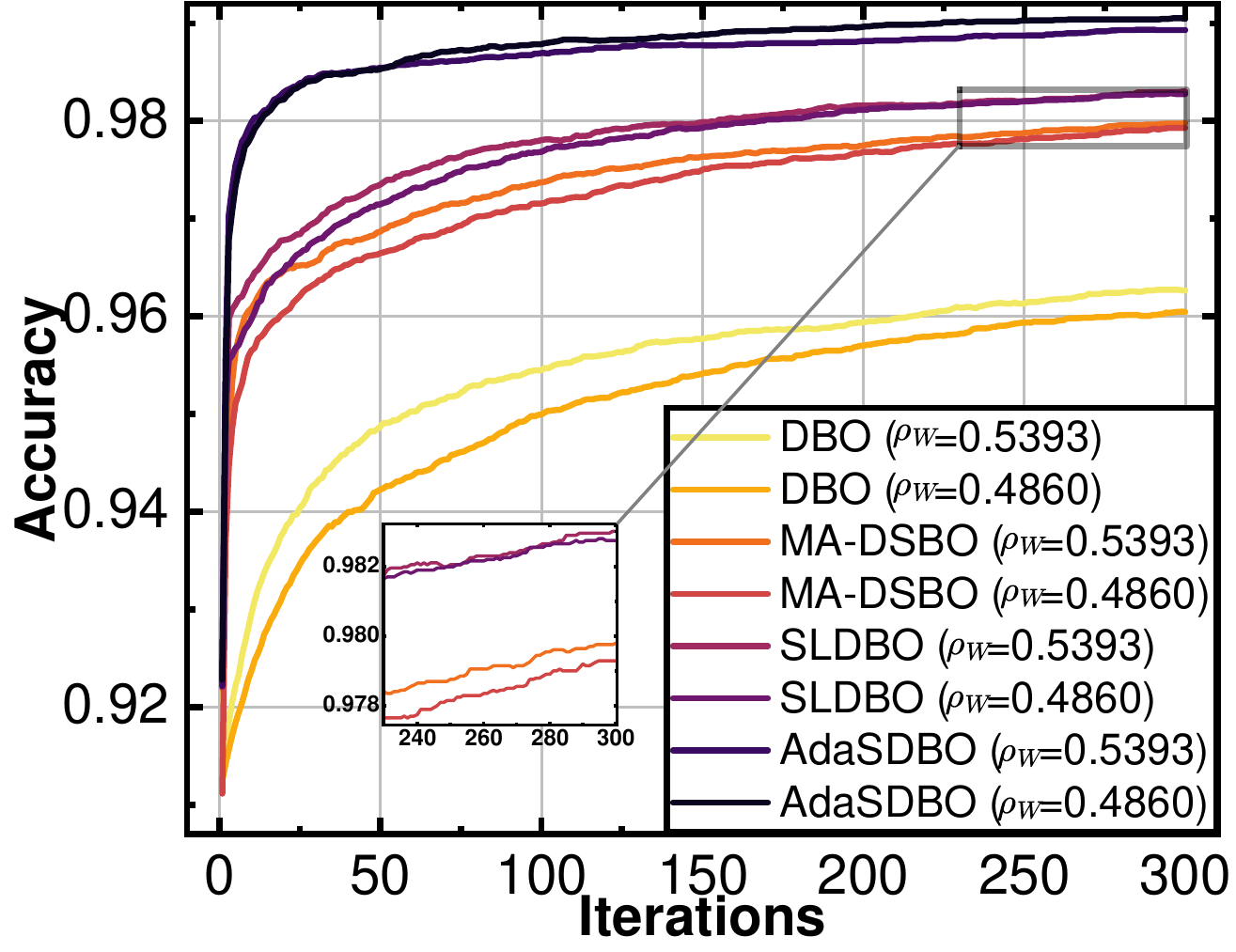}
    \par\vspace{-0.1cm}
    \makebox[\textwidth]{{\hspace{0.66cm}(a) Test accuracy}}
\end{minipage}
\begin{minipage}[b]{0.41\textwidth}
    \includegraphics[width=\textwidth]{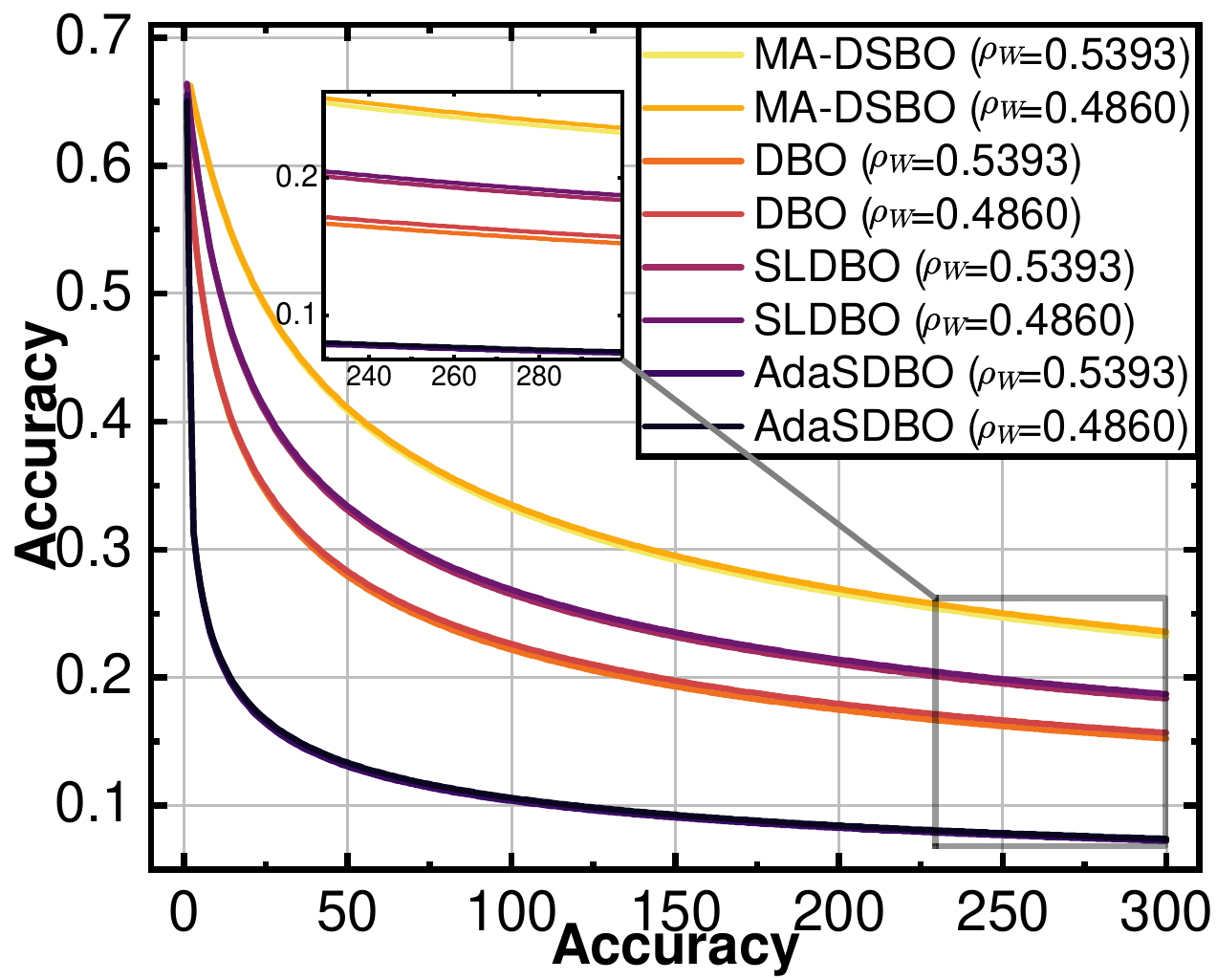}
    \par\vspace{-0.1cm}
    \makebox[\textwidth]{{\hspace{0.66cm}(b) Upper-level loss}}
\end{minipage}
\caption{Test accuracy and upper-level loss for synthetic dataset with different $\rho_W$.}
\label{fig:rhosynthetic}
\end{figure*}

\begin{figure*}[t]
\centering
\begin{minipage}[b]{0.41\textwidth}
    \includegraphics[width=\textwidth]{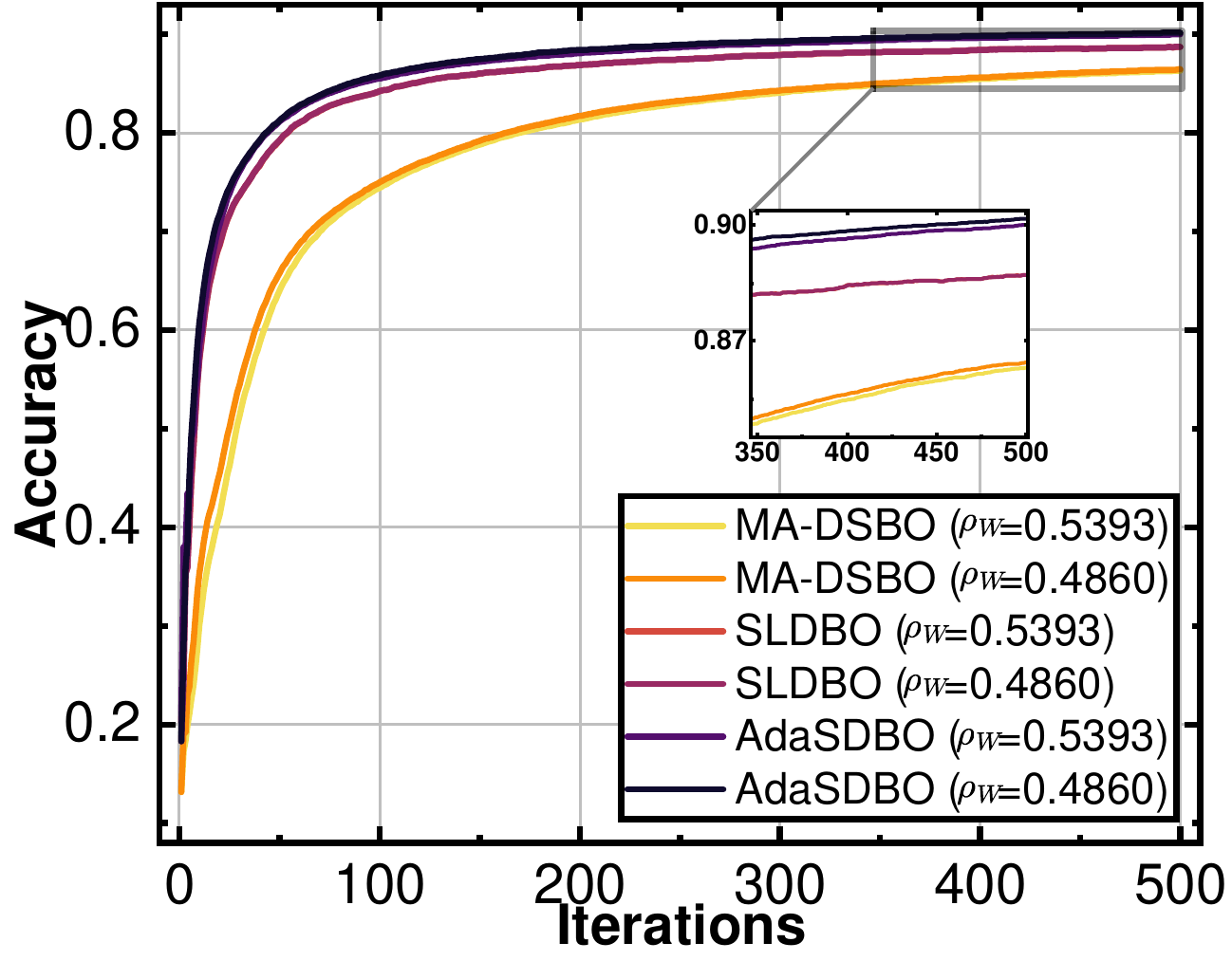}
    \par\vspace{-0.1cm}
    \makebox[\textwidth]{{\hspace{0.66cm}(a) Test accuracy}}
\end{minipage}
\begin{minipage}[b]{0.41\textwidth}
    \includegraphics[width=\textwidth]{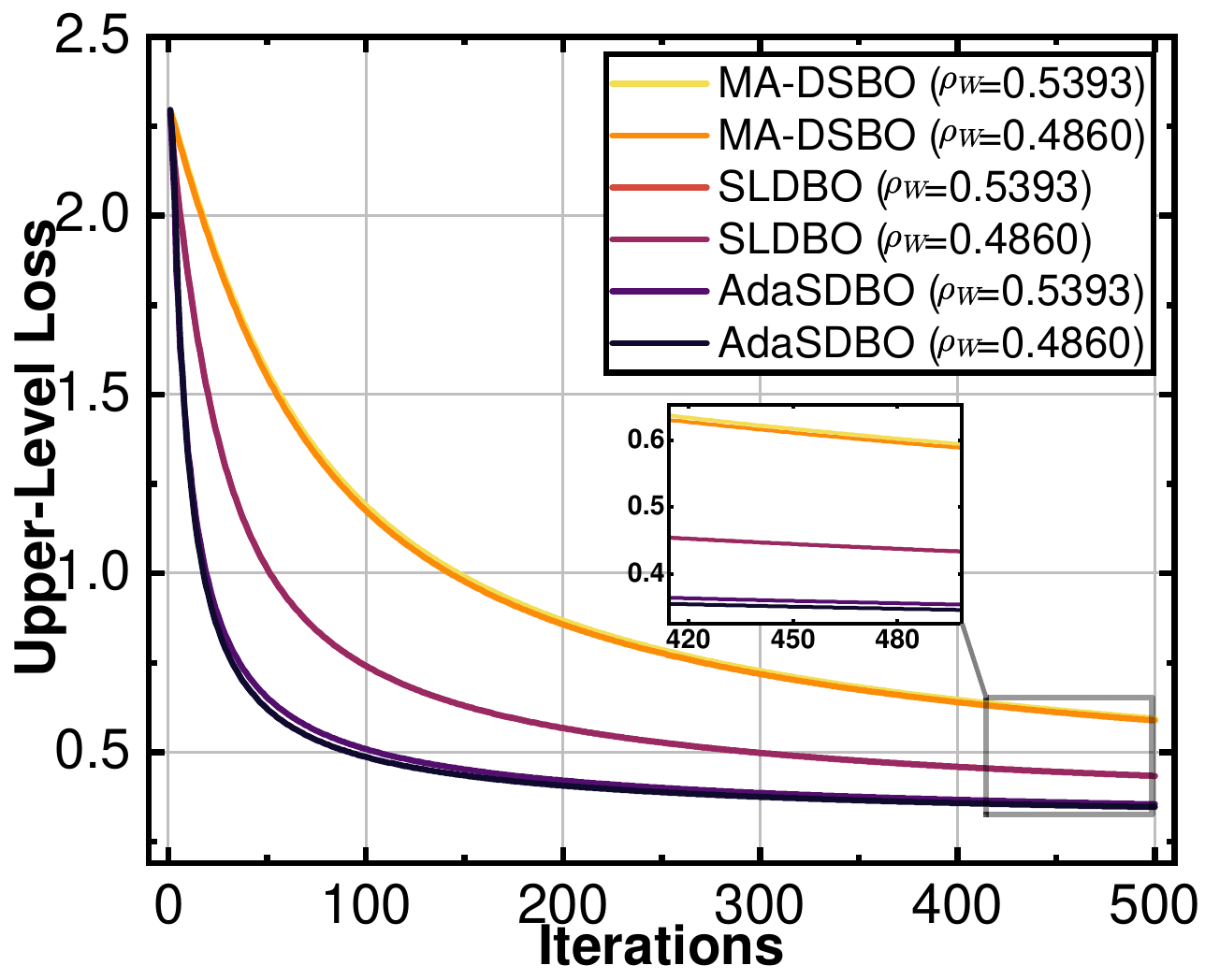}
    \par\vspace{-0.1cm} 
    \makebox[\textwidth]{{\hspace{0.66cm}(b) Upper-level loss}}
\end{minipage}
\caption{Test accuracy and upper-level loss for MNIST dataset with different $\rho_W$.}
\label{fig:rhomnist}
\end{figure*}

\begin{figure*}[t]
\centering
\begin{minipage}[b]{0.41\textwidth}
    \includegraphics[width=\textwidth]{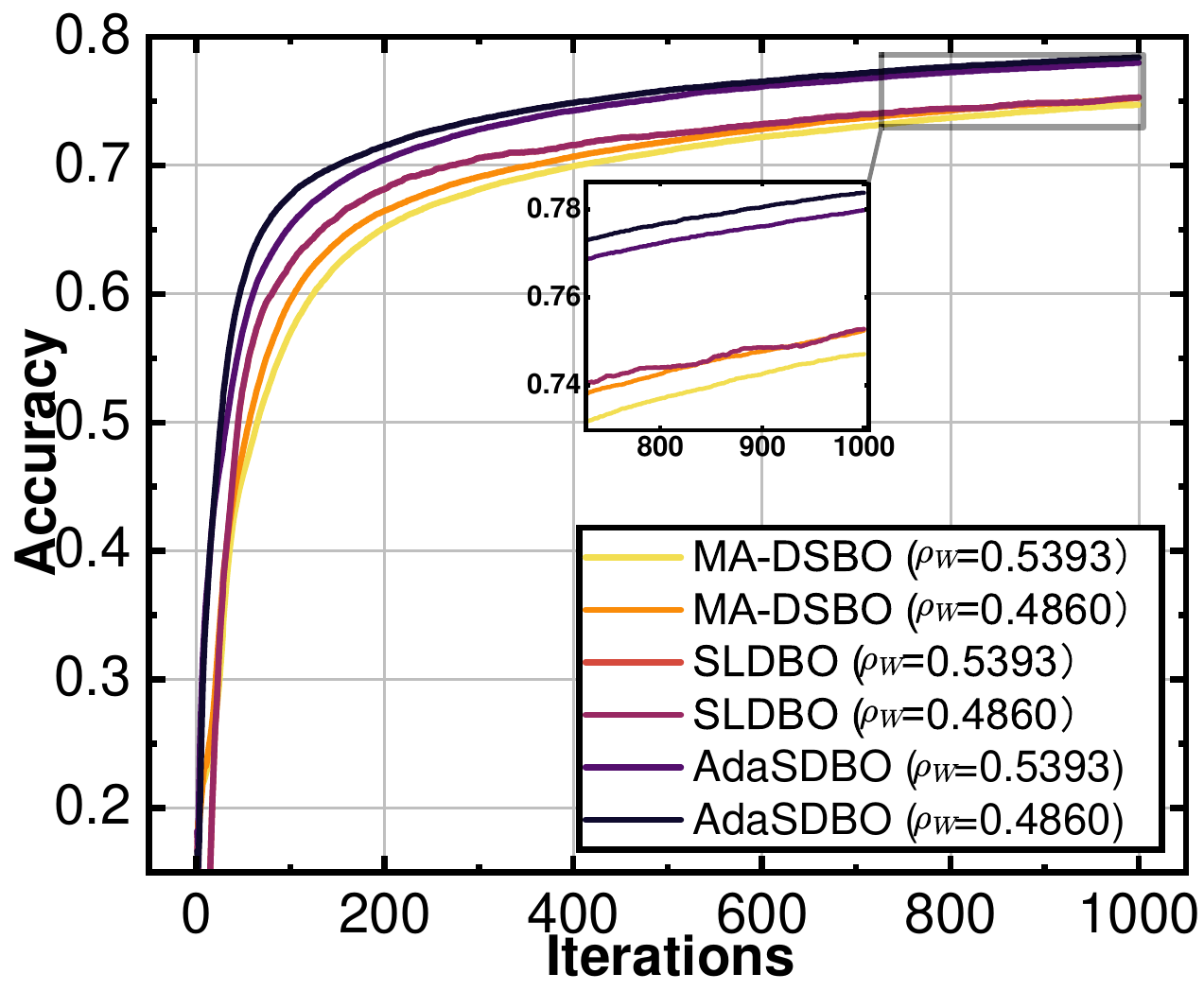}
    \par\vspace{-0.1cm}
    \makebox[\textwidth]{{\hspace{0.66cm}(a) Test accuracy}}
\end{minipage}
\begin{minipage}[b]{0.41\textwidth}
    \includegraphics[width=\textwidth]{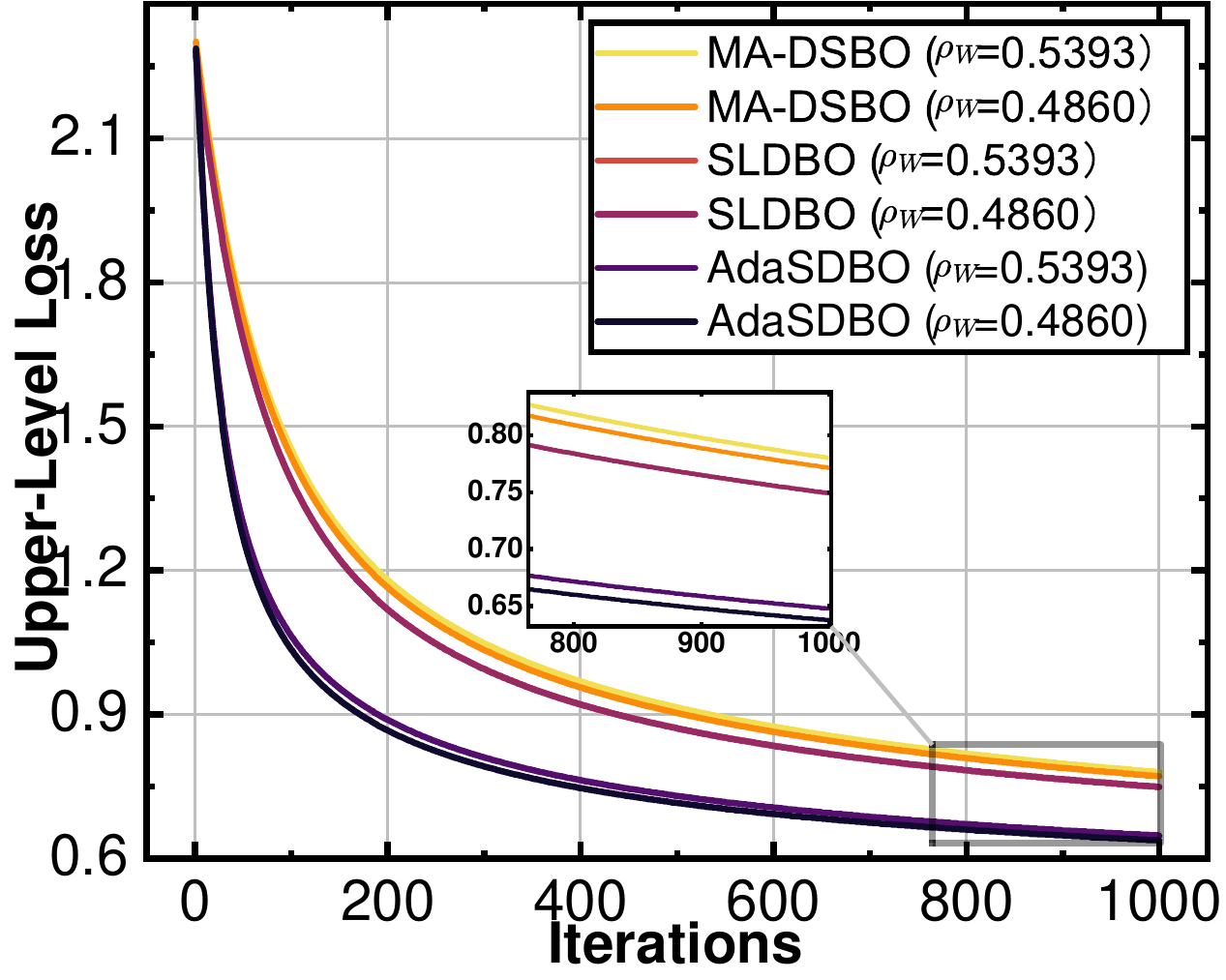}
    \par\vspace{-0.1cm}
    \makebox[\textwidth]{{\hspace{0.66cm}(b) Upper-level loss}}
\end{minipage}
\caption{Test accuracy and upper-level loss for FMNIST dataset with different $\rho_W$.}
\label{fig:rhofmnist}
\end{figure*}

\begin{figure*}[t]
  \centering
  \begin{minipage}[b]{0.32\textwidth}
    \includegraphics[width=\textwidth]{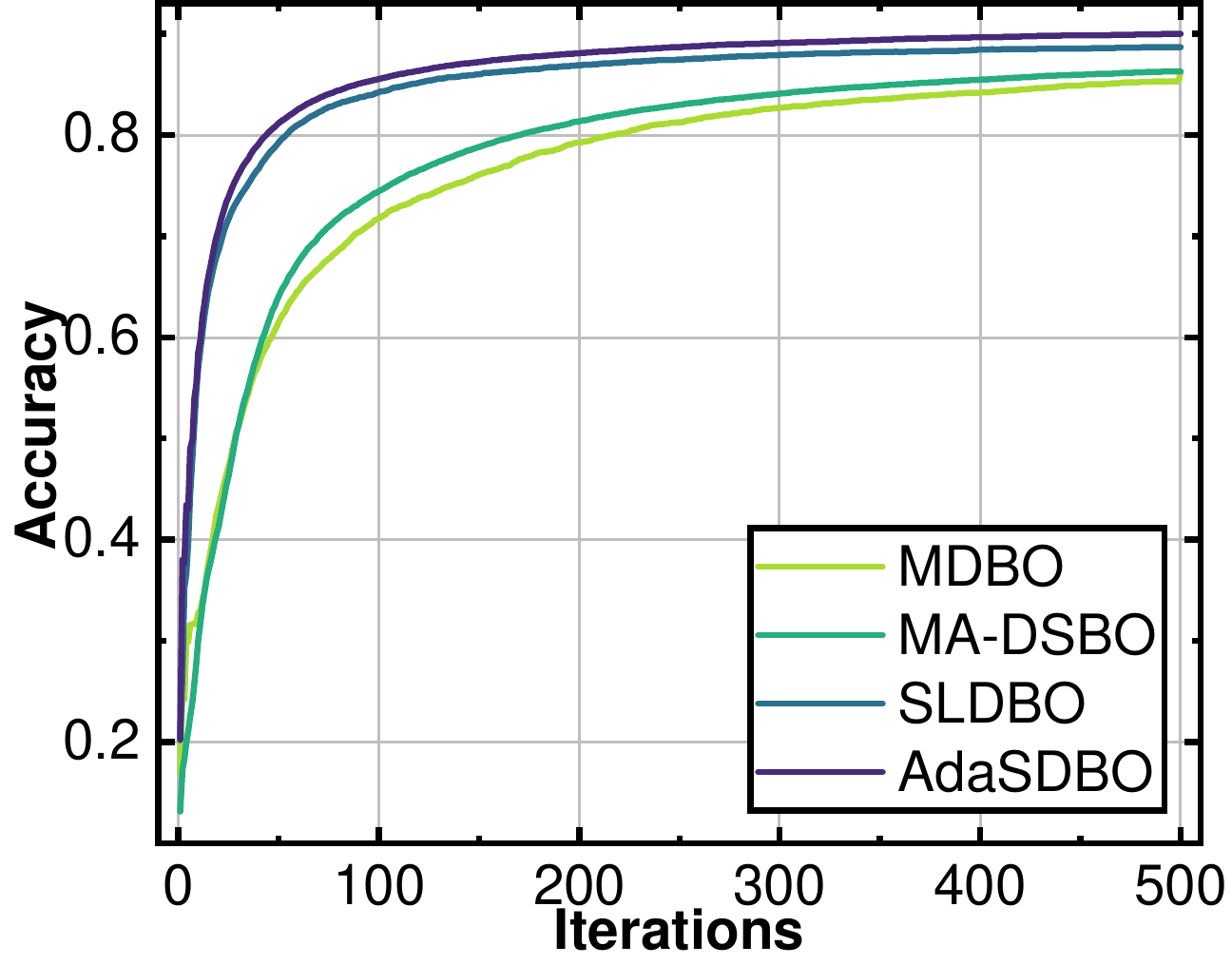}
    \par\vspace{-0.1cm} 
    \makebox[\textwidth]{{\hspace{0.66cm}(a) MNIST with $n=5$}}
  \end{minipage}
  \begin{minipage}[b]{0.32\textwidth}
    \includegraphics[width=\textwidth]{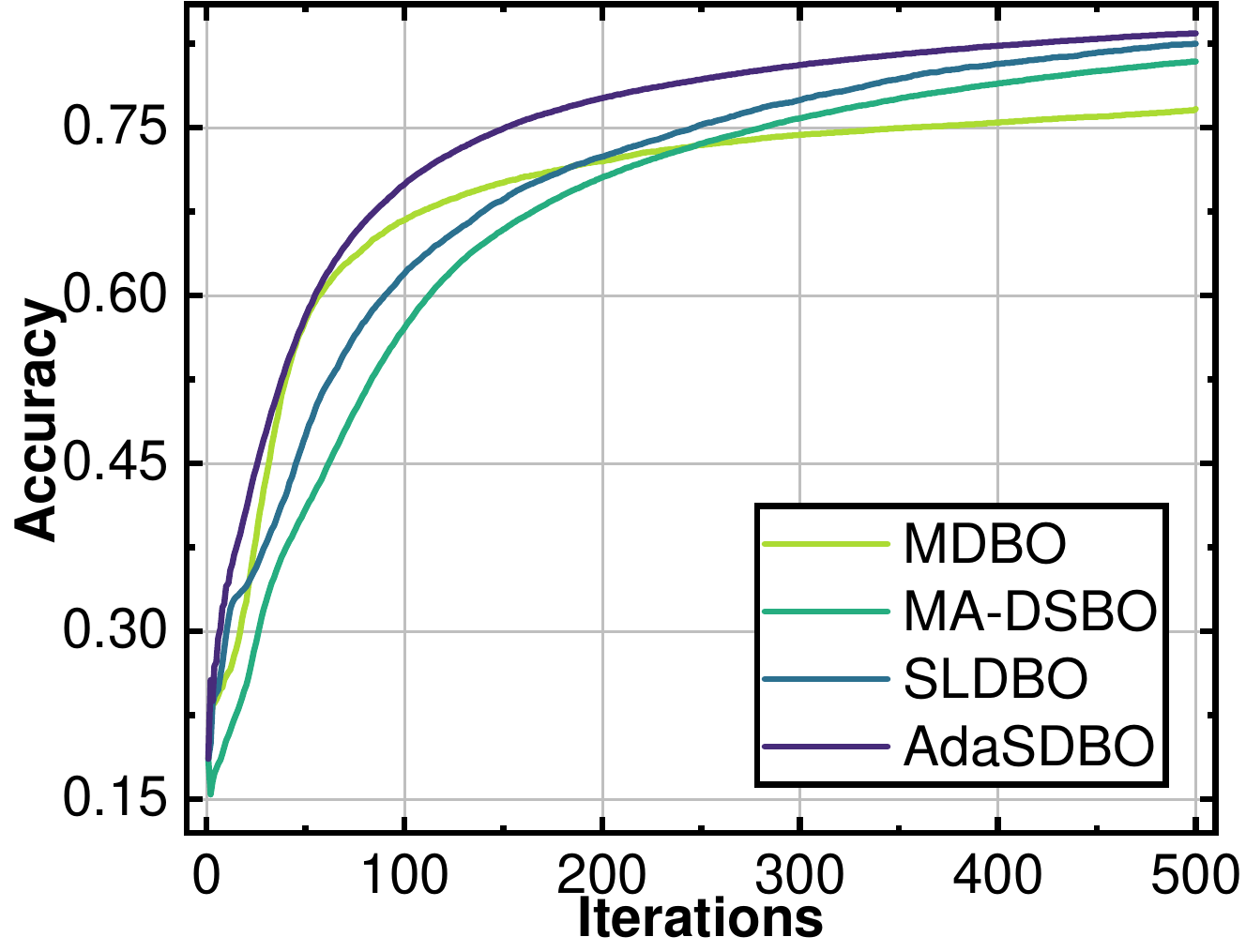}
    \par\vspace{-0.1cm} 
    \makebox[\textwidth]{{\hspace{0.66cm}(b) MNIST with $n=8$}}
  \end{minipage}
  \begin{minipage}[b]{0.32\textwidth}
    \includegraphics[width=\textwidth]{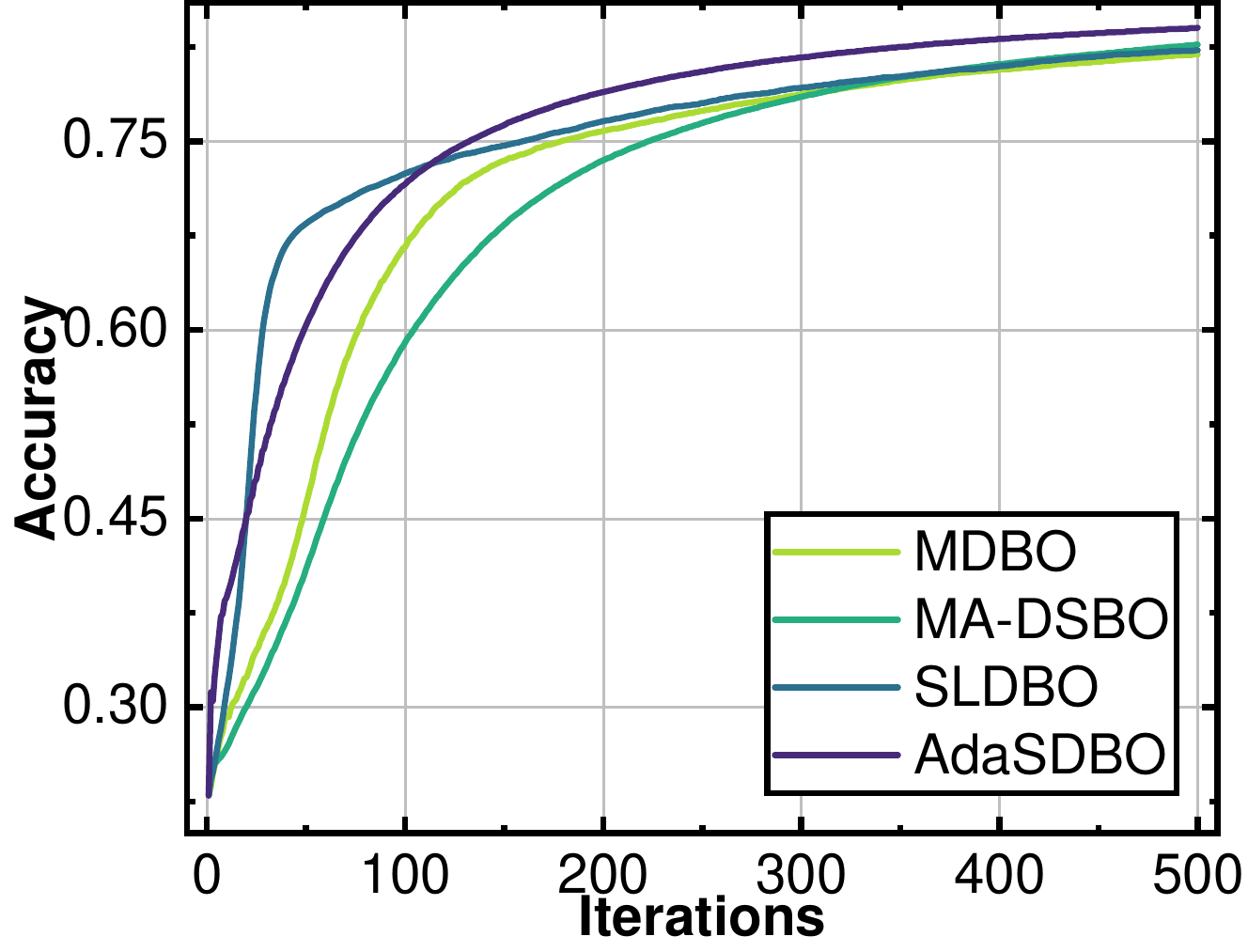}
    \par\vspace{-0.1cm} 
    \makebox[\textwidth]{{\hspace{0.66cm}(c) MNIST with $n=12$}}
  \end{minipage}
  \caption{Scalability analysis on the MNIST dataset under varying network sizes ($n = 5, 8, 12$).}
  \label{figsca1}
\end{figure*}

\subsection{Real-World Data Experiments}\label{realesec}
For the real-world data experiment, we apply our method to hyperparameter optimization on the MNIST dataset \citep{lecun1998gradient} and Fashion-MNIST (FMNIST) \citep{xiao2017fashion} dataset. Following \citep{grazzi2020iteration}, the functions \(f_i\) and \(l_i\) are defined as:
\begin{align}
    f_i(\lambda, \omega) &= \frac{1}{|D'_i|} \sum_{(x_e, y_e) \in D'_i} \ell(x_e^\top \omega, y_e), \nonumber\\
    l_i(\lambda, \omega) &= \frac{1}{|D_i|} \sum_{(x_e, y_e) \in D_i} \ell(x_e^\top \omega, y_e) + \frac{1}{cp} \sum_{j=1}^c \sum_{k=1}^p e^{\lambda_k} \omega_{jk}^2,\nonumber
\end{align}
where \(c=10\) and \(p=784\) denote the number of classes and features, respectively, \(\omega \in \mathbb{R}^{c \times p}\) is the model parameter, and \(\ell\) denotes the cross-entropy loss. \(D_i\) and \(D'_i\) represent the training and validation sets, respectively. The batch size for each computing agent is set to 1,000.

\subsection{Decentralized Meta-Learning Experiments}\label{dmetasec}
In our decentralized meta-learning experiment, following the MAML framework~\citep{finn2017model}, we consider a setting involving \(M\) distinct tasks, denoted by \(\{T_q\}_{q=1}^M\). Each task \(T_q\) is associated with a loss function \(L(x, y_q)\), where \(x\) denotes a shared embedding parameter across tasks, and \(y_q\) represents a task-specific parameter. The objective of meta-learning is to identify a universal parameter \(x^*\) that facilitates fast adaptation to new tasks by enabling efficient fine-tuning of \(y_q\) using a limited number of data points and update steps.

This problem naturally fits within a bilevel optimization framework. At the lower level, given a fixed \(x\), each task seeks the corresponding optimal adaptation parameter \(y_q^*\) by minimizing the loss over its training data \(D_q^{\text{tr}}\). The upper-level optimization then aims to select a shared parameter \(x\) such that the adapted models \(y_q^*\) perform well on the corresponding validation data \(D_q^{\text{val}}\). Let \(y^* = \text{col}\{y_1^*, \dots, y_M^*\}\) denote the collection of all task-specific solutions.

Unlike traditional centralized meta-learning, where all data is accessible at a single location, we consider a decentralized setup in which training and validation data for each task \(T_q\) are partitioned across \(n\) agents. Specifically, each agent \(i \in [n]\) maintains its own local training dataset \(D_{i,q}^{\text{tr}}\) and validation dataset \(D_{i,q}^{\text{val}}\) for task \(T_q\). Given a shared parameter \(x\), the local base-learners collaboratively solve for \(y_q^*(x)\) using decentralized lower-level optimization. The upper-level meta-update of \(x\) is then performed through cooperation among agents based on their local validation losses.

The decentralized bilevel optimization problem is formally expressed as:
\begin{align}
\min_x \quad & F(x) := \frac{1}{n} \sum_{i=1}^n \frac{1}{M} \sum_{q=1}^M f_{i,q}(x, y_q^*(x)), \notag\\
\text{s.t.} \quad & y_q^*(x) := \arg\min_y \frac{1}{n} \sum_{i=1}^n \frac{1}{M} \sum_{q=1}^M l_{i,q}(x, y),\notag
\end{align}
where $f_{i,q}(x, y_q^*) = \frac{1}{|D_{i,q}^{\text{val}}|} \sum_{(x, y_q^*) \in D_{i,q}^{\text{val}}} L(x, y_q^*(x))$ and $l_{i,q}(x, y) = \frac{1}{|D_{i,q}^{\text{tr}}|} \sum_{(x, y_q) \in D_{i,q}^{\text{tr}}} L(x, y_q) + R_{i,x}(y_q)$,
with \(R_{i,x}(y)\) denoting a strongly-convex regularizer with respect to \(y\). The experiment was conducted over 32 batches of tasks across 1,000 iterations. Each task included a training dataset and a validation dataset, both configured for 5-way classification with 50 shots per class. 
Specifically, the training and validation data were distributed among different agents to enable cooperative learning. 
For each task, 30\% of the data from the $i$-th class was assigned to agent $i$, while the remaining 70\% was evenly distributed among the other agents.

\subsection{Configurations}
All experiments were performed with $n=5$ using PyTorch \citep{paszke2019pytorch}. The network topology was configured as a ring topology, where the weight matrix \(W = (w_{ij})\) is defined as: 
\begin{equation*}
    w_{ii} = w, \quad w_{i,i+1} = w_{i,i-1} = \frac{1-w}{2}, 
\end{equation*}
where $w \in (0,1)$, \(w_{1,0} = w_{1,n}\), and \(w_{n,n+1} = w_{n,1}\). In this setup, each agent \(i\) is only connected to its immediate neighbors \(i-1\) and \(i+1\) for $i=1,\cdots,n$, with the indices \(0\) and \(n+1\) representing \(n\) and \(1\), respectively.

\begin{figure*}[t]
  \centering
  \begin{minipage}[b]{0.32\textwidth}
    \includegraphics[width=\textwidth]{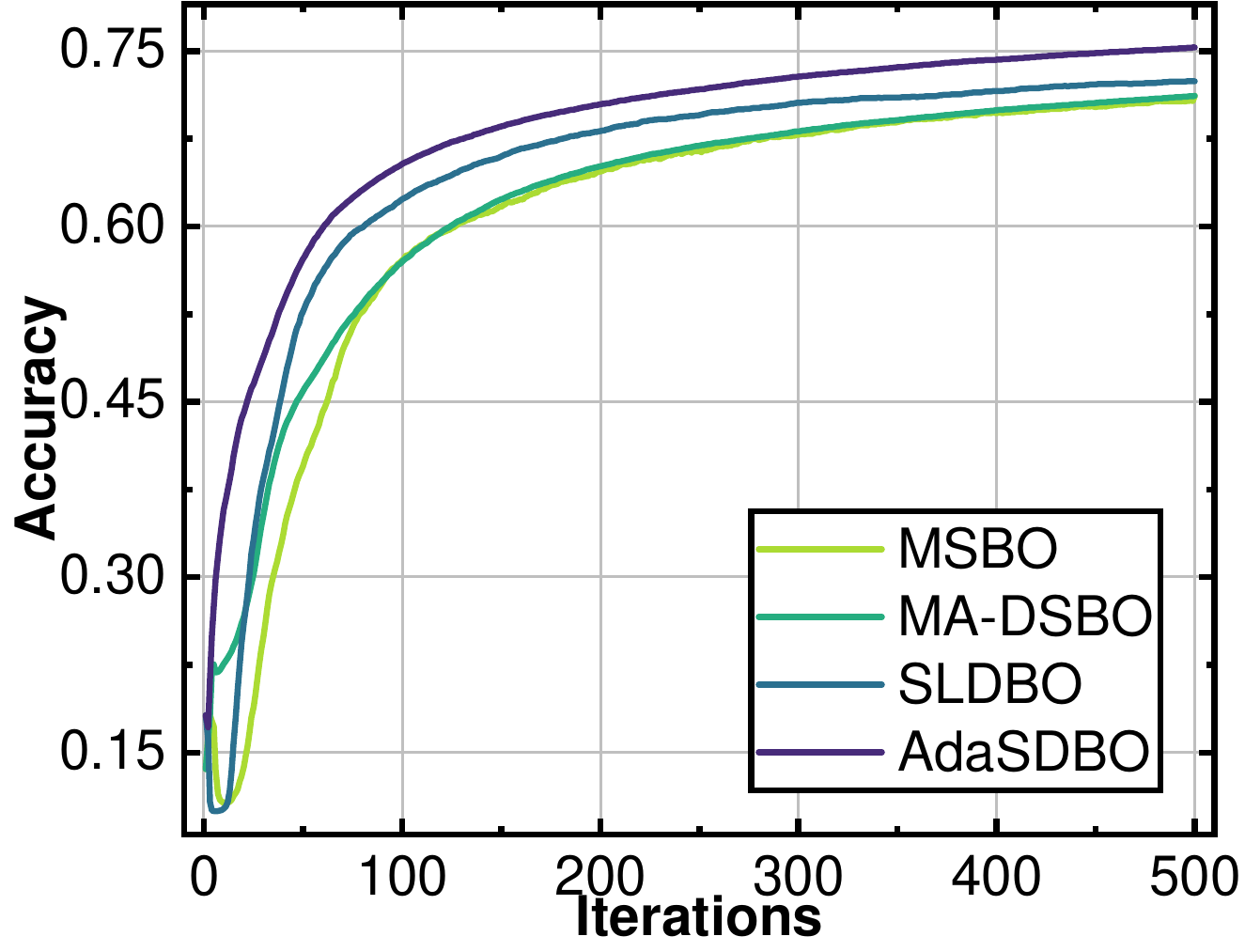}
    \par\vspace{-0.1cm} 
    \makebox[\textwidth]{{\hspace{0.66cm}(a) FMNIST with $n=5$}}
  \end{minipage}
  \begin{minipage}[b]{0.32\textwidth}
    \includegraphics[width=\textwidth]{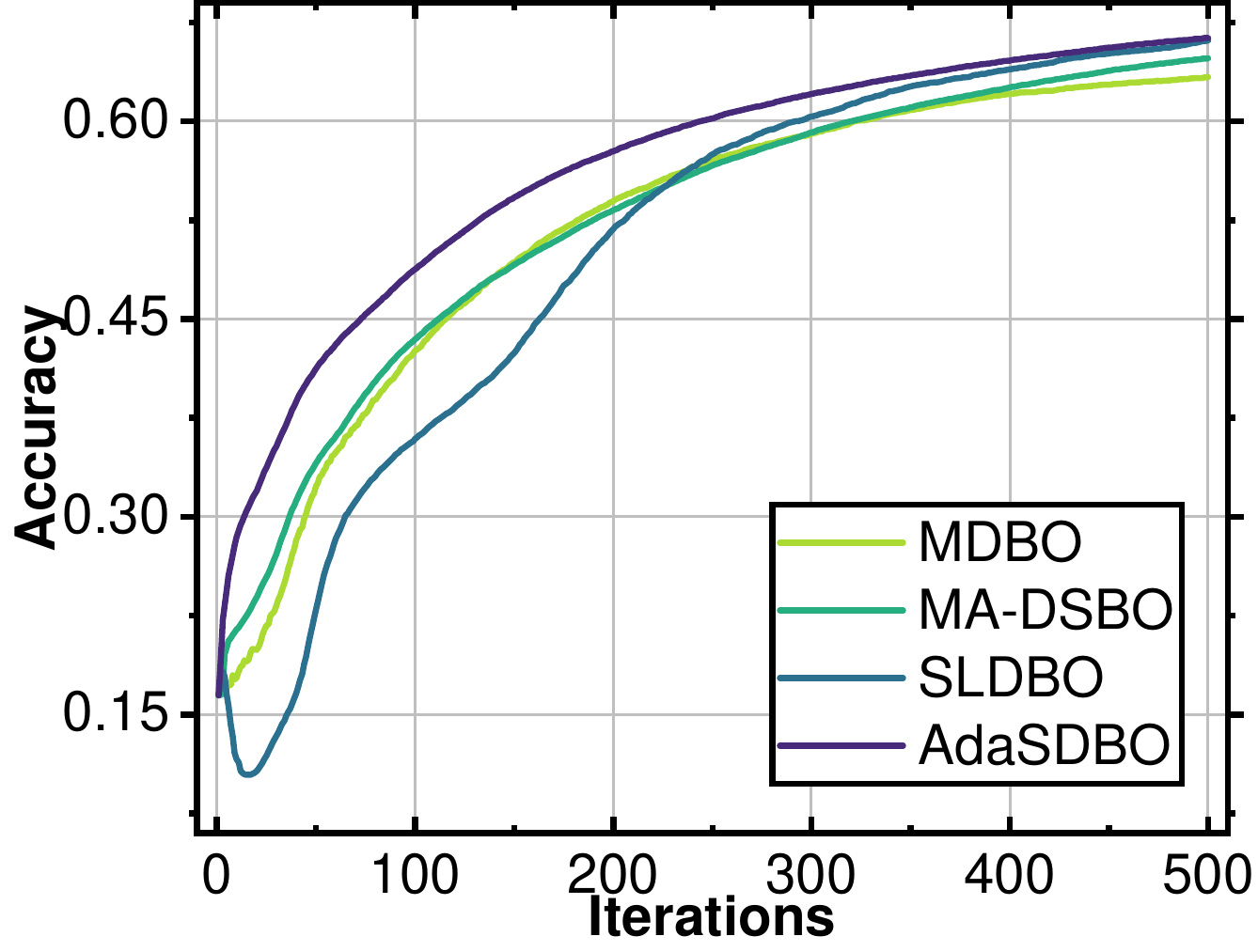}
    \par\vspace{-0.1cm} 
    \makebox[\textwidth]{{\hspace{0.66cm}(b) FMNIST with $n=8$}}
  \end{minipage}
  \begin{minipage}[b]{0.32\textwidth}
    \includegraphics[width=\textwidth]{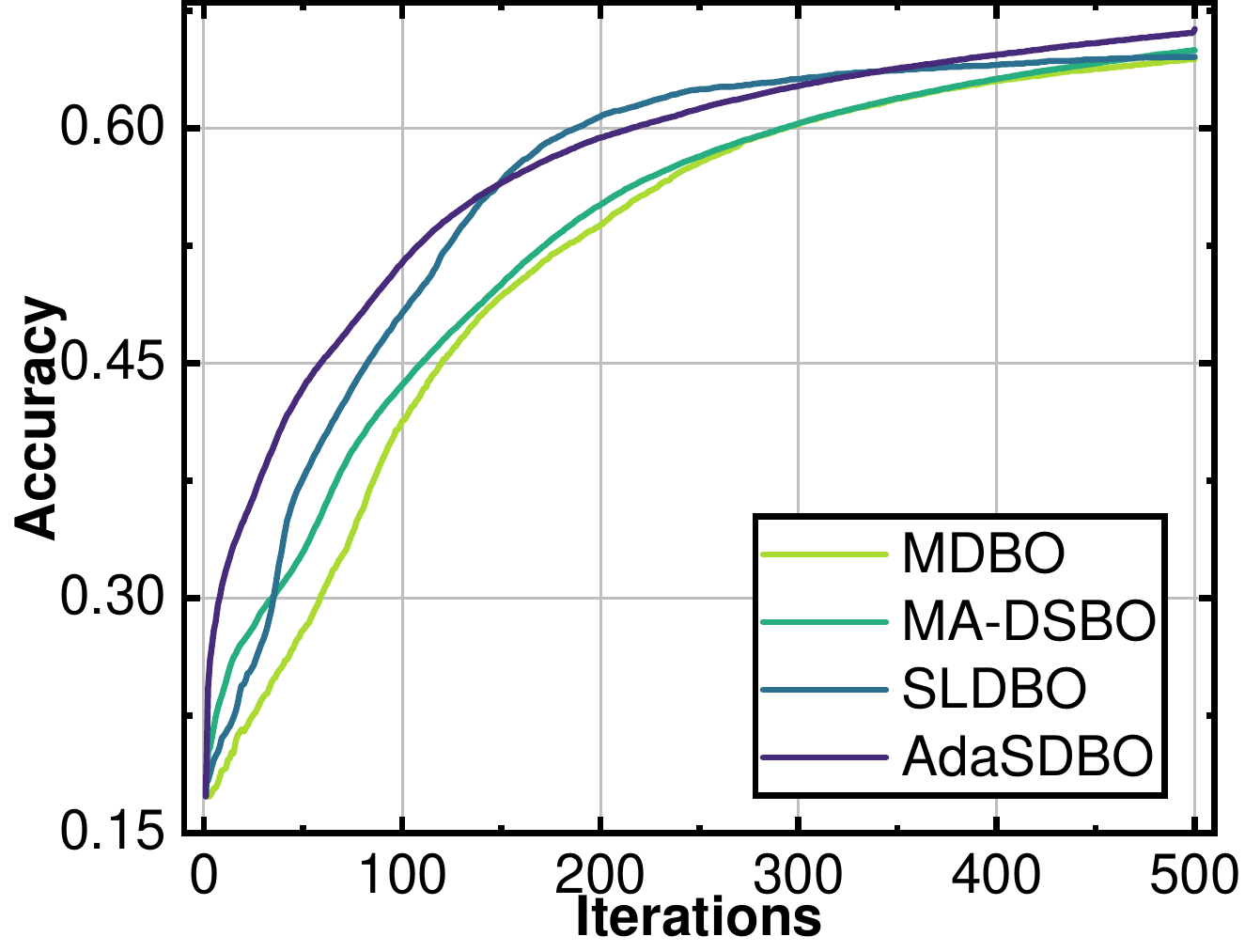}
    \par\vspace{-0.1cm} 
    \makebox[\textwidth]{{\hspace{0.66cm}(c) FMNIST with $n=12$}}
  \end{minipage}
  \caption{Scalability analysis on the FMNIST dataset under varying network sizes ($n = 5, 8, 12$).}
  \label{figsca2}
\end{figure*}

\begin{table}[H]
\centering
\small
\setlength{\tabcolsep}{3pt}
\caption{Test accuracy on the MNIST and FMNIST datasets under different communication topologies ($n = 8$).}
\begin{tabular}{lcccccc}
\toprule
\multirow{2}{*}{Algorithms} & \multicolumn{3}{c}{MNIST} & \multicolumn{3}{c}{FMNIST} \\
\cmidrule(lr){2-4} \cmidrule(lr){5-7}
 & Ring & Ladder & Random & Ring & Ladder & Random \\
\midrule
AdaSDBO & $0.908 \pm 0.001$ & $0.911 \pm 0.001$ & $0.913 \pm 0.001$ & $0.774 \pm 0.003$ & $0.788 \pm 0.003$ & $0.790 \pm 0.003$ \\
SLDBO & $0.871 \pm 0.002$ & $0.871 \pm 0.001$ & $0.871 \pm 0.001$ & $0.758 \pm 0.002$ & $0.758 \pm 0.002$ & $0.758 \pm 0.001$ \\
MA-DSBO & $0.850 \pm 0.001$ & $0.850 \pm 0.001$ & $0.850 \pm 0.002$ & $0.709 \pm 0.002$ & $0.716 \pm 0.001$ & $0.719 \pm 0.002$ \\
MDBO & $0.753 \pm 0.002$ & $0.754 \pm 0.001$ & $0.754 \pm 0.002$ & $0.650 \pm 0.001$ & $0.650 \pm 0.002$ & $0.653 \pm 0.001$ \\
\bottomrule
\end{tabular}
\label{topologydifferent}
\end{table}

For all experiments, except for the test accuracy versus stepsize comparison, we use the following parameter settings. For the baseline methods SLDBO and MA-DSBO, the stepsizes for updating \(x\) and \(v\) are set to 0.01, while the stepsize for updating \(y\) is set to 0.02, following the optimal stepsize order described in \citep{dong2023single,chen2023decentralized}. For the baseline methods DBO and MDBO, the stepsizes for updating both \(x\) and \(y\) are set to 0.01. For AdaSDBO, we set \(\gamma_x = \gamma_y = \gamma_v = 1\) and initialize \(m^x_{i,0} = m^y_{i,0} = m^v_{i,0} = 10\), \(\forall i \in [n]\). All experiments were conducted on a host machine equipped with an Intel(R) Xeon(R) W9-3475X CPU running at 2.20\,GHz (maximum turbo frequency: 4.80\,GHz), featuring 36 physical cores and 72 threads. The system was configured with 256\,GB of DDR5 ECC RAM and a single NVIDIA(R) RTX(TM) A6000 GPU with 48\,GB of memory.

\subsection{Additional Results}\label{addiresult}
For the synthetic dataset, we increased the data heterogeneity parameter \(r\) to 5 and analyzed the convergence performance of different methods under two data dimensions (\(p = 50\) and \(p = 200\)). It can be observed in \cref{fig:syntheticr5} that our proposed algorithm consistently outperforms the baseline methods in both convergence and test accuracy, even as the level of data heterogeneity increases.
This superior performance can be attributed to the adaptive stepsizes design, which enables our algorithm to dynamically adjust stepsizes to accommodate varying data distributions. Consequently, our proposed method demonstrates robust performance across different data heterogeneity settings, effectively adapting to changes in the data environment.

\begin{figure*}[t]
\centering
\begin{minipage}[b]{0.43\textwidth}
    \includegraphics[width=\textwidth]{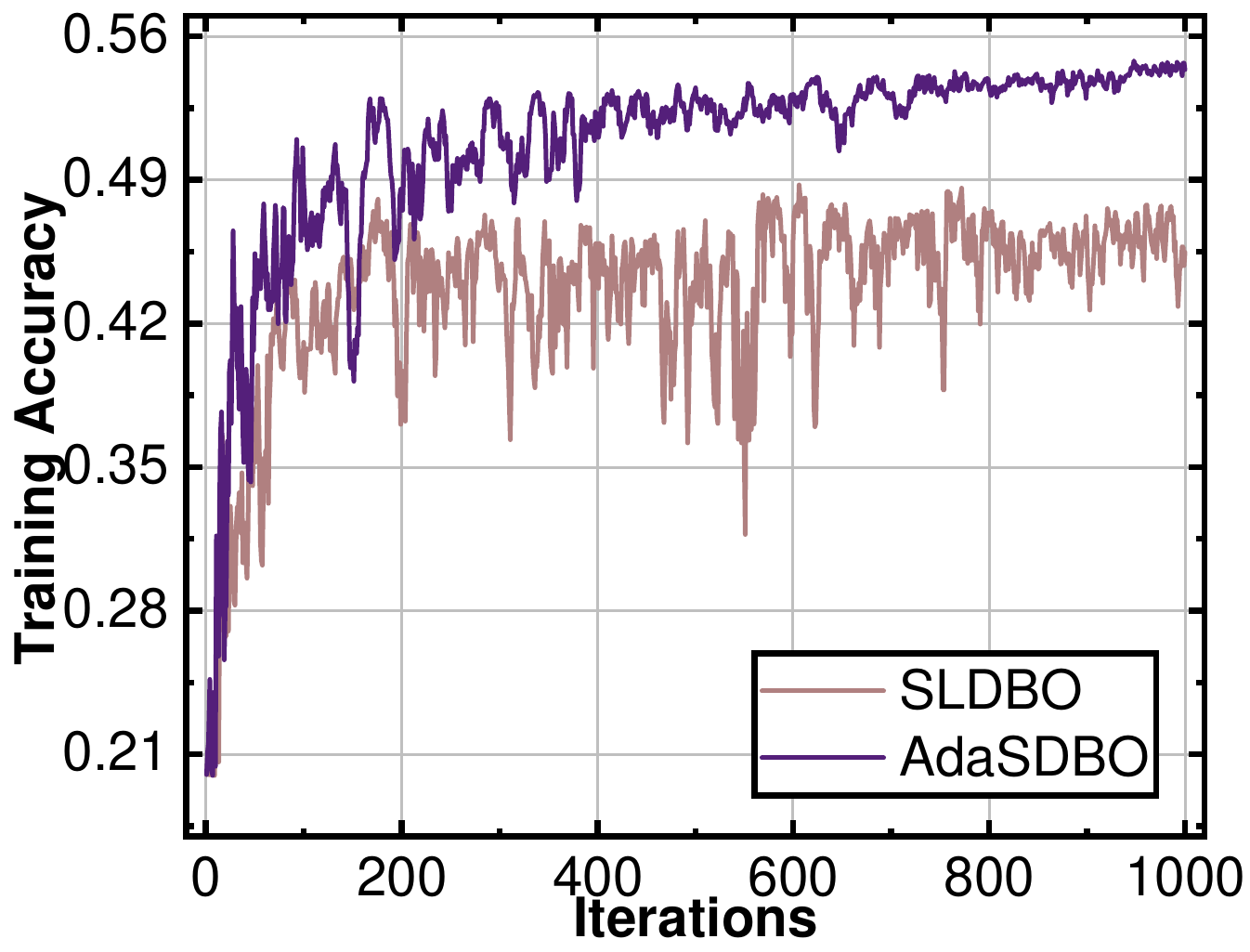}
    \par\vspace{-0.1cm} 
    \makebox[\textwidth]{{\hspace{0.66cm}(a) Training Accuracy}}
\end{minipage}
\begin{minipage}[b]{0.43\textwidth}
    \includegraphics[width=\textwidth]{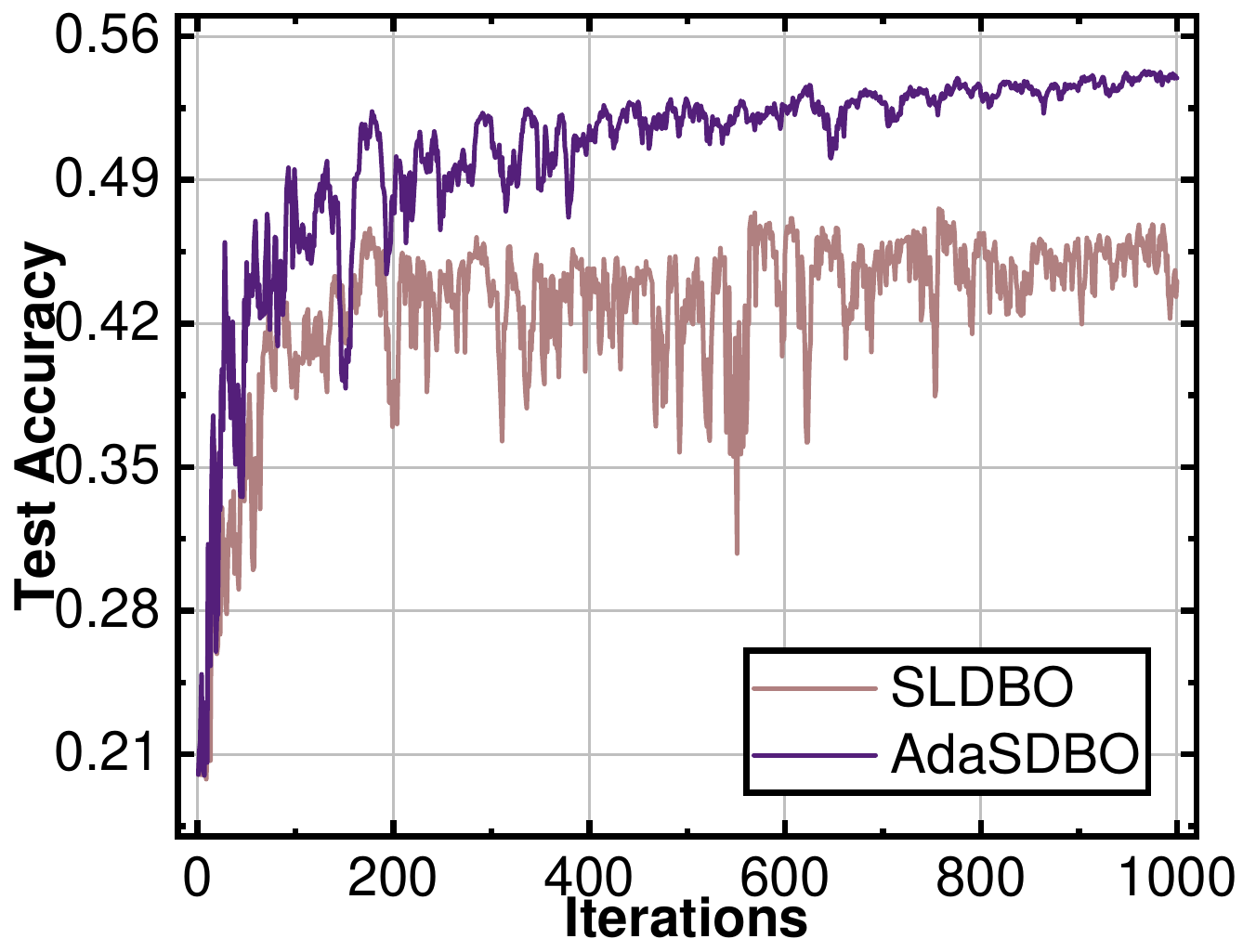}
    \par\vspace{-0.1cm} 
    \makebox[\textwidth]{{\hspace{0.66cm}(b) Test Accuracy}}
\end{minipage}
\caption{Performance comparison between AdaSDBO and SLDBO under the MAML framework, where each node employs an identical CNN for 5-way, 50-shot classification on the CIFAR-10 dataset.}
\label{fig:meta}
\end{figure*}

\begin{table}[t]
\centering
\small 
\setlength{\tabcolsep}{3pt} 
\caption{Performance comparison between AdaSDBO and SLDBO under varying network sizes in the decentralized meta-learning task.}
\label{tab:convergence_agents}
\begin{tabular}{ccccccc}
\toprule
\multirow{2}{*}{Algorithms} & \multicolumn{2}{c}{$n = 10$} & \multicolumn{2}{c}{$n = 20$} & \multicolumn{2}{c}{$n = 30$} \\
\cmidrule(lr){2-3} \cmidrule(lr){4-5} \cmidrule(lr){6-7}
 & Train Acc & Test Acc & Train Acc & Test Acc & Train Acc & Test Acc \\
\midrule
AdaSDBO & $0.543 \pm 0.002$ & $0.534 \pm 0.002$ & $0.542 \pm 0.002$ & $0.534 \pm 0.001$ & $0.538 \pm 0.001$ & $0.533 \pm 0.001$ \\
SLDBO & $0.503 \pm 0.004$ & $0.488 \pm 0.005$ & $0.472 \pm 0.006$ & $0.461 \pm 0.006$ & $0.486 \pm 0.006$ & $0.475 \pm 0.007$ \\
\bottomrule
\end{tabular}
\label{numbermetatestn102030}
\end{table}

Furthermore, we assessed the performance of different methods across varying network connectivity levels (\(\rho_W\)) on the synthetic, MNIST, and FMNIST datasets. As depicted in \cref{fig:rhosynthetic}, \cref{fig:rhomnist}, and \cref{fig:rhofmnist}, increasing the network connectivity ({i.e.,} a decrease in \(\rho_W\) from 0.5393 to 0.4860), leads to improved accuracy for all methods across the different datasets. Notably, our proposed algorithm maintains superior convergence performance compared to all baseline methods on each dataset, validating its effectiveness and reliability under different levels of network connectivity.

In \cref{figsca1} and \cref{figsca2}, we evaluate the broader scalability of our proposed method by varying the number of nodes \(n = 5, 8, 12\) on the MNIST and FMNIST datasets. We compare AdaSDBO against several baseline algorithms, including SLDBO \citep{dong2023single}, MA-DSBO \citep{chen2023decentralized}, and MDBO \citep{gao2023convergence}. The results show that AdaSDBO achieves convergence performance competitive with these state-of-the-art methods. Furthermore, AdaSDBO maintains stable performance across different system configurations, underscoring its ease of deployment and practical applicability.
This stability can be attributed to the problem-parameter-free nature of AdaSDBO, which eliminates the need for manually tuned stepsizes. In contrast, other methods in decentralized settings often suffer from sensitivity to stepsize selection due to their reliance on problem-specific parameters, which are typically unknown or difficult to estimate in practice. As a result, the parameter-free property of AdaSDBO makes it particularly well-suited for real-world decentralized applications.

To assess sensitivity to network structure, we conducted experiments under three commonly used communication topologies—ring, ladder, and random—and compared the performance of AdaSDBO with baseline methods. Structural details for these topologies are provided in the Appendix of \citep{li2024problem}. The results summarized in \cref{topologydifferent} indicate that stronger connectivity leads to faster and more stable convergence: in particular, the random topology, which has the highest connectivity, yields the fastest and most stable convergence for all methods. Across the three topologies, AdaSDBO consistently outperforms the baselines, maintaining strong performance under variations in the communication topology. This robustness is facilitated by the adaptive stepsize mechanism of AdaSDBO, which accommodates topology-induced heterogeneity.

For the decentralized meta-learning experiment, we compared our proposed algorithm with the single-loop decentralized bilevel optimization method SLDBO~\citep{dong2023single}. \Cref{fig:meta} shows that AdaSDBO consistently outperforms SLDBO in both average training accuracy across all nodes and test accuracy, demonstrating the effectiveness of our approach in decentralized meta-learning tasks. We further increase the number of agents to 10, 20, and 30; the results in \cref{numbermetatestn102030} indicate that AdaSDBO remains stable and competitive as the network scales, outperforming the baselines even in more complex settings.
The superior performance of AdaSDBO stems from its problem-parameter-free design, which enables stepsize selection without requiring knowledge of problem-specific parameters. Furthermore, the adaptive nature of our method allows AdaSDBO to dynamically adjust its learning dynamics and consistently achieve optimal convergence rates—even in decentralized settings where hyperparameter tuning is particularly difficult for other methods. This advantage becomes even more evident in complex decentralized meta-learning tasks, further underscoring the robustness and scalability of our proposed approach.

\end{document}